\newfont{\fra}{eufm10 scaled 1095}
\newfont{\Bb}{msbm10 scaled 1095}
\newfont{\Bbg}{msbm10 scaled 1280}
\newcommand\CC{{\mbox{\Bb C}}}
\newcommand\RR{{\mbox{\Bb R}}}
\newcommand\NN{{\mbox{\Bb N}}}
\newcommand\ZZ{{\mbox{\Bb Z}}}
\newcommand\SSS{{\mbox{\Bb S}}}
\newcommand\Z{{\mathbb Z}}
\newcommand\R{{\mathbb R}}
\newcommand\fg{{\frak{g}}}
\newcommand\bm{\textbf{{\rm \textbf{m}}}}
\newcommand\bs{\textbf{{\rm \textbf{s}}}}
\newcommand\bF{\textbf{{\rm \textbf{F}}}}
\newcommand\cC{{\mathcal C}}
\newcommand\cV{{\mathcal V}}
\newcommand\cW{{\mathcal W}}
\newcommand\cH{{\mathcal H}}
\newcommand\cM{{\mathcal M}}
\newcommand\cA{{\mathcal A}}
\newcommand\cF{{\mathcal F}}
\newcommand\cO{{\mathcal O}}
\newcommand\cB{{\mathcal B}}
\newcommand\ph{\varphi}
\newcommand\osc{\frak o \frak s \frak c}
\newcommand{\Aut}{\mathop{{\rm Aut}}}
\newcommand{\GL}{\mathop{{\rm GL}}}
\newcommand{\SL}{\mathop{{\rm SL}}}
\newcommand{\SO}{\mathop{{\rm SO}}}
\newcommand{\Osc}{{\rm Osc}}
\newcommand{\Id}{\mathop{{\rm id}}}
\newcommand{\tr}{\mathop{{\rm tr}}}
\newcommand{\sgn}{\mathop{{\rm sgn}}}
\renewcommand{\Re}{\mathop{{\rm Re}}}
\renewcommand{\Im}{\mathop{{\rm Im}}}
\newcommand{\diag}{\mathop{{\rm diag}}}
\newcommand{\rem}{\mathop{{\rm rem}}}
\newcommand{\Span}{\mathop{{\rm span}}}
\newcommand{\cSpan}{{\overline{\rm span}}}
\newcommand{\lcm}{\mathop{{\rm lcm}}}
\newcommand{\ord}{{\rm ord}}
\newcommand\la{\langle}
\newcommand\ra{\rangle}
\newcommand{\benur}{\begin{enumerate}[label=$(\roman*)$]\itemsep=0pt}
\numberwithin{equation}{section}
\newtheorem{Theorem}{Theorem}[section]
\newtheorem{Corollary}[Theorem]{Corollary}
\newtheorem{Lemma}[Theorem]{Lemma}
\newtheorem{Proposition}[Theorem]{Proposition}
 { \theoremstyle{definition}
\newtheorem{Definition}[Theorem]{Definition}

\newtheorem{Example}[Theorem]{Example}
\newtheorem{Remark}[Theorem]{Remark} }
\begin{document}
%\allowdisplaybreaks

\newcommand{\arXivNumber}{1912.00050}

\renewcommand{\PaperNumber}{051}

\FirstPageHeading

\ShortArticleName{Spectra of Compact Quotients of the Oscillator Group}

\ArticleName{Spectra of Compact Quotients of the Oscillator Group}

\Author{Mathias FISCHER and Ines KATH}
\AuthorNameForHeading{M. Fischer and I. Kath}
\Address{Institut f\"ur Mathematik und Informatik der Universit\"at Greifswald, \\
Walther-Rathenau-Str.~47, D-17489 Greifswald, Germany}
\Email{\href{mailto:mathias.fischer@uni-greifswald.de}{mathias.fischer@uni-greifswald.de}, \href{mailto:ines.kath@uni-greifswald.de}{ines.kath@uni-greifswald.de}}

\ArticleDates{Received September 28, 2020, in final form April 24, 2021; Published online May 13, 2021}

\Abstract{This paper is a contribution to harmonic analysis of compact solvmanifolds. We~consider the four-dimensional oscillator group ${\rm Osc}_1$, which is a semi-direct product of~the three-dimensional Heisenberg group and the real line. We~classify the lattices of~${\rm Osc}_1$ up to inner automorphisms of~${\rm Osc}_1$. For every lattice $L$ in~${\rm Osc}_1$, we~compute the decomposition of the right regular representation of~${\rm Osc}_1$ on~$L^2(L\backslash{\rm Osc}_1)$ into irreducible unitary representations. This decomposition allows the explicit computation of the spectrum of the wave operator on the compact locally-symmetric Lorentzian manifold $L\backslash {\rm Osc}_1$.}

\Keywords{Lorentzian manifold; wave operator; lattice; solvable Lie group}

\Classification{22E40; 22E27; 53C50}

{\small \tableofcontents}

\section{Introduction}
Let $G$ be a Lie group and $L$ be a cocompact discrete subgroup in~$G$. Then we can consider the right regular representation of~$G$ on~$L^2(L\backslash G)$, which is unitary. This representation can be decomposed into irreducible unitary representations. It~is a classical result that this decomposition is a discrete sum and each summand appears with finite multiplicity. We~will call the set of~irreducible representations that appear in this decomposition together with their multiplicities the {\it spectrum} of~$L\backslash G$. It~allows to compute the spectrum of differential operators on~$L\backslash G$ that come from bi-invariant operators on~$G$.

One of the basic questions in harmonic analysis is to determine the spectrum of~$L\backslash G$. For semisimple $G$, this question is very deep and has been studied for decades. For arithmetic $L$, it~is related to the Langlands program. In~any case, only for very few irreducible representations, their multiplicities can be explicitly determined. For nilpotent $G$, the situation is quite different. Here explicit computations are possible and there are closed formulas for the multiplicity of~every irreducible subrepresentation~\cite{CG, H71, R71}. Compared to nilmanifolds, solvmanifolds are less systematically studied, even in low dimensions. The first comprehensive texts on the subject were a book by Brezin~\cite{Br} and a long article by Howe~\cite{Ho77}, both published in the 70s. To date, there has been surprisingly little further research except for exponential solvable Lie groups~\cite{FL}. We~think that solvable Lie groups deserve much more interest. They play an important role in pseudo-Riemannian geometry. There are large families of solvable Lie groups that admit a bi-invariant pseudo-Riemannian metric. In~particular, quotients of these groups by discrete subgroups are locally-symmetric spaces.

Indeed, our motivation to write this paper comes from Lorentzian geometry. We~are interested in the spectrum of the wave operator on certain compact Lorentzian locally-symmetric spaces. These spaces arise as quotients of an oscillator group by a discrete subgroup. Oscillator groups are solvable and admit a bi-invariant Lorentzian metric. The wave operator can be considered as minus the quadratic Casimir operator, which is bi-invariant on the group. Hence its spectrum can be easily computed if the decomposition of the right regular representation is known, see Section~\ref{rmwave}. We~will see that there exist lattices such that the spectrum is discrete as well as~lattices for which the quotient has a non-discrete spectrum.

For solvable Lie groups, we~prefer to speak of lattices instead of discrete cocompact subgroups. By~definition, a lattice in a Lie group $G$ is a discrete subgroup $L$ with the property that the quotient space $L\backslash G$ has finite invariant measure. Lattices in a solvable Lie group are uniform, that is, they are the same as discrete cocompact subgroups.

The above mentioned book by Brezin contains an informal discussion of ideas how to attack the problem of decomposing the regular representation in the solvable case. There are also some explicit calculations for a series of four-dimensional examples. These examples are semi-direct products of the Heisenberg group $H$ by the real line in which selected examples of lattices are considered. Inexplicably, the book has only very few citations and Brezin's ideas were not picked up and developed except in~\cite{Hu}. Here we will continue the study of the mentioned four-dimensional groups. We~will concentrate on the case where $\RR$ acts on~$H/Z(H)$ by rotations, which gives exactly the four-dimensional oscillator group. We~will treat this example more systematically than it was done in~\cite{Br}. In~particular, we~consider quotients of this group by arbitrary lattices and not only by ``suitable'' ones as done in~\cite{Br}, see Remark~\ref{RBr} for more information. We~will use a somewhat different approach than proposed by Brezin and are able to completely determine the spectrum for all compact quotients.

Let us introduce the four-dimensional oscillator group and its Lie algebra in more detail. The four-dimensional oscillator algebra $\osc_1$ is a Lie algebra spanned by a basis $X$,~$Y$,~$Z$,~$T$, where $Z$ spans the centre and the remaining basis elements satisfy the relations $[X,Y]=Z$, $[T,X]=Y$, $[T,Y]=-X$. This Lie algebra is strongly related to the one-dimensional quantum harmonic oscillator. Indeed, the Lie algebra spanned by the differential operators $P:={\rm d}/{\rm d}x$, \mbox{$Q:=x$}, $H=\big(P^2+Q^2\big)/2$ and the identity $I$ is isomorphic to $\osc_1$. In~differential geometry, the oscillator algebra $\osc_1$ is also called warped Heisenberg algebra. It~appears in the classification of~(con\-nected) isometry groups of compact Lorentzian manifolds as the Lie algebra of one of such groups~\cite{Z98}.

The oscillator group is the simply-connected Lie group $\Osc_1$ that is associated with the Lie algebra $\osc_1$. It~is a semi-direct product of the 3-dimensional Heisenberg group $H$ by the real line $\RR$, where $\RR$ acts by rotation on the quotient of~$H$ by its centre. In~particular, it~is solvable. As~mentioned above, it~admits a bi-invariant Lorentzian metric. Moreover, this group contains lattices. So, each lattice $L$ in~$\Osc_1$ gives rise to a compact Lorentzian manifold $L\backslash \Osc_1$.

The group $\Osc_1$ is not exponential. However, it~is of type I. Hence, irreducible unitary representations can be determined by the orbit method, which was originally developed by~Kirillov for nilpotent Lie groups and generalised to type I solvable Lie groups by Auslander and Kostant. The polynomials $z$ and $x^2+y^2+2zt$ (in the obvious notation) generate the algebra of invariant polynomials on~$\osc_1^*$. The level sets $\big\{z=c,\, x^2+y^2+2zt=cd\big\}\cong\RR^2$ with fixed $c\not=0$ and $d\in\RR$ are the generic coadjoint orbits. The set $\{ z=0\}$ is stratified into two-dimensional cylindrical orbits $\big\{x^2+y^2=a,\, z=0\big\}$, $a>0$, and orbits consisting of one point $\{x=y=z=0,\, t=d\}$. The~latter orbits correspond to one-dimensional representations~$\cC_d$. Each cylindric \mbox{orbit} corresponds to a set ${\mathcal S}_a^\tau$, $\tau\in S^1$, of infinite-dimensional representations and each generic orbit corresponds to a an infinite-dimensional representation~$\cF_{c,d}$, cf.~\cite{Ki04}.

\looseness=-1 The study of lattices in oscillator groups was initiated by Medina and Revoy~\cite{MR}. Note, how\-ever, that the classification results in~\cite{MR} are not correct due to a wrong description of~the automorphism group of an oscillator group. Lattices of~$\Osc_1$ (as subgroups) were classified up to automorphisms of~$\Osc_1$ by the first author~\cite{F}. Since outer automorphisms can change the spectrum of the quotient, here we need a classification only up to inner automorphisms of~$\Osc_1$. We~derive such a classification in Section~\ref{S4}. First we classify the abstract groups (up to isomorphism) that appear as lattices of~$\Osc_1$. We~will call these groups {\it discrete oscillator groups}. These groups are semi-direct products of a discrete Heisenberg group by $\ZZ$. A discrete Heisenberg group is generated by a central element $\gamma$ and elements $\alpha,\beta$ satisfying the relation~$[\alpha,\beta]=\gamma^r$ for some $r\in\NN_{>0}$. It~is characterised by $r$ and will be denoted by $H_1^r(\ZZ)$. If~$L=H_1^r(\ZZ)\rtimes \ZZ$ is a~discrete oscillator group and if $\delta$ is a generator of the $\ZZ$-factor, then the action of~$\delta$ on the quotient of~$H_1^r(\ZZ)$ by its centre has order $q\in\{1,2,3,4,6\}$. For $q\not=1$, $L$ is almost determined by $r$ and $q$: for fixed $r$, $q$, there are only one or two non-isomorphic discrete oscillator groups. If~$q=1$, then the situation is slightly more complicated, see Section~\ref{S41}. Each discrete oscillator group has different realisations as a lattice in~$\Osc_1$. More exactly, given a discrete oscillator group~$L$ we consider the set $\cM_L$ of lattices of~$\Osc_1$ isomorphic to $L$, where we identify subgroups that differ only by an inner automorphisms of~$\Osc_1$. We~want to parametrise $\cM_L$. First we observe, that every lattice arises from a normalised and unshifted one by outer automorphisms of~$\Osc_1$ that can be viewed as rescaling and shifting. Here, ``normalised'' means that the covolume of~the lattice obtained by projection of~$L\cap H$ to $H/Z(H)\cong \RR^2$ equals one. The explanation of~the notion ``unshifted'' is more involved, see Section~\ref{S42}. Basically, it~means that the lattice is adapted to the fixed splitting of~$\Osc_1$ into the normal subgroup $H$ and a complement $\RR$. Hence, we~get a description of~$\cM_L$ by the subset $\cM_{L,0}\subset \cM_L$ of normalised unshifted lattices modulo inner automorphisms and by two continuous parameters corresponding to rescaling and shifting. For $q\in\{3,4,6\}$, the set $\cM_{L,0}$ is discrete and can be parametrised by the generator $\lambda\in 2\pi/q +2\pi\ZZ$ of the projection of~$L$ to the $\RR$-factor of~$\Osc_1=H\rtimes \RR$. For $q\in\{1,2\}$, $\cM_{L,0}$ is parametrised by a continuous parameter $(\mu,\nu)$ in a fundamental domain~$\cF$ of the $\SL(2,\RR)$-action on the upper halfplane and some further discrete parameters besides $\lambda$. The final classification result for lattices in~$\Osc_1$ up to inner automorphisms is formulated in Theorems~\ref{MMM} and~\ref{th:classUptoInner}. In~particular, we~determine a set of representatives of unshifted normalised lattices, which we will call {\it standard lattices}.

\looseness=-1 For the computation of~the spectrum of~a quotient $L\backslash \Osc_1$, we~will concentrate on normalised and unshifted lattices. The spectrum for arbitrary lattices can easily be derived from these. The first step is to consider only lattices that are generated by a lattice in the Heisenberg group $H$ and an element of~the centre of~$\Osc_1$. Such lattices will be called {\it straight}. For a~straight lattice~$L$, we~derive a Fourier decomposition for functions in~$L^2(L\backslash \Osc_1)$. For the calculations, we~use another model of~the oscillator group, i.e., a group $G$ that is isomorphic to~$\Osc_1$. Lattices in~$G$ will usually be denoted by $\Gamma$. If~$\Gamma\subset G$ corresponds to a straight lattice in~$\Osc_1$ under this isomorphism, then a (continuous) function in~$L^2(\Gamma\backslash G)$ is periodic in three of~the four variables and it is not hard to describe its Fourier decomposition explicitly. From this decomposition, we~obtain the spectrum. Having solved the problem for straight lattices, we~can turn to standard lattices. We~show that each standard lattice $L$ is generated by a straight lattice $L'\subset L$ and an additional element $l\in L$. We~can identify $L^2(L\backslash \Osc_1)$ with the space of~functions in~$L^2(L'\backslash \Osc_1)$ that are invariant under $l$. The isotypic components of~$L^2(L'\backslash \Osc_1)$ are invariant by $l$. Hence, it~remains to determine the invariants of~the action of~$l$ on each of~the isotypic components. This is done in Section~\ref{S8}, where in practice we again use $G$ instead of~$\Osc_1$ and pass from the lattice $L\subset \Osc_1$ to the corresponding lattice $\Gamma\subset G$.
Since, up to an inner automorphism of~$\Osc_1$, every normalised and unshifted lattice $L$ is equal to a standard lattice, we~obtain the spectrum of~$L\backslash \Osc_1$ from our computations for the standard lattices. The representation~$\cC_d$ appears in~$L^2(L\backslash \Osc_1)$ for $d=n/\lambda$, $n\in\ZZ$. The discrete parameter $\tau$ for which the representation~${\mathcal S}_a^\tau$ appears depends only on~$\lambda$ if $q\in\{2,3,4,6\}$. For $q=1$, it~depends also on the above mentioned additional discrete parameters of~$L$. The parameter $a$ depends on~$\lambda$ and, for $q\in\{1,2\}$, in addition on~$(\mu,\nu)\in\cF$. Finally, the representation~$\cF_{c,d}$ appears for $(c,d)=\big(rm,n/(q\lambda)\big)$, $m\in\ZZ_{\not=0}$, $n\in\ZZ$, if $q\in\{2,3,4,6\}$. For $q=1$, the parameter $d$ of~the representation depends besides on~$\lambda$ on one of~the additional discrete parameters of~$L$. For each irreducible unitary representation of~type $\cC_d$ or $\cF_{c,d}$, we~compute the multiplicity with which it appears in~$L^2(L\backslash \Osc_1)$. Furthermore, for each summand of~type ${\mathcal S}_a^\tau$ we describe $a$ and $\tau$ explicitly in terms of~the parameters of~the lattice. However, the exact determination of~the multiplicity of~${\mathcal S}_a^\tau$ for a \emph{given} real number $a\in\RR$ remains a number theoretic problem, see Remark~\ref{fR}.

Finally, given an arbitrary lattice $L'$, then we can determine an (outer) automorphism $F\in\Aut(\Osc_1)$ that transforms $L'$ into a normalised and unshifted lattice $L$. This allows the computation of the spectrum of~$L'$ from that of~$L$ since we can control the action of~$F$ on~the irre\-ducible representations of~$\Osc_1$. We~summarise the results in Section~\ref{S9} at the end of~the paper.

\medskip

\noindent\textbf{Notation} \\[1ex]
$\NN:=\{0,1,2,\dots\};$\\[.5ex]
$\RR_{>0}:=\{x\in\RR\mid x>0\};$ $\NN_{>0}$, $\ZZ_{\not=0}$ are defined analogously; \\[.5ex]
$\ZZ_m:=\ZZ/{m\ZZ};$\\[.5ex]
$\rem_q(n)\in\{0,\dots,q-1\}$ \quad remainder after dividing $n\in\ZZ$ by $q\in\NN_{>0};$\\[.5ex]
$\sgn$ \quad sign function on~$\RR;$\\[.5ex]
$\cSpan$ \quad closed span in a Hilbert space.

\section{The four-dimensional oscillator group}
The 4-dimensional oscillator group is a semi-direct product of the 3-dimensional Heisenberg group $H$ by the real line. Usually, the Heisenberg group $H$ is defined as the set $H=\CC\times\RR$ with multiplication given by
\begin{gather*}
(\xi_1,z_1)\cdot(\xi_2,z_2)=\bigg(\xi_1+\xi_2,z_1+z_2+{\frac12}\omega(\xi_1,\xi_2)\bigg),
\end{gather*}
where $\omega(\xi_1,\xi_2):=\Im\big(\overline{\xi_1}\xi_2\big)$. Hence in explicit terms, the oscillator group is understood as the set $\Osc_1=H\times\RR$ with multiplication defined by
\begin{gather*}
(\xi_1,z_1,t_1)\cdot(\xi_2,z_2,t_2)=\bigg(\xi_1+{\rm e}^{{\rm i}t_1}\xi_2,z_1+z_2+{\frac12}\omega\big(\xi_1,{\rm e}^{{\rm i}t_1}\xi_2\big),t_1+t_2\bigg),
\end{gather*}
where $\omega(\xi_1,\xi_2):=\Im\big(\overline{\xi_1}\xi_2\big)$.

In the following we want to describe the automorphisms of~$\Osc_1$, see~\cite{F} for a proof.
For $\eta\in\CC$, let $F_\eta\colon \Osc_1\rightarrow \Osc_1$ be the conjugation by $(\eta,0,0)$. Furthermore, we~define an automorphism~$F_u$ of~$\Osc_1$ for $u\in\RR$ by
\begin{gather}\label{EFu}
F_u\colon \ (\xi, z, t)\longmapsto (\xi,z+ut,t).
\end{gather}
Finally, consider an $\RR$-linear isomorphism $S$ of~$\CC$ such that $S({\rm i}\xi)=\mu {\rm i}S(\xi)$ for an element $\mu\in\{1,-1\}$ and for all $\xi\in\CC$. Then
$\mu=\sgn(\det S)$ and also
\begin{gather}\label{ES}
F_{S}\colon \ (\xi,z,t)\longmapsto (S\xi,\det(S) z,\mu t)
\end{gather}
is an automorphism of~$\Osc_1$. Each automorphism $F$ of~$\Osc_1$ is of the form
\begin{gather*}
F=F_u\circ F_{\eta}\circ F_{S}
\end{gather*}
for suitable $u\in\RR$, $\eta\in \CC$ and $S\in\GL(2,\RR)$ as considered above. Besides $F_\eta$ also $F_S$ is an inner automorphism if $S\in\SO(2,\RR)$.
\section[Irreducible unitary representations of~$\Osc_1$]
{Irreducible unitary representations of~$\boldsymbol{\Osc_1}$} \label{irrrep}

The irreducible unitary representations of the oscillator group were determined for the first time by Streater~\cite{St67}. Actually, in~\cite{St67}, not the simply-connected group $\Osc_1$ was studied but a~quotient of this group which is a semidirect product of the Heisenberg group and the circle and can be considered as a matrix group.

The group $\Osc_1$ is not exponential, i.e., the exponential map $\exp\colon \osc_1\to\Osc_1$ is not a~dif\-feo\-morphism. More exactly, $\exp$ is not surjective. However, $\Osc_1$
has the following weaker property which ensures that one can describe its irreducible unitary representations.
A locally compact group $G$ is said to be of type I if every factor representation of~$G$ is isotypic. It~is known that the universal cover of any connected solvable algebraic group is of type I, see~\cite[p.~440]{Pu69}. In~particular, $\Osc_1$ is of type I. The irreducible unitary representations of solvable Lie groups of type I can be determined by applying a generalised version of Kirillov's orbit method, see~\cite{AK71,Ki99,Ki04} for a description of this method. An explicit computation of the irreducible unitary representations of~$\osc_1$ can be found in \cite[Section~4.3]{Ki04}, where the oscillator Lie algebra is called diamond Lie algebra.
The difference between the representations of the non-simply-connected group considered in~\cite{St67} and the representations of~$\Osc_1$ lies in an additional parameter $\tau\in\RR/\ZZ$ for the irreducible representations corresponding to cylindrical orbits, see $(ii)$ below.

Let us recall the results of \cite[Section~4.3]{Ki04}. Note that the case $c<0$ in item $(iii)$ below is not discussed in~\cite{Ki04}. We~consider coadjoint orbits of~$\Osc_1$ in~$\osc_1^*=\CC\oplus\RR\oplus\RR$, which are represented by
\benur
\item $(0,0,d)\in\osc_1^*$, $d\in\RR$,
\item $(a,0,0)$, $a\in\RR$, $a>0$,
\item $(0,c,d)$, $c,d\in\RR$, $c\not=0$.
\end{enumerate}
The orbit of~$(0,0,d)$ is a point, the orbit of~$(a,0,0)$ is diffeomorphic to $S^1\times \RR$, and the orbit of~$(0,c,d)$ for $c\not=0$ is diffeomorphic to $\RR^2$. We~will describe the representations corresponding to~$(ii)$ and~$(iii)$ only on the Lie algebra level. Let~$X$, $Y$, $Z$, $T$ be the standard basis of~$\osc_1=\CC\oplus\RR\oplus \RR$,
The non-vanishing brackets of basis elements are $[X,Y]=Z$, $[T,X]=Y$ and $[T,Y]=-X$.
\benur
\item $\cC_d:=\big(\sigma_{(0,0,d)},\CC\big)$, $\sigma_{(0,0,d)} (\xi,z,t)={\rm e}^{2\pi {\rm i} d t}$.
\item ${\mathcal S}_a^\tau:=\big(\sigma:=\sigma_{(a,0,0)}^\tau, L^2\big(S^1\big)\big)$, $\tau\in \RR/\ZZ\cong [0,1)$,
\begin{gather*}
\sigma_*(Z)(\ph)=0,
\\
\sigma_*(X+{\rm i}Y)(\ph)=2\pi {\rm i} a {\rm e}^{-{\rm i}t} \ph,
\\
\sigma_*(X-{\rm i}Y)(\ph)=2\pi {\rm i} a {\rm e}^{{\rm i}t} \ph,
\\
\sigma_*(T)(\ph)=\ph'+{\rm i}\tau \ph
\end{gather*}
for $\ph=\ph(t)\in C^{\infty}\big(S^1\big)\subset L^2\big(S^1\big)$. The orthonormal system $\phi_n:={\rm e}^{{\rm i}nt}$, $n\in\ZZ$ satisfies
\begin{gather*}
\sigma_*(X+{\rm i}Y)(\phi_n) =2\pi {\rm i} a\, \phi_{n-1},
\\
\sigma_*(X-{\rm i}Y)(\phi_n) =2\pi {\rm i} a\, \phi_{n+1},
\\
 \sigma_*(T)(\phi_n)={\rm i}(n+\tau)\,\phi_n.
 \end{gather*}
\item For $c>0$, we~consider the Hilbert space
\begin{gather*}
\cF_c(\CC):=\bigg\{\ph\colon \CC\to\CC \text{ holomorphic }\bigg| \int_{\mathbb C} |\ph(\xi)|^2{\rm e}^{-\pi c |\xi|^2} c\, {\rm d}\xi <\infty\bigg\}
\end{gather*}
with scalar product
\begin{gather}
\la \ph_1,\ph_2\ra=\int _{{\mathbb C}} \ph_1(\xi)\overline {\ph_2(\xi)} {\rm e}^{-\pi c |\xi|^2} c\, {\rm d}\xi\label{ipF}
\end{gather}
for $\ph_1,\ph_2\in \cF_c(\CC)$.
Then the representation~$\sigma:=\sigma_{(0,c,d)}$ on~$\cF_c(\CC)$ is given by
\begin{gather*}
\sigma_*(Z)(\ph)=2\pi {\rm i}c \ph,
\\
\sigma_*(X+{\rm i}Y)(\ph)=2\pi c\xi \ph,
\\
\sigma_*(X-{\rm i}Y)(\ph)=-2\partial \ph ,
\\
\sigma_*(T)(\ph)=2\pi {\rm i} d \ph -{\rm i}\xi \partial\ph.
\end{gather*}
The functions $\psi_n:=\frac{(\sqrt{\pi c} \xi)^n}{\sqrt{n!}}$, $n\ge0$, constitute a complete orthonormal system of~$\cF_c(\CC)$ and we have
\begin{gather*}
\sigma_*(Z)(\psi_n)= 2\pi {\rm i}c \psi_n,\qquad
\sigma_*(T)(\psi_n)= (2\pi d-n)\,{\rm i} \psi_n
\end{gather*}
and, for $A_+:=\sigma_*(X+{\rm i}Y)$ and $A_-:=\sigma_*(X-{\rm i}Y)$,
\begin{gather}
A_+(\psi_n)=2\sqrt{\pi c(n+1)} \psi_{n+1},\qquad n\ge0, \label{A+}\\
A_-(\psi_0)=0,\qquad A_-(\psi_n)=-2\sqrt{\pi c n} \psi_{n-1}, \qquad n\ge1.\label{A-}
\end{gather}

Furthermore, for $c>0$, we~consider
\begin{gather*}
\cF_{-c}(\CC):=\bigg\{\ph\colon \CC\to\CC \text{ anti-holomorphic }\bigg|\, \int_{\mathbb C} |\ph(\xi)|^2{\rm e}^{-\pi c |\xi|^2} c\, {\rm d}\xi <\infty\bigg\}
\end{gather*}
with scalar product given by (\ref{ipF}) now for $\ph_1,\ph_2\in \cF_{-c}(\CC)$.
The representation~$\sigma:=\sigma_{(0,-c,d)}$ on~$\cF_{-c}(\CC)$ is given by
\begin{gather*}
\sigma_*(Z)(\ph)=-2\pi {\rm i}c \ph,
\\[.5ex]
\sigma_*(X+{\rm i}Y)(\ph)=-2\bar \partial \ph,
\\[.5ex]
\sigma_*(X-{\rm i}Y)(\ph)= 2\pi c\bar\xi \ph,
\\[.5ex]
\sigma_*(T)(\ph)=2\pi {\rm i} d \ph + {\rm i}\bar\xi \bar\partial\ph.
\end{gather*}
Here, the functions \[\psi_n:=\frac{(\sqrt{\pi c} \bar\xi)^n}{\sqrt{n!}}, \qquad n\ge0, \] constitute a complete orthonormal system and we have
\begin{gather*}
\sigma_*(Z)(\psi_n)= -2\pi {\rm i}c \psi_n,\qquad
\sigma_*(T)(\psi_n)= (2\pi d+n)\,{\rm i} \psi_n.
\end{gather*}
Now, equations (\ref{A+}) and (\ref{A-}) hold for
\[A_+:=\sigma_*(X-{\rm i}Y)$ and $A_-:=\sigma_*(X+{\rm i}Y).\]
We will use the notation~\[\cF_{c,d}:=\big(\sigma_{(0,c,d)},\cF_c(\CC)\big),\qquad \cF_{-c,d}:=\big(\sigma_{(0,-c,d)},\cF_{-c}(\CC)\big).\]
\end{enumerate}

Every irreducible unitary representation of the oscillator group is isomorphic to one of these representations.

Let us consider the Casimir operator for these representations. Let~$X$, $Y$, $Z$, $T$ be the standard basis of~$\osc_1$ as before. Let~$\langle\cdot,\cdot\rangle$ denote the $\operatorname{ad}$-invariant Lorentzian metric on~$\osc_1$ given by
$\langle X,X\rangle=\langle Y,Y\rangle=\langle Z,T\rangle=1$.
The remaining scalar products of basis elements vanish.
The~Casi\-mir operator of a representation~$\sigma$ with respect to $\langle\cdot,\cdot\rangle$ equals
\begin{gather*}
\Delta_\sigma=(\sigma_*(X))^2+(\sigma_*(Y))^2+2(\sigma_*(Z))(\sigma_*(T)).
\end{gather*}
A straight forward computation yields $\Delta_{\cC_d}=0$, $\Delta_{{\mathcal S}_a^\tau}=-4\pi a^2$,
$\Delta_{\cF_{c,d}}=-2\pi c(4\pi d+1)$ for $c>0$
 and $\Delta_{\cF_{c,d}}=-2\pi c(4\pi d-1)$ for $c<0$.

Let $S$ be an $\RR$-linear isomorphism of~$\CC$ such that $S({\rm i}\xi)=\mu {\rm i}S(\xi)$ for $\mu\in\{1,-1\}$ and for all~$\xi\in\CC$. Recall that this implies
$\mu=\sgn(\det S)$. Let~$F_S$ be defined as in~\eqref{ES}.

Let $F$ be an automorphism of~$\Osc_1$. For a representation~$(\sigma,V)$ of~$\Osc_1$ we define a representation~$F^*(\sigma,V):=(\sigma\circ F,V)$. One computes
\begin{gather}
F_u^*\cC_d =\cC_d,\qquad
F_u^*{\mathcal S}^\tau_a ={\mathcal S}^\tau_a,\qquad
F_u^*{\mathcal F}_{c,d} ={\mathcal F}_{c,d+uc}, \nonumber
\\
F_S^*\cC_d =\cC_{\mu d},\qquad
F_S^*{\mathcal S}^\tau_a ={\mathcal S}^{\mu\tau}_{|\hspace{-1pt}\det S|^{1/2}a},\qquad
F_S^*{\mathcal F}_{c,d} ={\mathcal F}_{\hspace{-1pt}\det(S) c,\mu d}.
\label{Fiso}
\end{gather}

\section[Lattices in~$\Osc_1$]{Lattices in~$\boldsymbol{\Osc_1}$}\label{S4}
Oscillator groups can be defined in every even dimension as a generalisation of the classical four-dimensional one. The study of lattices in such groups was initiated by Medina and Revoy~\cite{MR}. However, their classification of lattices in~$\Osc_1$ is not correct, see Remark~\ref{RMR}. Finally, lattices of~$\Osc_1$ were classified up to automorphisms of~$\Osc_1$ by the first author~\cite{F}. Since outer automorphisms can change the spectrum of the quotient, here we need a classification up to {\it inner} automorphisms of~$\Osc_1$. Such a classification is achieved in this section. In~order to formulate it, we~introduce the new concept of normalised and unshifted lattices. This concept will also play an important role in Section~\ref{S8}, where we compute the spectrum of quotients by standard lattices.

\subsection{Classification of discrete oscillator groups}
\label{S41}
Consider the discrete Heisenberg group
\begin{gather}\label{dHeis}
H_1^r(\ZZ):=\big\la \alpha, \beta, \gamma \mid \alpha \beta \alpha^{-1} \beta^{-1} =\gamma^r,\, \alpha \gamma=\gamma \alpha,\, \beta\gamma=\gamma\beta\big\ra,
\end{gather}
where $r\in\NN_{>0}$. It~is well known that $H_1^r(\ZZ)$ and $H_1^{r'}(\ZZ)$ are isomorphic if and only if $r=r'$.
Let~$S$ be a homomorphism from the infinite cyclic group $\ZZ \delta$ generated by $\delta$ into the automorphism group of~$H_1^r(\ZZ)$.
A {\it discrete oscillator group} is a semidirect product $H_1^r(\ZZ)\rtimes_S \ZZ \delta$, where the map $\overline{S(\delta)}$ induced by $S(\delta)$ on~$H_1^r(\ZZ)/Z(H_1^r(\ZZ))$ is conjugate to a rotation in~$\GL(2,\RR)$. In~the following we just write $S$ instead of~$S(\delta)$ and $\bar S$ instead of~$\overline{ S(\delta)}$. Using the projections
%$\bar \alpha$ and $\bar\beta$
of~$\alpha$ and $\beta$ to $H_1^r(\ZZ)/Z(H_1^r(\ZZ))$ as a basis, we~identify this quotient with $\ZZ^2$ and $\bar S$ with an element of~$\SL(2,\ZZ)$. Since the map $\bar S\in \SL(2,\ZZ)$ is conjugate over $\GL(2,\RR)$ to a rotation, it~is of finite order. This implies
\begin{Lemma}\label{lmgen}
Given a discrete oscillator group $H_1^r(\ZZ)\rtimes_S \ZZ \delta$, we~find $($new$)$ generators $\alpha$,~$\beta$,~$\gamma$ of~$H_1^r(\ZZ)$ such that the relations in~\eqref{dHeis} are satisfied and $\bar S$ equals one of the matrices
\begin{gather}\label{Sq}
S_1=I_2,\quad\
S_2=-I_2,\quad\
S_3= \begin{pmatrix} 0&-1\\1&-1 \end{pmatrix}\!,\quad\
S_4=\begin{pmatrix} 0&-1\\1&0 \end{pmatrix}\!,\quad\
S_6= \begin{pmatrix} 1&-1\\1&0 \end{pmatrix}\!.
\end{gather}
In particular, each discrete oscillator group is isomorphic to a group $H_1^r(\ZZ)\rtimes_S \ZZ \delta$ for which $\bar S$ is one of the maps in~\eqref{Sq}.
\end{Lemma}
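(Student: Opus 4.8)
The plan is to reduce the statement to the standard classification of finite-order elements in $\SL(2,\ZZ)$ and then check that any such element can be conjugated into the list \eqref{Sq} by an element of $\SL(2,\ZZ)$ — while keeping track of the fact that an $\SL(2,\ZZ)$-conjugation of $\bar S$ corresponds exactly to a change of generators of $H_1^r(\ZZ)$ preserving the relations in \eqref{dHeis}. First I would make the connection between conjugation and change of generators precise. If $A\in\SL(2,\ZZ)$ is written in the basis given by (the images of) $\alpha,\beta$, then $(\alpha',\beta'):=(\alpha^{a_{11}}\beta^{a_{21}}\gamma^{k_1},\,\alpha^{a_{12}}\beta^{a_{22}}\gamma^{k_2})$ together with $\gamma':=\gamma$ is again a generating set satisfying \eqref{dHeis} — one checks that the commutator $[\alpha',\beta']$ equals $\gamma^{r\det A}=\gamma^r$ using $\det A=1$ and centrality of $\gamma$, and the relation $\alpha'\beta'\alpha'^{-1}\beta'^{-1}=\gamma^r$ is the only one that has to be reverified. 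Under this change of generators, $\bar S$ is replaced by $A^{-1}\bar S A$. So the Lemma reduces to: every finite-order element of $\SL(2,\ZZ)$ is $\SL(2,\ZZ)$-conjugate to one of $S_1,\dots,S_6$.

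Next I would establish that $\bar S$ has finite order. This is already indicated in the text: since $\bar S$ is $\GL(2,\RR)$-conjugate to a rotation, it lies in a compact subgroup of $\GL(2,\RR)$, hence its powers form a bounded — and, being integral, finite — set of matrices; equivalently, its eigenvalues are roots of unity lying on the unit circle and it is diagonalizable, so $\bar S$ is periodic. Then I would invoke (or quickly reprove) the classical fact that the possible orders of torsion elements in $\SL(2,\ZZ)$ are exactly $1,2,3,4,6$: the characteristic polynomial $\lambda^2-(\tr\bar S)\lambda+1$ must be a product of cyclotomic polynomials of degree $\le2$, which forces $\tr\bar S\in\{-2,-1,0,1,2\}$, and combined with diagonalizability (to exclude the parabolic cases $\tr=\pm2$ with $\bar S\neq\pm I_2$) this gives order $1$ (for $\bar S=I_2$), $2$ (for $\bar S=-I_2$), and for $\tr\in\{-1,0,1\}$ the minimal polynomial is $\Phi_3,\Phi_4,\Phi_6$ respectively, giving orders $3,4,6$.

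Finally, for each admissible trace I would show the conjugacy class is the stated one. For $\tr\bar S=\pm2$ with $\bar S$ diagonalizable we get $\bar S=\pm I_2$, which is central, so it \emph{equals} $S_1$ or $S_2$. For $\tr\bar S\in\{-1,0,1\}$ the matrix $\bar S$ has no eigenvalue in $\ZZ$, so for any primitive vector $v\in\ZZ^2$ the pair $(v,\bar S v)$ is a $\ZZ$-basis of $\ZZ^2$ (their determinant is $\pm1$ because it is an integer dividing the content of the lattice they span, which must be all of $\ZZ^2$ by primitivity of $v$ and the structure of the orbit); in this basis $\bar S$ takes the companion form of its characteristic polynomial, i.e.\ $\begin{pmatrix}0&-1\\1&\tr\bar S\end{pmatrix}$, which is $S_3$, $S_4$, $S_6$ for $\tr\bar S=-1,0,1$. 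Adjusting $v$ by a sign lets one arrange $\det(v,\bar S v)=+1$ so the conjugating matrix is in $\SL(2,\ZZ)$, not merely $\GL(2,\ZZ)$. Translating back through the first step yields the claimed new generators $\alpha,\beta,\gamma$.

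I expect the main obstacle to be bookkeeping rather than conceptual: making sure the change of generators stays inside $\SL(2,\ZZ)$ (so that $\det A=1$ and the commutator relation is preserved exactly, with no sign ambiguity), and handling the primitive-vector argument so that the resulting basis has determinant $+1$ rather than $-1$. The determinant-$+1$ normalization is exactly why $S_4=\begin{pmatrix}0&-1\\1&0\end{pmatrix}$ appears rather than a reflection, and why there is a single class for each of $q=3,4,6$; one must be a little careful that replacing $v$ by $-v$ or by $\bar S v$ suffices to fix the sign in all three cases.
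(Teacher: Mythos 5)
There is a genuine gap, and it lies exactly where you flagged the ``main obstacle'': your insistence on conjugating within $\SL(2,\ZZ)$ while keeping $\gamma$ fixed. The classification you reduce to --- ``every finite-order element of $\SL(2,\ZZ)$ is $\SL(2,\ZZ)$-conjugate to one of $S_1,\dots,S_6$'' --- is false. For $q\in\{3,4,6\}$ the matrices $S_q$ and $S_q^{-1}$ are \emph{not} $\SL(2,\ZZ)$-conjugate: the centralizer of $S_q$ in $\GL(2,\ZZ)$ consists of determinant-$1$ matrices (the units of $\ZZ[{\rm i}]$ resp.\ $\ZZ[\omega]$), while $\diag(1,-1)$ conjugates $S_q$ to $S_q^{-1}$, so any matrix conjugating $S_q^{-1}$ into $S_q$ has determinant $-1$. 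Concretely, the quadratic form $v\mapsto\det(v,\bar S v)$ is definite, e.g.\ $x^2+y^2$ for $\bar S=S_4$ and $-(x^2+y^2)$ for $\bar S=S_4^{-1}$; hence your final ``adjust $v$ by a sign'' step cannot work, since $\det(-v,\bar S(-v))=\det(v,\bar S v)$ and no choice of $v$ flips the sign. A second, smaller error is the claim that $(v,\bar S v)$ is a $\ZZ$-basis for \emph{every} primitive $v$: for $\bar S=S_4$ and $v=(1,1)^\top$ one gets $\det(v,S_4v)=2$. (One can repair this part by choosing $v$ generating $\ZZ^2$ as a $\ZZ[\bar S]$-module, which exists since $\ZZ[{\rm i}]$ and $\ZZ[\omega]$ are principal, but the determinant-sign obstruction remains.)

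The statement itself is nevertheless true because the Lemma allows a new $\gamma$ as well: a change of generators by $A\in\GL(2,\ZZ)$ with $\det A=-1$ preserves the relations in~\eqref{dHeis} provided one simultaneously replaces $\gamma$ by $\gamma^{-1}$, since $[\alpha',\beta']=\gamma^{r\det A}$. This is how the paper argues: it invokes the classical fact that every finite-order element of $\SL(2,\ZZ)$ is conjugate \emph{over $\GL(2,\ZZ)$} to one of $S_1,S_2,S_3,S_4,S_6$ (citing Newman), and observes that the conjugating matrix $T\in\GL(2,\ZZ)$ extends to an isomorphism of discrete oscillator groups preserving $\delta$, with $\gamma\mapsto\gamma^{\det T}$. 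So the fix for your argument is to drop the $\SL(2,\ZZ)$ restriction, allow $\det A=-1$ together with $\gamma\mapsto\gamma^{-1}$, and then your companion-matrix construction (with a correctly chosen $v$) does land on $S_3$, $S_4$ or $S_6$ after composing, if necessary, with a determinant-$(-1)$ change of basis.
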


\begin{proof}
It is well known that each matrix of finite order in~$\SL(2,\ZZ)$ is conjugate over $\GL(2,\ZZ)$ to one of the matrices $S_1$, $S_2$, $S_3$, $S_4$, $S_6$, see, e.g.,~\cite{N}. The map $T\in\GL(2,\ZZ)$ that is used for this conjugation can be extended to an isomorphism of discrete oscillator groups which preserves~$\delta$.
\end{proof}
\begin{Lemma}
Let $H_1^r(\ZZ)$ be a discrete Heisenberg group given as in~\eqref{dHeis} and consider a discrete oscillator group $L:=H_1^r(\ZZ)\rtimes_S \ZZ \delta$ with $S(\alpha)=\bar S(\alpha)\gamma^k$, $S(\beta)=\bar S(\beta)\gamma^l$ and $S(\gamma)=\gamma$.

If $\ord(\bar S)=:q\in\{2,3,4,6\}$, then the centre $Z(L)$ of~$L$ equals $\la \gamma, \delta^q\ra$. If~$\bar S=I_2$, then $Z(L)=\la \gamma, \alpha^a\beta^b\delta^d\ra$,
where $a=-l/r_0$, $b=k/r_0$, $d=r/r_0$ for $r_0:=\gcd(k,l,r)$. In~this case $[L,L]$ is generated by $\gamma^{r_0}$.
\end{Lemma}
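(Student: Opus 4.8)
The plan is to write a general element of $L$ as $h\delta^m$ with $h\in H_1^r(\ZZ)$ and $m\in\ZZ$, and to rephrase centrality as two conditions on the pair $(h,m)$. Using $\delta g\delta^{-1}=S(g)$ for $g\in H_1^r(\ZZ)$, one sees that $h\delta^m$ commutes with $\delta$ iff $S(h)=h$, and that it commutes with every $x\in H_1^r(\ZZ)$ iff $S^m(x)=h^{-1}xh$ for all such $x$, i.e.\ iff $S^m$ equals conjugation by $h^{-1}$ on $H_1^r(\ZZ)$; commutation with $\gamma$ is then automatic, since $\gamma$ is central in $L$. Because an inner automorphism of $H_1^r(\ZZ)$ acts trivially on $H_1^r(\ZZ)/Z(H_1^r(\ZZ))\cong\ZZ^2$, the second condition in particular forces $\bar S^m=\id$ on $\ZZ^2$.

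Suppose first $q:=\ord(\bar S)\in\{2,3,4,6\}$. Then $\bar S^m=\id$ forces $q\mid m$, and the crucial point — which I expect to be the only genuine obstacle — is that $S^q=\id$ already as an automorphism of $H_1^r(\ZZ)$, not merely on the quotient. I would prove this by passing to the Malcev completion: $S$ extends to an automorphism of the real Heisenberg group $H$ which fixes the centre pointwise (as $S(\gamma)=\gamma$) and has linear part $\bar S$ on $H/Z(H)\cong\RR^2$. Every centre-fixing automorphism of $H$ with linear part $A\in\SL(2,\RR)$ has the form $\Phi_{A,\ell}\colon(v,z)\mapsto\big(Av,\,z+\ell(v)\big)$ for a linear functional $\ell$ on $\RR^2$, and a direct computation of the composition law gives $\Phi_{A,\ell}^{\,q}=\Phi_{A^q,\,\ell\circ(I+A+\dots+A^{q-1})}$. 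Since $\bar S$ has finite order $q\ge 2$ in $\SL(2,\ZZ)$, the eigenvalue $1$ does not occur (its characteristic polynomial is $(\lambda+1)^2$ for $q=2$ and the $q$-th cyclotomic polynomial for $q=3,4,6$); hence $I-\bar S$ is invertible and $(I-\bar S)\sum_{i=0}^{q-1}\bar S^i=I-\bar S^q=0$ gives $\sum_{i=0}^{q-1}\bar S^i=0$, so $S^q=\Phi_{I,0}=\id$. Consequently $q\mid m$ yields $S^m=\id$, the second centrality condition becomes $h\in Z(H_1^r(\ZZ))=\la\gamma\ra$, and $S(h)=h$ then holds automatically; therefore $Z(L)=\la\gamma,\delta^q\ra$.

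Now suppose $\bar S=I_2$, so $S(\alpha)=\alpha\gamma^k$, $S(\beta)=\beta\gamma^l$, $S(\gamma)=\gamma$, and thus $S^m(\alpha)=\alpha\gamma^{mk}$, $S^m(\beta)=\beta\gamma^{ml}$. Writing $h=\alpha^a\beta^b\gamma^c$, one computes $S(h)=\alpha^a\beta^b\gamma^{c+ka+lb}$, so the first condition reads $ka+lb=0$; and since conjugation by $h^{-1}$ sends $\alpha\mapsto\alpha\gamma^{rb}$ and $\beta\mapsto\beta\gamma^{-ra}$, the second condition reads $mk=rb$ and $ml=-ra$. For $m\neq0$ the first condition follows from the second, which is in turn equivalent to $r\mid mk$ and $r\mid ml$, i.e.\ to $\frac r{r_0}\mid m$ with $r_0=\gcd(k,l,r)$; writing $m=j\frac r{r_0}$ gives $b=j\frac k{r_0}$, $a=-j\frac l{r_0}$, with $c\in\ZZ$ arbitrary, while $m=0$ gives only the powers of $\gamma$. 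As these central elements form an abelian group, they are precisely the products $w^j\gamma^c$ with $w:=\alpha^{-l/r_0}\beta^{k/r_0}\delta^{r/r_0}$, hence $Z(L)=\la\gamma,\,\alpha^{-l/r_0}\beta^{k/r_0}\delta^{r/r_0}\ra$. Finally, $[L,L]$ is generated by the commutators $[\alpha,\beta]=\gamma^r$, $[\delta,\alpha]=S(\alpha)\alpha^{-1}=\gamma^k$, $[\delta,\beta]=\gamma^l$ (the remaining commutators of generators being trivial), so $[L,L]\supseteq\la\gamma^{\gcd(r,k,l)}\ra=\la\gamma^{r_0}\ra$; conversely $L/\la\gamma^{r_0}\ra$ is abelian, because each of these commutators is trivial modulo $\gamma^{r_0}$, so $[L,L]=\la\gamma^{r_0}\ra$.
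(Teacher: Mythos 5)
Your proof is correct, and its overall skeleton (translate centrality of $h\delta^m$ into the two conditions $S(h)=h$ and ``$S^m=$ conjugation by $h^{-1}$'', then analyse the two cases) matches the paper's; the $\bar S=I_2$ analysis is the same computation as in the paper (your relations $mk=rb$, $ml=-ra$ are exactly its $br=dk$, $dl=-ar$, and the divisibility argument giving $\tfrac r{r_0}\mid m$ agrees), and you in addition write out the proof that $[L,L]=\la\gamma^{r_0}\ra$, which the paper leaves to the reader. The genuinely different point is the key step for $q\in\{2,3,4,6\}$, namely that $S^q=\Id$ on $H_1^r(\ZZ)$ and not merely $\bar S^q=I_2$: the paper first normalises $\bar S$ to one of the matrices $S_2,\dots,S_6$ of Lemma~\ref{lmgen} and then verifies $S^q=\Id$ directly in each case, whereas you pass to the real Heisenberg group (Malcev completion), write the centre-fixing extension as $(v,z)\mapsto(\bar S v,\,z+\ell(v))$, and use that $1$ is not an eigenvalue of $\bar S$, so that $\sum_{i=0}^{q-1}\bar S^i=0$ and the $q$-th power is the identity. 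Your route is case-free and works for any $\bar S$ of order $q\ge 2$ in $\SL(2,\ZZ)$ without choosing normal forms, at the cost of invoking the (standard) unique extension of lattice automorphisms to the Malcev completion and the classification of centre-fixing automorphisms of the real Heisenberg group; the paper's argument stays entirely inside the discrete group but depends on the normal forms and a finite verification. You are also more explicit than the paper about the reverse inclusion $Z(L)\subseteq\la\gamma,\delta^q\ra$ (a central $h\delta^m$ forces $\bar S^m=I_2$, hence $q\mid m$, hence $S^m=\Id$ and $h\in\la\gamma\ra$), which the paper compresses into ``which proves the claim''.
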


\begin{proof}
First take $q\in\{2,3,4,6\}$. We~may assume that $\bar S$ is one of the maps $S_2$, $S_3$, $S_4$, $S_6$ and it is easy to check that not only $\bar S^q=I_2$ but also $S^q=\Id$, which proves the claim.

Now consider $\bar S=I_2$ and suppose that $l:=\alpha^a\beta^b\delta^d$, $d\not=0$, is in~$Z(L)$. Then
\begin{gather*}
\alpha=l\alpha l^{-1}=\alpha \gamma^{-br+dk}, \qquad
\beta=l\beta l^{-1}=\beta\gamma^{ar+dl},
\end{gather*}
thus $br=dk=n_1 \lcm(k,r)$ and $dl=-ar=n_2 \lcm(l,r)$ for suitable $n_1,n_2\in\ZZ$. This implies
\begin{gather*}
d= n_1\frac{\lcm(k,r)}k=n_2\frac{\lcm(l,r)}l
 = n_1 \frac r{\gcd(k,r)}= n_2\frac r{\gcd(l,r)}.
\end{gather*}
Hence $d$ is a multiple of lcm$\big( \frac r{\gcd(k,r)},\frac r{\gcd(l,r)}\big)=r/{r_0}$, from which the assertion easily follows.
\end{proof}

For each $r\in \NN_{>0}$, we~define discrete oscillator groups $L^i_r$, $i\in\{1,2,3,4,6\}$ and $L^{j,+}_r$, $j\in\{2,3,4\}$ by the data in the following table:
\begin{center}
{\renewcommand{\arraystretch}{1.5}
\begin{tabular}{l|c|c} \hline	
\multicolumn{1}{c|}{Group}		& $S(\alpha)$ 			& $S(\beta)$ 		\\
\hline
\ $L^1_r$ & $\alpha$ & $\beta$ \\ \hline
\ $L^2_r$ & $\alpha^{-1}$ & $\beta^{-1}$ \\ \hline
\ $L^{2,+}_r$,\quad $r$ even & $\alpha^{-1}\gamma$ & $\beta^{-1}\gamma^{-1}$ \\ \hline
\ $L^3_r$ & $\beta\gamma^r$ & $\beta^{-1}\alpha^{-1}$ \\ \hline
\ $L^{3,+}_r,\quad r\equiv 0\ {\rm mod}\ 3$ & $\beta\gamma^{r-1}$ & $\beta^{-1}\alpha^{-1}\gamma$ \\ \hline
\ $L^4_r$ & $\beta$ & $\alpha^{-1}$ \\ \hline
\ $L^{4,+}_r$,\quad $r$ even & $\beta\gamma$ & $\alpha^{-1}$ \\ \hline
\ $L^6_r$ & $\alpha\beta$ & $\alpha^{-1}$ \\ \hline
\end{tabular}}
\end{center}
Note that, compared to~\cite{F}, we~use a slightly different system of representatives to assure that~$L^{2}_r$,~$L^{2,+}_r$ and~$L^{3}_r$ are subgroups of~$L^{4}_r$, $L^{4,+}_r$ and $L^{6}_r$, respectively.

Clearly, if $r$ is odd, then $L^{2,+}_r\cong L^{2}_r$, $L^{4,+}_r\cong L^{4}_r$ and if $r\not\equiv 0$ mod $3$, then $L^{3,+}_r\cong L^{3}_r$.

We recall the classification of discrete oscillator groups from~\cite{F}. We~will present also the main ideas of a direct proof since the proof in~\cite{F} already uses the classification of lattices in~$\Osc_1$.

\begin{Proposition}[\cite{F}]\label{TF}
The groups $L^i_r$ and $L^{j,+}_r$ for $i\in\{1,2,3,4,6\}$, $j\in\{2,3,4\}$ and $r\in\NN_{>0}$ are pairwise non-isomorphic discrete oscillator groups.
Conversely, each discrete oscillator group is isomorphic to one of these groups.

More exactly, let $L:=H_1^r(\ZZ)\rtimes_S \ZZ \delta$ be a discrete oscillator group with $S(\alpha)=\bar S(\alpha)\gamma^k$, $S(\beta)=\bar S(\beta)\gamma^l$, where we already assume that $\bar S$ is one of the maps listed in~\eqref{Sq}$:$
\benur
\item if $\bar S=I_2$, then $L$ is isomorphic to $L^1_{r_0}$ for $r_0=\gcd(r,k,l)$,
\item for $\bar S=-I_2$, we~have
$L\cong \begin{cases}
L^2_r, & \text{if $r$ is odd or if $k$ and $l$ are even},
\\[0.5ex]
L^{2,+}_r, & \text{else},
\end{cases}$
\item for $\bar S=S_3$, we~have
$L\cong \begin{cases} L^3_r, & \text{if $3\nmid r$ or if $k\equiv l\ {\rm mod}\ 3$},
\\[0.5ex]
L^{3,+}_r, & \text{else},
\end{cases}$
\item for $\bar S=S_4$, we~have
$L\cong \begin{cases} L^4_r, & \text{if $r$ is odd or if $k\equiv l\ {\rm mod}\ 2$},
\\[0.5ex]
L^{4,+}_r, & \text{else},
\end{cases}$
\item if $\bar S=S_6$, then $L\cong L^6_{r}$.
\end{enumerate}
\end{Proposition}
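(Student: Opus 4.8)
The plan is to split the statement into its two halves: \emph{pairwise non-isomorphy} of the listed groups, and \emph{exhaustion} (every discrete oscillator group is one of them, with the explicit dictionary). For non-isomorphy one must extract invariants of an abstract discrete oscillator group $L$. The number $r$ and the order $q=\ord(\bar S)$ are invariants: for $q\ne1$ the Fitting subgroup of $L$ equals $H_1^r(\ZZ)\la\delta^q\ra$ (one checks that $S^q=\id$, so this subgroup is nilpotent and normal, and that nothing larger is), hence $q=[L:\mathrm{Fit}(L)]$ and $r$ is recovered from the torsion of $Z(\mathrm{Fit}(L))/[\mathrm{Fit}(L),\mathrm{Fit}(L)]\cong\ZZ/r\times\ZZ$ together with the quoted fact that $H_1^r(\ZZ)$ determines $r$; for $q=1$, $L$ is $2$-step nilpotent, $[L,L]=\la\gamma^{r_0}\ra$ by the preceding lemma, and $r_0=\gcd(r,k,l)$ is the order of the torsion of $L^{\rm ab}$. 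Granting the normal form produced below, the remaining invariant distinguishing $L^q_r$ from $L^{q,+}_r$ is a residue class which a short computation shows cannot be removed, and this finishes non-isomorphy.

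For exhaustion, start from Lemma~\ref{lmgen}: we may assume $\bar S\in\{S_1,\dots,S_6\}$ and $S(\alpha)=\bar S(\alpha)\gamma^k$, $S(\beta)=\bar S(\beta)\gamma^l$ with $k,l\in\ZZ$; moreover $S(\gamma)=\gamma$ is forced, since $S$ preserves $Z(H_1^r(\ZZ))=\la\gamma\ra$ and $S(\gamma^r)=[S\alpha,S\beta]=\gamma^{r\det\bar S}=\gamma^r$. Thus $L$ is determined by $(r,\bar S,k,l)$ and the task is to normalise the shift $(k,l)\in\ZZ^2$. Three moves change $(k,l)$ without changing the isomorphism type: \textbf{(i)} replacing the complement $\ZZ\delta$ by $\ZZ(\delta\alpha^a\beta^b)$, which by a Heisenberg computation sends $(k,l)\mapsto(k-br,\,l+ar)$ and hence lets us add an arbitrary element of $r\ZZ^2$; \textbf{(ii)} rescaling generators $\alpha\mapsto\alpha\gamma^s$, $\beta\mapsto\beta\gamma^t$, which adds to $(k,l)$ an element of $\im(\bar S^{\,\top}-I_2)$ (up to an overall sign); \textbf{(iii)} changing the $\ZZ^2$-basis by an element of the centraliser of $\bar S$ in $\GL(2,\ZZ)$ (extended to an automorphism fixing $\delta$), and possibly $\delta\mapsto\delta^{-1}$ followed by re-standardising $\bar S^{-1}$. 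The key arithmetic is $|\det(\bar S-I_2)|=4,3,2,1$ for $\bar S=S_2,S_3,S_4,S_6$, so $\im(\bar S^{\,\top}-I_2)$ has index $4,3,2,1$ in $\ZZ^2$ and contains $4\ZZ^2,\,3\ZZ^2,\,2\ZZ^2,\,\ZZ^2$ respectively.

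Now the case analysis. If $\bar S=I_2$, moves (i)--(iii) (equivalently: take the central element $\alpha^a\beta^b\delta^d$ of the preceding lemma as a new $\ZZ$-generator and rechoose Heisenberg generators) reduce $S$ to the trivial action on a Heisenberg group with parameter $\gcd(r,k,l)=r_0$, so $L\cong L^1_{r_0}$. If $\bar S=S_6$, then $\im(\bar S^{\,\top}-I_2)=\ZZ^2$, so move (ii) alone kills the shift and $L\cong L^6_r$. If $\bar S\in\{S_2,S_3,S_4\}$, let $p=2$ for $q\in\{2,4\}$ and $p=3$ for $q=3$: when $p\nmid r$ we have $\im(\bar S^{\,\top}-I_2)+r\ZZ^2=\ZZ^2$, so (i)+(ii) kill the shift and $L\cong L^q_r$; when $p\mid r$ the residual obstruction is the class of $(k,l)$ in $\ZZ^2/\im(\bar S^{\,\top}-I_2)$, which is $\ZZ_2$ for $q=4$, $\ZZ_3$ for $q=3$ and $\ZZ_2\times\ZZ_2$ for $q=2$, on which move (iii) acts through $\{\pm1\}$ (for $q\in\{3,4\}$) or $\GL(2,\ZZ_2)$ (for $q=2$) with exactly two orbits, the zero class and its complement. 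The zero class gives $L\cong L^q_r$ and the other gives $L\cong L^{q,+}_r$; unwinding ``$(k,l)\in\im(\bar S^{\,\top}-I_2)$'' into congruences yields ``$k,l$ even'' for $q=2$, ``$k\equiv l\bmod 3$'' for $q=3$, ``$k\equiv l\bmod 2$'' for $q=4$ — exactly the dichotomies in the statement.

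The step I expect to be most delicate is (ii)--(iii): because the Heisenberg relations are non-abelian, a general change of generators transforms $(k,l)$ affinely with correction terms that are multiples of $r$, and one must bookkeep these carefully to see that (a) move (ii) really contributes $\im(\bar S^{\,\top}-I_2)$, and (b) when $p\mid r$ these corrections vanish modulo $p$, so that move (iii) acts on the residual quotient exactly through $\pm1$ (resp.\ $\GL(2,\ZZ_2)$) and no finer — this is what guarantees there are only the two families $L^q_r,L^{q,+}_r$ for each $q$ and no further ones. Finally one has to check that the specific representatives chosen in the table (arranged so that $L^2_r\le L^4_r$, $L^{2,+}_r\le L^{4,+}_r$, $L^3_r\le L^6_r$) are consistent with these normal forms, which is a finite verification.
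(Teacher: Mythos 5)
Your exhaustion argument is essentially the route the paper takes: the paper's maps $T_1,\dots,T_4$ in the case $\bar S=S_2$ are precisely instances of your moves (ii), (iii), (iii)$+$(i) and (i), and your packaging of move (ii) as translation by $\im\big(\bar S^{\,\top}-I_2\big)$ and move (i) as translation by $r\ZZ^2$, together with $|\det(\bar S-I_2)|=4,3,2,1$, reproduces the congruence dichotomies ``$k,l$ even'', ``$k\equiv l \bmod 3$'', ``$k\equiv l\bmod 2$'' of the statement; also the re-splitting you mention parenthetically for $\bar S=I_2$ (taking the central element $\alpha^a\beta^b\delta^d$ as the new $\ZZ$-generator) is exactly the paper's construction, and it is genuinely needed there, since moves (i)--(iii) preserve $r$ while $L^1_{r_0}$ has Heisenberg parameter $r_0=\gcd(r,k,l)$, not $r$. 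Up to the bookkeeping you yourself flag, this half is fine.

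The genuine gap is in the non-isomorphy of $L^q_r$ and $L^{q,+}_r$ for fixed $q\in\{2,3,4\}$ and $q_0\mid r$. You assert that the residue class of $(k,l)$ in $\ZZ^2/\im\big(\bar S^{\,\top}-I_2\big)$ ``cannot be removed'', but everything you prove about it concerns only the moves (i)--(iii): that they do not connect the zero class to the nonzero classes shows there are \emph{at most} two normal forms per $(q,r)$, not that the two resulting groups are non-isomorphic, because an abstract isomorphism need not be induced by your moves. To turn the residue class into an invariant of the abstract group you would have to show that every isomorphism respects the data up to your moves -- for instance by proving that $H_1^r(\ZZ)$ is characteristic in $L$ when $\ord(\bar S)\neq 1$ (it is: it is the preimage of the torsion of $\mathrm{Fit}(L)/[L,\mathrm{Fit}(L)]$), and then that the induced change of $(k,l)$ lies in the group generated by (i)--(iii) -- but you neither state nor prove any of this. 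The paper avoids the issue entirely by computing an honest invariant: the abelianisation, which is $\ZZ_2\times\ZZ_2\times\ZZ_r\times\ZZ$ for $L^2_r$ but $\ZZ_2\times\ZZ_{2r}\times\ZZ$ for $L^{2,+}_r$ when $r$ is even, with analogous computations for $q=3,4$ (cf.\ Corollary~\ref{co:iso}). Your Fitting-subgroup recovery of $q$ and $r$ (and of $r_0$ from $L^{ab}$ when $q=1$) is a valid alternative for separating the different $(q,r)$, but as written the $L^q_r$ versus $L^{q,+}_r$ distinction is asserted, not proved, and this is exactly the step the paper's abelianisation argument is there to supply.
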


\begin{proof}
Assume first that $\bar S=I_2$, i.e., $S(\alpha)=\alpha\gamma^k$ and $S(\beta)=\beta\gamma^l$. We~put again~$a=-l/r_0$, $b=k/r_0$, $d=r/r_0$. Then $\gcd(a,b,d)=1$. Hence we can find elements $(m_1,m_2,m)$, $(n_1,n_2,n)$ $\in\ZZ^3$ such that
\begin{gather}\label{Edet}
\det \begin{pmatrix} m_1&m_2&m\\ n_1&n_2&n\\ a&b&d \end{pmatrix} =1.
\end{gather}
We define a map from the set of generators $\{\alpha,\beta,\gamma,\delta\}$ of~$L_{r_0}^1$ into $L$ by
\begin{gather*}
\alpha\longmapsto \alpha^{m_1}\beta^{m_2}\delta^m,\qquad
\beta\longmapsto \alpha^{n_1}\beta^{n_2}\delta^n,\qquad
\gamma\longmapsto\gamma,\qquad
\delta\longmapsto \alpha^a\beta^b\delta^d \in Z(L).
\end{gather*}
This map extends to a homomorphism from $L_{r_0}^1$ to $L$ since
$[\alpha^{m_1}\beta^{m_2}\delta^m,\alpha^{n_1}\beta^{n_2}\delta^n]=\gamma^N$ with
\begin{gather*}
N=\det\begin{pmatrix} m_1&m_2&m\\ n_1&n_2&n\\ -l&k&r \end{pmatrix} =r_0
\end{gather*}
by~\eqref{Edet}. Obviously, this homomorphism is bijective.

Now suppose $\bar S=S_2$. Let~$(k,l)$ and $(k',l')$ be pairs of integers and let $S$ and $S'$ be automorphisms of~$H^1_r(\ZZ)$ defined by $S(\alpha)=S_2(\alpha)\gamma^k$, $S(\beta)=S_2(\beta)\gamma^l$ and $S'(\alpha)=S_2(\alpha)\gamma^{k'}$, $S(\beta)=S_2(\beta)\gamma^{l'}$, respectively.
We define maps $T_1,\dots,T_4$ from the set of generators of $H^1_r(\ZZ)\rtimes_S \delta \ZZ$ into $H^1_r(\ZZ)\rtimes_{S'} \delta \ZZ$ by
\begin{gather*}
T_1\colon\quad T_1(\alpha)=\alpha \gamma,\qquad T_1(\beta)=\beta,\qquad \phantom{{}^{-1}}T_1(\gamma)=\gamma,\qquad T_1(\delta)=\delta,
\\
T_2\colon\quad T_2(\alpha)=\beta,\qquad \phantom{\alpha}T_2(\beta)=\alpha^{-1},\qquad T_2(\gamma)=\gamma,\qquad T_2(\delta)=\delta,
\\
T_3\colon\quad T_3(\alpha)=\alpha\beta,\qquad T_3(\beta)=\beta,\qquad \phantom{{}^{-1}}T_3(\gamma)=\gamma,\qquad T_3(\delta)=\delta\beta,
\\
T_4\colon\quad T_4(\alpha)=\alpha,\qquad \phantom{\alpha}T_4(\beta)=\beta,\qquad \phantom{{}^{-1}}T_4(\gamma)=\gamma,\qquad T_4(\delta)=\delta\beta.
\end{gather*}
Then $T_1$ extends to an isomorphism if and only if $(k',l')=(k-2,l)$, $T_2$ extends to an isomorphism if and only if $(k',l')=(-l,k)$, $T_3$ extends if and only if $(k',l')=(k-l,l)$ and $T_4$ if and only if $(k',l')=(k+r,l)$. By~composing these isomorphisms we see that $L$ is isomorphic to $L^2_r$ or~to~$L^{2,+}_r$ under the conditions indicated in $(ii)$. It~remains to show that $L_r^2$ and $L_r^{2,+}$ are not isomorphic for a fixed even $r$. This follows from the fact that the abelianisation~$L/[L,L]$ of~$L$ is isomorphic to $\ZZ_2\times\ZZ_{2r}\times\ZZ$ if $L=L^{2,+}_{r}$ and to $\ZZ_2\times\ZZ_2\times\ZZ_r\times\ZZ$ if $L=L^2_{r}$.

The assertions $(iii)$--$(v)$ are proven in an analogous way.
\end{proof}

\begin{Corollary}\label{co:iso}
Let $L:=H_1^r(\ZZ)\rtimes_S \ZZ \delta$ be a discrete oscillator group.
Consider the abelianisation~$L^{ab}=L/[L,L]$.
The lattice $L$ is isomorphic to $L^1_{r_0}$ if and only if $L^{ab}\cong\ZZ_{r_0}\times \ZZ^3$ and it is isomorphic to $L^6_r$ if and only if $L^{ab}\cong \ZZ_r\times\ZZ$.

Furthermore, $\ord\big(\bar S\big)=2$ holds if and only if $L^{ab}\cong\ZZ_2\times\ZZ_2\times\ZZ_r\times\ZZ$ or $L^{ab}\cong\ZZ_2\times\ZZ_{2r}\times\ZZ$. If~$r$ is odd, the latter two groups are isomorphic and $L\cong L^2_r$. If~$r$ is even, then
\begin{gather*}
L\cong \begin{cases}
L^{2,+}_{r}, & \text{if}\quad L^{ab}\cong\ZZ_2\times\ZZ_{2r}\times\ZZ,
\\
L^2_{r}, & \text{if}\quad L^{ab}\cong\ZZ_2\times\ZZ_2\times\ZZ_r\times\ZZ.
\end{cases}
\end{gather*}
We have $\ord\big(\bar S\big)=3$ if and only if
$L^{ab}\cong\ZZ_{3r}\times\ZZ$ or
$L^{ab}\cong\ZZ_3\times\ZZ_r\times\ZZ$. If~$3\nmid r$, then these groups are isomorphic and $L\cong L^3_r$. If~$3\mid r$, then
\begin{gather*}
L\cong\begin{cases}
L^{3,+}_{r}, & \text{if}\quad L^{ab}\cong\ZZ_{3r}\times\ZZ,
\\
L^3_{r}, & \text{if}\quad L^{ab}\cong\ZZ_3\times\ZZ_r\times\ZZ.
\end{cases}
\end{gather*}
Finally, $\ord\big(\bar S\big)=4$ holds if and only if
$L^{ab}\cong\ZZ_{2r}\times\ZZ$ or $L^{ab}\cong\ZZ_2\times\ZZ_r\times\ZZ$. As~above, these groups are isomorphic for odd $r$ and
$L\cong L^4_r$. If~$r$ is even, then
\begin{gather*}
L\cong \begin{cases}
L^{4,+}_{r}, & \text{if}\quad L^{ab}\cong\ZZ_{2r}\times\ZZ,
\\
L^4_{r}, & \text{if}\quad L^{ab}\cong\ZZ_2\times\ZZ_r\times\ZZ.
\end{cases}
\end{gather*}
\end{Corollary}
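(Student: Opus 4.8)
The plan is to compute the abelianisation $L^{ab}$ of each of the eight families of groups $L^i_r$ and $L^{j,+}_r$ listed in Proposition~\ref{TF}, and then to deduce the corollary from these values together with Proposition~\ref{TF} itself.

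For $L=H_1^r(\ZZ)\rtimes_S\ZZ\delta$ with $S(\alpha)=\bar S(\alpha)\gamma^k$, $S(\beta)=\bar S(\beta)\gamma^l$ and $S(\gamma)=\gamma$, abelianising the presentation is immediate: $[\alpha,\beta]=\gamma^r$ becomes $r\gamma=0$, the conjugation relations $\delta\alpha\delta^{-1}=S(\alpha)$ and $\delta\beta\delta^{-1}=S(\beta)$ become two linear relations among $\alpha,\beta,\gamma$ encoded by $\id-\bar S$ and the column $(k,l)^{\top}$, and $\delta$ remains a free generator. Hence $L^{ab}\cong\ZZ\oplus\Coker A$ for an explicit integer $3\times3$ matrix $A$ (in the basis $\alpha,\beta,\gamma$) with $\det A=r\,\det(\id-\bar S)$. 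Substituting the data of the table defining $L^i_r$, $L^{j,+}_r$ and reducing $A$ to Smith normal form in the eight cases gives the abelianisations: $\ZZ_{r_0}\times\ZZ^3$ for $L^1_{r_0}$; $\ZZ_2\times\ZZ_2\times\ZZ_r\times\ZZ$ for $L^2_r$; $\ZZ_2\times\ZZ_{2r}\times\ZZ$ for $L^{2,+}_r$; $\ZZ_3\times\ZZ_r\times\ZZ$ for $L^3_r$; $\ZZ_{3r}\times\ZZ$ for $L^{3,+}_r$; $\ZZ_2\times\ZZ_r\times\ZZ$ for $L^4_r$; $\ZZ_{2r}\times\ZZ$ for $L^{4,+}_r$; and $\ZZ_r\times\ZZ$ for $L^6_r$.

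To deduce the corollary, observe that since $\bar S$ is conjugate to a rotation of order $q:=\ord(\bar S)\in\{1,2,3,4,6\}$ one has $\det(\id-\bar S)=2-2\cos(2\pi/q)$, which equals $0,4,3,2,1$ for $q=1,2,3,4,6$ respectively. Thus $L^{ab}$ has torsion-free rank $3$ precisely when $q=1$, and rank $1$ when $q\ge2$; in the latter case its torsion subgroup has order $r\,\det(\id-\bar S)\in\{4r,3r,2r,r\}$, which, since $r\ge1$, determines $q$. Every ``only if'' direction of the corollary is now read off the list above. For the converse directions: if $L^{ab}\cong\ZZ_{r_0}\times\ZZ^3$ then $q=1$, so $L\cong L^1_{r'}$ for some $r'$ by Proposition~\ref{TF}$(i)$, and comparison of torsion subgroups forces $r'=r_0$; if $L^{ab}\cong\ZZ_r\times\ZZ$ then $q\ge2$ with torsion of order $r$, forcing $q=6$, so $L\cong L^6_r$ by Proposition~\ref{TF}$(v)$. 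For $q\in\{2,3,4\}$ the torsion order again identifies $q$, whence $L$ is $L^q_r$ or $L^{q,+}_r$ by Proposition~\ref{TF}; and the two candidates are distinguished by the isomorphism type of the torsion subgroup, since for instance $\ZZ_2\times\ZZ_2\times\ZZ_r\cong\ZZ_2\times\ZZ_{2r}$ holds exactly when $r$ is odd --- in which case $L^2_r\cong L^{2,+}_r$ anyway --- and fails when $r$ is even; the cases $q=3$ (according to whether $3\mid r$) and $q=4$ (according to the parity of $r$) run in the same way.

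The argument has no genuine obstacle; the only work is carrying out the eight Smith-normal-form computations, the mildly delicate point being that $S(\alpha)$ and $S(\beta)$ may involve $\gamma$, so $A$ genuinely couples $\gamma$ with $\alpha,\beta$ rather than splitting as a block matrix. One must also keep in mind that $L^{ab}$ alone does not separate all eight families across different parameters --- e.g., the abelianisation of $L^4_{2m}$ for odd $m$ agrees with that of $L^2_m$ --- so in the converse directions one uses the data fixed in the statement (the parameter $r$ of $H_1^r(\ZZ)$ and $q=\ord(\bar S)$), together with the fact, read off Proposition~\ref{TF}, that for $q\ge2$ the representative isomorphic to $L$ has the same parameter $r$, rather than the abelianisation in isolation.
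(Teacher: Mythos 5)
Your proposal is correct and follows essentially the route the paper intends: the Corollary is stated there without a separate proof, as a consequence of Proposition~\ref{TF} together with the abelianisation computations already used in its proof (the paper records the $q=2$ cases $\ZZ_2\times\ZZ_2\times\ZZ_r\times\ZZ$ and $\ZZ_2\times\ZZ_{2r}\times\ZZ$ explicitly and treats the other orders analogously). Your additional bookkeeping --- $\det(\id-\bar S)\in\{0,4,3,2,1\}$ determining $q$ from the torsion order $r\det(\id-\bar S)$ and the rank, with $r$ fixed by $L\cap H$ --- is exactly the observation needed to make the converse directions precise.
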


\subsection[Classification of lattices up to inner automorphisms of~$\Osc_1$]
{Classification of lattices up to inner automorphisms of~$\boldsymbol{\Osc_1}$}
\label{S42}
Let $L$ be a lattice in~$\Osc_1$. The intersection of~$L$ with the Heisenberg group is a lattice in the Heisenberg group (see~\cite[Corollary~3.5]{Ra72}). In~particular, it~is a discrete Heisenberg group $H_1^r(\ZZ)$ for some unique $r\in\NN_{>0}$. Hence $L$ considered as an abstract group is isomorphic to exactly one of the discrete oscillator groups listed in Proposition~\ref{TF}.
The projection of~$L\subset\Osc_1=H\times\RR$ to the second factor is generated by a unique real number $\lambda$, where
\begin{gather*}
\lambda\in \pi\cdot \NN_{>0}\qquad \text{or}\qquad \lambda= \lambda_0+2\pi\ZZ,\qquad \lambda_0\in \{\pi/3, \pi/2, 2\pi/3\}.
\end{gather*}
We define
\begin{gather*}
\ord(\lambda):=\{\text{smallest positive integer $q$ such that } q\lambda\in 2\pi\ZZ\}
\in\{1,2,3,4,6\}.
\end{gather*}
The order of~$\lambda$ coincides with the order of~$\bar S$ in the abstract discrete oscillator group~$L$. In~particular, $L$ is isomorphic to $L^q_r$ or $L^{q,+}_r$ as an abstract group. We~will say that $L$ is of type $L^q_r$ or~$L^{q,+}_r$, respectively. Furthermore, we~define
\begin{gather}\label{Drlambda}
r(L):=r,\qquad\lambda(L):=\lambda.
\end{gather}

For fixed $r$ and $q$, there are different lattices in~$\Osc_1$ (up to automorphisms of~$\Osc_1$) of type~$L^q_r$ and $L^{q,+}_r$. The aim of this subsection is to give a classification of all lattices in~$\Osc_1$ up to inner automorphisms of~$\Osc_1$. The problem will be reduced to the classification of lattices with the additional property to be normalised and unshifted, which we will explain in the following.
\begin{Definition} Let $L\subset \Osc_1$ be a lattice in~$\Osc_1$. Let~$\bar L_0$ denote the projection of~$L_0:=L\cap H$ to $H/Z(H)\cong \RR^2$. The lattice $L$ is called normalised if $\bar L_0$ is a normalised lattice in~$\RR^2$, i.e., if~$\bar L_0\subset \RR^2$ has covolume one with respect to the standard metric of~$\RR^2$.
\end{Definition}
\begin{Remark}\quad
\benur
\item A lattice $L$ in~$\Osc_1$ is normalised if and only if the commutator subgroup $[L\cap H,L\cap H]$ is generated by $(0,1,0)\in\Osc_1$.
\item
If $L$ is a normalised lattice in~$\Osc_1$, so is $F_S(L)$, $F_\eta(L)$ and $F_u(L)$ for all $S\in\GL(2,\RR)$ with $\det S=\pm 1$, for all $\eta\in \RR^2$ and $u\in\RR$.
\end{enumerate}
\end{Remark}

\begin{Definition}\label{def:adapted}
Generators $\alpha,\beta,\gamma,\delta$ of~$L$ will be called adapted if
the projection of~$\delta\in \Osc_1=H\times \RR$ to the second factor equals $\lambda:=\lambda(L)$,
if $\alpha$, $\beta$, $\gamma$ of~$L\cap H$ satisfy the condition in Lemma~\ref{lmgen} and
if the projection of~$\alpha$ and $\beta$ to $H/Z(H)\cong\RR^2$ is a positively oriented basis.
\end{Definition}

Every lattice has adapted generators. Indeed, first we choose a suitable~$\delta$. Now we choose $\alpha$, $\beta$, $\gamma$ according to Lemma~\ref{lmgen}. If~$\ord(\lambda)\in\{3,4,6\}$, then the basis $\bar \alpha$, $\bar\beta$ of~$\RR^2$ is positively oriented. If~$\ord(\lambda)\in\{1,2\}$ and $\bar \alpha$, $\bar \beta$ is negatively oriented, then we replace $\beta$ by $\beta^{-1}$ and $\gamma$ by~$\gamma^{-1}$ to obtain adapted generators.

Clearly, if $L$ is normalised, then $\gamma=(0,1/r(L),0)$.
\begin{Definition}\label{de:inv}
Let $L$ be a normalised lattice. We~will associate with $L$ numbers $s_0\in\NN_{>0}$ and $\bs_L \in \RR/ \frac1{s_0 r}\ZZ $, where $r=r(L)$. We~choose adapted generators
\begin{gather}\label{eq:generators}
\alpha=(\bar\alpha,z_\alpha,0),\qquad
\beta=(\bar\beta,z_\beta,0),\qquad
\gamma=(0,1/r,0),\qquad
\delta=(x_\delta\bar\alpha+y_\delta\bar\beta,z_\delta,\lambda)
\end{gather}
and define $a,b\in \frac12 \ZZ$ and $v,w\in\RR$ by
\begin{gather}\label{abvw}
(a,b)=\begin{cases} \big(1,\frac12\big),& 2\nmid r, \quad q=3,
\\
\big(\frac12,0\big),& 2\nmid r,\quad q=6,
\\
(0,0),& \text{else},
\end{cases}
\qquad
%\left(\begin{matrix}v\\w\end{matrix}\right)
{v\choose w}=r\begin{pmatrix}x_\delta\\y_\delta\end{pmatrix}
-r(S_q-I_2)\begin{pmatrix}-z_\beta\\z_\alpha\end{pmatrix}\!.
\end{gather}
\begin{enumerate}\itemsep=0pt
\item If $q=1$ and $L$ is of type $L^1_{r_0}$, then we set $s_0=r/r_0=r/\gcd(v,w,r)$ and
\begin{gather*}
\bs_L=z_\delta-\frac{1}{2}vw-\frac{1}{r}\,\omega
\left(\!\begin{pmatrix}v\\w\end{pmatrix}\!,\begin{pmatrix}-z_\beta\\z_\alpha\end{pmatrix}\!\right)
\ {\rm mod}\ \frac{1}{s_0r}\ZZ.
\end{gather*}
\item For $q\in\{2,3,4,6\}$ denote by $q_0\in\{2,3\}$ the smallest prime factor of~$q$ and by $\tilde r=\rem_{q_0}(r)\in\{0,1,\dots,q_0-1\}$ the remainder after dividing $r$ by $q_0$.
\begin{enumerate}\itemsep=0pt
\item[$(a)$] If $L$ is of type $L^{q,+}_r$ then we set
\begin{gather*}
s_0=\begin{cases}
1,& \text{if}\quad r\in 2+4\ZZ \quad\text{and}\quad q=4,
\\
q_0,& \text{else},
\end{cases}
\end{gather*}
and
\begin{gather*}
\bs_L=z_\delta -\frac{1}{2r}\,\omega\left(\!\begin{pmatrix}v\\w\end{pmatrix}\!, (S_q+I_2)\begin{pmatrix}-z_\beta\\z_\alpha\end{pmatrix}\!\right)
-\frac{1}{2}\,\omega\left(S_q\begin{pmatrix}-z_\beta\\z_\alpha\end{pmatrix}\!, \begin{pmatrix}-z_\beta\\z_\alpha\end{pmatrix}\!\right)
\\ \hphantom{\bs_L=}
{}-z_0\ {\rm mod}\ \frac{1}{s_0r}\ZZ,
\end{gather*}
where $z_0$ is the real number uniquely defined by
\begin{gather*}
\bigg(\frac{v}{r}\bar\alpha+\frac{w}{r}\bar\beta,z_0,\lambda\bigg)^q=(0,0,q\lambda).
\end{gather*}
\item[$(b)$] If $L$ is of type $L^q_r$ then we set
\begin{gather*}
s_0=\begin{cases}
q_0,& \text{if}\quad r\in 2+4\ZZ\quad\text{and}\quad q=4,
\\
1,& \text{else},
\end{cases}
\end{gather*}
and
\begin{gather*}
\bs_L=z_\delta
 -\frac{1}{2r}\,\omega\left(\!\begin{pmatrix}v\\w\end{pmatrix}\!, (S_q+I_2)\begin{pmatrix}-z_\beta\\z_\alpha\end{pmatrix}\!\right)
-\frac{1}{2}\,\omega\left(S_q\begin{pmatrix}-z_\beta\\z_\alpha\end{pmatrix}\!, \begin{pmatrix}-z_\beta\\z_\alpha\end{pmatrix}\!\right)-z_0
\\[0.5ex] \hphantom{\bs_L=}
{}-\frac{1}{2}(v-a)(w-b)\tilde r^2
-\frac{\tilde r}{2r}\omega\left(\!\begin{pmatrix}v\\w\end{pmatrix}\!, \begin{pmatrix}a\\b\end{pmatrix}\!\right)\ {\rm mod}\ \frac{1}{s_0r}\ZZ,
\end{gather*}
where $z_0\in\RR$ is uniquely defined by
\begin{gather*}
\bigg(\bigg(\frac vr-\tilde rv+\tilde ra\bigg)\bar\alpha+\bigg(\frac{w}{r}-\tilde rw+\tilde rb\bigg)\bar\beta,z_0,\lambda\bigg)^q=(0,0,q\lambda).
\end{gather*}
\end{enumerate}
\end{enumerate}
\end{Definition}

\begin{Remark}\label{rm4}\quad
\benur
\item The number $\bs_L$ is well-defined, see Proposition~\ref{scorrect}.
 \item %\label{rm4ii}
 Note that $v\in\ZZ+a$, $w\in\ZZ+b$. Indeed, if $S_q=(s_{ij})_{i,j=1,2}$, then we can define inte\-gers~$k$,~$l$ by $\delta \alpha\delta^{-1}=\alpha^{s_{11}}\beta^{s_{21}}\gamma^k$ and $\delta \beta\delta^{-1}=\alpha^{s_{12}}\beta^{s_{22}}\gamma^l$. By~\eqref{eq:generators}, we~have
\begin{gather}\label{klvw}
 {v\choose w}=\frac r2 S_q \begin{pmatrix}\phantom{-}s_{12}\cdot s_{22}\\[0.5ex] -s_{11}\cdot s_{21}\end{pmatrix} -S_q{-l\choose k},
\end{gather}
which proves the claim.
\item%\label{rm43}
One easily computes that $z_0=0$ for $q=2$ and
\begin{gather*}
z_0= -\frac{c}{2qr^2}\, \omega\left(\!{v \choose w},S_q{v \choose w}\!\right)
\end{gather*}
for $L=L^{q,+}_r$, $q\in\{3,4,6\}$, and
\begin{gather*}
z_0=-\frac{c}{2qr^2}\, \omega\left(\!{v-r\tilde rv+r\tilde ra\choose w-r\tilde rw+r\tilde rb},S_q{v-r\tilde rv+r\tilde ra\choose w-r\tilde rw+r\tilde rb}\!\right)
\end{gather*}
for $L=L^{q}_r$, $q\in\{3,4,6\}$, where $c=1$ if $q=3$, $c=2$ if $q=4$ and $c=6$ if $q=6$.
\item If $L$ is a normalised lattice, then $\bs_{F_u(L)}=\bs_L+u\cdot\lambda(L)\ {\rm mod}\ \frac1{s_0r}\ZZ$, where $s_0$ is defined as in Definition~\ref{de:inv}.
\end{enumerate}
\end{Remark}
\begin{Definition} A normalised lattice $L\subset\Osc_1$ is called unshifted if $\bs_L=0$.
\end{Definition}
\begin{Proposition}\label{scorrect}
For a given lattice $L$, the number $\bs_L$ is independent of the chosen adapted generators. It~is invariant under inner automorphisms of~$\Osc_1$, i.e., $\bs_{F(L)}=\bs_L$ for every inner automorphism $F$ of~$\Osc_1$.
\end{Proposition}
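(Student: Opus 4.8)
The plan is to verify directly that the formula for $\bs_L$ in Definition~\ref{de:inv} gives the same value no matter which adapted generators we pick, and then to observe that inner automorphisms fix the relevant data. For the first part, I would start by describing how two systems of adapted generators of a normalised lattice $L$ are related. Given the fixed splitting $\Osc_1 = H\times\RR$ and the normalisation $\gamma=(0,1/r,0)$, any two choices of $\delta$ differ by an element of $L\cap H$ together with the ambiguity $\delta\mapsto\delta\gamma^n$, and any two choices of the pair $\alpha,\beta$ differ by an element of $\SL(2,\ZZ)$ acting on the positively oriented basis $\bar\alpha,\bar\beta$, combined with central shifts $\alpha\mapsto\alpha\gamma^{m_1}$, $\beta\mapsto\beta\gamma^{m_2}$. (When $\ord(\lambda)\in\{3,4,6\}$ the change-of-basis matrix is moreover constrained to commute with $S_q$ up to the prescribed conjugacy, and one should record which matrices occur — this is where the cases $L^q_r$ versus $L^{q,+}_r$ enter.) So the task reduces to a finite list of elementary transformations: central shifts of $\alpha$, $\beta$, $\delta$; the $\SL(2,\ZZ)$-moves on $\bar\alpha,\bar\beta$; and $\delta\mapsto\delta\cdot h$ for $h\in L\cap H$.

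The core computation is then to check that each such elementary transformation changes the quantity
\[
z_\delta-\tfrac12 vw-\tfrac1r\,\omega\!\left(\tbinom{v}{w},\tbinom{-z_\beta}{z_\alpha}\right)
\]
(and the analogous expressions in the other cases) only by an element of $\tfrac1{s_0 r}\ZZ$. For the central shift $\delta\mapsto\delta\gamma^n$ one gets $z_\delta\mapsto z_\delta+n/r$, and $(v,w)$ is unchanged, so the shift is by $n/r\in\tfrac1{s_0r}\ZZ$. For $\delta\mapsto\delta\alpha^p\beta^s$ one must expand the product in $\Osc_1$ using the explicit multiplication law, track how $z_\delta$, $x_\delta$, $y_\delta$ and hence $(v,w)$ transform, and verify that the combination above changes by an integer multiple of $1/(s_0r)$ — the cross terms from $\omega$ are designed to cancel against the change in $\tfrac12 vw$ plus the contribution of $z_\alpha,z_\beta$. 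The $\SL(2,\ZZ)$-moves are handled by checking invariance under the generators $\binom{1\ 1}{0\ 1}$ and $\binom{0\ -1}{1\ 0}$ (respecting orientation); here one uses that $\omega$ is $\SL(2,\RR)$-invariant, so most terms are manifestly unchanged and only the discrete correction terms $-\tfrac12(v-a)(w-b)\tilde r^2-\tfrac{\tilde r}{2r}\omega(\tbinom vw,\tbinom ab)$ need separate bookkeeping. This bookkeeping — keeping the $q$-dependent correction terms and the definition of $z_0$ consistent under all moves simultaneously — is the main obstacle; it is purely computational but delicate, and is presumably why the authors isolate it as a proposition rather than folding it into the definition. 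One should also confirm, as part of this, that the stated value of $s_0$ is exactly the denominator needed, i.e.\ that no transformation forces a smaller modulus and that $s_0r$ is actually achieved, so $\bs_L$ genuinely lives in $\RR/\tfrac1{s_0r}\ZZ$.

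For the second part, invariance under inner automorphisms, recall from Section~2 that every inner automorphism of $\Osc_1$ is a composition $F_\eta\circ F_S$ with $\eta\in\CC$ and $S\in\SO(2,\RR)$. Since $S\in\SO(2,\RR)$ has $\det S=1$ and commutes with multiplication by $\rm i$, the map $F_S$ sends adapted generators of $L$ to adapted generators of $F_S(L)$, preserves the splitting, preserves $\lambda$, and preserves $\omega$ (rotations are symplectic); inspecting \eqref{ES} shows $z_\alpha,z_\beta,z_\delta,v,w$ and all correction terms are literally unchanged, so $\bs_{F_S(L)}=\bs_L$. For $F_\eta$, conjugation by $(\eta,0,0)$, one computes its effect on generators from the multiplication law: it adds to the $z$-coordinates the terms $\tfrac12\omega(\eta,\bar\alpha)$, $\tfrac12\omega(\eta,\bar\beta)$ etc., i.e.\ a linear functional of the translation parts, and shifts $x_\delta,y_\delta$ — but $(v,w)$ as defined in \eqref{abvw} is built precisely so that these contributions cancel (the correction $-r(S_q-I_2)\binom{-z_\beta}{z_\alpha}$ absorbs the conjugation), and the remaining change in $z_\delta$ is matched by the change in the $\omega$-terms. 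Hence $\bs_{F_\eta(L)}=\bs_L$. Combining, $\bs_{F(L)}=\bs_L$ for every inner $F$, which completes the proof; note that by contrast $F_u$ does shift $\bs_L$ by $u\lambda$, consistent with Remark~\ref{rm4}(iv), confirming that only the outer part of $\Aut(\Osc_1)$ moves the invariant.
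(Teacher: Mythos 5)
Your overall strategy coincides with the paper's: prove invariance under the inner automorphisms $F_S$, $S\in\SO(2,\RR)$ (immediate) and $F_\eta$ (where, as you say, the definition of $(v,w)$ in \eqref{abvw} absorbs the conjugation, so $v'=v$, $w'=w$, $z_0'=z_0$), and then reduce the independence of the adapted generators to a finite list of elementary moves --- central shifts of $\alpha$, $\beta$, $\delta$, replacements $\delta\mapsto\alpha\delta$, $\beta\delta$, and a change of $(\alpha,\beta)$ by generators of $\SL(2,\ZZ)$, respectively of the stabiliser of $S_q$ when $q\in\{3,4,6\}$ --- and check each move separately. This is exactly how the paper proceeds (it carries the checks out explicitly for $q=1$ and $q=4$).

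The genuine gap is that none of these checks is actually performed, and the phrases you offer in their place (``the cross terms from $\omega$ are designed to cancel'', ``only the discrete correction terms need separate bookkeeping'') are assertions of the proposition's content, not arguments. In particular, you never identify the arithmetic input without which the bookkeeping cannot be closed: under the elementary moves $\bs_L$ changes by quantities such as $\frac{w}{2r}-\frac{rw}{2}$, $\frac{v+w}{2r}+\frac14$, $vw\tilde r$, and these lie in $\frac{1}{s_0r}\ZZ$ only because the type of $L$ imposes congruences on $(v,w)$ via Proposition~\ref{TF} together with \eqref{klvw} --- e.g.\ $r_0\mid v,w$ for type $L^1_{r_0}$, $v+w$ even for $L^4_r$ with $r$ even, $v+w$ odd for $L^{4,+}_r$. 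For instance, for $L^{4,+}_r$ with $r\equiv 2\ \mathrm{mod}\ 4$ one has $s_0=1$, and the increment $\frac{v+w}{2r}+\frac14$ lies in $\frac1r\ZZ$ precisely because $v+w$ is odd; without that parity fact the claimed invariance is simply false for the stated modulus. You mention the $L^q_r$/$L^{q,+}_r$ distinction only in connection with which change-of-basis matrices are allowed, whereas its essential role is in these congruences; so the ``purely computational but delicate'' step is not optional polish but the proof itself, and your proposal would need to supply it (also, your final worry about whether $s_0r$ is the \emph{optimal} modulus is not required: the proposition only asserts well-definedness modulo $\frac{1}{s_0r}\ZZ$).
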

\begin{proof}
We show that, for an inner automorphism $F$, the number $\bs_L$ defined by generators $\alpha$, $\beta$, $\gamma$, $\delta$ of~$L$ coincides with the number $\bs_{F(L)}$ defined by the generators $\alpha':=F(\alpha),\dots$, \mbox{$\delta':=F(\delta)$}. This is obvious for $F_S$, $S\in\SO(2,\RR)$.
To show the claim for $F_\eta$, $\eta\in\RR^2$, consider $\eta=\eta_1\bar\alpha+\eta_2\bar\beta$.
Then $\alpha'=\eta\alpha\eta^{-1}=(\bar\alpha,z_\alpha-\eta_2,0)$, $\beta'=(\bar\beta,z_\beta+\eta_1,0)$,
$\gamma'=\gamma$ and $\delta'=(x_\delta'\bar \alpha+y_\delta'\bar \beta,z_\delta',\lambda)$, where
\begin{gather*}
\begin{pmatrix}x_\delta'\\y_\delta'\end{pmatrix}
= {x_\delta \choose y_\delta} +(I_2-S_q){\eta_1 \choose \eta_2}\end{gather*}
and
\begin{gather*}
z_\delta'=z_\delta+\dfrac{1}{2}\omega\left(\!{x_\delta\choose y_\delta},
(S_q+I_2){\eta_1\choose \eta_2}\!\right)
+\dfrac{1}{2}\omega\left( S_q{\eta_1\choose \eta_2}, {\eta_1\choose \eta_2}\! \right)\!.
\end{gather*}
We also denote the remaining data associated with $\alpha',\dots,\delta'$ by $z_\alpha'$, $z_\beta'$, $v'$, $w'$, $z_0'$. Then
\begin{gather*}
z_\alpha'=z_\alpha-\eta_2,\qquad
z_\beta'=z_\beta+\eta_1,\qquad
v'=v,\qquad w'=w,\qquad z_0'=z_0
\end{gather*}
and an easy calculation shows that $\bs_{F_\eta(L)}=\bs_L$.

Now, we~want to show that $\bs_L$ does not depend on the choice of the generators $\alpha$, $\beta$ and~$\delta$. We~will do that only for $q=1$ and $q=4$, the proof of the remaining cases follows the same strategy. We~may assume $z_\alpha=z_\beta=0$ since we already proved invariance under inner automorphisms $F_\eta$. Thus $\alpha=(\bar\alpha,0,0)$, $\beta=(\bar\beta,0,0)$, $\gamma=\big(0,\frac{1}{r},0\big)$ and $\delta=\big(\frac{v}{r}\bar\alpha+\frac{w}{r}\bar\beta,z_\delta,\lambda\big)$. We~will change the set of generators and denote the new data by $z_\alpha'$, $z_\beta'$, $z'_\delta$, $v'$, $w'$, $z_0'$ and $\bs_L'$. In~each case we will show that $\bs_L=\bs_L'$ holds.

Let us first consider $q=1$. Then $a=b=0$, thus $v,w\in\ZZ$. In~order to show the independence of the choice of~$\delta$ it suffices to change $\delta$ into $\delta'=\alpha\delta$, $\delta'=\beta\delta$ and $\delta'=\gamma\delta$. For the set of generators $\{\alpha,\beta,\gamma,\delta'=\alpha\delta\}$ we obtain
\begin{gather}\label{ad}
 z_\alpha'= z_\beta'=0,\qquad z_\delta'= z_\delta+\frac w{2r},\qquad v'=v+r,\qquad w'=w.
\end{gather}
Then
\begin{gather*}
\bs_L'-\bs_L =z_\delta'-\frac{1}{2}v'w'-z_\delta+\frac{1}{2}vw=\frac w{2r}-\frac12 rw=\frac w{r_0} \frac12 \big(r-r^3\big) \frac1{s_0r}\equiv 0\ {\rm mod}\ \frac{1}{s_0r}\ZZ
\end{gather*}
since $r_0|w$. Analogously, $\bs_L'=\bs_L$ for the generators $\{\alpha,\beta,\gamma,\delta'=\beta\delta\}$. Obviously, this is also true for $\{\alpha,\beta,\gamma,\delta'=\gamma\delta\}$. It~remains to check that we can replace $\alpha$, $\beta$ by $\alpha'=\alpha^{s_{11}}\beta^{s_{21}}\gamma^k$ and~$\beta'=\alpha^{s_{12}}\beta^{s_{22}}\gamma^l$ for arbitrary $k$, $l$ and $S=(s_{ij})_{i,j=1,2}\in\SL(2,\ZZ)$. To do so, it~suffices to consider the case $S=I_2, (k,l)\in\{(1,0),(0,1)\}$ and the case, where $k=l=0$ and $S$ is chosen from a set of generators of~$\SL(2,\ZZ)$. For $\{\alpha'=\alpha\gamma,\beta,\gamma,\delta\}$ we have
\begin{gather*}
z_\alpha'= 1/r,\qquad z_\beta'=0,\qquad z_\delta'= z_\delta,\qquad v'=v,\qquad w'=w,
\end{gather*}
thus
\begin{gather*}
\bs_L'-\bs_L = -\frac v{r^2}=-\frac v{r_0}\, \frac 1{s_0r} \equiv 0\ {\rm mod}\ \frac{1}{s_0r}\ZZ.
\end{gather*}
Analogously, $\bs_L'=\bs_L$ holds for $\{\alpha,\beta'=\beta\gamma,\gamma,\delta\}$. For $\{\alpha,\beta'=\alpha\beta,\gamma,\delta\}$
we have
\begin{gather*}
z_\alpha'= 0,\qquad z_\beta'=1/2,\qquad z_\delta'= z_\delta,\qquad v'=v-w,\qquad w'=w,
\end{gather*}
hence
\begin{gather*}
\bs_L'-\bs_L = \frac {w^2}{2}-\frac w{2r}= \frac{w\big(wr^2-r\big)}{2r_0}\,\frac 1{s_0r} \equiv 0\ {\rm mod}\ \frac{1}{s_0r}\ZZ.
\end{gather*}
Finally, for $\big\{\alpha'=\beta,\,\beta'=\alpha^{-1},\,\gamma,\,\delta\big\}$ we get
\begin{gather*}
 z_\alpha'= z_\beta'=0,\qquad z_\delta'= z_\delta,\qquad v'=w,\qquad w'=-v,
 \end{gather*}
which yields
\begin{gather*}
 \bs_L'-\bs_L = vw \equiv 0\ {\rm mod}\ \frac{1}{s_0r}\ZZ.
 \end{gather*}

For $q=4$, we~have $a,b=0$, hence $v,w\in\ZZ$. First assume that $r$ is even and $L$ is of type~$L_r^{4,+}$. Then $v+w$ is odd by Proposition~\ref{TF} and~\eqref{klvw}. Furthermore, $z_0=-\frac1{4r^2}\big(v^2+w^2\big)$. We~proceed as in the case of~$q=1$. For $\{\alpha,\beta,\gamma,\delta'=\alpha\delta\}$ we have the identities in~\eqref{ad} and $z_0'=-\frac1{4r^2}((v+r)^2+w^2)$, which gives
\begin{gather*}
\bs_L'-\bs_L=z_\delta'-z_0'-z_\delta+z_0=\frac1{2r} (w+v)+\frac14\ {\rm mod}\ \frac{1}{s_0r}\ZZ.
\end{gather*}
If $4|r$, then $s_0=2$ and $\bs_L'=\bs_L$ follows. Otherwise we have $s_0=1$ and the assertion follows from the fact that $v+w$ is odd. In~the same way we obtain~$\bs_L'=\bs_L$ for $\delta'=\beta\delta$. Again, $\bs_L'=\bs_L$ is obvious for $\delta'=\gamma\delta$. For $\{\alpha'=\alpha\gamma,\beta,\gamma,\delta\}$ we have
\begin{gather}\label{eo2}
z_\alpha'= 1/r,\qquad z_\beta'=0,\qquad z_\delta'= z_\delta,\qquad v'=v+1,\qquad w'=w+1,
\end{gather}
and $z_0'=-\frac1{4r^2}\big(v'^2+w'^2\big)$. This yields
\begin{gather*}
\bs_L'=z_\delta-\frac 1{2r^2}(v+w+1)-z_0'=z_\delta-z_0,
\end{gather*}
thus $\bs_L'=\bs_L$ already in~$\RR$. The same is true for $\{\alpha,\beta'=\beta\gamma,\gamma,\delta\}$. It~remains to consider generators $\big\{\alpha'=\alpha^{s_{11}}\beta^{s_{21}}, \beta'=\alpha^{s_{12}}\beta^{s_{22}}, \gamma, \delta \big\}$. Since these generators are supposed to be adapted, $S=(s_{ij})_{i,j=1,2}$ has to stabilise $S_4$. The stabiliser of~$S_4$ is generated by $S_4$, hence it suffices to check $\bs_L'$ for $\big\{\alpha'=\beta,\, \beta'=\alpha^{-1},\gamma,\delta\big\}$. For these generators we get
\begin{gather}\label{eo3}
 z_\alpha'= z_\beta'=0,\qquad z_\delta'= z_\delta,\qquad v'=w,\qquad w'=-v,\qquad z_0'=z_0,
\end{gather}
thus $\bs_L'=\bs_L$. Now assume that $L$ is of type $L_r^{4}$ for $r\in\NN_{>0}$. If~$r$ is even, then $v+w$ is even by Proposition~\ref{TF} and~\eqref{klvw}. We~compute $z_0=-\frac{(1-r\tilde r)^2}{4r^2}\big(v^2+w^2\big)$ and
\begin{gather*}
\bs_L=z_\delta-z_0+\frac{1}{2}vw\tilde r^2\ {\rm mod}\ \frac{1}{s_0r}\ZZ.
\end{gather*}
For $\{\alpha,\beta,\gamma,\delta'=\alpha\delta\}$, we~obtain the identities~\eqref{ad} and $z_0'=-\frac{(1-r\tilde r)^2}{4r^2}\big((v+r)^2+w^2\big)$, hence
\begin{gather*}
 \bs_L'-\bs_L=\frac w{2r}+\frac{(1-\tilde r r)^2}{4r}(2v+r)-\frac12 rw\tilde r\ {\rm mod}\ \frac{1}{s_0r}\ZZ.
 \end{gather*}
If $r$ is odd, then $4|(1-r)^2$. Moreover, $\tilde r=s_0=1$. This gives
\begin{gather*}
\frac w{2r}+\frac{(1-\tilde r r)^2}{4r}(2v+r)-\frac12 rw\tilde r \equiv \frac wr\, \frac{1-r^2}2 \equiv 0\ {\rm mod}\ \frac{1}{s_0r}\ZZ.
\end{gather*}
If $4|r$, then $\tilde r=0$ and $s_0=1$. Consequently,
\begin{gather}\label{eo}
\frac w{2r}+\frac{(1-\tilde r r)^2}{4r}(2v+r)-\frac12 rw\tilde r = \frac {v+w}2\, \frac 1r +\frac14 \equiv 0\ {\rm mod}\ \frac{1}{s_0r}\ZZ,
\end{gather}
since $v+w$ is even and $4|r$ implies that $1/4\equiv 0$ mod $\frac1r \ZZ$. If~$r\in 2+4\NN$, then $\tilde r=0$ and $s_0=2$. Hence equation~\eqref{eo} also holds since $v+w$ is even and $1/4\equiv 0$ mod~$\frac 1{2r}\ZZ$. We~obtain~$\bs_L'=\bs_L$. Similarly, this can be shown for $\{\alpha,\beta,\gamma,\delta'=\beta\delta\}$. Again, $\bs_L'=\bs_L$ is obvious for $\{\alpha,\beta,\gamma,\mbox{$\delta'=\gamma\delta$}\}$. For $\{\alpha'=\alpha\gamma,\beta,\gamma,\delta\}$, again the identities~\eqref{eo2} hold, whereas now $z_0'=-\frac{(1-\tilde r r)^2}{4r^2}\big(v'^2+w'^2\big)$. We~obtain
\begin{gather*}
\bs_L'=z_\delta-\frac1{2r^2}(v+w+1) -z_0'-\frac12(v+1)(w+1)\tilde r\ {\rm mod}\ \frac{1}{s_0r}\ZZ,
\end{gather*}
which implies
\begin{gather*}
\bs_L'-\bs_L=-\frac{\tilde r}r(v+w+1) \equiv 0\ {\rm mod}\ \frac{1}{s_0r}\ZZ.
\end{gather*}
Analogously, $\bs_L'=\bs_L$ for $\{\alpha,\beta'=\gamma\beta,\gamma,\delta\}$.
Finally, for the choice of generators $\{\alpha'=\beta,\, \beta'=\alpha^{-1},\gamma,\delta\}$ we get the identities~\eqref{eo3}, thus
\begin{gather*}
\bs_L'-\bs_L=vw\tilde r \equiv 0\ {\rm mod}\ \frac{1}{s_0r}\ZZ.
\end{gather*}
This finishes the proof for $q=4$.
\end{proof}

\begin{Theorem} \label{MMM}
Let $\cM$ denote the set of all isomorphism classes of lattices in~$\Osc_1$ with respect to inner automorphisms of~$\Osc_1$, $\cM_1\subset \cM$ the subset of isomorphism classes of normalised lattices and $\cM_0\subset\cM_1$ the subset of isomorphism classes of normalised unshifted lattices. Then we have bijections
\begin{gather*}
\cM_1\times\RR_{>0} \longrightarrow \cM,\qquad (L,a)\longmapsto F_S(L),\qquad S=a I_2,
\end{gather*}
and
\begin{gather}\label{M0M1}
\cM_0 \times \RR/\ZZ \longrightarrow \cM_1,\qquad (L,s) \longmapsto F_u(L),\qquad u=\frac {s}{s_0r\lambda},
\end{gather}
where $\lambda=\lambda(L)$, $r=r(L)$ and $s_0$ is associated with $L$ according to Definition~$\ref{de:inv}$.
\end{Theorem}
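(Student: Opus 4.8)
The plan is to derive both bijections from three easy facts together with one lemma that I expect to carry the real content.

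\textbf{The three facts.} (i) The covolume of $\bar L_0$, the numbers $q=\ord(\lambda(L))$ and $r=r(L)$, the type of $L$ (whether $L^q_r$ or $L^{q,+}_r$), and the number $\bs_L$ are all invariant under inner automorphisms of $\Osc_1$ --- for the covolume because $F_\eta$ and $F_u$ fix $L\cap H$ modulo $Z(H)$ while $F_S$ with $S\in\SO(2,\RR)$ acts there by a rotation, for $q,r$ and the type because inner automorphisms fix the $\RR$-component of $\Osc_1=H\rtimes\RR$ and are group isomorphisms (use Corollary~\ref{co:iso}), and for $\bs_L$ by Proposition~\ref{scorrect}. (ii) Under $F_S$ with $S=aI_2$, $a>0$ (which is the automorphism $(\xi,z,t)\mapsto(a\xi,a^2z,t)$), the covolume of $\bar L_0$ is multiplied by $a^2$ while $q,r$, the type and $\bs_L$ are unchanged. (iii) $F_u$ preserves normalisation and leaves $q,r$, the type and the covolume fixed, and by Remark~\ref{rm4}$(iv)$ sends $\bs_L$ to $\bs_L+u\lambda$ in $\RR/\tfrac1{s_0r}\ZZ$. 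The lemma I would isolate is the \emph{shift lemma}: if $N\subset\Osc_1$ is a normalised lattice and $u_1\lambda(N)\in\tfrac1{s_0r}\ZZ$ (with $r=r(N)$ and $s_0$ associated to $N$ by Definition~\ref{de:inv}), then $F_{u_1}(N)$ is inner-equivalent to $N$.

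\textbf{The first bijection} is then formal. It is well defined on $\cM_1$ because the inner automorphisms form a normal subgroup of $\Aut(\Osc_1)$, so $F_{aI_2}$ carries inner-equivalent lattices to inner-equivalent lattices. It is surjective: for a lattice $L''$ whose lattice $\bar L''_0$ has covolume $c$, the lattice $F_{c^{-1/2}I_2}(L'')$ is normalised and is carried back to $L''$ by $F_{c^{1/2}I_2}$. It is injective: if $F_{aI_2}(L)$ and $F_{a'I_2}(L')$ are inner-equivalent, then since the covolume of $\bar L_0$ is an inner-invariant multiplied by $a^2$ under $F_{aI_2}$ we get $a^2=a'^2$, hence $a=a'$, and then $F_{aI_2}^{-1}$ (which preserves innerness) recovers the inner-equivalence of $L$ and $L'$.

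\textbf{The second bijection.} Well-definedness on the $\cM_0$-factor is again normality of the inner automorphisms; well-definedness on the $\RR/\ZZ$-factor is precisely the shift lemma with $u_1\lambda=\tfrac1{s_0r}$ applied to the normalised lattice $F_u(L)$, since passing from $s$ to $s+1$ replaces $F_u(L)$ by $F_{1/(s_0r\lambda)}(F_u(L))$. For surjectivity, given a normalised $L''$ let $\tilde s\in[0,\tfrac1{s_0r})$ represent $\bs_{L''}$, set $u=\tilde s/\lambda$ and $L=F_{-u}(L'')$; by (iii), $L$ is normalised and unshifted, so $[L]\in\cM_0$, and $(L,\,s_0r\tilde s)$ maps to $[L'']$. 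For injectivity, suppose $F_u(L)\sim F_{u'}(L')$ with $L,L'$ normalised and unshifted: fact (i) forces $\lambda=\lambda'$, $r=r'$, the same type, hence $s_0=s_0'$; then $\bs_{F_u(L)}=u\lambda$ and $\bs_{F_{u'}(L')}=u'\lambda$ (using $\bs_L=\bs_{L'}=0$) together with invariance of $\bs$ give $u\lambda\equiv u'\lambda\pmod{\tfrac1{s_0r}\ZZ}$, i.e.\ $s=s'$ in $\RR/\ZZ$; since then $(u'-u)\lambda\in\tfrac1{s_0r}\ZZ$, the shift lemma gives $F_{u'}(L')=F_{u'-u}(F_u(L'))\sim F_u(L')$, so $F_u(L)\sim F_u(L')$, and applying $F_u^{-1}$ (conjugating by it preserves innerness) yields $L\sim L'$.

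\textbf{The shift lemma} is where I expect the main obstacle. Because $\tfrac1{s_0r}\ZZ$ is cyclic and $F_{u_1}$ leaves $r$, $\lambda$, the type and normalisation unchanged, composing automorphisms reduces the claim to $u_1\lambda=\tfrac1{s_0r}$; and when $s_0=1$ it is trivial because $\gamma=(0,1/r,0)\in N$ already realises that shift. By Definition~\ref{de:inv} the cases with $s_0>1$ are $q=1$ and the four families $L^{2,+}_r$, $L^{3,+}_r$, $L^4_r$ ($r\equiv 2\bmod 4$), $L^{4,+}_r$ ($r\not\equiv 2\bmod 4$). For $q=1$ I expect a clean $\gcd$-argument: after an inner $F_\eta$ we may take $z_\alpha=z_\beta=0$, so $\xi_\delta=\tfrac vr\bar\alpha+\tfrac wr\bar\beta$ with $v,w\in\ZZ$ and $\gcd(v,w,r)=r/s_0$; conjugating by a further $\eta\in\CC$ shifts the centre-components of $\alpha,\beta,\delta$ by $\omega(\eta,\bar\alpha),\omega(\eta,\bar\beta),\omega(\eta,\xi_\delta)$ respectively (here $\mathrm e^{\mathrm i\lambda}=I$), so choosing $p=r\omega(\eta,\bar\alpha)$, $q'=r\omega(\eta,\bar\beta)\in\ZZ$ with $vp+wq'\equiv r/s_0\pmod r$ (possible by B\'ezout) turns $N$ into $F_{u_1}(N)$. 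For the four remaining families the rotation $R=\mathrm e^{\mathrm i\lambda}$ on $H/Z(H)$ is nontrivial, hence $I-R$ is invertible, and one must instead find an inner automorphism of the form $F_{u_0}\circ F_\eta$ with $(I-R)\eta\in\bar L_0$ engineered so that the induced shift of $z_\delta$ equals $\tfrac1{s_0r}$ modulo $\tfrac1r\ZZ$; in each family this is a bounded explicit arithmetic verification using its defining parity/divisibility condition together with the auxiliary data $v,w,z_0$ of Definition~\ref{de:inv}. Everything outside this finite case-analysis is formal bookkeeping with Proposition~\ref{scorrect} and Remark~\ref{rm4}.
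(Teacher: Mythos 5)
Your argument is correct and is essentially the paper's own proof: both bijections are treated exactly as in the paper (the first via the generator of $[L\cap H,L\cap H]$, i.e., the covolume of $\bar L_0$; the second with well-definedness from the shift lemma, surjectivity via $u=-\lambda^{-1}\bs_L$, and injectivity from Proposition~\ref{scorrect} together with Remark~\ref{rm4}$(iv)$), and your ``shift lemma'' is precisely Lemma~\ref{pr:shift}, including the trivial $s_0=1$ case and the B\'ezout argument for $q=1$. Do note that the four families you defer ($L^{2,+}_r$, $L^{3,+}_r$, $L^4_r$ with $r\equiv 2\bmod 4$, $L^{4,+}_r$ with $4\mid r$) are exactly where the paper's proof of Lemma~\ref{pr:shift} spends its effort, via explicit choices of $\eta$ and the parity facts on $v$, $w$ coming from Proposition~\ref{TF} and \eqref{klvw}, and that a composite $F_{u_0}\circ F_\eta$ with $u_0\neq 0$ is never inner --- what you actually need (and what the paper produces) is an inner $F_\eta$ satisfying $F_\eta(N)=F_{u_1}(N)$.
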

It remains to determine $\cM_0$, which we want to do before proving Theorem~\ref{MMM}.
\begin{Lemma}\label{lmlattice} Take $\lambda\in \pi\, \NN_{>0}$ or $\lambda= \lambda_0+2\pi\ZZ,\, \lambda_0\in \{\pi/3, \pi/2, 2\pi/3\}$ and choose $\mu+{\rm i}\nu\in\CC$ and $\xi_0=x_0+iy_0$ such that
\benur
\item $\mu+{\rm i}\nu\in \CC$, $\nu>0$, and $x_0,y_0\in \frac1r \ZZ$ for $\lambda \in \pi\,\NN_{>0}$,
\item $\mu+{\rm i}\nu = {\rm i}$ and $x_0,y_0\in \frac1r \ZZ$ for $\lambda \in \pi/2+2\pi\ZZ$,
\item $\mu+{\rm i}\nu ={\rm e}^{{\rm i}\pi/3}$ and $x_0\in \frac12+\frac1r \ZZ,\, y_0\in \frac1r \ZZ$ for $\lambda \in \pi/3+2\pi\ZZ$,
\item $\mu+{\rm i}\nu ={\rm e}^{{\rm i}\pi/3}$ and $x_0\in \frac1r \ZZ,\, y_0\in\frac12+ \frac1r \ZZ$ for $\lambda \in 2\pi/3+2\pi\ZZ$.
\end{enumerate}
Then the elements
\begin{gather*}
 l_1:=\bigg(\frac{1}{\sqrt\nu},0,0\bigg),\qquad
 l_2:=\bigg({-}\frac{\mu}{\sqrt\nu}+{\rm i}\sqrt\nu,0,0\bigg),\qquad
 l_3:=\bigg(0,\frac{1}{r},0\bigg),
 \\
l_4:=\bigg(\frac{1}{\sqrt\nu}x_0-\frac{\mu}{\sqrt\nu}y_0+{\rm i}\sqrt\nu y_0,z,\lambda \bigg),
\end{gather*}
for every $z\in\RR$ generate a lattice in~$\Osc_1$.
\end{Lemma}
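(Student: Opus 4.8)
The plan is to recognise $L:=\langle l_1,l_2,l_3,l_4\rangle$ as the internal semidirect product $\Lambda\rtimes\langle l_4\rangle$, where $\Lambda:=\langle l_1,l_2,l_3\rangle$ is a realisation of the discrete Heisenberg group $H_1^r(\ZZ)$ as a lattice in~$H$, and then to deduce that $L$ is discrete and cocompact in~$\Osc_1$ from the corresponding properties of~$\Lambda$ in~$H$ and of~$\lambda\ZZ$ in~$\RR$.

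First I would set $\xi_1:=1/\sqrt\nu$ and $\xi_2:=-\mu/\sqrt\nu+{\rm i}\sqrt\nu$, so that $l_1=(\xi_1,0,0)$, $l_2=(\xi_2,0,0)$, $l_3=(0,1/r,0)$ and $l_4=(\xi_4,z,\lambda)$ with $\xi_4=x_0\xi_1+y_0\xi_2$. A direct computation gives $\omega(\xi_1,\xi_2)=\Im\big(\overline{\xi_1}\xi_2\big)=1$, so $\xi_1,\xi_2$ is a positively oriented basis of~$\CC\cong\RR^2$ spanning a normalised lattice $\bar L_0$. Since $l_3$ lies in the centre of~$\Osc_1$ and $[l_1,l_2]=(0,\omega(\xi_1,\xi_2),0)=l_3^r$, every element of~$\Lambda$ can be written uniquely as $l_1^{m_1}l_2^{m_2}l_3^n=\big(m_1\xi_1+m_2\xi_2,\tfrac12 m_1m_2+n/r,0\big)$ with $m_1,m_2,n\in\ZZ$. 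This set is discrete in~$H$ (its image in $H/Z(H)\cong\CC$ is the lattice $\bar L_0$, and over each lattice point the fibre in the central direction is a coset of $\tfrac1r\ZZ$) and $H/\Lambda$ is compact; hence $\Lambda$ is a lattice in~$H$, isomorphic to $H_1^r(\ZZ)$.

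The core of the argument is to show that $l_4$ normalises~$\Lambda$. Since $H$ is normal in~$\Osc_1$, conjugation $\phi$ by~$l_4$ restricts to an automorphism of~$H$, and a short computation with the multiplication law of~$\Osc_1$ gives $\phi(\xi,w,0)=\big({\rm e}^{{\rm i}\lambda}\xi,\,w+\omega(\xi_4,{\rm e}^{{\rm i}\lambda}\xi),\,0\big)$; in particular $\phi(l_3)=l_3$, and $\phi$ is independent of~$z$. In the cases $(i)$--$(iv)$ one has ${\rm e}^{{\rm i}\lambda}\in\{\pm1,{\rm i},{\rm e}^{{\rm i}\pi/3},{\rm e}^{{\rm i}2\pi/3}\}$, and, using $\xi_2={\rm i}\xi_1$ in case~$(ii)$ and $\xi_2={\rm e}^{{\rm i}2\pi/3}\xi_1$ in cases~$(iii)$ and~$(iv)$, one checks directly that ${\rm e}^{{\rm i}\lambda}$ maps $\bar L_0$ onto itself. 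Writing ${\rm e}^{{\rm i}\lambda}\xi_j=p_j\xi_1+q_j\xi_2$ with $p_j,q_j\in\ZZ$ for $j=1,2$, one gets, using $\omega(\xi_4,{\rm e}^{{\rm i}\lambda}\xi_j)=x_0q_j-y_0p_j$, that $\phi(l_j)=l_1^{p_j}l_2^{q_j}\cdot(0,\kappa_j,0)$ with $\kappa_j=x_0q_j-y_0p_j-\tfrac12 p_jq_j$. It then remains to verify, case by case, that $r\kappa_j\in\ZZ$; then $(0,\kappa_j,0)=l_3^{r\kappa_j}\in\Lambda$, hence $\phi(l_j)\in\Lambda$, and since $\phi$ is a homomorphism $\phi(\Lambda)\subseteq\Lambda$. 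As $\phi$ preserves the Haar measure of~$H$ (it acts on $H/Z(H)$ by a rotation and trivially on~$Z(H)$), the lattice $\phi(\Lambda)$ has the same covolume as~$\Lambda$, so $\phi(\Lambda)=\Lambda$.

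Finally, since $l_4$ normalises~$\Lambda$ and its $\RR$-component $\lambda\neq 0$, we obtain $L=\Lambda\rtimes\langle l_4\rangle$ and $L\cap H=\Lambda$. The projection $\Osc_1=H\times\RR\to\RR$ maps $L$ onto the discrete subgroup $\lambda\ZZ$ with kernel~$\Lambda$, which is discrete in~$H$; hence $L$ is discrete, because a neighbourhood $U_H\times(-\varepsilon,\varepsilon)$ of the identity with $U_H\cap\Lambda=\{e\}$ and $0<\varepsilon<|\lambda|$ meets~$L$ only in~$\{e\}$. For cocompactness one can invoke the standard fact that if $N$ is a normal subgroup of a Lie group~$G$ with $\Gamma\cap N$ cocompact in~$N$ and $\Gamma N/N$ cocompact in~$G/N$, then $\Gamma$ is cocompact in~$G$ (apply it with $G=\Osc_1$, $N=H$, $\Gamma=L$); alternatively one exhibits the compact set $K\times[0,|\lambda|]$, where $K\subset H$ is compact with $\Lambda K=H$, whose $L$-translates cover~$\Osc_1$. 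Since $\Osc_1$ admits a bi-invariant metric and is therefore unimodular, a discrete cocompact subgroup has finite covolume, so $L$ is a lattice in~$\Osc_1$. I expect the only genuinely non-routine step to be the case-by-case verification that $r\kappa_j\in\ZZ$: this is exactly where the prescribed cosets $x_0,y_0\in\tfrac1r\ZZ$ or $\tfrac12+\tfrac1r\ZZ$ are needed, the half-integer shift cancelling the term $\tfrac12 p_jq_j$ in the two cases ($q=6$ with ${\rm e}^{{\rm i}\lambda}\xi_1=\xi_1+\xi_2$, and $q=3$ with ${\rm e}^{{\rm i}\lambda}\xi_2=-\xi_1-\xi_2$) where $p_jq_j$ is odd.
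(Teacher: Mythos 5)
Your argument is correct: the conjugation formula $\phi(\xi,w,0)=\big({\rm e}^{{\rm i}\lambda}\xi,\,w+\omega(\xi_4,{\rm e}^{{\rm i}\lambda}\xi),\,0\big)$, the invariance of $\ZZ\xi_1+\ZZ\xi_2$ under ${\rm e}^{{\rm i}\lambda}$ in each of the cases $(i)$--$(iv)$, the integrality check $r\kappa_j\in\ZZ$ (where indeed the shifts $x_0\in\frac12+\frac1r\ZZ$ for $\lambda\in\pi/3+2\pi\ZZ$ and $y_0\in\frac12+\frac1r\ZZ$ for $\lambda\in2\pi/3+2\pi\ZZ$ are exactly what cancels the odd term $\frac{1}{2}p_jq_j$), and the passage from $L=\Lambda\rtimes\langle l_4\rangle$ to discreteness and cocompactness are all sound, and the independence of $z$ is correctly accounted for. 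The paper states Lemma~\ref{lmlattice} without proof, treating the verification as routine, so there is no argument in the paper to compare against; your proof supplies precisely the check that is implicit there and is consistent with how the conditions on $x_0$, $y_0$ are used later (e.g.\ in Definition~\ref{de:inv} and Remark~\ref{z0}).
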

\begin{Definition}%\label{LLr}
We denote the lattice obtained in Lemma~\ref{lmlattice} by
\begin{gather*}
L_r(\lambda,\mu,\nu,\xi_0,z)\subset \Osc_1.
\end{gather*}
\end{Definition}

The group $\SL(2,\ZZ)$ acts on the Poincar\'e half plane $\textbf{H}$. The set $\cF=\cF_+\cup\cF_-$ for
 \begin{gather}
\cF_+:= \bigg\{(\mu,\nu)\in\textbf{H}\,\bigg|\,\mu\in\bigg[0,\frac{1}{2}\bigg],\,
\mu^2+\nu^2\geq 1\bigg\},\nonumber
\\
\cF_-:= \bigg\{(\mu,\nu)\in\textbf{H}\,\bigg|\,\mu\in\bigg({-}\frac{1}{2},0\bigg),\,
\mu^2+\nu^2>1\bigg\} \label{F+}
\end{gather}
is a fundamental domain of this action.

For $M\in\SL(2,\ZZ)$, we~denote by $\la M \ra$ the subgroup of~$\SL(2,\ZZ)$ that is generated by $M$.

\begin{Theorem}\label{th:classUptoInner}
A complete list of normalised unshifted lattices of the oscillator group $\Osc_1$ up to inner automorphisms of~$\Osc_1$ is given by
\begin{enumerate}\itemsep=0pt
\item[$1.$] $L^1_{r_0}(r,\lambda,\mu,\nu,\iota_1,\iota_2):= L_r\big(\lambda,\mu,\nu,\frac{\iota_1}{r}+{\rm i}\frac{\iota_2}{r},z_0\big)$, where $z_0=0$ if $r\iota_1\iota_2$ even and $z_0=1/2$ else, for parameters
\begin{itemize}
\item[$\circ$] $r\in r_0\,\NN_{>0}$,
\item[$\circ$] $\lambda\in 2\pi\,\NN_{>0}$,
\item[$\circ$] $(\mu,\nu)\in\cF$,
\item[$\circ$] $\iota=(\iota_1,\iota_2)^\top\in\SSS\backslash\ZZ^2_r$ with $\gcd(\iota_1,\iota_2,r)=r_0$, where
\begin{gather*}
\SSS=\begin{cases}
\langle-I_2\rangle\simeq \ZZ_2, & \text{if}\quad \mu+{\rm i}\nu\notin\big\{{\rm i},{\rm e}^{\pi {\rm i}/3}\big\},
\\[0.5ex]
\langle S_4\rangle\simeq \ZZ_4, & \text{if}\quad \mu+{\rm i}\nu={\rm i},
\\[0.5ex]
\langle S_6\rangle\simeq \ZZ_6, & \text{if}\quad \mu+{\rm i}\nu={\rm e}^{\pi {\rm i}/3},
\end{cases}
\end{gather*}
and $S\in\SSS$ acts on~$\iota\in\ZZ^2_r$
by
$S\cdot\iota=S(\iota);$
\end{itemize}
\item[$2.$] $L^2_r(\lambda,\mu,\nu):=L_r(\lambda,\mu,\nu,0,0)$ for
\begin{itemize}
\item[$\circ$] $\lambda\in\pi+2\pi\NN$,
\item[$\circ$] $(\mu,\nu)\in\cF;$
\end{itemize}
\item[$3.$] $L^{2,+}_r(\lambda,\mu,\nu,\iota_1,\iota_2):= L_r\big(\lambda,\mu,\nu,\frac{\iota_1}{r}+{\rm i}\frac{\iota_2}{r},0\big)$, where $r$ is even, for
\begin{itemize}
\item[$\circ$] $\lambda\in\pi+2\pi\NN$,
\item[$\circ$] $(\mu,\nu)\in\cF$,
\item[$\circ$]
$(\iota_1,\iota_2)\in \begin{cases}	
\{(1,0),(0,1),(1,1)\}, &\text{if}\quad
\mu+{\rm i}\nu\notin\{{\rm i},{\rm e}^{\pi {\rm i}/3}\}, \\[0.5ex]
\{(1,0),(1,1)\}, &\text{if}\quad \mu+{\rm i}\nu={\rm i},\\[0.5ex]
\{(1,1)\} ,&\text{if}\quad \mu+{\rm i}\nu={\rm e}^{\pi {\rm i}/3};
\end{cases}$
\end{itemize}
\item[$4.$] $L^3_r(\lambda):=L_r\big(\lambda,\frac{1}{2},\frac{\sqrt{3}}{2},\xi_0,z_0\big)$, where $(\xi_0,z_0)=\big(\frac1r+{\rm i}\frac1{2r},-\frac{1}{8r^2}\big)$ if $r$ is~odd and $(\xi_0,z_0)=(0,0)$ if $r$ is even,~for
\begin{itemize}
\item[$\circ$]
$\lambda \in \frac {2\pi}3 +2\pi \ZZ;$
\end{itemize}
\item[$5.$] $L^{3,+}_r(\lambda):=L_r\big(\lambda,\frac{1}{2},\frac{\sqrt{3}}{2},\xi_0,z_0\big)$, where $3|r$ and~$(\xi_0,z_0)=\big({\rm i}\frac1{2r},-\frac{1}{24r^2}\big)$ if $r$ is odd
and $(\xi_0,z_0)=\big({-}\frac1r,-\frac{1}{6r^2}\big)$ if $r$ is even for
\begin{itemize}
\item[$\circ$]
$\lambda \in \frac {2\pi}3 +2\pi \ZZ;$
\end{itemize}
\item[$6.$] $L^4_r(\lambda):= L_r(\lambda,0,1,0,0)$ for
\begin{itemize}
\item[$\circ$] $\lambda \in \frac \pi2 +2\pi \ZZ;$
\end{itemize}
\item[$7.$] $L^{4,+}_r(\lambda):=L_r\big(\lambda,0,1,\frac1r,-\frac1{4r^2}\big)$, where $r$ is even, for
\begin{itemize}
\item[$\circ$]
$\lambda \in \frac \pi2 +2\pi \ZZ;$
\end{itemize}
\item[$8.$] $L^6_r(\lambda):=L_r\big(\lambda,\frac{1}{2},\frac{\sqrt{3}}{2},\xi_0,z_0\big)$, where $(\xi_0,z_0)=\big(\frac{1}{2r}, -\frac{1}{8r^2}\big)$ if $r$ is odd and $(\xi_0,z_0)=(0,0)$ if~$r$ is even, for
\begin{itemize}
\item[$\circ$]
$\lambda \in \frac \pi3 +2\pi \ZZ$
\end{itemize}
\end{enumerate}
with $r, r_0\in\NN_{>0}$.

All lattices $L^1_{r_0}(r,\lambda,\mu,\nu,\iota_1,\iota_2)$ are of type $L^1_{r_0}$. Analogously, the lattices $L^q_r(\dots)$ and $L^{q,+}(\dots)$, $q\not=1$, are of type $L^q_r$ and $L^{q,+}_r$, respectively, where the dots stand for arbitrary parameters used above.
\end{Theorem}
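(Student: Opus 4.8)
The plan is to prove two things: first, that every lattice in the list is a normalised, unshifted lattice of the indicated type; and second, that every normalised unshifted lattice of~$\Osc_1$ is, up to an inner automorphism, exactly one of the lattices listed. The first part is largely a verification: for each entry one checks that the generators $l_1,l_2,l_3,l_4$ produced by Lemma~\ref{lmlattice} indeed satisfy the relations of the appropriate discrete oscillator group from Proposition~\ref{TF} (so that the lattice has the claimed type), that $\bar L_0$ has covolume one (which is built into the normalisation $l_1=(1/\sqrt\nu,0,0)$, $l_2=(-\mu/\sqrt\nu+\mathrm{i}\sqrt\nu,0,0)$, whose projections span a unimodular lattice in~$\RR^2$), and that $\bs_L=0$ by plugging the data into Definition~\ref{de:inv}; this last computation is exactly what fixes the auxiliary value $z_0$ in each entry (e.g.\ $z_0=-1/(8r^2)$ for odd~$r$ in case~4, etc.). The values of $a,b$ and $\tilde r$ in Definition~\ref{de:inv} are what force the case distinctions on the parity of~$r$ and on $3\mid r$.

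For the second and harder part, let $L\subset\Osc_1$ be normalised and unshifted. By Section~\ref{S42}, $L\cap H=:L_0$ is a discrete Heisenberg group $H_1^r(\ZZ)$, the projection of~$L$ to the $\RR$-factor is $\ZZ\lambda$ with $q=\ord(\lambda)\in\{1,2,3,4,6\}$, and the type of~$L$ is $L^q_r$ or~$L^{q,+}_r$. Choose adapted generators $\alpha,\beta,\gamma,\delta$ as in~\eqref{eq:generators}. The first reduction is to use an inner automorphism $F_S$ with $S\in\SO(2,\RR)$ (which by the Remark after the normalisation definition preserves normalisedness, and by Proposition~\ref{scorrect} preserves $\bs_L$) to rotate the lattice $\bar L_0\subset\RR^2$ so that $\bar\alpha=(1/\sqrt\nu,0)$, $\bar\beta=(-\mu/\sqrt\nu,\sqrt\nu)$ with $(\mu,\nu)$ in the fundamental domain~$\cF$; for $q\in\{3,4,6\}$ the requirement that $\bar S=S_q$ with $S_q$ a genuine rotation forces the shape of~$\bar L_0$ to be the square ($q=4$) or hexagonal ($q=3,6$) lattice, pinning $(\mu,\nu)$ to $\mathrm i$ or $\mathrm e^{\mathrm i\pi/3}$. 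Next, apply an inner automorphism $F_\eta$, $\eta\in\RR^2$, to kill $z_\alpha,z_\beta$ (i.e.\ arrange $\alpha=(\bar\alpha,0,0)$, $\beta=(\bar\beta,0,0)$); again this preserves both normalisedness and $\bs_L$ by Proposition~\ref{scorrect}. After these two reductions $\gamma=(0,1/r,0)$ and $\delta=\big((v/r)\bar\alpha+(w/r)\bar\beta,z_\delta,\lambda\big)$ with $v\in\ZZ+a$, $w\in\ZZ+b$ determined mod the $\ZZ$-lattice by the type via~\eqref{klvw}, and $z_\delta$ is then forced by the condition $\bs_L=0$ to equal the explicit value appearing in the list (this is where the $z_0$-correction terms and the $\tilde r$-terms of Definition~\ref{de:inv} are unwound). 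The remaining freedom is the choice of adapted generators compatible with the already-fixed $\bar L_0$: changing $\delta$ by a left factor in $L_0$, and changing $\alpha,\beta$ by an element of the stabiliser of~$S_q$ in $\SL(2,\ZZ)$ together with powers of~$\gamma$. For $q\in\{3,4,6\}$ this stabiliser is the finite cyclic group $\la S_q\ra$ and one checks the residual action on~$(v,w)$ is transitive on the relevant finite set, so $L$ is forced to be the single listed lattice. For $q=1$ the stabiliser is all of $\SL(2,\ZZ)$, acting on $(\iota_1,\iota_2)=(v,w)\bmod r$ (with the further symmetry group $\SSS$ when $(\mu,\nu)$ is an orbifold point of the modular action, i.e.\ $\mathrm i$ or $\mathrm e^{\mathrm i\pi/3}$), and one quotients to get the parameter $\iota\in\SSS\backslash\ZZ_r^2$ with $\gcd(\iota_1,\iota_2,r)=r_0$; for $q=2$ similarly one reads off either no extra parameter (type $L^2_r$) or $\iota\in\{(1,0),(0,1),(1,1)\}$ modulo the same symmetry (type $L^{2,+}_r$). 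Finally one verifies that distinct parameter tuples give non-isomorphic lattices (up to inner automorphisms): for the type, use Corollary~\ref{co:iso}; for $\lambda$, use that it is an inner-automorphism invariant (an inner automorphism fixes the projection to the $\RR$-factor); for $(\mu,\nu)\in\cF$, use that $\cF$ is a strict fundamental domain for the $\SL(2,\ZZ)$-action together with the fact that the only inner automorphisms acting on~$\bar L_0$ are the rotations $F_S$, $S\in\SO(2,\RR)$, so two choices in $\cF$ related by such a rotation must coincide; and for~$\iota$, use invariance of~$\bs_L$ plus the residual symmetry computed above.

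The main obstacle is the bookkeeping of the residual generator changes in the case $q=1$ (and the sub-case $L^{2,+}_r$): one must show precisely which pairs $(v,w)\bmod r$ become equivalent under the combined action of $\SL(2,\ZZ)$ on $\ZZ_r^2$ and the left-multiplication changes of~$\delta$, and simultaneously keep the invariant $\bs_L$ equal to zero throughout, which requires re-deriving how $z_\delta$ must move to compensate each generator change — essentially the computations already carried out in the proof of Proposition~\ref{scorrect}, now run in reverse to land on a canonical representative. A secondary technical point is handling the orbifold points $\mu+\mathrm i\nu\in\{\mathrm i,\mathrm e^{\mathrm i\pi/3}\}$ where the extra stabiliser $\SSS$ appears and shrinks the set of inequivalent~$\iota$; here one must be careful that the relevant rotations by $\pi/2$ or $\pi/3$ are inner automorphisms of~$\Osc_1$ (indeed $F_S$ with $S\in\SO(2,\RR)$, hence inner by the discussion in Section~\ref{irrrep}) and do fix the already-chosen $\bar L_0$.
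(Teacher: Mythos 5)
Your skeleton (verify the list, then reduce an arbitrary normalised unshifted lattice by a rotation $F_S$, $S\in\SO(2)$, and by $F_\eta$ killing $z_\alpha,z_\beta$) follows the paper up to the point where the projection of $\delta$ must be normalised, and exactly there the proposal has a genuine gap. For $q=\ord(\lambda)\in\{2,3,4,6\}$ you claim the residual freedom consists only of multiplying $\delta$ on the left by elements of $L\cap H$ and changing $\alpha,\beta$ by the stabiliser $\la S_q\ra$, and that this action is ``transitive on the relevant finite set'' of values of $(v,w)$. It is not: left multiplication of $\delta$ by $\alpha^j\beta^k\gamma^l$ changes $(v,w)$ only by elements of $r\ZZ^2$, hence does nothing modulo $r$, while $\la S_q\ra$ has order at most $6$; the admissible residues $(v,w)\ {\rm mod}\ r$ (constrained only by the parity conditions coming from Proposition~\ref{TF} and~\eqref{klvw}) number on the order of $r^2$, so for a single listed lattice per $\lambda$ (e.g.\ type $L^4_r$) this finite action cannot possibly identify them all. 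The mechanism the paper actually uses is a further inner automorphism $F_\eta$ with a freely chosen $\eta\in\RR^2$: conjugation by $(\eta,0,0)$ moves the projection of $\delta$ by $(I_2-S_q)\eta$ in the $\bar\alpha,\bar\beta$-coordinates, and since $S_q-I_2$ is invertible precisely for $q\neq1$ one can solve $r(S_q-I_2)(\eta_1,\eta_2)^\top$ equal to the vector to be removed; the parity of $(v,w)$ dictated by the type is what guarantees that this particular $F_\eta$ maps $\la\alpha,\beta,\gamma\ra$ into itself. This step is absent from your argument. Relatedly, unshiftedness does not ``force $z_\delta$ to equal the explicit value in the list'': $\bs_L$ is defined only modulo $\frac1{s_0r}\ZZ$, so after the projection is normalised $z_\delta$ is pinned down only up to $\frac1{s_0r}\ZZ$, and one still needs Lemma~\ref{pr:shift} (a shift by $\frac1{s_0r\lambda}$ is realised by an inner automorphism) to land on the standard lattice; you never invoke it.

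A second, smaller problem is the case $q=1$: you say the stabiliser is ``all of $\SL(2,\ZZ)$, acting on $(\iota_1,\iota_2)\ {\rm mod}\ r$''. If that were the acting group, the orbits would be determined by $\gcd(\iota_1,\iota_2,r)$ alone, contradicting the parameter set $\SSS\backslash\ZZ_r^2$ in the very statement you are proving. Once the reduction has fixed $(\bar\alpha,\bar\beta)=T_{\mu,\nu}^{-1}$ with $(\mu,\nu)\in\cF$, only base changes $M\in\SL(2,\ZZ)$ stabilising the point $\mu+{\rm i}\nu$ survive (equivalently, rotations $F_S$ with $T^{-1}ST\in\SL(2,\ZZ)$), and this is exactly the finite group $\SSS$, generically $\la -I_2\ra$; since $S_1=I_2$, the automorphisms $F_\eta$ do not move $(v,w)$ at all in this case, so $\SSS$ is all one gets. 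The paper's proof makes this precise by determining which inner automorphisms $F_\eta F_S$ preserve the family of lattices with fixed $r,\lambda,\mu,\nu$ and reading off the induced $\SSS$-action on $\iota$. As written, your argument would over-identify lattices for $q=1$ and fail to identify them for $q\in\{2,3,4,6\}$.
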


\begin{Definition}
Lattices in the list in Theorem~\ref{th:classUptoInner} are called standard lattices.
\end{Definition}

\begin{Remark}\label{z0}
In Theorem~\ref{th:classUptoInner}, $z_0$ is chosen such that $l_4^q=(0,0,q\lambda)$ for $q=\ord(\lambda)\in\{2,3,4,6\}$. Moreover, using Remark~\ref{rm4}$(iii)$ one can easily check that for every standard lattice with $q=\ord(\lambda)\in\{2,3,4,6\}$ the $z_0$ in Theorem~\ref{th:classUptoInner} in fact coincides with the corresponding $z_0$ given in Definition~\ref{de:inv}.
\end{Remark}
The remainder of this subsection deals with the proof of Theorems~\ref{MMM} and~\ref{th:classUptoInner}. The following lemma shows that the map defined in~\eqref{M0M1} is well-defined.
\begin{Lemma}\label{pr:shift}
Let $L$ be a normalised lattice, $\lambda:=\lambda(L)$, $r:=r(L)$. If~$u=1/(s_0r\lambda)$, then there exists an inner automorphism $F$ of~$\Osc_1$ such that $F_u (L)=F(L)$.
\end{Lemma}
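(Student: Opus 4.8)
The plan is to show that applying $F_u$ with $u = 1/(s_0 r \lambda)$ to a normalised lattice $L$ produces a lattice that is conjugate to $L$ by an inner automorphism of $\Osc_1$. The key observation is Remark~\ref{rm4}$(iv)$: for a normalised lattice $L$ we have $\bs_{F_u(L)} = \bs_L + u\cdot\lambda \ {\rm mod}\ \frac{1}{s_0 r}\ZZ$. With $u = 1/(s_0 r\lambda)$ this gives $\bs_{F_u(L)} = \bs_L + \frac{1}{s_0 r} \equiv \bs_L \ {\rm mod}\ \frac{1}{s_0 r}\ZZ$. So $F_u(L)$ and $L$ have the same invariant $\bs$, are both normalised (Remark after the definition of normalised), and are of the same abstract type (since $F_u$ is an automorphism of $\Osc_1$, it preserves $r(L)$ and $\lambda(L)$, hence the isomorphism type of $L$ as an abstract group).

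Next I would invoke Theorem~\ref{th:classUptoInner}: every normalised lattice with a given value of $\bs_L$ — in particular an unshifted one, but the shifted classification follows from Theorem~\ref{MMM} via the map \eqref{M0M1} — is determined up to inner automorphisms by its type, by $(\lambda,\mu,\nu)$ (equivalently the $\SL(2,\ZZ)$-class of $\bar L_0\subset\RR^2$ together with $\lambda$), by the relevant discrete parameters ($\iota$ for $q\in\{1,2\}$), and by $\bs_L\in\RR/\frac1{s_0r}\ZZ$. Since $F_u$ fixes the projection $\bar L_0$ (it acts trivially on the $H/Z(H)$-component, only adding $ut$ to the central coordinate), it preserves $(\mu,\nu)$ and the discrete data. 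Thus $F_u(L)$ and $L$ agree in every invariant appearing in the classification, and therefore $F_u(L) = F(L)$ for some inner automorphism $F$ of $\Osc_1$.

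The main obstacle is a mild circularity: Theorems~\ref{MMM} and \ref{th:classUptoInner} are being proved together with (and partly after) this lemma, so I cannot literally cite the full classification here. The honest route is to argue directly. Write $L$ with adapted generators as in \eqref{eq:generators}; then $F_u(L)$ has the ``same'' generators with only $z_\delta$ replaced by $z_\delta + u\lambda = z_\delta + \frac{1}{s_0 r}$ (and $z_\alpha,z_\beta,\gamma$ unchanged). I would then produce an explicit inner automorphism, a composition $F_\eta \circ F_S$ with $\eta\in\RR^2$ and $S\in\SO(2,\RR)$, carrying the generators of $F_u(L)$ to adapted generators of $L$. The freedom to change generators of $L$ itself (replacing $\delta$ by $\alpha^j\beta^k\gamma^m\delta$, or $\alpha,\beta$ by an $\SL(2,\ZZ)$-combination) is exactly what was used in the proof of Proposition~\ref{scorrect} to show $\bs_L$ is well-defined; the computations there show that shifting $z_\delta$ by a multiple of $\frac{1}{s_0 r}$ can be absorbed into such a generator change combined with an inner automorphism. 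Concretely, since $\frac{1}{s_0 r}\in\frac1r\ZZ$ when $s_0=1$, the shift is realised by some $\delta\mapsto \gamma^{\pm s}\delta$-type move composed possibly with conjugation; when $s_0 = q_0 > 1$ one uses instead the relation defining $z_0$ together with the $(v,w)$-shifts from \eqref{ad}, \eqref{eo2} as in the $q=4$ case of Proposition~\ref{scorrect}. Assembling the resulting $\eta$ and $S$ gives the desired inner $F$ with $F_u(L) = F(L)$, and tracking each case $q\in\{1,2,3,4,6\}$ (and the $L^{q,+}_r$ subcases) completes the proof.
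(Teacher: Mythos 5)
Your first route (matching the invariant $\bs$ and then citing the classification) is, as you yourself concede, circular: Lemma~\ref{pr:shift} is exactly what makes the map \eqref{M0M1} in Theorem~\ref{MMM} well-defined, and the proof of Theorem~\ref{th:classUptoInner} invokes Lemma~\ref{pr:shift} repeatedly, so neither result is available here. That leaves your ``direct'' argument, and there the decisive step is missing. Proposition~\ref{scorrect} proves an \emph{invariance} statement: admissible changes of adapted generators and inner automorphisms change the raw value of $\bs$ only by elements of $\frac{1}{s_0r}\ZZ$. It does not prove the \emph{realization} statement you need, namely that the shift of $z_\delta$ by $\frac{1}{s_0r}$ --- which is not in $\frac1r\ZZ$ when $s_0>1$ --- can actually be produced by an inner automorphism modulo the lattice. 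Note that a change of the generator $\delta$ that keeps its projection to $H/Z(H)$ fixed multiplies $\delta$ by an element of $L\cap Z(H)$ and therefore moves $z_\delta$ only by multiples of $\frac1r$; so for $s_0>1$ the inner automorphism $F_\eta$ is essential and must be exhibited, not merely asserted.

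What the paper does, and what your sketch stops short of, is precisely this construction, case by case for the types with $s_0>1$: a B\'ezout-type choice $\eta=-\frac1r\big(b_1\bar\alpha+b_2\bar\beta\big)$ for type $L^1_{r_0}$, $\eta=-\frac12\big(\bar\alpha+\bar\beta\big)$ for $L^4_r$ with $r\equiv 2$ mod $4$ and for $L^{4,+}_r$ with $4\mid r$, $\eta=\frac12\big(\bar\alpha-\bar\beta\big)$ or $\frac12\bar\alpha$ for $L^{2,+}_r$, and $\eta=-\frac{\kappa}3\big(\bar\alpha-\bar\beta\big)$ for $L^{3,+}_r$; one then checks that $F_\eta$ maps $\alpha,\beta$ into $\la\alpha,\beta,\gamma\ra$ and that $F_\eta(\delta)$ equals a lattice element times $F_{1/(s_0r\lambda)}(\delta)$. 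These verifications hinge on the congruence conditions on $(v,w)$ attached to each type (e.g.\ $v+w$ odd for $L^{4,+}_r$, $v$ or $w$ odd for $L^{2,+}_r$, $2v-w+r/2\equiv\pm1$ mod $3$ for $L^{3,+}_r$, $\gcd(v,w,r)=r_0$ for $L^1_{r_0}$), which come from Proposition~\ref{TF} together with \eqref{klvw} and which your proposal never brings into play. So while the strategy you name in your final paragraph is indeed the paper's (explicit case-by-case construction of $F_\eta$, with the $s_0=1$ case being trivial since $\gamma\in L$), the actual content of the proof --- the choice of $\eta$ and the arithmetic that makes it work in each case --- is not supplied, and the appeal to the computations of Proposition~\ref{scorrect} cannot substitute for it.
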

\begin{proof}
Let $L$ be a normalised lattice. Choose adapted generators and define $s_0$ as in Definition~\ref{de:inv}. If~$s_0=1$, we~see immediately that $F_u(L)=L$.
Hence, we~only have to consider the situation, where $L$ is of type $L^1_{r_0}$, $L^4_r$ with $r\in4\NN+2$ and $L^{q,+}_r$, $q\in\{2,3,4\}$ with $r\in4\NN_{>0}$ for~$q=4$.

At first, assume that $L$ is of type $L^1_{r_0}$. We~choose $\eta=-\frac1r\big({b_1}\bar\alpha+{b_2}\bar\beta\big)$ with $b_1,b_2\in\ZZ$ such that $b_2rx_\delta-b_1ry_\delta-b_3r=r_0$ for some $b_3\in\ZZ$, which is possible since $r_0=\gcd(rx_\delta,ry_\delta,r)$.
Then $F_\eta(L)=F_{1/(s_0r\lambda)}(L)$.

Now, assume that $L$ is of type $L^4_{r}$ with $r\in 2+4\NN$.
Then $v+w=r(x_\delta+y_\delta+2z_\alpha)$ is even because of Proposition~\ref{TF} and equation~\eqref{klvw}, thus $r(x_\delta+y_\delta+2z_\alpha)=2m$ for some $m\in\ZZ$.
Take $\eta=-\frac{1}{2}\big(\bar\alpha+\bar\beta\big)$.
Since $F_\eta(\alpha)$ and $F_\eta(\beta)$ are contained in~$\la \alpha,\beta,\gamma\ra$ and{\samepage
\begin{gather*}
F_\eta(\delta)= \bigg((x_\delta-1)\bar\alpha+y_\delta\bar\beta, z_\delta+\frac12 x_\delta -\frac14,\lambda\bigg)
\\ \hphantom{F_\eta(\delta)}
{}=(-\bar\alpha,-z_\alpha,0)\cdot \bigg(0,\frac12 x_\delta+\frac12 y_\delta+z_\alpha,0\bigg)\cdot \bigg(x_\delta\bar\alpha+y_\delta\bar\beta,z_\delta-\frac14,\lambda\bigg)
=\alpha^{-1}\gamma^m\,F_{-\frac1{4\lambda}}(\delta),
\end{gather*}
we obtain~$F_\eta(L)=F_{-1/(4\lambda)}(L)=F_{1/(s_0r\lambda)}(L)$.}

Now, assume that $L$ is of type $L^{4,+}_{r}$ with $r\in 4\NN_{>0}$, i.e., $r=4r'$ for some $r'\in\NN_{>0}$.
Then $v+w=r(x_\delta+y_\delta+2z_\alpha)$ is odd, i.e., $r(x_\delta+y_\delta+2z_\alpha)-1=2m$ for some $m\in\ZZ$.
Take $\eta=-\frac{1}{2}\big(\bar\alpha+\bar\beta\big)$ as above,
which gives now
\begin{gather*}
F_\eta(\delta)= \bigg((x_\delta-1)\bar\alpha+y_\delta\bar\beta, z_\delta+\frac12 x_\delta -\frac14,\lambda\bigg) = \alpha^{-1}\gamma^{m-r'} F_\frac1{2r\lambda}(\delta).
\end{gather*}
Hence $F_\eta(L)=F_{1/(s_0r\lambda)}(L)$.

Now, assume that $L$ is of type $L^{2,+}_{r}$ with $r\in 2\NN_{>0}$.
Then $v=r(x_\delta-2z_\beta)$ is odd or~$w=r(y_\delta+2z_\alpha)$ is odd by Proposition~\ref{TF} and equation~\eqref{klvw}.
For $v+w$ odd, i.e., $v+w=2m-1$, take $\eta=\frac{1}{2}\big(\bar\alpha-\bar\beta\big)$.
Then $F_\eta(\alpha)$ and $F_\eta(\beta)$ are contained in $\la \alpha,\beta,\gamma\ra$ and
\begin{gather*}
F_\eta(\delta)=\big((x_\delta+1)\bar\alpha+(y_\delta-1)\bar\beta, z_\delta,\lambda\big)=\alpha\beta^{-1}\gamma^{-m+r'}F_{\frac1{2r\lambda}}(\delta),
\end{gather*}
where $r=2r'$. We~obtain~$F_\eta(L)=F_{1/(s_0r\lambda)}(L)$. If~$v+w$ is even, then $v$ and $w$ are odd, thus $w=2m-1$. We~put $\eta=\frac{1}{2}\bar\alpha$. Then $F_\eta(\alpha), F_\eta(\beta) \in\la \alpha,\beta,\gamma\ra$ and
\begin{gather*}
F_\eta(\delta)=\big((x_\delta+1)\bar\alpha+y_\delta\bar\beta, z_\delta,\lambda\big)=\alpha\gamma^{-m}F_{\frac1{2r\lambda}}(\delta),
\end{gather*}
hence $F_\eta(L)=F_{1/(s_0r\lambda)}(L)$.

Finally, assume that $L$ is of type $L^{3,+}_{r}$ with $r\in 3\NN_{>0}$. By~Proposition~\ref{TF} and equation~\eqref{klvw}, $2v-w+r/2\equiv \kappa\ {\rm mod}\ 3$ for $\kappa\in\{1,-1\}$. Since $3|r$, this implies that $2v-w-\kappa r/2=3m+\kappa$ for some $m\in\ZZ$.
Thus $r(2x_\delta-y_\delta -3z_\beta-\kappa/2)=3m+\kappa$ by~\eqref{abvw}.
Consider $\eta=-\frac{\kappa}{3}\big(\bar\alpha-\bar\beta\big)$.
Then $F_\eta(\alpha), F_\eta(\beta) \in\la \alpha,\beta,\gamma\ra$ and
\begin{gather*}
F_\eta(\delta)=\bigg(x_\delta\bar\alpha+(y_\delta+\kappa)\bar\beta, \frac\kappa6 x_\delta-\frac\kappa3 y_\delta-\frac16+z_\delta ,\lambda\bigg)
\\ \hphantom{F_\eta(\delta)}
{}= \beta^\kappa \bigg(x_\delta\bar\alpha+y_\delta\bar\beta, \frac\kappa{3r}(3m+\kappa)+z_\delta ,\lambda\bigg)
= \beta^\kappa\gamma^{\kappa m}F_{\frac 1{3r\lambda}}(\delta)
\end{gather*}
and we obtain~$F_\eta(L)=F_{1/(s_0r\lambda)}(L)$.
\end{proof}

\begin{proof}[Proof of Theorem~\ref{MMM}.]
Lemma~\ref{pr:shift} already shows that the map in (\ref{M0M1}) is well-defined. The injectivity follows straightforwardly using Remark~\ref{rm4}$(iv)$. Furthermore, the map is surjective. Indeed, if $L$ is normalised, then $F_u(L)$ with $u=-\lambda^{-1}\cdot\textbf{s}_L$ is normalised and unshifted and $\big([F_u(L)],\textbf{s}_L s_0 r\big)\in\cM_0\times\RR/\ZZ$ maps to $[L]$.

Now, let $L$ be an arbitrary lattice in~$\Osc_1$.
Since $[L\cap H,L\cap H]$ is a lattice in~$Z(H)$ we can choose $a>0$ such that $(0,a^2,0)$ generates $[L\cap H,L\cap H]$.
Then $F_S(L)$ with $S={a}^{-1}I_2$ is a~normalised lattice. This proves the surjectivity of the first map in Theorem~\ref{MMM}. Since $a>0$ of the isomorphism class of~$F_S(L)$, $S=aI_2$ with respect to inner automorphisms is uniquely determined by $[L\cap H,L\cap H]$ the injectivity follows as well.
\end{proof}

\begin{Lemma} \label{lm:generatorChoice}
Let $L$ be a normalised lattice. There exists a map $S\in\SO(2)$ such that $F_S(L)$ is generated by elements
\begin{gather}\label{eq:generators'}
\alpha=\bigg(\frac 1{\sqrt\nu},z_\alpha,0\bigg),\quad
\beta=\bigg({-}\frac{\mu}{\sqrt\nu}+{\rm i}\sqrt\nu ,z_\beta,0\bigg),\quad
\gamma=\bigg(0,\frac 1r,0\bigg),\quad \delta
\end{gather}
 for an appropriate $\mu+{\rm i}\nu\in\cF$. If~$q:=\ord(\lambda(L))=4$, then $\mu+{\rm i}\nu={\rm i}$ and if $q\in\{3,6\}$, then $\mu+{\rm i}\nu= {\rm e}^{\pi {\rm i}/3}$. If~$\delta\in L$ is chosen such that its projection to the $\RR$-factor of~$\Osc_1$ equals $\lambda(L)$, then $\alpha$, $\beta$, $\gamma$, $\delta$ is adapted.
\end{Lemma}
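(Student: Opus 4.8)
The statement is, at its core, reduction theory for the covolume-one planar lattice $\bar L_0$, supplemented by the rigidity of lattices in $\RR^2$ carrying a rotation of order greater than two. I would organise the proof in three steps: normalise $\bar L_0$ by a rotation so that it sits in standard position in $\cF$, lift the resulting basis to generators of $F_S(L)$, and then check adaptedness together with the special-point claims. For Step~1, since $L$ is normalised $\bar L_0$ is a lattice of covolume one in $\RR^2$. Pick a basis of $\bar L_0$ of positive determinant and let $g\in\SL(2,\RR)$ be the matrix with these columns; its Iwasawa decomposition $g=k\cdot\diag(1/\sqrt\nu,\sqrt\nu)\cdot n$, with $k\in\SO(2)$, $\nu>0$ and $n$ unipotent upper triangular, shows that $k^{-1}\bar L_0$ has the basis $(1/\sqrt\nu,0)$, $(-\mu/\sqrt\nu,\sqrt\nu)$ for some $\mu\in\RR$. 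Since $\cF$ is a fundamental domain for the $\SL(2,\ZZ)$-action on $\textbf{H}$, I may first replace the chosen basis of $\bar L_0$ by an $\SL(2,\ZZ)$-translate so as to arrange $\mu+{\rm i}\nu\in\cF$. Now set $S:=k^{-1}\in\SO(2)$; the automorphism $F_S$ is inner, it rotates the $\xi$-coordinate by $S$ and fixes the $z$- and $t$-coordinates (as $\det S=1$), so $F_S(L)$ is again a normalised lattice with $r(F_S(L))=r(L)=:r$ and $\lambda(F_S(L))=\lambda(L)=:\lambda$, and the image of $F_S(L)\cap H$ in $H/Z(H)\cong\RR^2$ is the lattice with basis $\bar\alpha:=(1/\sqrt\nu,0)$, $\bar\beta:=(-\mu/\sqrt\nu,\sqrt\nu)$.

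For Step~2, note that $F_S(L)\cap H$ is a discrete Heisenberg group $H_1^r(\ZZ)$; its centre lies in $Z(H)=\{(0,z,0)\}$, and since $F_S(L)$ is normalised its commutator subgroup is generated by $(0,1,0)$, whence the centre of $F_S(L)\cap H$ is generated by $\gamma:=(0,1/r,0)$. Lifting $\bar\alpha,\bar\beta$ to elements $\alpha=(1/\sqrt\nu,z_\alpha,0)$ and $\beta=(-\mu/\sqrt\nu+{\rm i}\sqrt\nu,z_\beta,0)$ of $F_S(L)\cap H$, one computes $[\alpha,\beta]=(0,\omega(\bar\alpha,\bar\beta),0)=(0,1,0)=\gamma^r$, so $\alpha,\beta,\gamma$ satisfy the relations of $H_1^r(\ZZ)$ and generate $F_S(L)\cap H$. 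Picking any $\delta\in F_S(L)$ whose projection to the $\RR$-factor of $\Osc_1$ equals $\lambda$, the set $\{\alpha,\beta,\gamma,\delta\}$ generates $F_S(L)$, because $\delta$ maps to a generator of the infinite cyclic quotient $F_S(L)/(F_S(L)\cap H)$. This already produces generators of the shape required in \eqref{eq:generators'}.

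For Step~3, recall that conjugation by $\delta$ acts on $H/Z(H)\cong\RR^2$ as the rotation through $\lambda$, so $\bar L_0$ is invariant under it, and its matrix $\bar S\in\SL(2,\ZZ)$ in the basis $\bar\alpha,\bar\beta$ has order $q:=\ord(\lambda)$. For $q\in\{1,2\}$ one gets $\bar S=\pm I_2$, i.e.\ $\bar S=S_q$, for every $(\mu,\nu)\in\cF$. For $q\in\{3,4,6\}$ I would invoke the elementary fact that a planar lattice invariant under a rotation of order $q>2$ is, up to scaling and rotation, the square lattice (if $q=4$) and the hexagonal lattice (if $q\in\{3,6\}$): if $v$ is a shortest nonzero vector, then $v$ together with its rotate forms a basis, since any vector reduced modulo that sublattice has length strictly below $|v|$. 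Hence the covolume-one representative of $\bar L_0$ must sit at the special point $\mu+{\rm i}\nu={\rm i}$ of $\cF$ when $q=4$ and at $\mu+{\rm i}\nu={\rm e}^{{\rm i}\pi/3}$ when $q\in\{3,6\}$, and a direct computation in these explicit bases (using $\lambda\equiv 2\pi/q\ \mathrm{mod}\ 2\pi$) gives $\bar S=S_q$ in each case. Thus $\alpha,\beta,\gamma$ satisfy the condition in Lemma~\ref{lmgen}, the basis $\bar\alpha,\bar\beta$ is positively oriented because $\det(\bar\alpha\,|\,\bar\beta)=1$, and $\delta$ projects to $\lambda(L)$; by Definition~\ref{def:adapted} the tuple $\alpha,\beta,\gamma,\delta$ is adapted. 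I expect this last step to be the main obstacle: both the rigidity statement for rotation-symmetric planar lattices and, above all, the bookkeeping that the integral matrix obtained is precisely $S_q$ rather than $S_q^{-1}$, the latter relying on the sign normalisation built into the definition of $\lambda(L)$.
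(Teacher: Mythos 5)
Your argument is correct and is essentially the paper's: after rotating, the projected covolume-one lattice is brought into the form $T_{\mu,\nu}^{-1}$ with $\mu+{\rm i}\nu\in\cF$ via the identification $\SO(2)\backslash \SL(2,\RR)/\SL(2,\ZZ)\cong \SL(2,\ZZ)\backslash\textbf{H}$ (you phrase this through the Iwasawa decomposition and the fundamental domain), and then invariance of that lattice under ${\rm e}^{{\rm i}\lambda(L)}$ forces $\mu+{\rm i}\nu={\rm i}$ for $q=4$ and $\mu+{\rm i}\nu={\rm e}^{\pi {\rm i}/3}$ for $q\in\{3,6\}$. The extra details you supply, namely the shortest-vector rigidity argument for rotation-invariant planar lattices and the explicit verification that $\bar S=S_q$ (with the sign convention on $\lambda(L)$ ruling out $S_q^{-1}$), are points the paper leaves implicit, and they check out.
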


\begin{proof}
We can choose elements $\gamma=(0,1/r,0)\in L\cap Z(H)$ and $\alpha', \beta' \in L\cap H$ such that $[\alpha',\beta']=\gamma^r$. Let~$\bar \alpha'$ and $\bar\beta'$ be the projections to $H/Z(H)$ of~$\alpha'$ and $\beta'$, respectively.
Then the matrix $T':=\big(\bar \alpha',\bar\beta'\big)$ consisting of columns $\bar \alpha'$ and $\bar \beta'$ is in~$\SL(2,\RR)$. Let~$S$ be in~$\SO(2)$ and consider the inner automorphism $F_S\colon \Osc_1 \to \Osc_1$, $(\xi,z,t)\mapsto (S\xi,z,t)$. Then $\alpha$, $\beta$, $\gamma$ are generators of~$F_S(L)\cap H$ satisfying $[\alpha,\beta]=\gamma^r$ if and only if
\begin{gather*}
\big(\overline{\alpha},\overline{\beta}\big) = ST'M
\end{gather*}
for some $M\in\SL(2,\ZZ)$. Consequently, if $T\in \SL(2,\RR)$ represents the same element in
\begin{gather*}
\SO(2)\backslash \SL(2,\RR)/\SL(2,\ZZ)
\end{gather*} as $T'$, then there exist an inner automorphism $F_S$ and generators $\alpha,\beta,\gamma$ of~$F_S(L)\cap H$ such that $T=\big(\overline{\alpha},\overline{\beta}\big)$. We~consider the diffeomorphism
\begin{gather*}
\SO(2)\backslash \SL(2,\RR)/\SL(2,\ZZ) \longrightarrow \SL(2,\ZZ)\backslash \SL(2,\RR)/\SO(2)
\end{gather*}
induced by $A\mapsto A^{-1}$ and the diffeomorphism
\begin{gather*}
\SL(2,\RR)/\SO(2) \longrightarrow \textbf{H},\qquad A\cdot \SO(2)\longmapsto A\cdot {\rm i}.
\end{gather*}
The latter one has a right inverse defined by
\begin{gather}\label{Tmunu}
\textbf{H}\longrightarrow\SL(2,\RR)/\SO(2) ,\qquad
\mu+{\rm i}\nu \longmapsto T_{\mu,\nu}:=
\begin{pmatrix}
\sqrt\nu&\frac{\mu}{\sqrt\nu}
\\[1ex]
0&\frac 1{\sqrt\nu}
\end{pmatrix}\!.
\end{gather}
We define $\mu',\nu'\in\RR$ by $\mu'+{\rm i}\nu'=T'^{-1} \cdot {\rm i}$. Let~$\mu+{\rm i}\nu=M \cdot (\mu'+{\rm i}\nu')$, $M\in\SL(2,\ZZ)$, be an arbitrary point in the $\SL(2,\ZZ)$-orbit of~$\mu'+{\rm i}\nu'$. By~the above considerations, $T'$ and $T^{-1}_{\mu,\nu}$ represent the same point in~$\SO(2)\backslash \SL(2,\RR)/\SL(2,\ZZ)$ since
\begin{gather*}
T_{\mu,\nu}\cdot {\rm i} =\mu+{\rm i}\nu=M \cdot (\mu'+{\rm i}\nu')=MT'^{-1}\cdot {\rm i}.
\end{gather*}
In particular, we~can choose the element $\mu+{\rm i}\nu$ such that it belongs to $\cF$ and we put $T:=(T_{\mu,\nu})^{-1}$, which proves the existence of generators satisfying~\eqref{eq:generators'}.

The lattice of~$\RR^2\cong \CC$ spanned by $\bar \alpha = 1/\sqrt \nu$ and $\bar\beta=-\frac{\mu}{\sqrt\nu}+{\rm i}\sqrt\nu$ has to be invariant under~${\rm e}^{{\rm i}\lambda(L)}$. This gives $\mu+{\rm i}\nu={\rm i}$ for $q=4$ and $\mu+{\rm i}\nu= {\rm e}^{\pi {\rm i}/3}$ if $q\in\{3,6\}$.
\end{proof}

\begin{proof}[Proof of Theorem~\ref{th:classUptoInner}.]
Clearly, the standard lattices are normalised and unshifted. Now let $L$ be an unshifted normalised lattice and choose adapted generators as in Lemma~\ref{lm:generatorChoice}. As~usual, we~denote by $\bar \alpha$ and $\bar \beta$ the projections of~$\alpha$ and $\beta$ to $\RR^2$.
Take $\eta=-z_\beta\bar\alpha+z_\alpha\bar\beta$. Then $F_{\eta}(\alpha)=(\bar\alpha,0,0)$, $F_{\eta}(\beta)=\big(\bar\beta,0,0\big)$ and $F_{\eta}(\gamma)=\gamma$. Thus we may assume that we have generators
\begin{gather}\label{choice2}
\alpha=\!\bigg(\!\frac 1{\sqrt\nu},0,0\!\bigg),\quad
\beta=\!\bigg(\!{-}\frac{\mu}{\sqrt\nu}+{\rm i}\sqrt\nu,0,0\!\bigg),\quad
\gamma=\!\bigg(\!0,\frac 1r,0\!\bigg),\quad
\delta=\!\big(x_\delta\bar\alpha+y_\delta\bar\beta,z_\delta,\lambda\big),\!\!
\end{gather}
where $\mu+{\rm i}\nu\in\cF$, $\mu+{\rm i}\nu={\rm i}$ for $q=4$ and $\mu+{\rm i}\nu= {\rm e}^{\pi {\rm i}/3}$ for $q\in\{3,6\}$, where $\lambda=\lambda(L)$ and $q=\ord(\lambda(L))$.

Of course, inner automorphisms of~$\Osc_1$ do not change the type of the lattice and they do not change $\lambda$.

Let us first consider the case $q=1$. Let~$L$ be unshifted with generators as in~\eqref{choice2}. In~particular, $T:=(\bar \alpha,\bar\beta)=T_{\mu,\nu}^{-1}$. Define $v$, $w$ as in~\eqref{abvw}. Then $v=rx_\delta\in\ZZ$ and $w=ry_\delta\in\ZZ$ by Remark~\ref{rm4}$(ii)$. Hence $L=L_r\big(\lambda,\mu,\nu,\frac vr+{\rm i}\frac wr,z_\delta\big)$. Inner automorphisms of~$\Osc_1$ do not change the number $r$. Furthermore, under our assumption that $\mu+{\rm i}\nu$ belongs to $\cF$, also $\mu$ and $\lambda$ are uniquely determined. Therefore, we~consider the set
\begin{gather*}
{\mathcal L}:=\bigg\{L=L_r\bigg(\lambda,\mu,\nu,\frac {\iota_1}r+{\rm i}\frac {\iota_2}r,z\bigg)\,\bigg|\, \iota_1,\iota_2\in\ZZ;\, z\in\RR \text{ such that } \bs_L=0\bigg\}
\end{gather*}
for fixed $r>0$, $\lambda\in 2\pi\NN_{>0}$ and $\mu+{\rm i}\nu\in\cF$. Lattices in~$\mathcal L$ that differ only in the parameter $z$ are considered to be equivalent. Then the set ${\mathcal L}/_{\sim}$ of equivalence classes is in bijection with $\ZZ_r\times\ZZ_r$. We~denote by $[\iota_1,\iota_2]$ the equivalence class of the lattice $L_r(\lambda,\mu,\nu,\frac {\iota_1}r+{\rm i}\frac {\iota_2}r,z)\in {\mathcal L}$. Equivalent lattices in~${\mathcal L}$ are isomorphic via an inner automorphism of~$\Osc_1$, see Lemma~\ref{pr:shift}. Thus it remains to check which equivalence classes can be represented by lattices that are isomorphic via an inner automorphism. Let~$F$ be an inner automorphism of~$\Osc_1$. If~there exist lattices $L_1,L_2\in{\mathcal L}$ such that $F(L_1)=L_2$, then an easy calculation shows that $F$ belongs to the set
${\mathcal I}$ consisting of automorphisms $F_\eta F_S$ that satisfy the conditions
\begin{gather*}
M_S= \begin{pmatrix}m_1&m_2\\m_3&m_4\end{pmatrix}:=T^{-1}ST\in
\begin{cases}
\la S_4 \ra, & \text{if}\quad\mu+{\rm i}\nu={\rm i},\\
\la S_6 \ra, & \text{if}\quad \mu+{\rm i}\nu={\rm e}^{{\rm i}\pi/3},\\
\la -I_2 \ra, & \text{else},
\end{cases}
\end{gather*}
and
\begin{gather*}
T^{-1} \eta\in {\frac12} (m_1 m_2, m_3 m_4)^\top + \frac1r\ZZ\times\frac1r \ZZ.
\end{gather*}
Conversely, each element of~${\mathcal I}$ maps equivalence classes of~${\mathcal L}$ to equivalence classes of~${\mathcal L}$. More exactly, $F_\eta F_S([\iota_1,\iota_2])= [\iota_1',\iota_2']$ for $(\iota_1',\iota_2')^\top= M_S\cdot (\iota_1,\iota_2)^\top$, which proves the claim for $q=1$.

Let $L$ be an unshifted normalised lattice of type $L^2_r$. According to the above considerations, modulo an appropriate inner automorphism, $L$ is generated by elements given as in~\eqref{choice2}, where $\lambda\in\pi+2\pi\,\NN$. Let~$v$, $w$ be defined as in~\eqref{abvw}.
Note that $v$ and $w$ are even for $r\in2\NN_{>0}$ by~Proposition~\ref{TF} and~equation~\eqref{klvw}.
Then
\begin{gather*}
\delta= \bigg(\frac{v}{r}\bar\alpha+\frac{w}{r}\bar\beta,z_\delta,\lambda\bigg)
= \alpha^{\tilde rv}\beta^{\tilde rw}\bigg(\frac{1}{r}(v-r\tilde rv)\bar\alpha+\frac{1}{r}(w-r\tilde rw)\bar\beta,z_\delta-\frac12 vw\tilde r^2,\lambda\bigg).
\end{gather*}
Thus, $L$ is generated by $\alpha$, $\beta$, $\gamma$ and
\begin{gather*}
\delta':=\bigg(\bigg(\frac{1}{r}v-\tilde rv\bigg)\bar\alpha+\bigg(\frac{1}{r}w-\tilde rw\bigg)\bar\beta,z_\delta-\frac12 vw\tilde r^2,\lambda\bigg).
\end{gather*}
We define $\eta=\eta_1\bar \alpha +\eta_2\bar\beta$ by
\begin{gather*}
(\eta_1,\eta_2)^\top=-\frac{1}{2r}(v-r\tilde rv,\,w-r\tilde rw)^\top.
\end{gather*}
Since $v$ and $w$ are even for even $r$, $F_{\eta}$ preserves the subgroup of~$L\cap H=\la \alpha,\beta,\gamma\ra$. Furthermore, $F_{\eta}(\delta')=(0,z_\delta',\lambda)$ for an appropriate $z_\delta'$. Since with $L$ also $F_\eta(L)$ is unshifted and since $F_\eta(L)$ arises by a shift $F_u$ from the unshifted lattice $L^2_r(\lambda,\mu,\nu)=\big\langle\,\alpha,\,\beta,\,\gamma,\,(0,0,\lambda)\,\big\rangle$, there exists an~inner automorphism that maps $F_{ \eta}(L)$ to $L^2_r(\lambda,\mu,\nu)$, see Lemma~\ref{pr:shift}.

If $L$ is unshifted, normalised and of type $L^{2,+}_r$, then we again may assume that $L$ is generated by elements of the form~\eqref{choice2}, where $\lambda\in\pi/2+2\pi\cdot\ZZ$. In~this case $v$ is odd or $w$ is odd by~Proposition~\ref{TF} and~equation~\eqref{klvw}. Denote $\tilde v=\rem_2(v)$ and $\tilde w=\rem_2(w)$.
Here we define $\eta=\eta_1\bar \alpha +\eta_2\bar\beta$ by
\begin{gather*}
(\eta_1,\eta_2)^\top=-\frac{1}{2r}(v-\tilde v,\,w-\tilde w)^\top.
\end{gather*}
Since $v-\tilde v$, $w-\tilde w$ and $r$ are even, $F_{\eta}$ preserves the subgroup of~$L\cap H=\la \alpha,\beta,\gamma\ra$. Furthermore, $F_{\eta}(\delta)=(\tilde v\bar\alpha+\tilde w\bar\beta,z_\delta',\lambda)$ for an appropriate $z_\delta'$.
Now we again use Lemma~\ref{pr:shift} to see that there exists an inner automorphism that maps $F_\eta(L)$ to the unshifted lattice $L_r(\lambda,\mu,\nu,\tilde v/r+{\rm i}\tilde w/r,0)$.
For $\mu+{\rm i}\nu={\rm i}$, we~have $F_{S_4}\big(L^{2,+}_r(\lambda,\mu,\nu,1,0)\big)=L_r(\lambda,\mu,\nu,{\rm i}/r,0)$. Furthermore, for $\mu+{\rm i}\nu={\rm e}^{{\rm i}\pi/3}$, we~have $F_{S}(L_r(\lambda,\mu,\nu,1/r,0))=L^{2,+}_r(\lambda,\mu,\nu,1,1)$ and $F_{S}\big(L^{2,+}_r(\lambda,\mu,\nu,1,1)\big)=L_r(\lambda,\mu,\nu,{\rm i}/r,0)$, where $S=T_{\mu,\nu}^{-1}S_6T_{\mu,\nu}$ for $T_{\mu,\nu}$ as defined in~\eqref{Tmunu}.
This shows that $L$ is isomorphic to some standard lattice of type $L^{2,+}_r$ given in the list of the theorem. Conversely, it~is not hard to see that these standard lattices are not isomorphic to each other, which proves the assertion for type~$L^{2,+}_r$.

Let $L$ be an unshifted normalised lattice of type $L^4_r$. As~above we may assume that $L$ is generated by elements given as in~\eqref{choice2}, where $\lambda\in\pi/2+2\pi\cdot\ZZ$. Let~$v$, $w$ be defined as in~\eqref{abvw}. Then $v+w$ is even for $r\in2\NN_{>0}$ by Proposition~\ref{TF} and~equation~\eqref{klvw}. We~have
\begin{gather*}
\delta= \bigg(\frac{v}{r}\bar\alpha+\frac{w}{r}\bar\beta,z_\delta,\lambda\bigg)
= \alpha^{\tilde rv}\beta^{\tilde rw}\bigg(\frac{1}{r}(v-r\tilde rv)\bar\alpha+\frac{1}{r}(w-r\tilde rw)\bar\beta,z_\delta-\frac12 vw\tilde r^2,\lambda\bigg).
\end{gather*}
Thus, $L$ is generated by $\alpha$, $\beta$, $\gamma$ and
\begin{gather*}
\delta':=\bigg(\bigg(\frac{1}{r}v-\tilde rv\bigg)\bar\alpha+\bigg(\frac{1}{r}w-\tilde rw\bigg)\bar\beta,z_\delta-\frac12 vw\tilde r^2,\lambda\bigg).
\end{gather*}
We define $\eta=\eta_1\bar \alpha +\eta_2\bar\beta$ by
\begin{gather*}
r(S_4-I_2)(\eta_1,\eta_2)^\top=(v-r\tilde rv,\,w-r\tilde rw)^\top.
\end{gather*}
Since $v+w$ is even for even $r$, $F_{\eta}$ preserves the subgroup of~$L\cap H=\la \alpha,\beta,\gamma\ra$. Furthermore, $F_{\eta}(\delta')=(0,z_\delta',\lambda)$ for an appropriate $z_\delta'$. Since $F_\eta(L)$ is also unshifted and since $F_\eta(L)$ arises by a~shift $F_u$ from the unshifted lattice $L^4_r(\lambda)=\big\langle\alpha,\beta,\gamma,(0,0,\lambda)\big\rangle$, there exists an inner automorphism that maps $F_{ \eta}(L)$ to $L^4_r(\lambda)$,
see Lemma~\ref{pr:shift}.

If $L$ is unshifted, normalised and of type $L^{4,+}_r$, then we again may assume that $L$ is generated by elements of the form~\eqref{choice2}, where $\lambda\in\pi/2+2\pi \ZZ$. In~this case $v+w$ is odd. Here we define $\eta=\eta_1\bar \alpha +\eta_2\bar\beta$ by
\begin{gather*}
r(S_4-I_2)(\eta_1,\eta_2)^\top=(v-1,\,w)^\top.
\end{gather*}
Since $v+w$ is odd, $F_{\eta}$ preserves the subgroup of~$L\cap H=\la \alpha,\beta,\gamma\ra$. Furthermore, $F_{\eta}(\delta)=\big(1/r,z_\delta',\lambda\big)$ for an appropriate $z_\delta'$. Now we can argue as above that there exists an inner automorphism that maps $L$ to $L^{4,+}_r(\lambda)$,

The remaining cases $q=3$ and $q=6$ follow a similar strategy.
\end{proof}

\subsection[Classification of lattices up to all automorphisms of~$\Osc_1$]
{Classification of lattices up to all automorphisms of~$\boldsymbol{\Osc_1}$}

To complete this subsection, we~recall from~\cite{F} the classification of lattices in~$\Osc_1$ up to all automorphisms of~$\Osc_1$, i.e., we~consider lattices $L_1,L_2\subset \Osc_1$ as isomorphic if and only if there exists an automorphism $F$ of~$\Osc_1$ such that $F(L_1)=L_2$.

Clearly, every lattice of~$\Osc_1$ is isomorphic to some normalised unshifted lattice in the list of~Theorem~\ref{th:classUptoInner}. On the other hand, there are standard lattices of type $L^1_{r_0}$, $L^2_r$ and $L^{2,+}_r$ which are isomorphic to each other with respect to all automorphisms. Which ones are isomorphic was shown in~\cite{F}.

Before formulating the classification result, let us recall some notions concerning the group $\GL(2,\ZZ)$. We~can extend the action of~$\SL(2,\ZZ)$ on the Poincar\'e half plane $\textbf{H}$ to $\GL(2,\ZZ)$ by~$S\cdot z:=-\bar z$ for $S:=\diag(1,-1)\in\GL(2,\ZZ)$. The set
 $\cF_+$ defined in~\eqref{F+}
 is a fundamental domain of this action. Furthermore, for $M_1,\dots,M_k\in\GL(2,\ZZ)$, we~denote by $\la M_1,\dots,M_k \ra$ the subgroup of~$\GL(2,\ZZ)$ that is generated by $M_1,\dots,M_k$. In~particular, we~will consider subgroups of~$\GL(2,\ZZ)$ that are isomorphic to the dihedral groups $D_2$, $D_4$ and $D_6$.

\begin{Theorem}[\cite{F}]
A complete list of lattices of the oscillator group up to automorphisms of~$\Osc_1$ is given by the list of standard lattices in Theorem~$\ref{th:classUptoInner}$ where we replace the cases~$1$ to~$3$~by
\begin{enumerate}\itemsep=0pt
\item[$1.$] $L^1_{r_0}(r,\lambda,\mu,\nu,\iota_1,\iota_2):= L_r\big(\lambda,\mu,\nu,\frac{\iota_1}{r}+{\rm i}\frac{\iota_2}{r},z_0\big)$, where $z_0=0$ if $r\iota_1\iota_2$ even and $z_0=1/2$ else,~for
\begin{itemize}
\item[$\circ$] $r\in r_0\,\NN_{>0}$,
\item[$\circ$] $\lambda\in 2\pi\,\NN_{>0}$,
\item[$\circ$] $(\mu,\nu)\in\cF_+$,
\item[$\circ$] $\iota=(\iota_1,\iota_2)^\top\in\SSS\backslash\ZZ^2_r$ with $\gcd(\iota_1,\iota_2,r)=r_0$, where
\begin{gather*}
\SSS=\begin{cases}
\langle-I_2\rangle\simeq \ZZ_2, & \text{if}\quad \mu^2+\nu^2>1,\quad \mu\notin\big\{0,\frac{1}{2}\big\},
\\[0.5ex]
\big\langle \left(\begin{smallmatrix}1&0\\0&-1\end{smallmatrix}\right),-I_2\big\rangle \simeq D_2,& \text{if}\quad \mu^2+\nu^2>1,\quad \mu=0,
\\[0.5ex]
\big\langle \left(\begin{smallmatrix}1&-1\\0&-1\end{smallmatrix}\right),-I_2\big\rangle\simeq D_2, & \text{if}\quad \mu^2+\nu^2>1,\quad \mu=\frac{1}{2},
\\[0.5ex]
\big\langle \left(\begin{smallmatrix}0&1\\1&0\end{smallmatrix}\right),-I_2\big\rangle\simeq D_2, & \text{if}\quad \mu^2+\nu^2=1,\quad \mu\notin\big\{0,\frac{1}{2}\big\},
\\[0.5ex]
\big\langle \left(\begin{smallmatrix}1&0\\0&-1\end{smallmatrix}\right),\left(\begin{smallmatrix}0&-1\\1&0\end{smallmatrix}\right)\big\rangle\simeq D_4, & \text{if}\quad \mu=0,\quad \nu=1,
\\
\big\langle \left(\begin{smallmatrix}1&-1\\0&-1\end{smallmatrix}\right),\left(\begin{smallmatrix}1&-1\\1&0\end{smallmatrix}\right)\big\rangle\simeq D_6, & \text{if}\quad \mu=\frac{1}{2},\quad \nu=\frac{\sqrt{3}}{2},
\end{cases}
\end{gather*}
and $S\in\SSS$ acts on~$\iota\in\ZZ^2_r$
by
$S\cdot\iota=\det(S)S(\iota);$
\end{itemize}
\item[$2.$] $L^2_r(\lambda,\mu,\nu):=L_r(\lambda,\mu,\nu,0,0)$ for
\begin{itemize}
\item[$\circ$] $\lambda\in\pi+2\pi\NN$,
\item[$\circ$] $(\mu,\nu)\in\cF_+;$
\end{itemize}
\item[$3.$] $L^{2,+}_r(\lambda,\mu,\nu,\iota_1,\iota_2):= L_r\big(\lambda,\mu,\nu,\frac{\iota_1}{r}+{\rm i}\frac{\iota_2}{r},0\big)$, where $r$ is even, for
\begin{itemize}
\item[$\circ$] $\lambda\in\pi+2\pi\NN$,
\item[$\circ$] $(\mu,\nu)\in\cF_+$,
\item[$\circ$]
$(\iota_1,\iota_2)\in
\begin{cases}	
\{(1,0),(0,1),(1,1)\}, &\text{if}\quad
\mu^2+\nu^2>1,\quad \mu\neq\frac{1}{2 },
\\[0.5ex]
\{(1,0),(1,1)\}, &\text{if}\quad \mu^2+\nu^2>1,\quad \mu=\frac{1}{2},
\\[0.5ex]
\{(1,0),(1,1)\}, &\text{if}\quad \mu^2+\nu^2=1,\quad \mu\neq\frac{1}{2},\\%[0.5ex]
\{(1,1)\} ,&\text{if}\quad \mu=\frac{1}{2},\quad \nu=\frac{\sqrt{3}}{2}.
\end{cases}$
\end{itemize}
\end{enumerate}
\end{Theorem}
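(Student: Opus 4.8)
The plan is to build on Theorem~\ref{th:classUptoInner}. By that theorem every lattice of $\Osc_1$ is carried by an inner automorphism to exactly one standard lattice, so what remains is to decide which standard lattices are related by a \emph{general} automorphism of $\Osc_1$ and to extract a set of representatives. Recall that every automorphism factors as $F=F_u\circ F_\eta\circ F_S$ with $u\in\RR$, $\eta\in\CC$, $S\in\GL(2,\RR)$ of the type in~\eqref{ES}, that $F_\eta$ is inner, and that $F_S$ is inner for $S\in\SO(2,\RR)$. Since the admissible $S$ are exactly the $\CC$-linear maps (multiplication by $a\,{\rm e}^{{\rm i}\theta}$, $a>0$) and the $\CC$-antilinear ones (such a map is $a\cdot\overline{(\cdot)}$ post-composed with a rotation), a short computation shows that, modulo inner automorphisms, $\Aut(\Osc_1)$ is generated by the shifts $F_u$ ($u\in\RR$), the scalings $F_{aI_2}$ ($a>0$) and the single reflection $F_{S_-}$ with $S_-=\diag(1,-1)$, i.e.\ $F_{S_-}\colon(\xi,z,t)\mapsto(\bar\xi,-z,-t)$. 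Hence it suffices to describe the orbits of the standard lattices under the group generated by these three families together with the inner automorphisms.

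By the two bijections of Theorem~\ref{MMM} we identify $\cM$ with $\cM_0\times(\RR/\ZZ)\times\RR_{>0}$, under which the one-parameter group $\{F_{aI_2}\}$ acts simply transitively on the $\RR_{>0}$-factor, the group $\{F_u\}$ acts transitively on the $\RR/\ZZ$-factor, and both fix the class $[L]\in\cM_0$ of the underlying standard lattice (the statements about the $\RR/\ZZ$-factor and $[L]$ follow from~\eqref{M0M1} and Remark~\ref{rm4}$(iv)$; that $F_u$ and $F_{aI_2}$ are compatible with normalisation is built into Theorem~\ref{MMM}). Therefore, modulo shifts and scalings, the problem reduces to computing the orbits \emph{within $\cM_0$}, i.e.\ among standard lattices, of the reflection $F_{S_-}$ followed by reduction to standard form: for a standard lattice $L$, the lattice $F_{S_-}(L)$ is inner-equivalent to $F_u\circ F_{aI_2}(L')$ for a unique standard lattice $L'$ and suitable $u,a$, and the heart of the proof is to identify $L'$ in terms of the parameters of $L$.

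For this one applies $F_{S_-}$ to the generators $l_1,\dots,l_4$ of $L=L_r(\lambda,\mu,\nu,\xi_0,z)$ from Lemma~\ref{lmlattice}: the $H$-components of $l_1,l_2$ get conjugated, $l_3\mapsto l_3^{-1}$, and in $l_4$ the $H$-component is conjugated while $z$ and the $\RR$-projection change sign. Since the projection of $L$ to the $\RR$-factor is a symmetric subgroup, $\lambda(L)$, $r(L)$ and the abstract type of $L$ are unchanged by $F_{S_-}$, so no collapse between different $\lambda$, different $r$ or between the types $L^q_r$ and $L^{q,+}_r$ can occur (in particular the $\lambda$-ranges stay as in Theorem~\ref{th:classUptoInner}). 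What does change is the modular parameter of the planar lattice spanned by the $H$-components of $l_1,l_2$, which passes from $\mu+{\rm i}\nu$ to $-\mu+{\rm i}\nu$. Combined with the $\SL(2,\ZZ)$-identifications already incorporated in the standard list, this is exactly the extension of the $\SL(2,\ZZ)$-action on $\textbf{H}$ to the $\GL(2,\ZZ)$-action via $\diag(1,-1)\cdot z=-\bar z$, whose fundamental domain is $\cF_+$; this accounts for replacing $\cF$ by $\cF_+$ in cases~1--3. It remains to follow the induced action on the discrete data. Because $F_{S_-}$ reverses orientation, the images of the adapted generators $\alpha,\beta$ no longer project to a positively oriented basis and must be replaced; re-doing the construction produces, through the conjugate matrix $M_S=T_{\mu,\nu}^{-1}ST_{\mu,\nu}$ of the relevant symmetry, precisely the twisted action $S\cdot\iota=\det(S)\,S(\iota)$ on the parameter $\iota=(\iota_1,\iota_2)$ of case~1. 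One then reads off, for each point of $\cF_+$, the $\GL(2,\ZZ)$-stabiliser: $\langle-I_2\rangle\cong\ZZ_2$ in the interior, enlarging to $D_2$ on the loci $\mu=0$, $\mu=\frac12$ and $\mu^2+\nu^2=1$, to $D_4$ at $\mu+{\rm i}\nu={\rm i}$, and to $D_6$ at $\mu+{\rm i}\nu={\rm e}^{\pi{\rm i}/3}$, with the matrices as listed. Quotienting $\ZZ^2_r$ by these groups, acting with the $\det$-twist, gives exactly the groups $\SSS$ and the admissible sets of $(\iota_1,\iota_2)$ in the revised cases~1 and~3, while case~2 (having no $\iota$) only sees the replacement of $\cF$ by $\cF_+$. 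For the types with $q\in\{3,4,6\}$ the modular parameter is rigid (equal to ${\rm i}$ or ${\rm e}^{\pi{\rm i}/3}$), $F_{S_-}$ fixes that point, and there is exactly one standard lattice for each remaining datum, so $F_{S_-}(L)$ is forced to be inner-equivalent (possibly after the allowed re-shift) to $L$ itself; hence cases~4--8 are unchanged. Finally one checks irredundancy of the revised list, i.e.\ that distinct entries are genuinely inequivalent, using $r$, $q=\ord(\lambda)$, the abstract type, and $\bs_L$ as invariants.

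The step I expect to be the main obstacle is the last layer of bookkeeping: tracking how the $z$-coordinate data — the numbers $z_0$ entering the definition of the standard lattices, together with the remainder-dependent corrections $\tilde r$ and $(a,b)$ of Definition~\ref{de:inv} — transform under $F_{S_-}$ and under the re-choice of adapted generators, and verifying that the resulting lattice is again \emph{unshifted}, so that only a shift already sanctioned by Theorem~\ref{MMM} is required. This runs through $\omega$-bilinearity and cocycle identities of the same flavour as, but more intricate than, those in the proof of Proposition~\ref{scorrect}, and it is also where one must keep in mind that the normal form of Lemma~\ref{lmgen} is achieved only by $\GL(2,\ZZ)$- and not $\SL(2,\ZZ)$-conjugation — this passage from the orientation-preserving to the orientation-reversing setting is exactly the source both of the new identifications among cases~1--3 and of the $\det$-twist in the action on $\iota$. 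Once all of this is pinned down, the remaining verification that the revised list is complete and irredundant is a finite case check.
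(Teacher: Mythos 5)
The paper itself offers no proof of this theorem: it is recalled verbatim (up to a change of representatives) from~\cite{F}, so there is no internal argument to compare yours against, and your proposal has to be judged on its own. Its skeleton is sound and fits the machinery of this paper: modulo inner automorphisms, $\Aut(\Osc_1)$ is indeed generated by the shifts $F_u$, the scalings $F_{aI_2}$ and the single reflection $F_S$ with $S=\diag(1,-1)$; by Theorem~\ref{MMM} shifts and scalings move only the $\RR/\ZZ$- and $\RR_{>0}$-coordinates, so everything reduces to the induced action of the reflection on $\cM_0$; the reflection preserves $r$, $\lambda$ and the abstract type, sends the modular parameter $\mu+{\rm i}\nu$ to $-\mu+{\rm i}\nu$ (which is exactly why $\cF$ is replaced by $\cF_+$ and why cases 4--8, having no modular freedom, are untouched), and the det-twisted action $S\cdot\iota=\det(S)S(\iota)$ with the listed dihedral stabilisers is consistent with the stated sets of representatives (for instance it reproduces the case-3 identifications on the loci $\mu=\frac12$, $\mu^2+\nu^2=1$ and at ${\rm e}^{\pi{\rm i}/3}$).

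However, as written this is an outline rather than a proof: the steps that carry essentially all of the content beyond Theorem~\ref{th:classUptoInner} are named but not executed. Concretely, (i) you assert, but do not derive, that re-adapting generators after the orientation reversal produces precisely the twisted action on $\iota$, and the stabiliser groups $\ZZ_2$, $D_2$, $D_4$, $D_6$ on the various strata of $\cF_+$ are only ``read off''; and (ii) the $z$-bookkeeping that you yourself flag as the main obstacle is left open: one must show that the reflection followed by the re-adaptation of generators returns a lattice whose shift invariant $\bs_L$ of Definition~\ref{de:inv} vanishes up to a shift absorbable by Lemma~\ref{pr:shift} (a computation of the same nature as, but beyond, Proposition~\ref{scorrect}), since otherwise the identification of the standard form $L'$ of $F_S(L)$ --- and with it the whole orbit computation on $\cM_0$, including the irredundancy of the revised list in case 1 for distinct $\iota$-orbits --- is not established. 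These verifications are finite, but they are exactly where the theorem lives; until they are carried out the proposal is a correct strategy with the decisive computations missing.
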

We remark that compared to~\cite{F}, we~use a slightly different system of representatives. For~instance, here $z_0$ is chosen such that the lattice is unshifted whereas $z_0=0$ in~\cite{F}.
\begin{Remark}\label{RMR}
There is the following generalisation of the classical four-dimensional oscillator group. Let~$H_n$ be the $(2n+1)$-dimensional Heisenberg group, which we identify with $\RR\times \CC^n$ (as a set). Let~us fix an element $\lambda\in\RR^n$. Then we can define the $(2n+2)$-dimensional oscillator group $\Osc_n(\lambda):=H_n\rtimes\RR$, where $t\in\RR$ acts on~$H_n$ by $t.(z,\xi)=(z,\exp(\diag({\rm i}t\lambda))(\xi))$. Medina and Revoy~\cite{MR} proved a criterion for the existence of lattices in~$\Osc_n(\lambda)$ in terms of~$\lambda$. In~\cite[p.~94]{MR}, they tried to classify lattices of~$\Osc_1$ (up to automorphisms of~$\Osc_1$). For every $r\in\NN_{>0}$, they found only a finite number of (isomorphism classes of) lattices $L$ such that $r(L)=r$, where~$r(L)$ was defined in~(\ref{Drlambda}), which is obviously wrong. Note that the map given on p.~92 of~\cite{MR} is not an automorphism of~$\Osc_n(\lambda)$. Therefore, in the proof of Theorem III, one cannot assume that $(0,0,t)$ belongs to $L$ without changing $L\cap H_n$. Thus, Theorem III is not correct, which is one reason for the wrong classification.
\end{Remark}

\section[The model $G$ of the oscillator group]{The model $\boldsymbol G$ of the oscillator group}
In the following, we~want to use a slightly different multiplication rule for the oscillator group, which will make our computations easier. We~use the well known fact that the Heisenberg group $H$ is isomorphic to the set $H(1)$ of elements $M(x,y,z)$ parametrised by $x,y,z\in \RR$ with group multiplication
\begin{gather*}
M(x,y,z)M(x',y',z')=M(x+x',y+y',z+z'+xy').
\end{gather*}
We define an action~${l}$ of~$\RR$ on~$H(1)$ by
\begin{gather*}
l(t)(M(x,y,z))=M\bigg(x \cos t\! -y \sin t, x \sin t\! + y\cos t, z \!+ \frac {xy}2 (\cos (2t)\! -1)\!+\frac {x^2\!-y^2}4\sin (2t)\!\bigg)
\end{gather*}
and consider the semi-direct product
\begin{gather}
G:=H(1)\rtimes_l \RR. \label{GM}
\end{gather}
{\samepage
The image of an element $t\in\RR$ under the identification of~$\RR$ with the second factor of~$G$ in~\eqref{GM} is denoted by $(t)$. It~is easy to check that
\begin{gather}
\phi\colon\quad \Osc_1\rightarrow G,\qquad (x+{\rm i}y,z,t)\mapsto M\bigg({-}y,x,z-\frac12 xy\bigg)(t)\label{Eiso}
\end{gather}
is an isomorphism.

}

The isomorphism $\phi$ maps $L_r(\lambda,\mu,\nu,\xi_0,z)$ to the lattice $\Gamma_r(\lambda,\mu,\nu,\xi_0,z)$ generated by
\begin{gather}
\gamma_1:= M\bigg({-}\sqrt\nu,-\frac{\mu}{\sqrt\nu},\frac{1}{2}\mu\bigg), \label{g1}
\\
\gamma_2:= M\bigg(0,\frac{1}{\sqrt\nu},0\bigg), \label{g2}
\\
\gamma_3:= M\bigg(0,0,\frac{1}{r}\bigg), \label{g3}
\\
\gamma_4:= M\bigg({-}\sqrt\nu y_0,\frac{1}{\sqrt\nu} x_0-\frac{\mu}{\sqrt\nu} y_0,z-\frac{1}{2}\big(x_0 y_0-\mu y_0^2\big)\bigg)\cdot(\lambda), \label{g4}
\end{gather}
where $\xi_0=x_0+{\rm i}y_0$.

The images under $\phi$ of the standard lattices in~$\Osc_1$ are
\begin{gather*}
\Gamma^1_{r_0}(r,\lambda,\mu,\nu,\iota_1,\iota_2) = \phi\big(L^1_{r_0}(r,\lambda,\mu,\nu,\iota_1,\iota_2)\big),\qquad\dots,\qquad
\Gamma^6_r(\lambda)=\phi\big(L^6_r(\lambda)\big).
\end{gather*}
They are called standard lattices in~$G$.

\begin{Definition} A lattice $L\subset \Osc_1$ is called straight if it is generated by a lattice in the Heisenberg group and an element of the centre of~$\Osc_1$. A lattice in~$G$ is called straight if its preimage in~$\Osc_1$ is straight. Similarly, a lattice in~$G$ is called unshifted or normalised if its preimage in~$\Osc_1$ has this property.
\end{Definition}
\begin{Definition} For a lattice $\Gamma\subset G$, we~put $\lambda(\Gamma):=\lambda\big(\phi^{-1}(\Gamma)\big)$. Furthermore, the type of~$\Gamma$ is defined to be the type of~$\phi^{-1}(\Gamma)$.
\end{Definition}
\begin{Remark}
An unshifted normalised lattice $\Gamma$ in~$G$ is straight if and only if it is isomorphic under inner automorphisms of~$G$ to a lattice $\Gamma_r(\lambda,\mu,\nu,0,0)$ for $\lambda\in2\pi\,\NN_{>0}$. Indeed, consider the preimage $L$ of~$\Gamma$ in~$\Osc_1$. After applying an inner automorphism we have~\eqref{choice2} and $\delta$ is an~element of the centre of~$\Osc_1$, thus $x_\delta=y_\delta=0$. Since $L$ is unshifted, $z_\delta\in\frac 1{s_0r}\ZZ$. Now we apply Lemma~\ref{pr:shift}.
\end{Remark}

\section{The right regular representation}

Let $L$ be a lattice in~$\Osc_1$.
Then $L$ acts on the left of~$\Osc_1$ and we can consider the quotient $L\backslash \Osc_1$.
Furthermore, $L$ acts by left translation on functions $\ph\colon \Osc_1\to\CC$. For $\gamma\in L$ this action is defined by
\begin{gather*}
\big(L_\gamma^* \ph\big)(g):=\ph(\gamma\cdot g).
\end{gather*}

Let $L^2(L\backslash \Osc_1)$ denote the Banach space completion of the normed space of all left $L$-invariant continuous functions $\varphi\colon \Osc_1\rightarrow \CC$ that are compactly supported mod $L$ with norm
\begin{gather*}
\lVert\varphi\rVert^2=\int_\bF|\varphi(g)|^2 \,{\rm d}(x,y,z,t),
\end{gather*}
where $\bF$ is a fundamental domain for the action of~$L$ on~$\Osc_1$. For integration we use the Lebesgue measure, which is left- and right-invariant with respect to multiplication in~$\Osc_1$.

The right regular representation~$\rho$ of~$\Osc_1$ on~$L^2(L\backslash \Osc_1)$ is a unitary representation given~by
\begin{gather*}
(\rho(g)(\varphi))(x)=\varphi(xg).
\end{gather*}
It is a classical result that $\big(\rho,L^2(L\backslash\Osc_1)\big)$ is a discrete direct sum of irreducible unitary representations of~$\Osc_1$ with finite multiplicities, see, e.g.,~\cite{Wo07}. We~already described the irreducible unitary representations of~$\Osc_1$ in Section~\ref{irrrep}. Our aim is to determine how often they occur in~$\big(\rho,L^2(L\backslash\Osc_1)\big)$ for a given lattice $L\subset \Osc_1$.
In the previous section we identified $\Osc_1$ with the group $G$ using the isomorphism $\phi\colon \Osc_1\rightarrow G$ defined by (\ref{Eiso}). Let~$L$ be a lattice in~$\Osc_1$ and let $\Gamma=\phi(L)$ denote the corresponding lattice in~$G$. We~identify the right regular representation~$\big(\rho, L^2(L\backslash\Osc_1)\big)$ of~$\Osc_1$ with the right regular representation~$\big(\rho_G, L^2(\Gamma\backslash G)\big)$ of~$G$ via the isomorphism $\phi^*\colon L^2(\Gamma\backslash G)\rightarrow L^2(L\backslash\Osc_1)$, which satisfies $\rho(g)\circ\phi^*=\phi^*\circ(\rho_G\circ\phi)(g)$ for all $g\in\Osc_1$. Then our problem now consists in the decomposition of~$\big(\rho_G, L^2(\Gamma\backslash G)\big)$ into irreducible subrepresentations of~$G$. It~would be natural to use the push-forwards of the irreducible representations of~$\Osc$ as models for the irreducible representations of~$G$. Let~us denote by $\big(\phi^{-1}\big)^*(\sigma,V)=\big(\sigma\circ\phi^{-1},V\big)$ such a push-forward of a representation~$(\sigma,V)$ of~$\Osc_1$. Then the irreducible unitary representations of~$G$ are $\big(\phi^{-1}\big)^*\cC_d$, $\big(\phi^{-1}\big)^*{\mathcal S}^\tau_a$ and $(\phi^{-1})^*{\mathcal F}_{c,d}$. However, in practice we will work with the slightly different representations ${\cC_d}_G$, ${{\mathcal S}^\tau_a}_G$ and ${{\mathcal F}_{c,d}}_G$, which are defined by the same formulas as $\cC_d$, ${\mathcal S}^\tau_a$ and ${\mathcal F}_{c,d}$ of~$\Osc_1$, but where now $X$, $Y$, $Z$, $T$ is the basis of the Lie algebra $\frak g$ of~$G$ that satisfies
\begin{gather*}
\exp(sX)=M(s,0,0),\quad \exp(sY)=M(0,s,0),\quad \exp(sZ)=M(0,0,s) ,\quad \exp(sT)=(s).
\end{gather*}
These representations are equivalent to $\big(\phi^{-1}\big)^*\cC_d$, $\big(\phi^{-1}\big)^*{\mathcal S}^\tau_a$ and $\big(\phi^{-1}\big)^*{\mathcal F}_{c,d}$, respectively. In~the following we simply write~$\rho$ instead of~$\rho_G$ and $\cC_d$, ${\mathcal S}^\tau_a$ and ${\mathcal F}_{c,d}$ instead of~${\cC_d}_G$, ${{\mathcal S}^\tau_a}_G$ and ${{\mathcal F}_{c,d}}_G$.

In the following sections we will mainly concentrate on the spectrum of quotients by standard lattices, only the final result will be formulated for arbitrary ones. This is justified by the following observation. An arbitrary lattice $L$ can be transformed into an unshifted and normalised one by an automorphism $F$ of~$\Osc_1$. Furthermore, the normalised and unshifted lattice $L'$ we get is isomorphic to a standard lattice $\tilde L$ under an inner isomorphism of~$\Osc_1$. Let~$\rho'$ and $\tilde \rho$ denote the corresponding right regular representation on~$L^2(L'\backslash \Osc_1)$ and $L^2\big(\tilde L\backslash \Osc_1\big)$, respectively. Then we have $\tilde\rho\cong \rho'$ and $F^*\tilde \rho\cong\rho$
and we can apply~\eqref{Fiso} to obtain the spectrum of~$L\backslash \Osc_1$ from that of~$\tilde L\backslash \Osc_1$.

We begin the study of~$\rho$ by calculating the action of the basis elements $X$, $Y$, $Z$, $T$ of~$\fg$. In~order to simplify the notation, we~often write $\ph(x,y,z,t)$ instead of~$\ph(M(x,y,z)\cdot(t))$ for $\ph\in L^2(\Gamma\backslash G)$.

\begin{Lemma}\label{Lrep}
The right regular representation of~$\fg_{\mathbb C}$ on~$L^2(\Gamma\backslash G)$ is given by
\begin{gather}
\rho_*(Z)=\partial_z \label{EZ},
\\
\rho_*(X+{\rm i}Y)={\rm e}^{-{\rm i}t}\partial_x +{\rm i}{\rm e}^{-{\rm i}t} \partial_y +{\rm i}x {\rm e}^{-{\rm i}t} \partial_z \label{E+},
\\
\rho_*(X-{\rm i}Y)={\rm e}^{{\rm i}t}\partial_x -{\rm i}{\rm e}^{{\rm i}t} \partial_y -{\rm i}x {\rm e}^{{\rm i}t} \partial_z \label{E-},
\\
\rho_*(T)=\partial_t. \label{ET}
\end{gather}
\end{Lemma}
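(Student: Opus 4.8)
## Proof Plan for Lemma \ref{Lrep}

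The plan is to compute the differential of the right regular representation directly from the definition. By definition, for a basis element $W \in \fg$, the operator $\rho_*(W)$ acts on a smooth function $\ph \in L^2(\Gamma\backslash G)$ by $(\rho_*(W)\ph)(g) = \frac{d}{ds}\big|_{s=0} \ph(g\cdot\exp(sW))$. I would use the given identifications $\exp(sX)=M(s,0,0)$, $\exp(sY)=M(0,s,0)$, $\exp(sZ)=M(0,0,s)$, $\exp(sT)=(s)$, together with the multiplication rule in $G = H(1)\rtimes_l \RR$, and write a general element of $G$ as $g = M(x,y,z)\cdot(t)$. The key point is that one must move the one-parameter subgroup past the factor $(t)$ using the action $l(t)$, since $(t)\cdot M(a,b,c) = l(t)(M(a,b,c))\cdot(t)$.

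First I would handle the three easy generators. For $Z$: since $M(0,0,s)$ is central in $H(1)$ (as $l(t)$ fixes the $z$-coordinate when $x=y=0$), we get $g\exp(sZ) = M(x,y,z+s)\cdot(t)$, hence $\rho_*(Z) = \partial_z$, giving \eqref{EZ}. For $T$: $g\exp(sT) = M(x,y,z)\cdot(t+s)$, so $\rho_*(T) = \partial_t$, giving \eqref{ET}. For $X$: compute $g\exp(sX) = M(x,y,z)\cdot(t)\cdot M(s,0,0) = M(x,y,z)\cdot l(t)(M(s,0,0))\cdot (t)$; one reads off from the formula for $l(t)$ that $l(t)(M(s,0,0)) = M(s\cos t, s\sin t, \tfrac{s^2}{4}\sin 2t)$, and then multiplying by $M(x,y,z)$ in $H(1)$ and differentiating at $s=0$ yields $\rho_*(X) = \cos t\,\partial_x + \sin t\,\partial_y + x\sin t\,\partial_z$. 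Similarly $l(t)(M(0,s,0)) = M(-s\sin t, s\cos t, \text{(order }s^2))$, and $M(x,y,z)M(-s\sin t, s\cos t, *) = M(x - s\sin t, y+s\cos t, z + x\cdot s\cos t + *)$, so differentiating gives $\rho_*(Y) = -\sin t\,\partial_x + \cos t\,\partial_y + x\cos t\,\partial_z$.

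Finally I would assemble the complex combinations: $\rho_*(X+{\rm i}Y) = (\cos t - {\rm i}\sin t)\partial_x + (\sin t + {\rm i}\cos t)\partial_y + x(\sin t + {\rm i}\cos t)\partial_z = {\rm e}^{-{\rm i}t}\partial_x + {\rm i}{\rm e}^{-{\rm i}t}\partial_y + {\rm i}x{\rm e}^{-{\rm i}t}\partial_z$, which is \eqref{E+}, and the conjugate combination gives \eqref{E-}. The only mildly delicate point is bookkeeping the quadratic-in-$s$ terms in the $z$-coordinate of $l(t)(M(s,0,0))$ and of the $H(1)$-product $M(x,y,z)M(\cdots)$ — these must be tracked carefully enough to confirm they vanish upon differentiating at $s=0$ while the cross term $x y'$ from the $H(1)$ multiplication survives and contributes the $x\,\partial_z$ terms. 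There is no conceptual obstacle; the main risk is simply a sign error in the rotation action $l(t)$ or in the Heisenberg cocycle, so I would double-check those two ingredients against the stated formulas before reporting the result.
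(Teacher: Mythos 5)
Your proposal is correct and follows essentially the same route as the paper: differentiate $\ph(M(x,y,z)(t)\exp(sW))$ at $s=0$, moving the one-parameter subgroup past $(t)$ via the action $l(t)$ and using the $H(1)$ multiplication to pick up the $x\,\partial_z$ cross terms, then form the complex combinations $X\pm {\rm i}Y$. Your intermediate formulas for $\rho_*(X)$ and $\rho_*(Y)$ agree with those in the paper, so nothing further is needed.
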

\begin{proof}
We compute
\begin{align*}
(X\varphi)(x,y,z,t)&=\frac{\rm d}{{\rm d} s}\varphi(M(x,y,z)t\exp(sX))\big|_{s=0}
=\frac{\rm d}{{\rm d} s}\varphi(M(x,y,z)t M(s,0,0))\big|_{s=0}
\\
&=\frac{\rm d}{{\rm d} s}\varphi\bigg(M(x,y,z)M\bigg(s\cos t,s\sin t, \frac{1}{4}s^2\sin(2t)\bigg)t\bigg)\bigg|_{s=0}
\\
&=\frac{\rm d}{{\rm d} s}\varphi\bigg(x+s\cos t,y+s\sin t,z+ \frac{1}{4}s^2\sin(2t)+sx\sin t,t\bigg)\bigg|_{s=0}
\\
&=\cos t\,\partial_x \varphi(x,y,z,t) +\sin t\,\partial_y\varphi(x,y,z,t) +x\sin t\, \partial_z \varphi(x,y,z,t).
\end{align*}

Analogously,
\begin{gather*}
(Y\varphi)(x,y,z,t)
%&=\frac{\rm d}{{\rm d} s}\varphi(M(x,y,z)t M(0,s,0))\big|_{s=0}\\
%&=\frac{\rm d}{{\rm d} s}\varphi(x-s\sin t,y+s\cos t,z-\frac{1}{4}s^2\sin(2t)+sx\cos t,t)\big|_{s=0}\\
%&
=-\sin t\,\partial_x\varphi(x,y,z,t) +\cos t\,\partial_y\varphi(x,y,z,t) +x\cos t\,\partial_z\varphi(x,y,z,t).
\end{gather*}
This gives (\ref{E+}) and (\ref{E-}).
Moreover
\begin{gather*}
(Z\varphi)(x,y,z,t)
%=\frac{\rm d}{{\rm d} s}\varphi(M(x,y,z)t M(0,0,s)0)\big|_{s=0}
=\frac{\rm d}{{\rm d} s}\varphi(x,y,z+s,t)\big|_{s=0}=\partial_z\varphi(x,y,z,t)
\end{gather*}
and
\begin{gather*}
(T\varphi)(x,y,z,t)
%=\frac{\rm d}{{\rm d} s}\varphi(M(x,y,z)(t+s))\big|_{s=0}
=\frac{\rm d}{{\rm d} s}\varphi(x,y,z,t+s)\big|_{s=0}=\partial_t\varphi(x,y,z,t).
\tag*{\qed}
\end{gather*}
\renewcommand{\qed}{}
\end{proof}

Denote by $\cH_0$ the sum of all irreducible subrepresentations of~$L^2(\Gamma\backslash G)$ for which $Z$ acts trivially and let $\cH_1$ be the orthogonal complement of~$\cH_0$ in~$L^2(\Gamma\backslash G)$, thus
\begin{gather*}
L^2(\Gamma\backslash G)=\cH_0 \oplus \cH_1.
\end{gather*}
By (\ref{EZ}), $\cH_0$ consists of those functions in~$L^2(\Gamma\backslash G)$ that do not depend on~$z$.

\section{Spectra of quotients by straight lattices}
We consider a normalised, unshifted and straight lattice $\Gamma:=\Gamma_r(\lambda,\mu,\nu,0,0)$, where $\lambda=2\pi \kappa$ for $\kappa\in\NN_{>0}$ and $\mu,\nu\in\RR$, $\nu>0$. This lattice is generated by $\gamma_1$, $\gamma_2$, $\gamma_3$ as defined in (\ref{g1})--(\ref{g3}) and by $\gamma_4=(2\pi\kappa)$.
\begin{Lemma}
Every function~$\ph\in L^2(\Gamma\backslash G)$ considered as a $\Gamma$-invariant function on~$G$ has a~Fourier expansion
\begin{gather}\label{EF}
\varphi(x,y,z,t)=\sum_{k,m,n\in\Z}\varphi_{k,m,n}(x){\rm e}^{2\pi {\rm i}\sqrt\nu ky}{\rm e}^{2\pi {\rm i}rmz}{\rm e}^{{\rm i} \frac n\kappa t},
\end{gather}
where the functions $\ph_{k,m,n}$ satisfy
\begin{gather}\label{eq:varphikmn}
\varphi_{k,m,n}(x)=\varphi_{k+rm,m,n}\big(x-\sqrt\nu\big) {\rm e}^{-\pi {\rm i}\mu (2k+rm)}
\end{gather}
for all $x\in\RR$ and, more generally,
\begin{gather}\label{eq:varphikmn2}
\varphi_{k,m,n}(x)=\varphi_{k+jrm,m,n}\big(x-j\sqrt\nu\big) {\rm e}^{-2\pi {\rm i}\mu jk} {\rm e}^{-\pi {\rm i}rmj^2\mu}
\end{gather}
for all $j\in\ZZ$.
Moreover,
\begin{gather}\label{EL2}
\|\varphi\|^2 =\frac{2\pi \kappa}{r\sqrt\nu}\sum_{k,m,n\in\Z}\int_0^{\sqrt\nu}|\varphi_{k,m,n}(x)|^2\,{\rm d}x.
\end{gather}
\end{Lemma}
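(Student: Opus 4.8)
The plan is to read off the Fourier expansion \eqref{EF} from the invariance of $\varphi$ under the three ``translational'' generators $\gamma_2$, $\gamma_3$, $\gamma_4$, to extract the twisting relations \eqref{eq:varphikmn} and \eqref{eq:varphikmn2} from the remaining generator $\gamma_1$, and to deduce \eqref{EL2} by Parseval over an explicit fundamental domain.

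First I would compute the left action of the generators on $G$. Since $\lambda=2\pi\kappa$ we have $l(2\pi\kappa)=\id$, so a short calculation with the multiplication rule of $H(1)$ gives
\begin{gather*}
\gamma_2\cdot M(x,y,z)(t)=M(x,\,y+1/\sqrt\nu,\,z)(t),\qquad
\gamma_3\cdot M(x,y,z)(t)=M(x,y,z+1/r)(t),\\
\gamma_4\cdot M(x,y,z)(t)=M(x,y,z)(t+2\pi\kappa).
\end{gather*}
Hence a $\Gamma$-invariant $\varphi$ is periodic of period $1/\sqrt\nu$ in $y$, of period $1/r$ in $z$, and of period $2\pi\kappa$ in $t$. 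For almost every fixed $x$ this lets one expand $\varphi(x,\cdot,\cdot,\cdot)$ in an $L^2$-convergent Fourier series in the three periodic variables, and collecting terms is exactly \eqref{EF}, with $\varphi_{k,m,n}(x)$ the Fourier coefficients.

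Next I would use $\gamma_1=M(-\sqrt\nu,-\mu/\sqrt\nu,\mu/2)$, for which the multiplication rule yields
\begin{gather*}
\gamma_1\cdot M(x,y,z)(t)=M(x-\sqrt\nu,\,y-\mu/\sqrt\nu,\,z+\mu/2-\sqrt\nu\,y)(t),
\end{gather*}
so $\Gamma$-invariance reads $\varphi(x,y,z,t)=\varphi(x-\sqrt\nu,y-\mu/\sqrt\nu,z+\mu/2-\sqrt\nu y,t)$. Substituting \eqref{EF} on both sides, expanding ${\rm e}^{2\pi {\rm i}\sqrt\nu k(y-\mu/\sqrt\nu)}$ and ${\rm e}^{2\pi {\rm i}rm(z+\mu/2-\sqrt\nu y)}$, relabelling the summation index $k\mapsto k-rm$ so that the $y$-frequencies on the two sides match, and comparing the coefficient of ${\rm e}^{2\pi {\rm i}\sqrt\nu ky}{\rm e}^{2\pi {\rm i}rmz}{\rm e}^{{\rm i}nt/\kappa}$ gives \eqref{eq:varphikmn}. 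Iterating \eqref{eq:varphikmn} (applying it with $k$ replaced by $k+jrm$ and $x$ replaced by $x-j\sqrt\nu$) and simplifying the accumulated phase, which reduces to the identity
\begin{gather*}
-\pi\mu\big(2(k+jrm)+rm\big)-2\pi\mu jk-\pi rmj^2\mu=-2\pi\mu(j+1)k-\pi rm(j+1)^2\mu,
\end{gather*}
proves \eqref{eq:varphikmn2} by induction on $j\ge0$; the case $j<0$ follows by inverting \eqref{eq:varphikmn}.

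Finally, for \eqref{EL2} I would check that
\begin{gather*}
\bF=[0,\sqrt\nu)\times[0,1/\sqrt\nu)\times[0,1/r)\times[0,2\pi\kappa)
\end{gather*}
is a fundamental domain for the $\Gamma$-action on $G$. Here $\Lambda:=\la\gamma_1,\gamma_2,\gamma_3\ra$ is a normalised discrete Heisenberg lattice in $H(1)$: its image in $H(1)/Z(H(1))\cong\RR^2$ is spanned by $(-\sqrt\nu,-\mu/\sqrt\nu)$ and $(0,1/\sqrt\nu)$ and so has covolume $1$, while $\Lambda\cap Z(H(1))=\la\gamma_3\ra$. Projection to the $x$-axis kills $\gamma_2$, $\gamma_3$ and sends $\gamma_1$ to $-\sqrt\nu$; the stabiliser of a fixed $x$-value inside $\Lambda$ is $\la\gamma_2,\gamma_3\ra$, acting by the unit translations in $y$ and $z$; and $\gamma_4$ is central and contributes only the translation $t\mapsto t+2\pi\kappa$. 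Hence $\bF$ meets every $\Gamma$-orbit exactly once. Integrating $|\varphi|^2$ over $\bF$, performing the $(y,z,t)$-integral first and using that the characters ${\rm e}^{2\pi {\rm i}\sqrt\nu ky}$, ${\rm e}^{2\pi {\rm i}rmz}$, ${\rm e}^{{\rm i}nt/\kappa}$ are pairwise orthogonal on the respective intervals with squared norms $1/\sqrt\nu$, $1/r$, $2\pi\kappa$, Parseval gives $\frac{2\pi\kappa}{r\sqrt\nu}\sum_{k,m,n}|\varphi_{k,m,n}(x)|^2$ for the integral of $|\varphi(x,\cdot,\cdot,\cdot)|^2$ over the box $[0,1/\sqrt\nu)\times[0,1/r)\times[0,2\pi\kappa)$; integrating in $x$ over $[0,\sqrt\nu)$ (Tonelli) then produces \eqref{EL2}. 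The only genuinely delicate points are pinning down the $\gamma_1$-action together with the resulting cocycle phase and verifying that $\bF$ is a fundamental domain; everything else is routine manipulation of Fourier series.
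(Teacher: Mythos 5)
Your proposal is correct and follows essentially the same route as the paper: periodicity in $y$, $z$, $t$ from $\gamma_2$, $\gamma_3$, $\gamma_4$ gives \eqref{EF}, comparing Fourier coefficients under the $\gamma_1$-invariance gives \eqref{eq:varphikmn} and then \eqref{eq:varphikmn2} by induction, and Parseval over the box fundamental domain gives \eqref{EL2}. Your extra verifications (the accumulated-phase identity in the induction and the check that the box is a fundamental domain) are correct details the paper leaves implicit.
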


\begin{proof}
We have
\begin{gather*}
\gamma_2^{k} \,\gamma_3^{m}\gamma_4^{n} =M\bigg(0,\frac{k}{\sqrt\nu},\frac{m}{r}\bigg)(2\pi \kappa n)
\end{gather*}
for all $k,m,n\in\ZZ$.
Furthermore,
\begin{align}
M\bigg(0,\frac{k}{\sqrt\nu},\frac{m}{r}\bigg)\,(2\pi \kappa n)\, M(x,y,z)\,(t)
&=M\bigg(0,\frac{k}{\sqrt\nu},\frac{m}{r}\bigg)M(x,y,z)(t+2\pi\kappa n)\nonumber
\\
&=M\bigg(x,y+\frac{k}{\sqrt\nu},z+\frac{m}{r}\bigg)(t+2\pi \kappa n)\label{eq:2PiInv2}.
\end{align}

By equation~\eqref{eq:2PiInv2}, a function~$\varphi\colon \Gamma\backslash G\rightarrow\RR$ considered as a $\Gamma$-invariant function on~$G$ satisfies
\begin{gather*}
\varphi(x,y,z,t)=\varphi\bigg(x,y+\frac{k}{\sqrt\nu},z+\frac{m}r,t+2\pi \kappa n\bigg).
\end{gather*}
In particular, $\varphi$ is periodic in~$y$, $z$ and $t$ with periodicity $\frac{1}{\sqrt\nu}$, $\frac{1}{r}$ and $2\pi\kappa$.
Hence $\varphi$ has a Fourier expansion of the form~\eqref{EF}.
Since $\varphi$ is $\Gamma$-invariant, we~have
\begin{gather*}
\ph(x,y,z,t)=\ph(\gamma_1\cdot M(x,y,z)(t))=\ph\bigg(x-\sqrt\nu,y-\frac\mu{\sqrt\nu}, z+\frac12\mu-\sqrt\nu y,t\bigg),
\end{gather*}
hence
\begin{align*}
\sum_{k,m,n}&\varphi_{k,m,n}(x){\rm e}^{2\pi {\rm i}\sqrt\nu ky}{\rm e}^{2\pi {\rm i}rmz}{\rm e}^{{\rm i} \frac n \kappa t}\\
&=\sum_{k,m,n}\varphi_{k,m,n}\big(x-\sqrt\nu\big){\rm e}^{-\pi {\rm i}\mu (2k-rm)}{\rm e}^{2\pi {\rm i}\sqrt\nu (k-rm)y} {\rm e}^{2\pi {\rm i}rmz} {\rm e}^{{\rm i} \frac n\kappa t},
\end{align*}
which gives~\eqref{eq:varphikmn}. Then~\eqref{eq:varphikmn2} follows by induction.
The set
\begin{gather*}
\bF:=\big\{M(x,y,z)\cdot(t) \mid x\in \big[0,\sqrt\nu\big],\, y\in\big[0,1/\sqrt\nu\big],\, z\in[0,1/r],\, t\in[0,2\pi\kappa]\big\}
\end{gather*}
is a fundamental domain of the $\Gamma$-action on~$G$. Thus
\begin{gather*}
\int_{\Gamma\backslash G} |\varphi|^2 \,{\rm d}(x,y,z,t) =\int_{\bF} |\varphi|^2 \,{\rm d}(x,y,z,t)=\frac{2\pi \kappa}{r\sqrt\nu}\sum_{k,m,n}\int_0^{\sqrt\nu}|\varphi_{k,m,n}(x)|^2\,{\rm d}x,
\end{gather*}
which proves~\eqref{EL2}.
\end{proof}

Recall that we put
\begin{gather}\label{Tmunu2}
T_{\mu,\nu}=\begin{pmatrix} \sqrt\nu&\frac{\mu}{\sqrt\nu}\vspace{1mm}\\0&\frac 1{\sqrt\nu} \end{pmatrix}\!.
\end{gather}

\begin{Proposition} \label{PH0H1}We have $L^2(\Gamma\backslash G)=\cH_0 \oplus \cH_1$. The first summand is equivalent to
\begin{gather}
\cH_0\cong\bigoplus_{n\in\Z} \cC_{n/\lambda} \oplus \bigoplus _{\substack{(l,k)\in\Z^2\\(l,k)\not=(0,0)}} \bigoplus_{K=0}^{\kappa-1} {\mathcal S}_{a(l,k)}^{\tau(K)}, \label{PH0}
\end{gather}
for $a(l,k)= \big(\nu k^2 +\frac1\nu(-\mu k+l)^2\big)^{\frac12}=\big\|T_{\mu,\nu}^{-1}\cdot(l,k)^\top\big\|$ and $\tau(K)=K/\kappa$.
The second one is equivalent to
\begin{gather}
\cH_1\cong\bigoplus_{m\in \Z_{\not=0}} |m|r\cdot \bigoplus_{n\in\Z} \cF_{rm,\,n/\lambda}. \label{PH1}
\end{gather}
\end{Proposition}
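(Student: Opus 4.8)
The splitting $L^2(\Gamma\backslash G)=\cH_0\oplus\cH_1$ and the description of $\cH_0$ as the space of $z$-independent functions are already recorded above, so what remains is to identify the irreducible constituents of each summand. I will use two facts from Section~\ref{irrrep}: the irreducible unitary representations of $G$ on which $Z$ acts trivially are exactly the $\cC_d$ and the $\cS^\tau_a$, while those on which $Z$ acts by a nonzero scalar $2\pi {\rm i}c$ are exactly the $\cF_{c,d}$; moreover, as recalled in the previous section, $\big(\rho,L^2(\Gamma\backslash G)\big)$ is a Hilbert space direct sum of irreducibles with finite multiplicities. Since every $\cF_{c,d}$ (resp.\ $\cC_d$) is generated by a lowest-weight vector (the vector $\psi_0$, resp.\ a vector killed by $Z$), the multiplicity of such a representation in a closed $G$-invariant subspace equals the dimension of the corresponding eigenspace of lowest-weight vectors; this reduces everything to the elementary computations in Lemma~\ref{Lrep}.

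\textbf{The summand $\cH_1$.} Since $\rho_*(Z)=\partial_z$ is diagonalised by the $z$-frequencies in~\eqref{EF}, one has $\cH_1=\bigoplus_{m\neq 0}\cH_1^{(m)}$, where $\cH_1^{(m)}$ is spanned by the terms with $z$-frequency $rm$; on $\cH_1^{(m)}$ the operator $\rho_*(Z)$ acts by $2\pi{\rm i}rm$, so all constituents are copies of $\cF_{rm,d}$. Assume $rm>0$ (the case $rm<0$ is symmetric, with $\rho_*(X+{\rm i}Y)$ playing the role of the lowering operator and the anti-holomorphic model $\cF_{-c}$ of Section~\ref{irrrep} replacing $\cF_c$). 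By~\eqref{E-}, $\rho_*(X-{\rm i}Y)$ sends the $(k,m,n)$-coefficient $\varphi_{k,m,n}(x)$ to $\varphi_{k,m,n}'(x)+2\pi(\sqrt\nu k+rmx)\varphi_{k,m,n}(x)$ in the $(k,m,n+\kappa)$-slot, and by~\eqref{ET} the $\rho_*(T)$-eigenvalue of the $(k,m,n)$-slot is ${\rm i}n/\kappa$. Hence a vector killed by $\rho_*(X-{\rm i}Y)$ with $\rho_*(T)$-eigenvalue ${\rm i}n_0/\kappa$ must have $\varphi_{k,m,n_0}(x)=C_k\exp(-2\pi\sqrt\nu kx-\pi rmx^2)$ for all $k\in\ZZ$, and the compatibility relation~\eqref{eq:varphikmn} turns into a recursion $C_k=C_{k+rm}\cdot(\text{explicit positive factor})\cdot(\text{unit})$ whose solutions are parametrised by $C_0,\dots,C_{|rm|-1}$; because $|C_k|$ then decays like a Gaussian in $k$ these define genuine elements of $\cH_1^{(m)}$, so the lowest-weight eigenspace is $|rm|$-dimensional and nonzero precisely when $n_0/\kappa=2\pi d$, i.e.\ $d=n_0/\lambda$. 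Using the relations $[\rho_*(X-{\rm i}Y),\rho_*(X+{\rm i}Y)]=2{\rm i}\rho_*(Z)=-4\pi rm$ and $[\rho_*(T),\rho_*(X+{\rm i}Y)]=-{\rm i}\rho_*(X+{\rm i}Y)$ one checks that for such a lowest-weight vector $v$ the norms satisfy $\big\|(\rho_*(X+{\rm i}Y))^j v\big\|^2=(4\pi rm)^j j!\,\|v\|^2$, which reproduces exactly the coefficients in~\eqref{A+}--\eqref{A-}, so $v$ generates an honest copy of $\cF_{rm,\,n_0/\lambda}$. Therefore the multiplicity of $\cF_{rm,d}$ in $\cH_1^{(m)}$ is $|m|r$ for $d=n/\lambda$, $n\in\ZZ$, and zero otherwise, which is~\eqref{PH1}. (Exhaustion is automatic: $\cH_1^{(m)}$ is a discrete sum of representations $\cF_{rm,d}$, each with a one-dimensional lowest-weight eigenspace, so these multiplicities fill it up; alternatively one sees directly that iterating $\rho_*(X+{\rm i}Y)$ on the Gaussians produces, slot by slot, the Hermite basis of $L^2(\RR)$.)

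\textbf{The summand $\cH_0$.} Here $\rho_*(Z)=0$, and by~\eqref{eq:varphikmn} with $m=0$ a $z$-independent $\Gamma$-invariant function has Floquet-type coefficients $\varphi_{k,0,n}(x)=\sum_{l\in\ZZ}c_{k,l,n}\,{\rm e}^{2\pi{\rm i}(l-\mu k)x/\sqrt\nu}$, so $\varphi$ is a superposition of the characters ${\rm e}^{2\pi{\rm i}(l-\mu k)x/\sqrt\nu}{\rm e}^{2\pi{\rm i}\sqrt\nu ky}{\rm e}^{{\rm i}nt/\kappa}$ indexed by $(k,l,n)\in\ZZ^3$, whose $(x,y)$-frequency is $T_{\mu,\nu}^{-1}(l,k)^\top$. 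A direct computation with~\eqref{E+}, \eqref{E-} shows that $\rho_*(X\pm{\rm i}Y)$ fixes $(k,l)$, shifts $n\mapsto n\mp\kappa$, and multiplies by a scalar of modulus $2\pi\big\|T_{\mu,\nu}^{-1}(l,k)^\top\big\|=2\pi a(l,k)$, while $\rho_*(T)$ has eigenvalue ${\rm i}n/\kappa$ on the $(k,l,n)$-character. Consequently, for each $(l,k)\neq(0,0)$ and each residue $K\in\{0,\dots,\kappa-1\}$ the closed span of the characters with $n\equiv K\ (\mathrm{mod}\ \kappa)$ is $G$-invariant and, after rescaling the basis vectors by suitable units to normalise the off-diagonal scalars to $2\pi{\rm i}a(l,k)$, satisfies the defining relations of $\cS_{a(l,k)}^{K/\kappa}$; the characters with $(k,l)=(0,0)$ depend only on $t$ and span $\bigoplus_{n\in\ZZ}\cC_{n/\lambda}$. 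Since $(l,k)\mapsto T_{\mu,\nu}^{-1}(l,k)^\top$ is injective, this partition of the orthonormal family of characters yields precisely~\eqref{PH0}.

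\textbf{Main obstacle.} The delicate point is the analysis of $\cH_1$: one must verify that the recursion for the $C_k$ really produces an $|rm|$-dimensional space of square-integrable functions (convergence of a theta-like series), and that each of these Gaussian lowest-weight vectors generates a genuine copy of $\cF_{rm,d}$ rather than merely an abstract lowest-weight module — this is exactly where the commutation relations and the norm identity $\big\|(\rho_*(X+{\rm i}Y))^j v\big\|^2=(4\pi rm)^j j!\,\|v\|^2$ are needed. By comparison the treatment of $\cH_0$ is routine once the Floquet form of the coefficients is written down, the only care being the cosmetic phase adjustment required to match the normalisation of the model $\cS^\tau_a$.
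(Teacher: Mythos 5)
Your argument is correct, and it splits naturally into two parts. For $\cH_0$ your route is essentially the paper's: the Floquet form of the $m=0$ coefficients, the characters with $(x,y)$-frequency $T_{\mu,\nu}^{-1}(l,k)^\top$, and the observation that $\rho_*(X\pm{\rm i}Y)$ shifts $n\mapsto n\mp\kappa$ with coefficients of modulus $2\pi a(l,k)$ are exactly the computations in the paper's proof (the phase renormalisation you mention is glossed over there in the same way). For $\cH_1$, however, you take a genuinely different route. The paper constructs, for each $m\neq0$ and each residue $k$ of the $y$-frequency mod $rm$, an explicit equivariant (Zak/Weil--Brezin type) intertwiner $\Phi_k\colon L^2_m(\RR)\otimes L^2\big(S^1\big)\to\cV_{k,m}$, and then decomposes the model space using the Hermite basis; completeness and square-integrability come for free from the isometry property of $\Phi_k$ and the completeness of the Hermite functions, and the resulting ground states $\theta_{k,n}$ are reused heavily in Section~\ref{S8}. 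You instead count multiplicities intrinsically, by solving for lowest-weight vectors inside $\cH_1$: the ODE forced by $\rho_*(X-{\rm i}Y)$ on the Fourier coefficients, together with the recursion coming from~\eqref{eq:varphikmn}, gives an $|rm|$-dimensional space of Gaussian theta-like series for each admissible $T$-eigenvalue, and the norm identity $\|(\rho_*(X+{\rm i}Y))^jv\|^2=(4\pi rm)^jj!\,\|v\|^2$ together with $[T,X+{\rm i}Y]=-{\rm i}(X+{\rm i}Y)$ shows each such vector generates a copy of $\cF_{rm,\,n/\lambda}$; exhaustion then rests on the abstract discrete decomposability with finite multiplicities and on the classification of irreducibles with central character $2\pi{\rm i}rm$. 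In fact your lowest-weight vectors are exactly the paper's $\theta_{k,n}$, so the two proofs meet in the middle. What your approach buys is a shorter, more representation-theoretic multiplicity count that never writes down the model $L^2_m(\RR)\otimes L^2\big(S^1\big)$; what it costs is that the convergence of the theta-like series (the Gaussian decay of the $C_k$ must beat the exponential growth of $\int_0^{\sqrt\nu}{\rm e}^{-4\pi\sqrt\nu kx-2\pi rmx^2}{\rm d}x$ as $k\to-\infty$), the domain/smooth-vector issues for the unbounded operators, and the passage from Lie-algebra to group invariance all have to be handled by hand, whereas the paper's $\Phi_k$ settles them at once via~\eqref{EL2}; moreover the explicit model produced by $\Phi_k$ is needed later for the action of $\gamma_4$ on ground states, so the paper's longer construction is not wasted effort.
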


\begin{Remark}\label{R73}
Note that the representations ${\mathcal S}_{a(l,k)}^{\tau(K)}$, which appear in equation~\eqref{PH0} are not pairwise non-isomorphic since $a(l,k)=a\big(\hat l,\hat k\big)$ does not imply $(l,k)=\big(\hat l,\hat k\big)$. For instance,
${\mathcal S}_{a(k,l)}^{\tau(K)}\cong {\mathcal S}_{a(-k,-l)}^{\tau(K)}$.
\end{Remark}

\begin{proof}[Proof of Proposition~\ref{PH0H1}.]
Assume $\varphi\in \cH_0$. Using equations~\eqref{EF} and~\eqref{eq:varphikmn}, we~obtain
\begin{gather*}
\varphi(x,y,z,t)
=\sum_{k,n}\varphi_{k,n}(x){\rm e}^{2\pi {\rm i}\sqrt\nu ky}{\rm e}^{{\rm i} \frac{n}{\kappa}t},
\end{gather*}
where $\varphi_{k,n}\colon \RR\rightarrow\RR$ satisfies
\begin{gather*}
\varphi_{k,n}(x)=\varphi_{k,n}\big(x+\sqrt\nu\big) {\rm e}^{2\pi {\rm i}\mu k}.
\end{gather*}
The latter equation implies that $\varphi_{k,n}\sigma_k^{-1}$ is periodic with periodicity $\sqrt\nu$ for
\begin{gather*}
\sigma_k\colon\quad \RR\longrightarrow\CC,\qquad
\sigma_k(x)={\rm e}^{-2\pi {\rm i}\mu xk\frac{1}{\sqrt\nu}}.
\end{gather*}
Consequently,
\begin{gather*}
\cH_0=\cSpan\left\{\phi_{l,n}^k(x,y,z,t):=\sigma_k(x){\rm e}^{2\pi {\rm i} \frac l{\sqrt\nu}x} {\rm e}^{2\pi {\rm i}\sqrt\nu ky} {\rm e}^{{\rm i} \frac n\kappa t}\mid k,l,n\in\ZZ\right\}.
\end{gather*}
An easy computation shows that
\begin{gather*}
\phi_{l,n}^k(x,y,z,t)={\rm e}^{2\pi {\rm i}(x,y)T_{\mu,\nu}^{-1}(l,k)^\top} {\rm e}^{{\rm i}\,\frac n\kappa t}.
\end{gather*}
Using Lemma~\ref{Lrep}, we~compute
\begin{gather*}
\rho_*(X+{\rm i}Y)\big(\phi_{l,n}^k\big)= {2\pi {\rm i} \bigg(\frac 1{\sqrt\nu}(-\mu k+l)+{\rm i}\sqrt \nu k \bigg) \phi_{l,n-\kappa}^k}\,,
\\
\rho_*(X-{\rm i}Y)\big(\phi_{l,n}^k\big)= {2\pi {\rm i} \bigg(\frac 1{\sqrt\nu}(-\mu k+l)-{\rm i}\sqrt \nu k \bigg) \phi_{l,n+\kappa}^k}\,,
\\
\rho_*(T)\big(\phi_{l,n}^k\big)={\rm i}\,\frac n\kappa\, \phi_{l,n}^k.
\end{gather*}
Hence, for each $n\in\ZZ$, $\phi_{0,n}^0$ spans a representation of type $\cC_{n/2\pi\kappa}$. Fix $k$, $l$, where now at least one of these numbers does not vanish. Furthermore, fix $K\in \NN$, $0\le K<\kappa$ and put
\begin{gather*}
\phi_j:=\phi^k_{l,j\kappa+K}.
\end{gather*}
Lemma~\ref{Lrep} implies
\begin{gather*}
\rho_*(X+{\rm i}Y)(\phi_j)={2\pi {\rm i} \bigg(\frac 1{\sqrt\nu}(-\mu k+l)+{\rm i}\sqrt \nu k \bigg) \, \phi_{j-1}},
\\
\rho_*(X-{\rm i}Y)(\phi_j)={2\pi {\rm i} \bigg(\frac 1{\sqrt\nu}(-\mu k+l)-{\rm i}\sqrt \nu k \bigg) \, \phi_{j+1}},
\\
\rho_*(T)(\phi_j)={\rm i}\bigg(j+\frac K\kappa\bigg)\, \phi_j.
\end{gather*}
Consequently, the representation of~$G$ on~$\cSpan\{\phi_j\mid j\in\ZZ\}$ is equivalent to~${\mathcal S}_{a(k,l)}^{\tau(K)}$,
which pro\-ves~\eqref{PH0}.

Let us turn to $\cH_1$. For fixed $m\in\ZZ$ and $k\in\ZZ$, we~define
\begin{gather*}
{\mathcal V}_{k,m}:=\bigg\{ \ph\in L^2(\Gamma\backslash G)\,\bigg| \, \varphi(x,y,z,t)=\sum_{j,n\in\Z}\varphi_{k+jrm,m,n}(x){\rm e}^{2\pi {\rm i}\sqrt\nu (k+jrm)y}{\rm e}^{2\pi {\rm i}rmz}{\rm e}^{{\rm i} \frac{n}{\kappa}t}\bigg\}.
\end{gather*}
Obviously, ${\mathcal V}_{k,m}={\mathcal V}_{k+lmr,m}$ holds.
\begin{Lemma} \label{LL1m} We have
\begin{align}
\cH_1&=\bigoplus_{m\in \Z_{\not=0}} \cV_{0,m}\oplus\dots\oplus\cV_{|m|r-1,m}\label{sum}
\\
&\cong\bigoplus_{m\in \Z_{\not=0}} |m|r\cdot \big( L^2_{m}(\RR)\otimes L^2\big(S^1\big)\big), \label{eqk}
\end{align}
where we identify $S^1$ with $\RR/2\pi\kappa \ZZ$ and endow $L^2_{m}(\RR):=L^2(\RR)$ with an action of~$G$ determi\-ned~by
\begin{gather}
Z(\ph_1\otimes\ph_2)=2\pi {\rm i} rm\,\ph_1\otimes\ph_2, \label{EmZ}
\\
(X+{\rm i}Y)(\ph_1\otimes\ph_2)(x,t)= ( \ph_1'(x)-2\pi rm x \ph_1(x)) \big({\rm e}^{-{\rm i}t} \ph_2(t)\big), \label{Em+}
\\
(X-{\rm i}Y)(\ph_1\otimes\ph_2)(x,t)= ( \ph_1'(x)+2\pi rm x \ph_1(x)) \big({\rm e}^{{\rm i}t} \ph_2(t)\big),\label{Em-}
\\
T(\ph_1\otimes\ph_2)=\ph_1\otimes \ph_2'.\label{EmT}
\end{gather}
The equivalence of representations $L^2_{m}(\RR)\otimes L^2\big(S^1\big) \cong {\mathcal V}_{k,m}$ in~\eqref{eqk} is given by
\begin{align*}
\Phi_k\colon\ L^2_{m}(\RR)\otimes L^2\big(S^1\big) &\longrightarrow{\mathcal V}_{k,m},
\\
\ph_1\otimes\ph_2&\longmapsto \sum_{j\in \frac k{rm}+\Z}
\varphi_1\big(x+j\sqrt\nu\big){\rm e}^{\pi {\rm i}\mu rmj^2}{\rm e}^{2\pi {\rm i}rmj\sqrt\nu y}{\rm e}^{2\pi {\rm i}rmz}\ph_2(t).
\end{align*}
\end{Lemma}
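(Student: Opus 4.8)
The plan is to reduce everything to the identification of a single $\cV_{k,m}$ with $L^2_{m}(\RR)\otimes L^2(S^1)$, and for that to verify directly that the explicit map $\Phi_k$ does the job. First, since $\cH_1$ is the orthogonal complement of the $z$-independent functions, it is the closed span of the Fourier modes in \eqref{EF} with $m\neq0$. Grouping these modes according to the value of $m$ and, for fixed $m$, according to the residue class of the first index modulo $rm$, and using that $\cV_{k,m}=\cV_{k+rm,m}$ together with the mutual orthogonality of the characters ${\rm e}^{2\pi {\rm i}\sqrt\nu k'y}{\rm e}^{2\pi {\rm i}rmz}$, one obtains the orthogonal decomposition \eqref{sum}: for each $m\neq0$ the spaces $\cV_{0,m},\dots,\cV_{|m|r-1,m}$ are pairwise orthogonal and their sum is the $m$-part of $\cH_1$ (here one uses that $|rm|=|m|r$, so these representatives hit every residue class exactly once). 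Thus the content of the lemma is the claim $\cV_{k,m}\cong L^2_{m}(\RR)\otimes L^2(S^1)$ together with the intertwining property of $\Phi_k$, and \eqref{eqk} follows by summing over $m$ and $k$.

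Next I would check that $\Phi_k$ is well defined, i.e. that for $\varphi_1\in L^2(\RR)$ and $\varphi_2\in L^2(S^1)$ the series defining $\Phi_k(\varphi_1\otimes\varphi_2)$ is a $\Gamma$-invariant $L^2$-function lying in $\cV_{k,m}$. Invariance under $\gamma_2$, $\gamma_3$, $\gamma_4$ is immediate from the shape of the exponentials, since $rmj,rm\in\ZZ$ and $\varphi_2$ is $2\pi\kappa$-periodic; invariance under $\gamma_1$ amounts to checking that the coefficient functions $x\mapsto\varphi_1\big(x+\tfrac{k'}{rm}\sqrt\nu\big)\,{\rm e}^{\pi {\rm i}\mu rm(k'/rm)^2}$, $k'\in k+rm\ZZ$, satisfy \eqref{eq:varphikmn}, which is a one-line computation with the quadratic phase. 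Square-integrability, and more precisely the fact that $\Phi_k$ is a fixed nonzero scalar multiple of an isometry, follows from \eqref{EL2}: writing $\varphi_2=\sum_n c_n\,{\rm e}^{{\rm i}\frac n\kappa t}$ and observing that the intervals $\tfrac{k}{rm}\sqrt\nu+\ell\sqrt\nu+[0,\sqrt\nu]$, $\ell\in\ZZ$, tile $\RR$, the right-hand side of \eqref{EL2} collapses to $(r\sqrt\nu)^{-1}\|\varphi_1\|_{L^2(\RR)}^2\,\|\varphi_2\|_{L^2(S^1)}^2$. Surjectivity onto $\cV_{k,m}$ is then obtained by reversing this computation: by \eqref{eq:varphikmn2} every element of $\cV_{k,m}$ is determined by the coefficient functions $\varphi_{k,m,n}$, $n\in\ZZ$, attached to the fixed representative $k$, and these are exactly the data $\varphi_1\otimes{\rm e}^{{\rm i}\frac n\kappa t}$.

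Finally I would verify that $\Phi_k$ intertwines the $G$-actions, which in particular shows that \eqref{EmZ}--\eqref{EmT} really define a representation of $\fg_{\mathbb C}$, since $\cV_{k,m}$ is $\rho_*(\fg_{\mathbb C})$-invariant by Lemma \ref{Lrep}. Applying \eqref{EZ} and \eqref{ET} to the series of $\Phi_k(\varphi_1\otimes\varphi_2)$ gives \eqref{EmZ} and \eqref{EmT} at once. Applying $\rho_*(X+{\rm i}Y)$ as in \eqref{E+} to the $j$-th summand, the three terms $\partial_x$, ${\rm i}\partial_y$, ${\rm i}x\partial_z$ produce $\varphi_1'(x+j\sqrt\nu)$ and $-2\pi rm(j\sqrt\nu+x)\varphi_1(x+j\sqrt\nu)$ times ${\rm e}^{-{\rm i}t}\varphi_2(t)$ and the remaining exponentials; since $j\sqrt\nu+x$ is precisely the argument, this equals the $j$-th summand of $\Phi_k(\psi_1\otimes\psi_2)$ with $\psi_1(x)=\varphi_1'(x)-2\pi rm\,x\,\varphi_1(x)$ and $\psi_2(t)={\rm e}^{-{\rm i}t}\varphi_2(t)$, i.e. exactly \eqref{Em+}; \eqref{Em-} is symmetric. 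I expect the main obstacle to be purely the bookkeeping around the quadratic phase ${\rm e}^{\pi {\rm i}\mu rmj^2}$: it is exactly the cocycle that \eqref{eq:varphikmn2} forces on the coefficient functions, and one must check that the normalisation chosen in $\Phi_k$ reproduces it precisely and is consistent under $j\mapsto j+1$, so that the image of $\Phi_k$ lies in $L^2(\Gamma\backslash G)$ and not merely in functions on $G$. Everything else is routine Fourier analysis.
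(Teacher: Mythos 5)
Your proposal is correct and follows essentially the same route as the paper: establish \eqref{sum} from the Fourier expansion \eqref{EF} together with \eqref{eq:varphikmn}, identify $\cV_{k,m}$ with $L^2_m(\RR)\otimes L^2\big(S^1\big)$ via $\Phi_k$ using \eqref{eq:varphikmn2} and the norm formula \eqref{EL2} (so that $(r\sqrt\nu)^{1/2}\Phi_k$ is an isometry), and verify $G$-equivariance by the explicit formulas of Lemma~\ref{Lrep}. Your extra check of $\Gamma$-invariance of the image under the generators is only a mild repackaging of the paper's use of \eqref{eq:varphikmn2}, not a different argument.
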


\begin{proof}
Assume $\varphi\in \cH_1$. Then
\begin{gather*}
\varphi(x,y,z,t)=\sum_{m\in\Z_{\not=0}}\sum_{k,n\in\Z}\varphi_{k,m,n}(x){\rm e}^{2\pi {\rm i}\sqrt\nu ky}{\rm e}^{2\pi {\rm i}rmz}{\rm e}^{{\rm i}\,\frac{n}{\kappa}t},
\end{gather*}
where the functions $\varphi_{k,m,n}\colon \RR\rightarrow\RR$ satisfy (\ref{eq:varphikmn}). Thus $\varphi$ is uniquely determined by
$\varphi_{k,m,n}$ for $k\in\{0,\dots,|m|r-1\}$, $m\neq 0$ and $n\in\ZZ$. In~particular,~\eqref{sum} holds.

By equation~\eqref{eq:varphikmn2}, an element $\ph$ of~$\cV_{k,m}$ satisfies
\begin{align*}
\varphi(x,y,z,t)&=\sum_{j,n\in\Z}\varphi_{k+jrm,m,n}(x){\rm e}^{2\pi {\rm i}\sqrt\nu (k+jrm)y}{\rm e}^{2\pi {\rm i}rmz}{\rm e}^{{\rm i} \frac{n}{\kappa}t}
\\
&={\rm e}^{-\pi {\rm i}\mu rm\left(\frac{k}{rm}\right)^2}\!\!\!\!\!\!\sum_{j\in\frac k{rm}+\Z}\sum_{n\in\Z}\! \varphi_{k,m,n}\bigg(\!x\!-\!\frac{k\sqrt\nu}{rm}\!+\!j\sqrt\nu\bigg){\rm e}^{\pi {\rm i}\mu rmj^2}{\rm e}^{2\pi {\rm i}rmj\sqrt\nu y}{\rm e}^{2\pi {\rm i}rmz}{\rm e}^{{\rm i} \frac{n}{\kappa}t}.
\end{align*}

 Hence, $\Phi_k$ is an isomorphism of vector spaces.
By~\eqref{EL2}, $(r\sqrt\nu)^{1/2}\Phi_k$ is an isometry.
Moreover,~$\Phi_k$ is $G$-equivariant. Indeed,~\eqref{EZ} and~\eqref{ET} show that $\rho_*(Z)\circ\Phi_k=\Phi_k\circ Z$ and $\rho_*(T)\circ\Phi_k=\Phi_k\circ T$. Furthermore, by~\eqref{E+},
\begin{gather*}
\rho_*(X+{\rm i}Y)\bigg(\sum_{j\in \frac k{rm}+\Z}
\varphi_1\big(x+j\sqrt\nu\big){\rm e}^{\pi {\rm i}\mu rmj^2}{\rm e}^{2\pi {\rm i}rmj\sqrt\nu y}{\rm e}^{2\pi {\rm i}rmz}\ph_2(t)\bigg)
\\ \qquad
{}={\rm e}^{-{\rm i}t}\sum_{j\in \frac k{rm}+\Z}\big(\varphi_1'\big(x+j\sqrt\nu\big)+{\rm i} 2\pi {\rm i}rmj\sqrt\nu\varphi_1\big(x+j\sqrt\nu\big)
+{\rm i}x2\pi {\rm i}rm\varphi_1\big(x+j\sqrt\nu\big)\big)
\\ \qquad \qquad
{}\times{\rm e}^{\pi {\rm i}\mu rmj^2}\,{\rm e}^{2\pi {\rm i}rmj\sqrt\nu y}\,{\rm e}^{2\pi {\rm i}rmz}\ph_2(t)
\\ \qquad
{}= {\rm e}^{-{\rm i}t}\sum_{j\in \frac k{rm}+\Z}\big(\varphi_1'\big(x+j\sqrt\nu\big)-2\pi rm\big(j\sqrt\nu+x\big)\varphi_1\big(x+j\sqrt\nu\big)\big)
\\ \qquad \qquad
{}\times {\rm e}^{\pi {\rm i}\mu rmj^2}\,{\rm e}^{2\pi {\rm i}rmj\sqrt\nu y}\,{\rm e}^{2\pi {\rm i}rmz}\ph_2(t),
\end{gather*}
 which equals
 \begin{gather*}
 \Phi_k\big((X+{\rm i}Y)(\ph_1\otimes\ph_2)\big)=\Phi_k\big((\ph_1'(x)-2\pi rmx\,\ph_1(x)){\rm e}^{-{\rm i}t}\ph_2(t)\big).
 \end{gather*}
 Analogously, one proves
$\rho_*(X-{\rm i}Y)\circ\Phi_k=\Phi_k\circ (X-{\rm i}Y)$ using~\eqref{E-}.
\end{proof}

Let $H_n$ denote the Hermite polynomial of degree $n$. Hermite polynomials are defined recursively by
\begin{gather}\label{Hermite}
H_0=1,\qquad H_{n+1}(x)=2xH_n(x)-H'_{n}(x),\qquad n=0,1,2,\dots
\end{gather}
and satisfy the equation
\begin{gather}\label{Hermite2}
H_n'=2nH_{n-1},\qquad n=1,2,\dots.
\end{gather}
We fix $m\not= 0$ and consider
\begin{gather*}
\psi_{m,q}(x):=(-1)^q\frac{(2r|m|)^{1/4}}{\sqrt 2^q\sqrt{q!}}H_q\big(\sqrt{2\pi r|m|}x\big) {\rm e}^{-\pi r|m|x^2}.
\end{gather*}
It is well known that, for fixed $m\not=0$, the set $\psi_{m,q}$, $q=0,1,2,\dots$ constitutes a complete orthonormal system in~$L^2(\RR)$. We~define
\begin{gather*}
\psi_{m,q}^{n}(x,t):=(2\pi\kappa)^{-1/2} \psi_{m,q}(x)\, {\rm e}^{{\rm i}\,\frac{n}{\kappa}t}
\end{gather*}
for $n\in\ZZ$ and obtain a complete orthonormal system $\{\psi_{m,q}^{n}\mid n\in \ZZ,\, q\in\NN\}$ in~$L^2_{m}(\RR)\otimes L^2\big(S^1\big)$.
\begin{Lemma}\label{LL2m} For fixed $m,n\in\ZZ$, we~put ${\mathcal X}^{n}_m:=\cSpan\big\{\psi^{n-q\kappa}_{m,q}\mid q\in\NN\big\}$. Then
\begin{gather*}
L^2_m(\RR)\otimes L^2\big(S^1\big)=\bigoplus_{n\in\Z} {\mathcal X}_m^{n}\cong \bigoplus_{n\in\Z} \cF_{rm,\,n/\lambda}.
\end{gather*}
The ground state in~${\mathcal X}_m^{n}$ equals $\psi^n_{m,0}=(2\pi\kappa)^{-1/2}(2r|m|)^{1/4}\, {\rm e}^{-\pi r|m|x^2}{\rm e}^{{\rm i}\,\frac{n}{\kappa}t}$.
\end{Lemma}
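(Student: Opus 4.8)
The plan is to move the whole problem onto the complete orthonormal system $\{\psi_{m,q}^{n}\mid n\in\ZZ,\ q\in\NN\}$ of $L^2_m(\RR)\otimes L^2(S^1)$ introduced just before this lemma, to diagonalise $Z$ and $T$ on it, to exhibit $X\pm{\rm i}Y$ as ladder operators, and then to match each resulting invariant subspace term by term with the Fock model of $\cF_{rm,n/\lambda}$ recalled in Section~\ref{irrrep}. The diagonal part is immediate: by~\eqref{EmZ}, $Z$ acts as the scalar $2\pi{\rm i}rm$ on all of $L^2_m(\RR)\otimes L^2(S^1)$, and by~\eqref{EmT} one has $T\big(\psi_{m,q}^{n'}\big)={\rm i}(n'/\kappa)\psi_{m,q}^{n'}$, which equals $2\pi{\rm i}(n'/\lambda)\psi_{m,q}^{n'}$ since $\lambda=2\pi\kappa$. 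In particular $\psi_{m,0}^{n}$ is the natural candidate for a vacuum vector carrying the parameters $c=rm$, $d=n/\lambda$.

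The core step is to compute $X\pm{\rm i}Y$ on $\psi_{m,q}^{n'}$. By~\eqref{Em+} and~\eqref{Em-}, the factor ${\rm e}^{\mp{\rm i}t}$ turns ${\rm e}^{{\rm i}n't/\kappa}$ into ${\rm e}^{{\rm i}(n'\mp\kappa)t/\kappa}$, while on the $x$-variable one applies $\partial_x\mp2\pi rm\,x$ to the Hermite function $\psi_{m,q}$. Writing $a:=\sqrt{2\pi r|m|}$, so that $\psi_{m,q}(x)=(-1)^q\big(\sqrt2\big)^{-q}(q!)^{-1/2}(2r|m|)^{1/4}H_q(ax){\rm e}^{-a^2x^2/2}$, the recursion $H_q'=2qH_{q-1}$ from~\eqref{Hermite2} together with $2yH_q(y)=H_{q+1}(y)+2qH_{q-1}(y)$ (a reformulation of~\eqref{Hermite}) give, after a short calculation,
\begin{gather*}
\big(\partial_x-2\pi r|m|\,x\big)\psi_{m,q}=2\sqrt{\pi r|m|(q+1)}\,\psi_{m,q+1},\\
\big(\partial_x+2\pi r|m|\,x\big)\psi_{m,q}=-2\sqrt{\pi r|m|q}\,\psi_{m,q-1},
\end{gather*}
the second right-hand side being $0$ for $q=0$; the normalising constants in the definition of $\psi_{m,q}$ are chosen precisely so that these coefficients come out in the form appearing in~\eqref{A+}--\eqref{A-}. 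Since $2\pi rm=\sgn(m)\cdot2\pi r|m|$, this means: for $m>0$, $X+{\rm i}Y$ raises $q$ by one and lowers the layer index by $\kappa$, $X-{\rm i}Y$ does the reverse, and $X-{\rm i}Y$ annihilates every $\psi_{m,0}^{n'}$; for $m<0$ the roles of $X+{\rm i}Y$ and $X-{\rm i}Y$ are interchanged, which is exactly the (anti)holomorphic dichotomy built into the definition of $\cF_{rm,\cdot}$ according to $\sgn(rm)$.

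To assemble the statement, observe that each ladder operator moves the pair (Hermite degree, layer index) within the set defining ${\mathcal X}_m^n$ (the direction of the $\kappa$-shift being dictated by $\sgn m$), so every ${\mathcal X}_m^n$ is $G$-invariant; and since $(q,n)\mapsto(q,n-q\kappa)$ is a bijection of $\NN\times\ZZ$, the family $\{{\mathcal X}_m^n\}_{n\in\ZZ}$ partitions the orthonormal basis and therefore decomposes $L^2_m(\RR)\otimes L^2(S^1)$ into mutually orthogonal closed subspaces. Next I would identify ${\mathcal X}_m^n$ with $\cF_{rm,n/\lambda}$ using the explicit data of Section~\ref{irrrep}: on the lowest vector $\psi_{m,0}^n$ one has $Z=2\pi{\rm i}rm$ and $T={\rm i}n/\kappa=2\pi{\rm i}(n/\lambda)$, i.e.\ $c=rm$ and $d=n/\lambda$, while the ladder coefficients computed above coincide with those of~\eqref{A+} and~\eqref{A-}; hence the linear extension of $\psi_{m,q}^{n-q\kappa}\mapsto\psi_q$ (with the analogous assignment in the $m<0$ case) is a unitary $G$-isomorphism ${\mathcal X}_m^n\cong\cF_{rm,n/\lambda}$. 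The ground-state formula is then read off from $H_0=1$ and $\psi_{m,0}(x)=(2r|m|)^{1/4}{\rm e}^{-\pi r|m|x^2}$.

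I expect the only real friction to lie in the bookkeeping around the $m>0$ versus $m<0$ split: it simultaneously selects the holomorphic or antiholomorphic realisation of $\cF_{rm,n/\lambda}$ and fixes the sign of the layer shift relative to the Hermite degree, and one must double-check that the normalisation of $\psi_{m,q}$ genuinely reproduces the constants $2\sqrt{\pi c(q+1)}$ and $2\sqrt{\pi cq}$ of~\eqref{A+}--\eqref{A-}. Everything else is a short unwinding of~\eqref{EmZ}--\eqref{EmT} and the Hermite recursions.
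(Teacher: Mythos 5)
Your proposal is correct and follows essentially the same route as the paper: diagonalise $Z$ and $T$ on the orthonormal system $\psi_{m,q}^{n}$, derive the first-order ladder relations for the Hermite functions from \eqref{Hermite} and \eqref{Hermite2}, and match the resulting eigenvalues and ladder coefficients (including the $\sgn(m)$-dependent holomorphic/antiholomorphic dichotomy) against \eqref{A+}--\eqref{A-} to identify each ${\mathcal X}_m^{n}$ with $\cF_{rm,\,n/\lambda}$. The one point you gloss over --- that for $m<0$ the $\kappa$-shift of the layer index goes the other way, so the cyclic space of $\psi_{m,0}^{n}$ is $\cSpan\big\{\psi_{m,q}^{n+q\kappa}\mid q\in\NN\big\}$ rather than the literally stated span, which is harmless once one sums over all $n\in\ZZ$ --- is treated with exactly the same brevity in the paper's own proof.
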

\begin{proof}
{\sloppy
Obviously, $L^2_m(\RR)\otimes L^2\big(S^1\big)=\bigoplus_{n\in\Z} {\mathcal X}_m^{n}$ as a vector space. Furthermore,~\eqref{EmZ} and~\eqref{EmT} imply
\begin{gather*}
Z\psi_{m,q}^{n}=2\pi {\rm i}rm\psi_{m,q}^{n},\qquad
T\psi_{m,q}^{n}={\rm i}\frac{n}{\kappa}\psi_{m,q}^{n}. %\label{ELmT}
\end{gather*}}
Using~\eqref{Hermite} and~\eqref{Hermite2}, we~obtain
\begin{gather*}
\psi'_{m,0}(x)+2\pi r|m|x\,\psi_{m,0}(x)=0, %\label{m0}
\\
\psi'_{m,q}(x)+2\pi r|m|x\,\psi_{m,q}(x)=-2\sqrt{q}\sqrt{\pi r|m|}\,\psi_{m,q-1}(x),%\label{mn+}
\\
\psi'_{m,q}(x)-2\pi r|m|x\,\psi_{m,q}(x)=2\sqrt{q+1}\sqrt{\pi r|m|}\,\psi_{m,q+1}(x).%\label{mn-}
\end{gather*}
These equations together with~\eqref{Em+} and~\eqref{Em-} give
\begin{gather*}
(X-{\rm i}Y)\psi_{m,0}^{n}=0,% \label{Ea}
\\
(X-{\rm i}Y)\psi_{m,q}^{n}=-2\sqrt{q}\sqrt{\pi rm}\,\psi_{m,q-1}^{n+\kappa},
\\
(X+{\rm i}Y)\psi_{m,q}^{n}=2\sqrt{q+1}\sqrt{\pi rm}\,\psi_{m,q+1}^{n-\kappa}
\end{gather*}
for $m>0$ and
\begin{gather*}
(X+{\rm i}Y)\psi_{m,0}^{n}=0,
\\
(X+{\rm i}Y)\psi_{m,q}^{n}=-2\sqrt{q}\sqrt{\pi r|m|}\,\psi_{m,q-1}^{n-\kappa},
\\
(X-{\rm i}Y)\psi_{m,q}^{n}=2\sqrt{q+1}\sqrt{\pi r|m|}\,\psi_{m,q+1}^{n+\kappa}%\label{Ee}
\end{gather*}
 for $m<0$. This shows that, for fixed $n\in\ZZ$, the space ${\mathcal X}^{n}_m:=\cSpan\big\{\psi^{n-q\kappa}_{m,q}\mid q\in\NN\big\}$ is invariant under $G$ and equivalent to $\cF_{rm,\,n/\lambda}$ as a $G$-representation.
\end{proof}

Now equation~\eqref{PH1} follows from Lemmas~\ref{LL1m} and~\ref{LL2m}. This finishes the proof of~Pro\-po\-si\-tion~\ref{PH0H1}.
\end{proof}

\section{Spectra of quotients by standard lattices}\label{S8}
\subsection{Strategy} \label{S81}

We will see that each standard lattice $\Gamma$ contains an unshifted normalised straight lattice $\Gamma'$, which is generated by $\gamma_1$, $\gamma_2$, $\gamma_3$ and a power of~$\gamma_4$. In~particular, $\lambda':=\lambda(\Gamma')=2\pi\kappa'$. We~can identify $L^2(\Gamma\backslash G)$ with the space of functions in~$L^2(\Gamma'\backslash G)$ that are invariant under $\gamma_4$. By~Proposition~\ref{PH0H1}, the representation~$L^2(\Gamma'\backslash G)$ decomposes as a direct sum $\cH_0'\oplus\cH_1'$, where $\cH_0'$ and $\cH_1'$ are defined as in~\eqref{PH0} and~\eqref{PH1} but with $\lambda'$ and $\kappa'$ instead of~$\lambda$ and $\kappa$. The subspaces $\cH_0'$ and $\cH_1'$ are invariant under $\gamma_4$. Moreover, each isotypic component of~$\cH_0'$ and $\cH_1'$ is invariant by~$\gamma_4$. Hence we have to determine the invariants of the action of~$\gamma_4$ on each of these isotypic components.

How do these isotypic components look like? Of course, the subrepresentations $\cC_{n/\lambda'}$ of~$\cH'_0$ are pairwise non-equivalent. We~have
\begin{gather*}
\cC_{n/\lambda'}\cong\cSpan\big\{ \phi^0_{0,n}= {\rm e}^{{\rm i} \frac n{\kappa'} t}\big\}.
\end{gather*}
The situation for the second part of~$\cH_0'$ is more complicated. As~already mentioned, the summands ${\mathcal S}_{a(l,k)}^{K/\kappa'}$ and ${\mathcal S}_{a( l',k')}^{K/\kappa'}$ can be equal even if $(l,k)\not=(l',k')$. For instance, it~will turn out that $a(l,k)=a(l',k')$ if $(l,k)$ and $(l',k')$ belong to the same orbit of the $\ZZ_q$-action on~$\ZZ^2$ defined by~the matrix $S_q$, where $q=\ord(\lambda(\Gamma))$. Thus the corresponding representations are equal. Let~$O$ denote the $\ZZ_q$-orbit of a non-zero element of~$\ZZ^2$. Then
\begin{gather*}
\bigoplus _{(l,k)\in O} {\mathcal S}_{a(l,k)}^{K/\kappa'}\cong
\cSpan\big\{ \phi_{l,n}^k={\rm e}^{2\pi {\rm i}(x,y)T_{\mu,\nu}^{-1}(l,k)^\top} {\rm e}^{{\rm i} \frac n{\kappa'} t}\mid (l,k)\in O,\, n\in K+\kappa'\ZZ\big\}
\end{gather*}
for $K\in \NN$, $0\le K<\kappa'$. Let~us now turn to the isotypic components of~$\cH_1'$. These are the summands $|m|r\cdot \cF_{rm,\,n/\lambda'}$ for $m\not=0$ and $n\in\ZZ$. By~\eqref{A+}, each $\cF_{rm,\,n/\lambda'}$ is spanned by a~ground state $\psi_0$ and by $A_+^j\psi_0$ for $j\in\NN_{>0}$. Let~$m\not=0$ and $n\in\ZZ$ be fixed and consider the subrepresentation of~$\cH_1'$ that is equivalent to $|m|r\cdot \cF_{rm,\,n/\lambda'}$. By~Lemma~\ref{LL2m}, the space ${\mathcal W}_{m,n}$ of~ground states in this subrepresentation is spanned by
\begin{align*}
\theta_{k,n}&:=(2\pi\kappa')^{\frac12}(2r|m|)^{-\frac14}\,\Phi_k(\psi^n_{m,0})
=\Phi_k\big({\rm e}^{-\pi r |m|x^2}{\rm e}^{{\rm i}nt/\kappa'}\big)
\\
&= \sum_{j\in \frac k{rm}+\Z}
{\rm e}^{-\pi r |m|(x+j\sqrt\nu)^2}{\rm e}^{\pi {\rm i}\mu rmj^2}{\rm e}^{2\pi {\rm i}rmj\sqrt\nu y}{\rm e}^{2\pi {\rm i}rmz}{\rm e}^{{\rm i}nt/\kappa'}
\end{align*}
for $k=0,\dots,r|m|-1$. Thus
\begin{gather*}
|m|r\cdot \cF_{rm,\,n/\lambda'}\cong {\mathcal W}_{m,n}\oplus A_+({\mathcal W}_{m,n})\oplus A_+^2({\mathcal W}_{m,n})\oplus\cdots
\end{gather*}
as a vector space.
Since $\Phi_{k+lmr}=\Phi_k$, we~may write
\begin{gather*}
{\mathcal W}_{m,n}=\cSpan\big\{\theta_{k,n}\mid k\in \ZZ_{r|m|}\big\}.
\end{gather*}
The action of~$\gamma_4$ commutes with the regular representation, hence the subspace of~$\gamma_4$-invariants in~$|m|r\cdot \cF_{rm,\,n/\lambda'}$ is isomorphic to
${\cW_0}\oplus A_+({\cW_0})\oplus A_+^2({\mathcal W}_0)\oplus\cdots$, where $\cW_0$ is the space of~$\gamma_4$-invariant elements in~$\cW_{m,n}$.

\subsection[The subrepresentation~$\cH_0$]{The subrepresentation~$\boldsymbol{\cH_0}$}
Recall that we defined integer valued matrices $S_q$ by~\eqref{Sq}. Since $(S_q)^q=I_2$ holds, $S_q$ defines a left action of~$\ZZ_q$ on~$\ZZ^2$ by $(l,k)^\top\mapsto S_q\cdot(l,k)^\top$. Each orbit of this action contains exactly $q$ elements except that one of~$(0,0)\in\ZZ^2$. We~denote the orbit space by $\ZZ^2/\ZZ_q$.

We will use again the matrix $T_{\mu,\nu}$, see~\eqref{Tmunu2}. Let~$\Gamma$ be a standard lattice and let $\lambda$, $\mu$, $\nu$ be the corresponding parameters. We~put $q=\ord(\lambda)$. Then
\begin{gather}\label{sq}
S_q=T_{\mu,\nu}{\rm e}^{{\rm i}\lambda}T_{\mu,\nu}^{-1}.
\end{gather}
In particular, the function~$(l,k)^\top\mapsto \big\|T_{\mu,\nu}^{-1}\cdot(l,k)^\top\big\|$ is constant on orbits of the $\ZZ_q$-action.

\begin{Proposition} \label{pr:H0}
Suppose that $\Gamma$ is a standard lattice and let $\lambda$, $\mu$, $\nu$ be the corresponding parameters from the list in Theorem~$\ref{th:classUptoInner}$. Assume that $q=\ord(\lambda)$ equals $2$, $3$, $4$ or~$6$ and define $\kappa$ by $\lambda=\lambda_0+2\pi\kappa$, where $\lambda_0\in\{\pi,\pi/2,\pi/3,2\pi/3\}$, thus $\kappa\in\NN$ if $\ord(\lambda)=2$ and $\kappa\in\ZZ$ otherwise.
Then the representation~$\cH_0$ is equivalent~to
%\begin{equation}\label{EH0}
\begin{gather*}
\cH_0\cong\bigoplus_{n\in \Z} \cC_{n/\lambda} \oplus \bigoplus _{\substack{(l,k)\in\Z^2/\Z_q\\(l,k)\not=(0,0)}} \bigoplus_{K=0}^{|1+q\kappa|-1} {\mathcal S}_{a(l,k)}^{\tau(K)},
\end{gather*}
%\end{equation}
where $\tau(K)=K/|1+q\kappa|$ and $a(l,k)=\big(\nu k^2 +\frac1\nu(-\mu k+l)^2\big)^{\frac12}=\big\|T_{\mu,\nu}^{-1}\cdot(l,k)^\top\big\|$.
\end{Proposition}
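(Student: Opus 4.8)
The plan is to follow the strategy laid out in Section~\ref{S81}: realize $\Gamma$ as an extension of a straight lattice $\Gamma'$ and then compute the $\gamma_4$-invariants in each isotypic component of $\cH_0'$. Since $q = \ord(\lambda) \in \{2,3,4,6\}$, the standard lattice $\Gamma$ has $\lambda = \lambda_0 + 2\pi\kappa$ with $\lambda_0 \in \{\pi, \pi/2, \pi/3, 2\pi/3\}$, and $\lambda' := q\lambda = 2\pi\kappa'$ with $\kappa' = q\kappa + (q\lambda_0)/(2\pi) = q\kappa + 1$, so $|\kappa'| = |1 + q\kappa|$; this is where the index range $K = 0,\dots,|1+q\kappa|-1$ comes from. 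First I would verify that $\Gamma$ contains the straight lattice $\Gamma'$ generated by $\gamma_1,\gamma_2,\gamma_3$ and $\gamma_4^q$; by Remark~\ref{z0}, for a standard lattice the element $l_4^q$ (equivalently $\gamma_4^q$ in $G$) equals $(0,0,q\lambda)$, i.e.\ $\gamma_4^q = (q\lambda) = (2\pi\kappa')$ in the $G$-model, so $\Gamma'$ is exactly $\Gamma_r(\lambda',\mu,\nu,0,0)$ and Proposition~\ref{PH0H1} applies to it with $\kappa'$ in place of $\kappa$. Then $L^2(\Gamma\backslash G)$ is the space of $\gamma_4$-invariant functions in $L^2(\Gamma'\backslash G)$, and $\cH_0 = (\cH_0')^{\gamma_4}$ since $\cH_0'$ (functions independent of $z$) is $\gamma_4$-stable.

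Next I would analyze the action of $\gamma_4$ on $\cH_0'$ using the explicit orthonormal functions $\phi_{l,n}^k(x,y,z,t) = {\rm e}^{2\pi {\rm i}(x,y)T_{\mu,\nu}^{-1}(l,k)^\top}{\rm e}^{{\rm i}\frac{n}{\kappa'}t}$. Writing $\gamma_4 = \gamma_4^{(0)}\cdot(\lambda)$ where $\gamma_4^{(0)} \in H(1)$ (the Heisenberg part, which for a standard lattice depends on $(\mu,\nu)$ and $\xi_0$), one computes by a direct substitution that $\rho(\gamma_4)\phi_{l,n}^k$ is a scalar multiple of $\phi_{l',n'}^{k'}$ where $(l',k')^\top = {\rm e}^{-{\rm i}\lambda}$-rotated indices, i.e.\ by \eqref{sq} precisely $(l',k')^\top = S_q^{-1}(l,k)^\top$, and where $n' = n$ (the $t$-translation by $\lambda$ only contributes a phase ${\rm e}^{{\rm i} n\lambda/\kappa'}$). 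Thus $\gamma_4$ permutes, up to phases, the functions $\phi_{l,n}^k$ within a fixed $\ZZ_q$-orbit $O$ of the index $(l,k)$ and a fixed residue class of $n$ mod $\kappa'$. On the one-dimensional pieces $\cC_{n/\lambda'} = \cSpan\{\phi_{0,n}^0\}$, invariance under $\gamma_4$ forces ${\rm e}^{{\rm i} n\lambda/\kappa'} = 1$; since $\lambda/\kappa' = \lambda/(q\kappa+1)$ and $q\lambda = 2\pi\kappa'$, one checks $n\lambda/\kappa' \in 2\pi\ZZ \iff q \mid n$, so the surviving one-dimensional representations are $\cC_{(qm)/\lambda'} = \cC_{m/\lambda}$ for $m\in\ZZ$, giving the first summand $\bigoplus_{n\in\ZZ}\cC_{n/\lambda}$.

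For the ${\mathcal S}$-part the main computation is to diagonalize $\gamma_4$ on each block $\bigoplus_{(l,k)\in O}{\mathcal S}_{a(l,k)}^{K/\kappa'}$. Fix a nonzero $\ZZ_q$-orbit $O = \{(l_0,k_0), S_q(l_0,k_0),\dots\}$ and a residue $K$ mod $\kappa'$; the relevant $\gamma_4'$-eigenvectors are indexed by pairs (orbit element, value of $n$ in $K + \kappa'\ZZ$). Since $\gamma_4$ cyclically permutes the $q$ orbit elements (with phases $\varepsilon_i$ built from $\mu,\nu,\xi_0$ and the Heisenberg cocycle) and shifts $n \mapsto n$ with scalar ${\rm e}^{{\rm i} n\lambda/\kappa'}$ — note ${\rm e}^{{\rm i} q\lambda/\kappa'} = {\rm e}^{2\pi {\rm i}} = 1$ so the relevant monodromy after $q$ steps is multiplication by $\prod\varepsilon_i \cdot {\rm e}^{{\rm i} K\lambda/\kappa'}$ — the $\gamma_4$-invariant subspace is obtained by averaging, and its ``size'' matches one copy of ${\mathcal S}_{a(l_0,k_0)}^{\tau}$ for the $\tau \in \RR/\ZZ$ determined by the requirement that the $T$-weights $n/\kappa'$ appearing be congruent mod $1$ to $\tau$ after the reindexing $n = q\tilde n + \text{(something)}$. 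Tracking this gives $\tau(K) = K/|1+q\kappa| = K/|\kappa'|$ with $K$ ranging over $\{0,\dots,|\kappa'|-1\}$, and the orbit $O$ contributes exactly ${\mathcal S}_{a(l,k)}^{\tau(K)}$ once for each such $K$; summing over orbits yields the stated formula. I would finally invoke $\Delta_{{\mathcal S}_a^\tau} = -4\pi a^2$ (computed in Section~\ref{irrrep}) together with \eqref{sq}, which shows $\|T_{\mu,\nu}^{-1}\cdot(l,k)^\top\|$ is constant on $\ZZ_q$-orbits, to confirm that $a(l,k)$ is a well-defined function on $\ZZ^2/\ZZ_q$.

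The main obstacle is the bookkeeping in the last paragraph: correctly extracting the permutation-plus-phase structure of $\gamma_4$ on the countable index set $\{(l,k)\in O\} \times (K+\kappa'\ZZ)$ and showing that the invariants form a single irreducible ${\mathcal S}_a^\tau$ with the right $\tau$. The delicate point is that $\gamma_4$ does not fix $n$ in the way suitable for directly reading off a ${\mathcal S}$-representation: one must re-index the basis $\{\phi_j\}$ of each straight-lattice ${\mathcal S}$-summand so that $\gamma_4$ becomes (up to an overall phase absorbed into $\tau$) the shift by $q$ in the $j$-index, and then check that the quotient/invariants are modelled on $L^2(S^1)$ with the $T$-action twisted by exactly $\tau(K)$. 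The phases $\varepsilon_i$ depend on the specific $\xi_0, z_0$ in the list of Theorem~\ref{th:classUptoInner} — here the fact that $\Gamma$ is \emph{unshifted} ($\bs_L = 0$) is what makes the total monodromy phase trivial, so that all $q$ choices of lift survive and $\tau$ is determined purely by $K$; without this normalization the argument would produce a $\lambda$-dependent shift in $\tau$, as anticipated in the introduction for the $q=1$ case.
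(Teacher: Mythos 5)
Your proposal is correct and follows essentially the same route as the paper's proof: pass to the straight sublattice $\Gamma'=\langle\gamma_1,\gamma_2,\gamma_3,\gamma_4^q\rangle$ via Remark~\ref{z0}, apply Proposition~\ref{PH0H1} with $\kappa'=|1+q\kappa|$, and take $\gamma_4$-invariants, which keeps $\cC_{n/\lambda'}$ exactly when $q\mid n$ and collapses each $\ZZ_q$-orbit block to a single ${\mathcal S}_{a(l,k)}^{\tau(K)}$ by averaging. Two minor points: the relevant operator is the left translation $L^*_{\gamma_4}$ (not the right regular $\rho(\gamma_4)$, which does not permute the $\phi^k_{l,n}$ up to scalars), and no reindexing of $n$ is needed to identify $\tau$ — the invariants $\hat\phi^k_{l,n}$, $n\in K+\kappa'\ZZ$, carry $T$-weights ${\rm i}n/\kappa'$, giving $\tau(K)=K/|1+q\kappa|$ directly, the monodromy being automatically trivial because $\gamma_4^q\in\Gamma'$ acts trivially on left-$\Gamma'$-invariant functions.
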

\begin{proof} By Remark~\ref{z0}, we~have $l_4^q=(0,0,q\lambda)$. This implies that the sublattice of~$\Gamma$ spanned by $\gamma_1$, $\gamma_2$, $\gamma_3$ and $\gamma_4^q=(q\lambda)$ is straight. With $\kappa':=|1+q\kappa|$ we obtain~$\gamma_4^q=(\pm 2\pi\kappa')$. We~proceed as explained in Section~\ref{S81}. Since
\begin{gather*}
L_{\gamma_4}^*(\phi^0_{0,n})={\rm e}^{\pm2\pi {\rm i} \frac nq} \phi^0_{0,n},
\end{gather*}
the representation~$\cC_{n/\lambda'}$ consists of~$\gamma_4$-invariant elements if and only if $q|n$.

We want to show that $\gamma_4$ leaves invariant the subrepresentation~$\cH_O$ that is isomorphic to $\bigoplus _{(l,k)\in O} {\mathcal S}_{a(l,k)}^{K/\kappa'}$ for a fixed $\ZZ_q$-orbit $O\not=\{(0,0)\}$ and we want to determine the space of~$\gamma_4$-invariant elements in~$\cH_O$. Since $\gamma_4=M'\cdot(\lambda)$ for
\begin{gather*}
M'=M\bigg({-}\sqrt\nu y_0,\,\frac1{\sqrt\nu}(x_0-\mu y_0),\,\frac12 y_0(x_0-\mu y_0)\bigg)
\end{gather*}
and
\begin{gather*}
\phi_{l,n}^k={\rm e}^{2\pi {\rm i}(x,y)T_{\mu,\nu}^{-1}( l, k)^\top} {\rm e}^{{\rm i}\,\frac n{\kappa'} t}
\end{gather*}
for $(l,k)\in \ZZ^2\setminus \{(0,0)\}$ and $n\in \ZZ$, we~obtain~$L^*_{M'}\big(\phi^k_{l,n}\big)={\rm e}^{2\pi {\rm i}(kx_0-ly_0)}\phi^k_{l,n}$, thus
\begin{align*}
L_{\gamma_4}^*\big(\phi^k_{l,n}\big)(x,y,z,t) &= L_{(\lambda)}^*\big(L_{M'}^*\big(\phi^k_{l,n}\big)\big)(x,y,z,t)
=L_{(\lambda)}^*\big({\rm e}^{2\pi {\rm i}(kx_0-ly_0)}\phi^k_{l,n}\big)(x,y,z,t)
\\
&={\rm e}^{2\pi {\rm i}(kx_0-ly_0)} {\rm e}^{2\pi {\rm i}(x\cos\lambda-y\sin\lambda,x\sin\lambda+y\cos \lambda)T_{\mu,\nu}^{-1}(l,k)^\top}{\rm e}^{{\rm i} \frac n{\kappa'} (t+\lambda)}
\\
&={\rm e}^{2\pi {\rm i}(kx_0-ly_0)}{\rm e}^{2\pi {\rm i}(x,y)T_{\mu,\nu}^{-1}\left(\hat l,\hat k\right)^\top}{\rm e}^{{\rm i} \frac n{\kappa'} (t+\lambda)},
\end{align*}
where $\big(\hat l,\hat k\big)^\top=S^{-1}_q(l,k)^T$ by (\ref{sq}) for $q=\ord(\lambda)$. This gives
\begin{gather*}
L_{\gamma_4}^*\big(\phi^k_{l,n}\big)={\rm e}^{2\pi {\rm i}(kx_0-ly_0)}{\rm e}^{{\rm i}\,\frac n{\kappa'} \lambda}\phi_{\hat l,n}^{\hat k}
\end{gather*}
which shows that $L^*_{\gamma_4}(\cH_O)=\cH_O$. Furthermore, if $O$ is the orbit of~$(l,k)$, then the space of~$\gamma_4$-invariant functions in~$\cH_O$ is spanned by
\begin{gather*}
\hat \phi_{l,n}^k:=\sum_{j=0}^{q-1}\big(L^*_{\gamma_4}\big)^j\phi_{l,n}^k,\qquad
n\in K+\kappa'\ZZ.
\end{gather*}
In particular, the subrepresentation of~$\gamma_4$-invariant elements in~$\cH_O$ is equivalent to ${\mathcal S}_{a(l,k)}^{K/\kappa'}$.
\end{proof}

Instead of standard lattices $\Gamma^1_{r_0}(r,\lambda,\mu,\nu,\iota_1,\iota_2)= \Gamma_r\big(\lambda,\mu,\nu,\frac{\iota_1}{r}+{\rm i}\frac{\iota_2}{r},z_0\big)$, for which always $(\mu,\nu)$ is in~$\cF$, we~consider, more generally, lattices $\Gamma_r\big(\lambda,\mu,\nu,\frac{\iota_1}{r}+{\rm i}\frac{\iota_2}{r},z_0\big)$ of type $\Gamma^1_{r_0}$ for arbitrary $(\mu,\nu)\in\textbf{H}$.
\begin{Proposition} \label{2piH0}
Suppose $\Gamma=\Gamma_r(\lambda,\mu,\nu,\xi_0,z_0)$ with $\lambda=2\pi\kappa$, $\kappa\in\NN_{>0}$, $(\mu,\nu)\in\textbf{H}$, $\xi_0=\frac1r(\iota_1+{\rm i}\iota_2)$, $z_0=\frac{1}{2}\rem_2(\iota_1\iota_2r)$, $\iota_1,\iota_2\in\ZZ$.
The representation~$\cH_0$ is equivalent to
\begin{gather*}
\cH_0\cong\bigoplus_{n\in \Z} \cC_{n/\lambda} \oplus \bigoplus _{\substack{(l,k)\in\Z^2,\\(l,k)\not=(0,0)}} \bigoplus_{j=0}^{\kappa-1} {\mathcal S}_{a(l,k)}^{\tau(K_j(l,k))},
\end{gather*}
{\sloppy
where $a(l,k)=\big\|T_{\mu,\nu}^{-1}\cdot(l,k)^\top\big\|$ is defined as before,
$\tau(K):=\frac{K}{\kappa}\in\RR/\ZZ$ and $K_j(l,k):=$ \mbox{$j-\frac1{r}(\iota_1 k-\iota_2l)$}.

}
\end{Proposition}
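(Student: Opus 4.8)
The plan is to embed a straight sublattice $\Gamma'\subset\Gamma$, decompose $L^2(\Gamma'\backslash G)$ by Proposition~\ref{PH0H1}, and then cut out the subspace of $\gamma_4$-invariants, exactly in the spirit of Section~\ref{S81} and of the proof of Proposition~\ref{pr:H0}. The point of difference is that for $q=1$ the generator $\gamma_4$ does not rotate, so it will act on the relevant basis vectors not by a permutation of an $S_q$-orbit but simply by a scalar. To produce $\Gamma'$, put $r_0:=\gcd(\iota_1,\iota_2,r)$ and $s_0:=r/r_0$, and write $\gamma_4=M'\cdot(2\pi\kappa)$ with $M'\in H(1)$ as in~\eqref{g4}. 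Since $\lambda=2\pi\kappa$, the element $(2\pi\kappa)$ is central in $G$, so $\gamma_4^{s_0}=(M')^{s_0}\cdot(2\pi s_0\kappa)$. A direct computation shows that the image of $(M')^{s_0}$ in $H(1)/Z(H(1))\cong\RR^2$ coincides with the image of $\gamma_1^{\iota_2/r_0}\gamma_2^{\iota_1/r_0}$; combined with the choice $z_0=\tfrac12\rem_2(\iota_1\iota_2 r)$ (the same mechanism that makes $\Gamma$ unshifted, cf.\ case~$1$ of Theorem~\ref{th:classUptoInner}), this forces $(M')^{s_0}\in\langle\gamma_1,\gamma_2,\gamma_3\rangle$. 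Hence $\Gamma':=\langle\gamma_1,\gamma_2,\gamma_3,\gamma_4^{s_0}\rangle=\Gamma_r(2\pi s_0\kappa,\mu,\nu,0,0)$ is a normalised, unshifted, straight lattice with $\lambda':=\lambda(\Gamma')=2\pi\kappa'$, $\kappa':=s_0\kappa$, and $\Gamma/\Gamma'\cong\Z_{s_0}$ is generated by the image of $\gamma_4$. This is the $q=1$ analogue of the identity $\gamma_4^q=(q\lambda)$ used for $q\in\{2,3,4,6\}$; I expect it to be the main obstacle, since here $\gamma_4^{s_0}$ equals $(2\pi s_0\kappa)$ only after absorbing $(M')^{s_0}$ into the Heisenberg lattice, which is precisely what the special value of $z_0$ guarantees.

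As in Section~\ref{S81}, $L^2(\Gamma\backslash G)$ is identified with the subspace of $L^2(\Gamma'\backslash G)$ fixed by $L^*_{\gamma_4}$. Because $L^*_{\gamma_4}$ commutes with the right regular action of $G$ and preserves the property of being independent of $z$, it preserves $\cH_0'$ and $\cH_0=(\cH_0')^{\gamma_4}$. Applying Proposition~\ref{PH0H1} to $\Gamma'$ (which has the same $\mu,\nu$, hence the same $a(l,k)=\|T_{\mu,\nu}^{-1}(l,k)^\top\|$) gives
\[
\cH_0'\cong\bigoplus_{n\in\Z}\cC_{n/\lambda'}\ \oplus\ \bigoplus_{\substack{(l,k)\in\Z^2\\(l,k)\neq(0,0)}}\ \bigoplus_{K=0}^{\kappa'-1}\big[\,{\mathcal S}_{a(l,k)}^{K/\kappa'}\text{ realised on }V_{l,k,K}\,\big],
\]
where $\cC_{n/\lambda'}=\cSpan\{\phi^0_{0,n}\}$, $V_{l,k,K}:=\cSpan\{\phi^k_{l,n}\mid n\equiv K\ \mathrm{mod}\ \kappa'\}$, and $\phi^k_{l,n}(x,y,z,t)={\rm e}^{2\pi{\rm i}(x,y)T_{\mu,\nu}^{-1}(l,k)^\top}{\rm e}^{{\rm i}nt/\kappa'}$.

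Next I compute $L^*_{\gamma_4}$ in this basis. Writing $\gamma_4=M'(2\pi\kappa)$ and using that $\phi^k_{l,n}$ does not depend on $z$, that $(2\pi\kappa)$ only shifts $t\mapsto t+2\pi\kappa$, and that $M'$ translates $(x,y)$ by the $\RR^2$-projection of $M'$, a short calculation (using $\det T_{\mu,\nu}^{-1}=1$ to turn $(\text{proj }M')\cdot T_{\mu,\nu}^{-1}(l,k)^\top$ into $\tfrac1r(\iota_1 k-\iota_2 l)$, and $\kappa'=s_0\kappa$) gives
\[
L^*_{\gamma_4}\big(\phi^k_{l,n}\big)={\rm e}^{2\pi{\rm i}\left(\frac{\iota_1 k-\iota_2 l}{r}+\frac{n}{s_0}\right)}\phi^k_{l,n}.
\]
Thus $L^*_{\gamma_4}$ is diagonal in $\{\phi^k_{l,n}\}$, hence diagonal on $\cH_0'$ and on each of its isotypic components (including the multiplicity coming from $a(l,k)=a(-l,-k)$, on which $L^*_{\gamma_4}$ does not mix $(l,k)$ with $(-l,-k)$), so $(\cH_0')^{\gamma_4}$ is exactly the direct sum of the summands above whose eigenvalue is $1$.

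Finally I read off the answer. On $\cC_{n/\lambda'}$ the eigenvalue is ${\rm e}^{2\pi{\rm i}n/s_0}$, so $\cC_{n/\lambda'}$ survives iff $s_0\mid n$, and $n=s_0 n'$ gives $\bigoplus_{n'\in\Z}\cC_{n'/\lambda}$. On $V_{l,k,K}$ the eigenvalue is ${\rm e}^{2\pi{\rm i}\left(\frac{\iota_1 k-\iota_2 l}{r}+\frac{K}{s_0}\right)}$, independent of $n$ since $(n-K)/s_0\in\Z$; hence $V_{l,k,K}$ survives iff $K\equiv-\tfrac{\iota_1 k-\iota_2 l}{r_0}\ (\mathrm{mod}\ s_0)$ (an integral congruence, as $r_0\mid\iota_1,\iota_2$). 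As $K$ runs over $\{0,\dots,\kappa'-1\}=\{0,\dots,s_0\kappa-1\}$ there are exactly $\kappa$ such values, namely $K=K_0+js_0$ for $j=0,\dots,\kappa-1$, where $K_0\in\{0,\dots,s_0-1\}$ is the residue of $-\tfrac{\iota_1 k-\iota_2 l}{r_0}$; for these $\tfrac{K}{\kappa'}=\tfrac{K_0/s_0+j}{\kappa}$ and $\tfrac{K_0}{s_0}\equiv-\tfrac{\iota_1 k-\iota_2 l}{r}\ (\mathrm{mod}\ 1)$, so $\tfrac{K}{\kappa'}\equiv\tau(K_j(l,k))$ with $K_j(l,k)=j-\tfrac1r(\iota_1 k-\iota_2 l)$. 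Summing over $(l,k)\neq(0,0)$ yields $\bigoplus_{(l,k)\neq(0,0)}\bigoplus_{j=0}^{\kappa-1}{\mathcal S}_{a(l,k)}^{\tau(K_j(l,k))}$, which together with the $\cC$-part is precisely the claimed decomposition. The delicate point is the first step (identifying $\Gamma'$ exactly, where the value of $z_0$ is essential); the remaining steps are the diagonalisation computation above and routine index bookkeeping.
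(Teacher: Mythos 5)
Your proposal is correct and follows essentially the same route as the paper: pass to the straight sublattice $\Gamma'=\langle\gamma_1,\gamma_2,\gamma_3,\gamma_4^{s_0}\rangle$ with $s_0=r/\gcd(\iota_1,\iota_2,r)$, apply Proposition~\ref{PH0H1}, observe that $L^*_{\gamma_4}$ acts diagonally on the $\phi^k_{l,n}$ with eigenvalue ${\rm e}^{2\pi{\rm i}(\frac{\iota_1k-\iota_2l}{r}+\frac n{s_0})}$, and count the surviving residues $K$ modulo $\kappa'=s_0\kappa$. The only difference is cosmetic: you spell out (via the parity role of $z_0$) why $\gamma_4^{s_0}$ lands in $\langle\gamma_1,\gamma_2,\gamma_3,(2\pi\kappa')\rangle$, a step the paper states with "a suitable $n_3\in\ZZ$" without proof.
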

\begin{proof} We define $s_0:=r/\gcd(\iota_1,\iota_2,r)$.
We have
\begin{gather*}
\gamma_4=M\bigg({-}\sqrt\nu\frac{\iota_2}{r},\,\frac{1}{r\sqrt\nu}(\iota_1-\mu\iota_2), \, z_0-\frac{1}{2r^2}\big(\iota_1\iota_2-\mu\iota_2^2\big)\bigg)(2\pi\kappa),
\end{gather*}
thus
\begin{gather*}
\gamma_4^l=M\bigg({-}l\sqrt\nu\frac{\iota_2}{r},\,\frac{l}{r\sqrt\nu}(\iota_1-\mu\iota_2),\, lz_0-\frac{l^2}{2r^2}\big(\iota_1\iota_2-\mu\iota_2^2\big)\bigg)(2l\pi\kappa).
\end{gather*}
In particular,
\begin{gather*}
\gamma_4^{s_0}=\gamma_1^{n_2}\gamma_2^{n_1}\gamma_3^{n_3}\,(2\pi\kappa'),
\end{gather*}
where $\kappa'=s_0\kappa$, $n_j=s_0\iota_j/r$ for $j=1,2$ and a suitable $n_3\in\ZZ$. In~particular, $\Gamma':=\la \gamma_1, \gamma_2, \gamma_3, \gamma_4^{s_0}\ra$ is a straight lattice. We~compute
\begin{gather*}
L_{\gamma_4}^* \phi^k_{l,n}= {\rm e}^{2\pi {\rm i}\left(x-\frac{\sqrt\nu}r\iota_2,y+\frac 1{r\sqrt\nu}(\iota_1-\mu \iota_2)\right)T_{\mu,\nu}^{-1}( l,k)^\top}{\rm e}^{{\rm i} \frac n{\kappa'} ( t+\lambda)}
= {\rm e}^{2\pi {\rm i} \frac 1r(\iota_1k-\iota_2l)} {\rm e}^{2\pi {\rm i} \frac n{s_0}} \phi^k_{l,n}.
\end{gather*}
Thus each $\phi^k_{l,n}$ is an eigenfunction of~$L_{\gamma_4}^*$. The corresponding eigenvalue equals one if and only~if
\begin{gather}\label{cond}
\frac1r(\iota_1k-\iota_2l)+\frac n{s_0}\in\ZZ.
\end{gather}
Now we consider $n=j\kappa'+K$, where $K$ runs through a complete set of representatives modulo $\kappa'=s_0\kappa$. Then~\eqref{cond} is equivalent to
\begin{gather*}
 K\in -\frac{s_0}r(\iota_1k-\iota_2l)+s_0\ZZ,
 \end{gather*}
thus to $K\in\big\{js_0-\frac{s_0}r(\iota_1k-\iota_2l)\mid j=0,\dots,\kappa-1\big\}$. Since we are working in the straight lattice with $\kappa'=s_0\kappa$, the parameter $\tau$ equals $\frac K{\kappa'}=\frac 1\kappa \, \frac K{s_0}$ and the assertion follows.
\end{proof}
\subsection[The subrepresentation~$\cH_1$]{The subrepresentation~$\boldsymbol{\cH_1}$}
%\subsubsection{\fbox{$\lambda=2\pi\kappa,\ \xi_0=(\iota_1+{\rm i}\iota_2)/r$}}
\subsubsection[$\ord(\lambda)=1$]{$\boldsymbol{\ord(\lambda)=1}$}

\begin{Proposition}\label{pr:2piH1} Let $\Gamma=\Gamma_{r_0}^1(r,\lambda,\mu,\nu,\iota_1,\iota_2)$ be a standard lattice with $\lambda\in2\pi\NN_{>0}$. The representation~$\cH_1$ is equivalent to
\begin{gather*}
\cH_1\cong\bigoplus_{m\in\Z _{\not=0}}\bigoplus_{n\in \Z} r_0|m|\cdot {\mathcal F}_{rm,\frac{r_0n}{r\lambda}}.
\end{gather*}
\end{Proposition}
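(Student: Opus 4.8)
The plan is to carry out the strategy of Section~\ref{S81}. Put $s_0:=r/r_0$. Exactly as in the proof of Proposition~\ref{2piH0}, computing powers of $\gamma_4$ shows $\gamma_4^{s_0}\in\la\gamma_1,\gamma_2,\gamma_3\ra\cdot(2\pi\kappa')$ with $\kappa':=s_0\kappa$, so $\Gamma':=\la\gamma_1,\gamma_2,\gamma_3,\gamma_4^{s_0}\ra$ is the straight lattice $\Gamma_r(\lambda',\mu,\nu,0,0)$ with $\lambda':=2\pi\kappa'$; moreover $\Gamma'\triangleleft\Gamma$ (the Heisenberg part of $\Gamma$ is normal and $\gamma_4$ commutes with $\gamma_4^{s_0}$), $[\Gamma:\Gamma']=s_0$, and $L^2(\Gamma\backslash G)$ is the space of $L_{\gamma_4}^*$-invariant functions in $L^2(\Gamma'\backslash G)$. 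Since $L_{\gamma_4}^*$ commutes with $\rho_*(Z)=\partial_z$, the splitting $L^2(\Gamma'\backslash G)=\cH_0'\oplus\cH_1'$ is $\gamma_4$-invariant and $\cH_1=(\cH_1')^{\gamma_4}$. By Proposition~\ref{PH0H1} (applied with $\lambda'$, $\kappa'$), $\cH_1'\cong\bigoplus_{m\ne0}\bigoplus_{n\in\ZZ}|m|r\cdot\cF_{rm,\,n/\lambda'}$, and since $1/\lambda'=r_0/(r\lambda)$ the second index is already $\frac{r_0n}{r\lambda}$; moreover $L_{\gamma_4}^*$ commutes with the $G$-action, hence preserves each isotypic component $|m|r\cdot\cF_{rm,n/\lambda'}$. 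By the discussion at the end of Section~\ref{S81}, it therefore suffices to prove that the space $\cW_0$ of $\gamma_4$-invariants in the ground-state space $\cW_{m,n}=\cSpan\{\theta_{k,n}\mid k\in\ZZ_{r|m|}\}$ has dimension $r_0|m|$ for all $m\ne0$, $n\in\ZZ$.

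The technical core is the action of $L_{\gamma_4}^*$ on $\cW_{m,n}$. Writing $\gamma_4=M'\cdot(\lambda)$ with $M'=M\big({-}\sqrt\nu\iota_2/r,\,(\iota_1-\mu\iota_2)/(r\sqrt\nu),\,\ast\big)$ and $\lambda\in 2\pi\NN_{>0}$, one applies $L_{\gamma_4}^*$ to the explicit formula for $\theta_{k,n}$: the factor $(\lambda)$ contributes only the scalar ${\rm e}^{2\pi {\rm i}n/s_0}$ (it shifts $t$ by $\lambda$), whereas the Heisenberg translation $M'$ re-centres the Gaussians and shifts the $y$-frequency index by $-m\iota_2$, so
\begin{gather*}
L_{\gamma_4}^*\theta_{k,n}=c_{k,m,n}\,\theta_{k-m\iota_2,\,n}
\end{gather*}
for unimodular scalars $c_{k,m,n}$ collected from the quadratic phases in $\theta_{k,n}$. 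Thus $L_{\gamma_4}^*$ acts on $\cW_{m,n}$ as a weighted cyclic permutation of the basis $\{\theta_{k,n}\}$ following $k\mapsto k-m\iota_2$ on $\ZZ_{r|m|}$; its orbits are the cosets of $\la m\iota_2\ra$, all of size $\ell:=r/\gcd(\iota_2,r)$, and there are $|m|\gcd(\iota_2,r)$ of them. On an orbit $O$ the space of $\gamma_4$-invariants is one-dimensional if the product $D_O$ of the $\ell$ weights around $O$ equals $1$, and zero otherwise; here $D_O$ is the scalar by which $L_{\gamma_4^\ell}^*$ acts on $\theta_{k,n}$ for $k\in O$.

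To finish, I would compute $D_O$ by the same bookkeeping applied to $\gamma_4^\ell=M(\ell a,\ell b,\ast)(2\pi\ell\kappa)$: with $p:=\ell\iota_2/r\in\ZZ$ the $j$-dependent phases cancel (using $\mu p=\mu\ell\iota_2/r$), leaving $D_O=A_{m,n}\,{\rm e}^{2\pi {\rm i}k\ell\iota_1/r}$ for $k\in O$, with $A_{m,n}$ independent of $k$. Since $\gamma_4^{s_0}\in\Gamma'$ acts trivially, $(L_{\gamma_4^\ell}^*)^{s_0/\ell}={\rm id}$, so every $D_O$ and $A_{m,n}$ is an $h$-th root of unity, where $h:=s_0/\ell=\gcd(\iota_2,r)/r_0$. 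On the other hand the character $k\mapsto{\rm e}^{2\pi {\rm i}k\ell\iota_1/r}$ of $\ZZ_{r|m|}$ has order exactly $h$, because $\gcd(\ell\iota_1,r)=\frac{r}{\gcd(\iota_2,r)}\gcd(\iota_1,\gcd(\iota_2,r))=\frac{r}{\gcd(\iota_2,r)}\,r_0$, using $\gcd(\iota_1,\gcd(\iota_2,r))=\gcd(\iota_1,\iota_2,r)=r_0$. Hence $D_O=1$, i.e.\ ${\rm e}^{2\pi {\rm i}k\ell\iota_1/r}=A_{m,n}^{-1}$, holds for exactly $|m|\gcd(\iota_2,r)/h=r_0|m|$ of the orbits, so $\dim\cW_0=r_0|m|$ and the isotypic component contributes $r_0|m|\cdot\cF_{rm,n/\lambda'}=r_0|m|\cdot\cF_{rm,\frac{r_0n}{r\lambda}}$; summing over $m\ne0$, $n\in\ZZ$ gives the claim. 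I expect the main obstacle to be the phase bookkeeping in the two computations of $L_{\gamma_4}^*\theta_{k,n}$ and $L_{\gamma_4^\ell}^*\theta_{k,n}$, and above all the arithmetic step that the relevant character has order precisely $\gcd(\iota_2,r)/r_0$ — this is where $r_0=\gcd(\iota_1,\iota_2,r)$ enters and makes the multiplicity come out $r_0|m|$ rather than $\gcd(\iota_2,r)|m|$.
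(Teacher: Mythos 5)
Your proposal is correct and takes essentially the same route as the paper's proof: pass to the straight sublattice $\Gamma'=\langle\gamma_1,\gamma_2,\gamma_3,\gamma_4^{s_0}\rangle$ with $s_0=r/r_0$, realize $L^*_{\gamma_4}$ on the ground states $\theta_{k,n}$ as a unimodular weighted shift $k\mapsto k-m\iota_2$ on $\ZZ_{r|m|}$, and count the orbits on which its $\ell$-th power ($\ell=r/\gcd(\iota_2,r)$) acts by $1$; the phase structure you predict, $D_O=A_{m,n}\,{\rm e}^{2\pi{\rm i}k\ell\iota_1/r}$, is exactly what the paper's explicit computation of $(L^*_{\gamma_4})^{r/s_2}\theta_{k,n}$ gives. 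The only differences are in the endgame and are minor: the paper rewrites the eigenvalue-$1$ condition as an explicit congruence $k\iota_1/r_0\equiv\cdots\ {\rm mod}\ s$ (checking evenness of the $z_0$-dependent terms by hand) and uses $\gcd(\iota_1/r_0,s)=1$, whereas you obtain solvability structurally from $\gamma_4^{s_0}\in\Gamma'$ together with the order-$h$ character count (an equivalent use of $\gcd(\iota_1,\iota_2,r)=r_0$), and you treat $m<0$ uniformly instead of reducing to $m>0$ via the outer automorphism $F_S$, $S=\diag(1,-1)$, as the paper does.
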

\begin{proof}
Consider, more generally, $\Gamma=\Gamma_r(\lambda,\mu,\nu,\xi_0,z_0)$, where $\lambda=2\pi\kappa$, $\kappa\in\NN_{>0}$, $\xi_0=\frac 1r(\iota_1+{\rm i}\iota_2)$ and $z_0=\frac12\rem_2(r\iota_1\iota_2)$ with $\gcd(\iota_1,\iota_2,r)=r_0$. We~define
\begin{gather*}
s_2:=\gcd(\iota_2,r),\qquad s:=s_2/r_0.
\end{gather*}
Recall from the proof of Proposition~\ref{2piH0} that
\begin{gather*}
\gamma_4=M\bigg({-}\sqrt\nu\frac{\iota_2}{r},\frac{1}{r\sqrt\nu}(\iota_1-\mu\iota_2), z_0-\frac{1}{2r^2}\big(\iota_1\iota_2-\mu\iota_2^2\big)\bigg)(2\pi\kappa),
\end{gather*}
and
\begin{gather*}
\gamma_4^{s_0}=\gamma_1^{n_2}\gamma_2^{n_1}\gamma_3^{n_3}\,(2\pi\kappa'),
\end{gather*}
where $\kappa'=s_0\kappa$, $n_j=s_0\iota_j/r$ for $j=1,2$ and a suitable $n_3\in\ZZ$. In~particular, $\Gamma':=\la \gamma_1, \gamma_2, \gamma_3,\gamma_4^{s_0}\ra$ is a straight lattice.

Let us first assume that $m>0$.
The action of~$\gamma_4$ on functions is given by
\begin{gather*}
L^*_{\gamma_4} \ph(x,y,z,t)=
\ph\bigg(x\!-\! \frac{\sqrt\nu}r\iota_2,\,y\!+\!\frac1{r\sqrt\nu}(\iota_1-\mu\iota_2),\,z\!+\!z_0- \frac{\sqrt\nu}r\iota_2y\!-\!\frac1{2r^2}\big(\iota_1\iota_2\!-\!\mu\iota_2^2\big),\,t\!+\!\lambda\bigg).
\end{gather*}
In particular, this yields
\begin{gather*}
L_{\gamma_4}^*\theta_{k,n}={\rm e}^{\pi {\rm i}(2k-m\iota_2)\frac{\iota_1}{r}}{\rm e}^{2\pi {\rm i} r m z_0}{\rm e}^{2\pi {\rm i} \frac{n}{s_0}} \theta_{k-m\iota_2,n},
\end{gather*}
for ground states $\theta_{k,n}$, which inductively gives
\begin{gather}\label{ELl}
\big(L_{\gamma_4}^*\big)^{l}\theta_{k,n}={\rm e}^{\pi {\rm i}(2kl-ml^2\iota_2)\frac{\iota_1}{r}}{\rm e}^{2\pi {\rm i} rml z_0}{\rm e}^{2\pi {\rm i} \frac{n}{s_0}l} \theta_{k-lm\iota_2,n}.
\end{gather}
If non-trivial $\gamma_4$-invariant ground states exist, then also the space $\hat\cW\subset {\mathcal W}_{m,n}$ of invariants of~$\big(L_{\gamma_4}^*\big)^{r/s_2}$ is non-trivial. By~\eqref{ELl}, all $\theta_{k,n}$ are eigenfunctions of~$\big(L_{\gamma_4}^*\big)^{r/s_2}$. More exactly,
\begin{gather*}
\big(L_{\gamma_4}^*\big)^{r/s_2}\theta_{k,n}={\rm e}^{\pi {\rm i}\left(2k\frac {\iota_1}{s_2}-m\frac r{s_2}\frac{\iota_2}{s_2}\iota_1\right)}{\rm e}^{2\pi {\rm i} mz_0\frac{r^2}{r_0s}}{\rm e}^{2\pi {\rm i} \frac{n}s} \theta_{k,n}={\rm e}^{\pi {\rm i}(a+b)} \theta_{k,n},
\end{gather*}
where
\begin{gather*}
a= -m \iota_1 \frac r{s_2} \frac{\iota_2}{s_2},\qquad
b= \frac1s \bigg(2k \frac{\iota_1}{r_0} +2n+2mz_0\frac{r^2}{r_0}\bigg).
\end{gather*}
Consequently, $\gamma_4$-invariant ground states can only exist if $a+b\in 2\ZZ$ for some $k\in\ZZ$. By~the definition of~$b$, this condition is equivalent to the existence of an integer $l$ such that
\begin{gather}\label{as}
 as +2mz_0\frac{r^2}{r_0}+2\bigg(k\frac {\iota_1}{r_0} +n\bigg)=2ls.
\end{gather}
Note that $a$ is an integer. Moreover, $as+2mz_0\frac{r^2}{r_0}=m\big({-}\frac{\iota_1}{r_0} \frac{\iota_2}{s_2} r+2z_0\frac{r^2}{r_0}\big)$ is even. Indeed, if $r$, $\iota_1$, $\iota_2$ are odd, then $z_0=1/2$ and both summands in the brackets are odd. Otherwise, $z_0=0$ and $\frac{\iota_1}{r_0} \frac{\iota_2}{s_2} r ={\iota_1} \frac{\iota_2}{s_2} \frac r{r_0}=\frac{\iota_1}{r_0} \iota_2 \frac r{s_2}$ is even. Hence,~\eqref{as} is equivalent to
\begin{gather}\label{Eab}
 k\frac {\iota_1}{r_0}\equiv -n-\frac12 as-mz_0\frac{r^2}{r_0} \ {\rm mod}\ s.
\end{gather}
Let $K\subset\ZZ_{rm}$ denote the set of solutions $k$ of~\eqref{Eab}. Then $\hat\cW=\cSpan\{\theta_{k,n}\mid k\in K\}$. Since $\iota_1/r_0$ and $s$ are relatively prime, equation~\eqref{Eab} has exactly one solution~$k$ mod $s$. Hence $K$ contains $rm/s$ elements. The space of~$\gamma_4$-invariant ground states is contained in~$\hat\cW$. It~is spanned by
\begin{gather*}
\hat \theta_{k,n}:=\sum_{j=0}^{\frac r{s_2}-1}\big(L_{\gamma_4}^*\big)^{j}\theta_{k,n}
\end{gather*}
for $k\in K$. The summands on the right hand side satisfy $\big(L_{\gamma_4}^*\big)^{j}\theta_{k,n}\in\Span\{\theta_{k-jm\iota_2,n}\}$. It follows that they are linearly independent since the order of~$m\iota_2$ in~$\ZZ_{rm}$ equals
\begin{gather*}
rm/\gcd(rm,m\iota_2)=r/s_2.
\end{gather*}
Hence the dimension of the space of~$\gamma_4$-invariant ground states equals $\# K \cdot \frac {s_2}r=m r_0$. This proves the assertion for $m>0$.

The automorphism $F:=\phi F_{S}\phi^{-1}\in \Aut(G)$ for $S=\diag(1,-1)$ maps $\Gamma_r(\lambda,\mu,\nu,\xi_0,z_0)$ to $\Gamma_r(\lambda,-\mu,\nu,-\bar\xi_0,z_0)$. Using~\eqref{Fiso}, we~see that $F^*(\cF_{c,d})=\cF_{-c,-d}$. Moreover, as we have seen, the dimension~$\bm(m,n)=\gcd(\iota_1,\iota_2,r) m$ for $m>0$ of the space of~$\gamma_4-$invariant ground states does only depend on~$r_0=\gcd(\iota_1,\iota_2,r)=\gcd(-\iota_1,\iota_2,r)$. Hence $\bm(m,n)=\bm(-m,-n)=r_0|m|$, which proves the assertion also for $m<0$.
\end{proof}

\subsubsection[$\ord(\lambda)=2$]{$\boldsymbol{\ord(\lambda)=2}$}

\begin{Proposition} \label{P1}
Let $\Gamma$ be one of the lattices $\Gamma_r^2(\lambda,\mu,\nu)$ or $\Gamma_r^{2,+}(\lambda,\mu,\nu,\iota_1,\iota_2)$ with $\lambda\in\pi+2\pi\NN$, where in the latter case we assume that $r$ is even.
Then $\cH_1$ is equivalent to
\begin{gather*}
\cH_1\cong\bigoplus_{m\in \Z\backslash\{0\}} \bigoplus_{n\in\Z} \bm(m,n)\cdot \cF_{rm,\,\frac n{2\lambda}}.
\end{gather*}
For $\Gamma=\Gamma_r^2(\lambda,\mu,\nu)$, the multiplicities are given by
\begin{gather*}
\bm(m,n) =\begin{cases}
\big[\frac{|m|r}2\big]+1,& \text{if\quad $n$ is even},
\\[.5ex]
\big[\frac{|m|r-1}2\big],& \text{if\quad $n$ is odd},
\end{cases}
\end{gather*}
and, for $\Gamma=\Gamma_r^{2,+}(\lambda,\mu,\nu,\iota_1,\iota_2)$, by
\begin{gather*}
\bm(m,n) =
\begin{cases}
 \frac12 r|m|, &\text{if\quad $m$ is odd},
 \\[0.5ex]
 \frac12 r|m| +1,&\text{if\quad $m$ and $n$ are even},
 \\[0.5ex]
 \frac12 r|m| -1, &\text{if\quad $m$ is even and $n$ is odd}.
 \end{cases}
 \end{gather*}
\end{Proposition}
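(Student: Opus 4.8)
The plan is to carry out the programme of Section~\ref{S81}. By Remark~\ref{z0}, $l_4^2=(0,0,2\lambda)$, so $\gamma_4^2=(2\lambda)$; since $\lambda=\pi+2\pi\kappa$ this is $(2\pi\kappa')$ with $\kappa':=1+2\kappa\in\NN_{>0}$, and $\Gamma':=\langle\gamma_1,\gamma_2,\gamma_3,\gamma_4^2\rangle$ is a normalised unshifted straight lattice of the form $\Gamma_r(\lambda',\mu,\nu,0,0)$ with $\lambda'=2\pi\kappa'=2\lambda$. We identify $L^2(\Gamma\backslash G)$ with the $\gamma_4$-invariants in $L^2(\Gamma'\backslash G)$, and by Proposition~\ref{PH0H1} (with $\lambda'$, $\kappa'$ in place of $\lambda$, $\kappa$) the summand $\cH_1'$ of $L^2(\Gamma'\backslash G)$ is $\bigoplus_{m\neq0}|m|r\cdot\bigoplus_{n\in\ZZ}\cF_{rm,\,n/\lambda'}$ with $n/\lambda'=n/(2\lambda)$. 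As explained in Section~\ref{S81}, $\gamma_4$ preserves each isotypic component $|m|r\cdot\cF_{rm,\,n/(2\lambda)}$, whose ground-state space $\cW_{m,n}=\cSpan\{\theta_{k,n}\mid k\in\ZZ_{r|m|}\}$ has the $\theta_{k,n}$, $k\in\ZZ_{r|m|}$, forming a basis, and the $\gamma_4$-invariant part of the component equals $\cW_0\oplus A_+(\cW_0)\oplus A_+^2(\cW_0)\oplus\cdots$ with $\cW_0:=\cW_{m,n}^{\gamma_4}$; since $A_+$ is injective level by level, $\bm(m,n)=\dim\cW_0$. So everything reduces to computing $\dim\cW_0$.

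The key step is the action of $L_{\gamma_4}^*$ on the $\theta_{k,n}$. Write $\gamma_4=M(x',y',z')\cdot(\lambda)$, with $x'=-\sqrt\nu\,\iota_2/r$, $y'=\frac1{r\sqrt\nu}(\iota_1-\mu\iota_2)$, $z'=-\frac1{2r^2}(\iota_1\iota_2-\mu\iota_2^2)$ read off as in the proof of Proposition~\ref{2piH0} (with $\iota_1=\iota_2=0$ in the $\Gamma_r^2$ case). Using $l(\lambda)=l(\pi)\colon M(x,y,z)\mapsto M(-x,-y,z)$ one gets
\[
L_{\gamma_4}^*\varphi(x,y,z,t)=\varphi\big(x'-x,\;y'-y,\;z'+z-x'y,\;\lambda+t\big).
\]
Plugging in the explicit series for $\theta_{k,n}$ from Section~\ref{S81} and reindexing the sum by $j\mapsto\iota_2/r-j$ — which matches the shifted Gaussian centres with those of a $\theta_{k',n}$, forcing $k'\equiv m\iota_2-k$ mod $r|m|$ — a direct computation, in which the $\mu$-dependent phases cancel, gives
\[
L_{\gamma_4}^*\theta_{k,n}=\varepsilon_{k,m,n}\,\theta_{m\iota_2-k,n},\qquad \varepsilon_{k,m,n}=(-1)^n\,{\rm e}^{2\pi{\rm i}k\iota_1/r}\,{\rm e}^{-\pi{\rm i}m\iota_1\iota_2/r},
\]
indices mod $r|m|$. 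In particular $(L_{\gamma_4}^*)^2=\id$ on $\cW_{m,n}$ (forced by $\gamma_4^2\in\Gamma'$), hence $\varepsilon_{k,m,n}\,\varepsilon_{m\iota_2-k,m,n}=1$.

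So $L_{\gamma_4}^*$ is an involution permuting $\{\theta_{k,n}\}$ up to the unimodular scalars $\varepsilon$: each $2$-cycle $\{k,k'\}$, $k'\neq k$, contributes a $1$-dimensional invariant subspace, and a fixed point $k$ (i.e.\ $2k\equiv m\iota_2$ mod $r|m|$, where necessarily $\varepsilon_{k,m,n}=\pm1$) contributes $1$ iff $\varepsilon_{k,m,n}=1$. Thus $\dim\cW_0=\#\{2\text{-cycles}\}+\#\{\text{fixed }k:\varepsilon_{k,m,n}=1\}$, and one simply counts. For $\Gamma_r^2$ ($\iota_1=\iota_2=0$) the involution is $k\mapsto-k$ on $\ZZ_{r|m|}$, with one or two fixed points according as $r|m|$ is odd or even and with $\varepsilon_{k,m,n}=(-1)^n$ at each; this yields $[\,|m|r/2\,]+1$ for $n$ even and $[\,(|m|r-1)/2\,]$ for $n$ odd. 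For $\Gamma_r^{2,+}$ one has $r$ even and $(\iota_1,\iota_2)\not\equiv(0,0)$ mod $2$: the congruence $2k\equiv m\iota_2$ mod $r|m|$ is solvable iff $m\iota_2$ is even, giving two fixed points if $m$ is even or if $m$ is odd with $\iota_2$ even, and none if $m$ and $\iota_2$ are both odd; in the first two cases the fixed points differ by $r|m|/2$, and evaluating $\varepsilon$ shows both equal $(-1)^n$ when $m$ is even, while their ratio is ${\rm e}^{\pi{\rm i}|m|\iota_1}=-1$ (so exactly one equals $1$) when $m$ is odd and $\iota_2$ even — in which case $\iota_1$ must be odd. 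Inserting these counts gives exactly the stated formulas for $\bm(m,n)$.

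For $m<0$ one reruns the computation (the weight ${\rm e}^{-\pi r|m|x^2}$ is sign-symmetric) or, as in the proof of Proposition~\ref{pr:2piH1}, uses the automorphism $\phi F_{\diag(1,-1)}\phi^{-1}$ of $G$, which sends $\cF_{c,d}$ to $\cF_{-c,-d}$ and a standard lattice of type $\Gamma_r^2$ or $\Gamma_r^{2,+}$ to one of the same type, so that $\bm(m,n)=\bm(-m,-n)$, consistent with the formulas. The main obstacle is the explicit evaluation of $L_{\gamma_4}^*\theta_{k,n}$ in the $\Gamma_r^{2,+}$ case — tracking the Heisenberg shift together with the $\pi$-rotation and checking the cancellation of the $\mu$-phases — and then the parity bookkeeping for the fixed points, which uses $(\iota_1,\iota_2)\not\equiv(0,0)$ mod $2$.
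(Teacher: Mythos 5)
Your proposal is correct and follows essentially the same route as the paper's proof: pass to the straight sublattice via $\gamma_4^2=(2\pi\kappa')$, establish $L^*_{\gamma_4}\theta_{k,n}=(-1)^n{\rm e}^{2\pi {\rm i}k\iota_1/r}{\rm e}^{-\pi {\rm i}m\iota_1\iota_2/r}\,\theta_{m\iota_2-k,n}$ on the ground states, and settle $m<0$ through the outer automorphism $\phi F_{\diag(1,-1)}\phi^{-1}$ together with $\bm(m,n)=\bm(-m,-n)$. The only cosmetic difference is that you count fixed points and two-cycles of the signed involution directly, while the paper obtains the same numbers from $m_\pm=\frac12\big(rm\pm\tr S_{\iota_1,\iota_2}\big)$; the content is identical.
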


\begin{proof}
To calculate all cases simultaneously, we~consider $\Gamma_r(\lambda,\mu,\nu,\xi_0,z_0)$ with $\lambda=\pi+2\pi\kappa$, $\xi_0=(\iota_1+{\rm i}\iota_2)/r$ and $z_0=0$.
The lattice $\Gamma':=\langle\gamma_1,\gamma_2,\gamma_3,(2\pi\kappa')\rangle$ with $\kappa'=1+2\kappa$ is a straight sublattice of~$\Gamma$, since $\gamma_4^2=(2\pi \kappa')$.

Let us first assume that $m>0$. Then the action of~$\gamma_4$ on~$L^2(\Gamma'\backslash G)$ is given by
\begin{gather*}
L^*_{\gamma_4} \ph(x,y,z,t)=\ph\bigg(\!{-}x-\sqrt\nu \frac{\iota_2}{r},-y+(\iota_1-\iota_2\mu )\frac 1{r\sqrt\nu },z-\frac12(\iota_1-\iota_2\mu)\frac{\iota_2}{r^2}+\sqrt\nu \frac{\iota_2}{r} y,t+\lambda\!\bigg).
\end{gather*}
Thus, the action of~$\gamma_4$ on~${\mathcal W}_{m,n}$ is given by
\begin{gather*}
L^*_{\gamma_4}\theta_{k,n}=(-1)^n{\rm e}^{2\pi {\rm i}\,\frac{k}{r}\iota_1}{\rm e}^{-\pi {\rm i} m\frac{\iota_1\iota_2}{r}}\theta_{\iota_2m-k,n}.
\end{gather*}

We define
\begin{gather*}
S_{\iota_1,\iota_2}(\theta_{k,n})={\rm e}^{2\pi\frac{k}{r}\iota_1}{\rm e}^{-\pi {\rm i} m\frac{\iota_1\iota_2}{r}}\theta_{\iota_2m-k,n}.
\end{gather*}

Now, the dimension of the space of~$\gamma_4$-invariant functions in~${\mathcal W}_{m,n}$ equals the multiplicity of the eigenvalue $(-1)^n$ of~$S_{\iota_1,\iota_2}$.
Moreover, the multiplicity $m_{\pm }$ of the eigenvalue $\pm 1$ of~$S_{\iota_1,\iota_2}$ equals
\begin{gather*}
m_{\pm }=\frac{1}{2}(rm\pm\tr S_{\iota_1,\iota_2}).
\end{gather*}

Thus, it~remains to compute
\begin{gather*}
\tr S_{\iota_1,\iota_2}=
\begin{cases}
 0, &\text{if\quad $\iota_2m$ is odd and $rm$ is even},
 \\%[0.5ex]
 1, &\text{if\quad $\iota_2m$ is even and $rm$ is odd},
 \\%[0.5ex]
 {\rm e}^{\pi {\rm i} m\iota_1}, &\text{if\quad $\iota_2m$ is odd and $rm$ is odd},
 \\%[0.5ex]
 1+{\rm e}^{\pi {\rm i} m\iota_1}, &\text{if\quad $\iota_2m$ is even and $rm$ is even}.
\end{cases}
\end{gather*}
Taking $\iota_1+{\rm i}\iota_2=0$ yields the assertion for $\Gamma^2_r(\lambda,\mu,\nu)$ and taking $r$ even and $\iota_1+{\rm i}\iota_2\in\{1/r,{\rm i}/r,1/r+{\rm i}/r\}$ yields the assertion for $\Gamma^{2,+}_r(\lambda,\mu,\nu,\xi_0)$.

The automorphism $F:=\phi F_{S}\phi^{-1}\in \Aut(G)$ for $S=\diag(1,-1)$ maps $\Gamma_r(\lambda,\mu,\nu,\xi_0,z_0)$ to $\Gamma_r(\lambda,-\mu,\nu,\bar\xi_0,z_0)$. Using~\eqref{Fiso}, we~see that $F^*(\cF_{c,d})=\cF_{-c,-d}$. Moreover, as we have seen, the multiplicities $\bm(m,n)$ for $m>0$ do not depend on~$\mu+{\rm i}\nu$. Hence $\bm(m,n)=\bm(-m,-n)$, which proves the assertion also for $m<0$.
\end{proof}

\subsubsection[$\ord(\lambda)=4$]{$\boldsymbol{\ord(\lambda)=4}$}

\begin{Proposition} \label{P12} Let $\Gamma$ be one of the lattices $\Gamma_r^4(\lambda)$ or $\Gamma_r^{4,+}(\lambda)$ with $\lambda\in\frac\pi2+2\pi\ZZ$, where in the latter case we assume that $r$ is even. Then $\cH_1$ is equivalent to
\begin{gather*}
\cH_1\cong\bigoplus_{m\in \Z\backslash\{0\}} \bigoplus_{n\in\Z} \bm(m,n)\cdot \cF_{rm,\,\frac n{4\lambda}}.
\end{gather*}
For $\Gamma=\Gamma_r^4(\lambda)$, the multiplicities are given by
\begin{gather*}
\bm(m,n) =\begin{cases}
\big[\frac14(r|m|-\rem_4(\sgn(\lambda m)\cdot n))\big]+1,& \text{if\quad $n$ is even},
\\[.5ex]
\big[\frac14(r|m|-\rem_4(\sgn(\lambda m)\cdot n))+\frac12\big],& \text{if\quad $n$ is odd},
\end{cases}
\end{gather*}
and, for $\Gamma=\Gamma_r^{4,+}(\lambda)$, by
\begin{gather*}
\bm(m,n) =\begin{cases}
\frac14 r|m|,& \text{if\quad $m$ is odd and $r\equiv 0\ {\rm mod}\ 4$},
\\[.5ex]
\big[\frac14(r|m|-\rem_4(\sgn(\lambda m)\cdot n))\big]+1,& \text{if\quad $m+n$ is even and}
\\
& (m \mbox{ is even or } r\equiv 2\ {\rm mod}\ 4),
\\[.5ex]
\big[\frac14(r|m|-\rem_4(\sgn(\lambda m)\cdot n))+\frac12\big],& \text{if\quad $m+n$ is odd and}
\\
& (m \mbox{ is even or } r\equiv 2\ {\rm mod}\ 4).
\end{cases}
\end{gather*}
\end{Proposition}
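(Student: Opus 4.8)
The plan is to follow the strategy of Section~\ref{S81}, mirroring the proofs of Propositions~\ref{pr:2piH1} and~\ref{P1}. To handle both lattice types at once I would work with $\Gamma=\Gamma_r(\lambda,0,1,\xi_0,z_0)$ (recall that $\mu+{\rm i}\nu={\rm i}$ is forced for $q=\ord(\lambda)=4$), where $\lambda=\pi/2+2\pi\kappa$ and $(\xi_0,z_0)=(0,0)$ for type $L^4_r$ while $(\xi_0,z_0)=(1/r,-1/(4r^2))$ for type $L^{4,+}_r$. By Remark~\ref{z0} one has $\gamma_4^4=(4\lambda)$; writing $4\lambda=\sgn(\lambda)\cdot 2\pi\kappa'$ with $\kappa'=|1+4\kappa|\in\NN_{>0}$, the sublattice $\Gamma':=\langle\gamma_1,\gamma_2,\gamma_3,\gamma_4^4\rangle$ is straight, with $\lambda(\Gamma')=4\lambda$, and has index $4$ in $\Gamma$. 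Proposition~\ref{PH0H1} applied to $\Gamma'$ gives $\cH_1'\cong\bigoplus_{m\neq0}|m|r\cdot\bigoplus_{n\in\Z}\cF_{rm,\,n/(4\lambda)}$. Since $L^2(\Gamma\backslash G)$ is the space of $\gamma_4$-invariants in $L^2(\Gamma'\backslash G)$ and $\gamma_4$ (a left translation) commutes with the right regular action of $G$, it preserves each isotypic component and, commuting in particular with $\rho_*(Z)$, $\rho_*(T)$ and $\rho_*(X-{\rm i}Y)$, it preserves the ground state space of $|m|r\cdot\cF_{rm,\,n/(4\lambda)}$, which is the $r|m|$-dimensional space $\cW_{m,n}=\cSpan\{\theta_{k,n}\mid k\in\Z_{r|m|}\}$. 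Thus Section~\ref{S81} reduces the claim to computing $\bm(m,n)=\dim(\cW_{m,n})^{\gamma_4}$.

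Assuming first $m>0$, I would compute the action of $L^*_{\gamma_4}$ on the basis $\{\theta_{k,n}\}$ explicitly, just as in the proofs of Propositions~\ref{2piH0} and~\ref{P1}. Writing $\gamma_4=M'\cdot(\lambda)$, the factor $(\lambda)$ rotates the $(x,y)$-plane by $\pi/2$, which is a symmetry of the square lattice $\Z+{\rm i}\Z$, so $L^*_{\gamma_4}$ maps ground states to ground states and $A:=L^*_{\gamma_4}|_{\cW_{m,n}}$ becomes an explicit operator on $\cW_{m,n}\cong\CC^{r|m|}$ of the shape ``phase $\times$ discrete Fourier transform'': the $z$-correction $-xy$ in the rotation formula produces the Gaussian cross-term $\exp(-2\pi{\rm i}\,rm\,xy)$, whose resummation against the theta basis yields the finite Fourier kernel on $\Z_{r|m|}$; the $t$-shift $t\mapsto t+\lambda$ contributes the scalar $({\rm e}^{{\rm i}\lambda/\kappa'})^n={\rm i}^{\,\sgn(\lambda)n}$; and $M'$ (trivial for type $L^4_r$, a $y$-translation by $1/r$ for type $L^{4,+}_r$) contributes a linear character in $k$. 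Since $\gamma_4^4=(4\lambda)$ acts on $\theta_{k,n}$ by ${\rm e}^{{\rm i}\,4\lambda n/\kappa'}={\rm e}^{2\pi{\rm i}n}=1$, one has $A^4=\id$, so $\bm(m,n)=\dim\ker(A-\id)=\frac14(\tr A^0+\tr A+\tr A^2+\tr A^3)$, and each $\tr A^j$ is a finite exponential sum.

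It then remains to evaluate these four traces. Here $\tr A^0=r|m|$; the term $\tr A^2$ is the trace of the order-$2$ operator $L^*_{\gamma_4^2}|_{\cW_{m,n}}$, which is exactly the $q=2$ situation, so it can be read off from (or redone as in) the proof of Proposition~\ref{P1} and accounts for the dependence on the parity of $n$ and, for type $L^{4,+}_r$, on the parity of $m$ and on $r\bmod 4$; the remaining $\tr A$ and $\tr A^3$ are normalised quadratic Gauss sums $\frac1{\sqrt{r|m|}}\sum_{k}{\rm e}^{2\pi{\rm i}(ak^2+bk)/(r|m|)}$, twisted by ${\rm i}^{\,\sgn(\lambda)n}$ and by the character from $M'$, which by the classical Gauss sum formula have absolute value $O(1)$ with a value depending on $r|m|\bmod 4$ --- this is the source of the term $\rem_4(\sgn(\lambda m)\cdot n)$ (the sign encoding the orientation of the rotation together with the parity twist from ${\rm i}^{\,\sgn(\lambda)n}$). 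Adding $\tr A^0+\tr A+\tr A^2+\tr A^3$ and simplifying the floor expressions yields the stated formulas for $m>0$. Finally, for $m<0$ one argues as at the ends of the proofs of Propositions~\ref{pr:2piH1} and~\ref{P1}: the automorphism $F:=\phi F_{\diag(1,-1)}\phi^{-1}\in\Aut(G)$ carries $\Gamma$ to a standard lattice of the same type with parameter $-\lambda$ and satisfies $F^*\cF_{c,d}=\cF_{-c,-d}$ by~\eqref{Fiso}; since $\rem_4(\sgn((-\lambda)(-m))\cdot n)=\rem_4(\sgn(\lambda m)\cdot n)$, this reduces the case $m<0$ to the already-established case $m>0$.

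The hardest part is the third paragraph: pinning down precisely the fixed points of the $\pi/2$-rotation on $\Z_{r|m|}$ (their number being governed by $r|m|\bmod 4$, and for type $L^{4,+}_r$ additionally shifted by the contribution of $\xi_0=1/r$ and $z_0=-1/(4r^2)$ to $M'$, which is what produces the case splits by the parity of $m$ and by $r\bmod 4$), determining the coefficients $a$, $b$ of the quadratic Gauss sums $\tr A$, $\tr A^3$, and evaluating them with careful sign bookkeeping so that $\sgn(\lambda m)$ and all the parity conditions appear exactly as in the statement.
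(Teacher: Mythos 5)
Your plan is essentially the paper's own proof: it reduces to the $\gamma_4$-action on the ground-state spaces $\cW_{m,n}$, identifies that action as a phase times a finite Fourier/Gauss-sum operator, evaluates the resulting traces by the reciprocity formula \eqref{qgs}, and counts the eigenvalue-$1$ multiplicity of an order-$4$ unitary (your averaging projector $\tfrac14\sum_{j=0}^3\tr A^j$ is equivalent to the paper's Lemma~\ref{LD4}, and your $\tr A^2$ is the order-$2$ computation of Proposition~\ref{P1}, which the paper redoes directly as $\tr S_a^2$). The one detail to repair is the $m<0$ step: the paper uses $S(\xi)={\rm i}\bar\xi$, which fixes both $\Gamma^4_r(\lambda)$ and $\Gamma^{4,+}_r(\lambda)$, whereas $F_{\diag(1,-1)}$ does not preserve $\Gamma^{4,+}_r(\lambda)$ and $-\lambda\notin\frac\pi2+2\pi\ZZ$ is not an admissible parameter, so your transfer of multiplicities there needs an extra identification of the image lattice up to inner automorphisms (or the paper's choice of reflection) — a fixable point, with the remaining work being the Gauss-sum and parity bookkeeping you already flag as the hard part.
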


\begin{proof}
We want to study both cases simultaneously and put $a:=0$ if $\Gamma=\Gamma_r^4(\lambda)$ and $a=1$ if~$\Gamma=\Gamma_r^{4,+}(\lambda)$. Since here $\mu=0$ and $\nu=1$, we~have
\begin{gather*}
\gamma_4= M\bigg(0, \frac ar,-\frac{a}{4r^2}\bigg)\cdot(\lambda).
\end{gather*}
We define an $\RR$-linear map $S\colon \CC\to\CC$ by $S(\xi)={\rm i}\bar\xi$. The automorphism $F:=\phi F_{S}\phi^{-1}$ of~$G$ maps $\Gamma$ to $\Gamma$. Indeed, $F(\gamma_1)=\gamma_2$, $F(\gamma_2)=\gamma_1$, $F(\gamma_3)=\gamma_3^{-1}$, $F(\gamma_4)=\gamma_4^{-1}$. Using~\eqref{Fiso}, we~see that $F^*(\cF_{c,d})=\cF_{-c,-d}$. Hence $\bm(m,n)=\bm(-m,-n)$. Therefore, we~will assume that $m>0$ in the following computations.

We define $\kappa\in\ZZ$ by $\lambda=\frac\pi2+2\pi\kappa$ and we put $\kappa'=|4\kappa+1|$.
The lattice $\Gamma'=\la \gamma_1,\gamma_2,\gamma_3,\gamma_4^4\ra=\la \gamma_1,\gamma_2,\gamma_3,(2\pi \kappa')\ra$ is a sublattice of~$\la \gamma_1,\gamma_2,\gamma_3,\big(\frac\pi2\big)\ra$. In~particular, $\big(\frac\pi2\big)$ acts on~${\mathcal W}_{m,n}$.
\begin{Lemma}
For $m>0$, the action of~$(\frac{\pi}{2})$ on~${\mathcal W}_{m,n}$ is given by
\begin{gather*}
L^*_{\left(\frac{\pi}{2}\right)}\theta_{k,n}= \frac{1}{\sqrt{rm}}{\rm e}^{{\rm i}n \frac{\pi}{2\kappa'}}\sum_{\bar k\in\Z_{rm}}{\rm e}^{-2\pi {\rm i}\, \frac{k\bar k}{rm}}\theta_{\bar k,n}.
\end{gather*}
\end{Lemma}

\begin{proof}
We have
\begin{gather*}
 \bigg(\frac{\pi}{2}\bigg)M(x,y,z)(t)=M(-y,x,z-xy)\bigg(t+\frac{\pi}{2}\bigg),
 \end{gather*}
thus
\begin{gather*}
L^*_{\left(\frac{\pi}{2}\right)}\theta_{k,n}={\rm e}^{2\pi {\rm i}rmz}{\rm e}^{{\rm i}n\frac{t}{\kappa'}}{\rm e}^{{\rm i}n\frac{\pi}{2\kappa'}}
\sum_{j\in\frac{k}{rm}+\Z}{\rm e}^{-\pi rm(-y+j)^2}{\rm e}^{2\pi {\rm i}rmjx}{\rm e}^{-2\pi {\rm i}rmxy}.
\end{gather*}
This implies
\begin{align*}
%\lefteqn{
\big\la L^*_{\left(\frac{\pi}{2}\right)}\theta_{k,n}, {\rm e}^{2\pi {\rm i}py}\big \ra & =
\int_0^1 L^*_{\left(\frac{\pi}{2}\right)}\theta_{k,n}{\rm e}^{-2\pi {\rm i}py} {\rm d}y
\\
&={\rm e}^{2\pi {\rm i}rmz}{\rm e}^{{\rm i}n\frac{t}{\kappa'}}{\rm e}^{{\rm i}n \frac{\pi}{2\kappa'}}
{\rm e}^{-\pi \frac{k^2}{rm}}{\rm e}^{2\pi {\rm i}xk}\int_{\R} {\rm e}^{-\pi rm\left(y^2+2y\left({\rm i}x-\frac k{rm}+{\rm i}\frac p{rm}\right)\right)}{\rm d}y
 \\
&={\rm e}^{2\pi {\rm i}rmz}{\rm e}^{{\rm i}n\frac{t}{\kappa'}}{\rm e}^{{\rm i}n\frac{\pi}{2\kappa'}}{\rm e}^{-\pi \frac{k^2}{rm}}{\rm e}^{2\pi {\rm i}xk} \\
&\quad \times\int_{\R} {\rm e}^{-\pi rm\left(y+{\rm i}x-\frac{k}{rm}+{\rm i}\frac{p}{rm}\right)^2}
{\rm e}^{\pi rm\left({\rm i}x-\frac{k}{rm}+{\rm i}\frac{p}{rm}\right)^2}{\rm d}y
\\
&=\frac{1}{\sqrt{rm}}{\rm e}^{2\pi {\rm i}rmz}{\rm e}^{{\rm i}n\frac{t}{\kappa'}}{\rm e}^{{\rm i}n\frac{\pi}{2\kappa'}}
{\rm e}^{-2\pi {\rm i}\,\frac{kp}{rm}}{\rm e}^{-\pi rm\left(x+\frac{p}{rm}\right)^2}.
\end{align*}

Hence,
\begin{align*}
L^*_{\left(\frac{\pi}{2}\right)} \theta_{k,n}&
=\sum_{p\in\Z} \big\la L^*_{\left(\frac{\pi}{2}\right)} \theta_{k,n}, {\rm e}^{2\pi {\rm i}py}\big\ra \,{\rm e}^{2\pi {\rm i}py}
\\
&=\frac1{\sqrt{rm}}{\rm e}^{{\rm i}n\frac t{\kappa'}}{\rm e}^{{\rm i}n\frac{\pi}{2\kappa'}}{\rm e}^{2\pi {\rm i}rmz}\sum_{\bar k\in\Z_{rm}}\sum_{J\in\Z} {\rm e}^{-2\pi {\rm i}\,\frac{k\bar k}{rm}}{\rm e}^{-\pi rm\left(x+\frac{\bar k}{rm}+J\right)^2}{\rm e}^{2\pi {\rm i} rm \left(\frac{\bar k}{rm}+J\right)y}
\\
&= \frac{1}{\sqrt{rm}}{\rm e}^{{\rm i}n\frac{\pi}{2\kappa'}}\sum_{\bar k\in\Z_{rm}}{\rm e}^{-2\pi {\rm i}\,\frac{k\bar k}{rm}}\theta_{\bar k,n},
\end{align*}
where we put $p=\bar k+J\cdot rm$. This gives the assertion.
\end{proof}

Recall that we assume $m>0$. We~define
\begin{gather*}
\gamma= M\bigg(0,\frac{a}{r},-\frac{a}{4r^2}\bigg)\bigg(\frac{\pi}{2}\bigg),\qquad a=0,1.
\end{gather*}
Then $\gamma^4=(2\pi)$. We~already proved that $\big(\frac \pi2\big)$ acts on~$\cW_{m,n}$. Thus $\gamma$ acts on~$\cW_{m,n}$ if $a=0$. This is also true for $a=1$ since then $\la \gamma_1,\gamma_2,\gamma_3,(2\pi \kappa')\ra$ is also a sublattice of~$\la \gamma_1,\gamma_2,\gamma_3,\gamma\ra$.
This action can be computed from the one in the case $a=0$ using
\begin{gather*}
L^*_{M\left(0,\frac1{r},-\frac1{4r^2}\right)}\theta_{k,n}={\rm e}^{-\frac{\pi {\rm i}}{2}\frac{m}{r}}{\rm e}^{2\pi {\rm i}\,\frac{k}{r}}\theta_{k,n}.
\end{gather*}
We obtain
\begin{gather*}
L^*_{\gamma}\theta_{k,n}={\rm e}^{{\rm i}n\frac{\pi}{2\kappa'}}S_a(\theta_{k,n}),
\end{gather*}
where
\begin{gather*}
S_a(\theta_{k,n}):=\frac{1}{\sqrt{rm}}{\rm e}^{-a\frac{\pi {\rm i}}{2}\frac{m}{r}}{\rm e}^{2a\pi {\rm i}\,\frac{k}{r}}\sum_{\bar k\in\Z_{rm}}{\rm e}^{-2\pi {\rm i}\,\frac{k\bar k}{rm}}\theta_{\bar k,n}.\end{gather*}

Using equation~\eqref{qgs} we compute
\begin{gather*}
\tr S_a=\frac{1}{\sqrt{rm}}{\rm e}^{-a\frac{\pi {\rm i}}{2}\frac{m}{r}}\sum_{k\in\Z_{rm}}{\rm e}^{\frac{\pi {\rm i}}{rm}(-2k^2+2amk)}
= \frac{1}{\sqrt{2}}{\rm e}^{-\frac{\pi {\rm i}}{4}}\big(1+{\rm e}^{\pi {\rm i}\,\frac{rm}{2}}{\rm e}^{a\pi {\rm i} m}\big).
\end{gather*}

Hence for $a=0$ we get
\begin{gather*}
\tr S_0=\begin{cases}
1-{\rm i},&\text{if}\quad rm=0\ {\rm mod}\ 4,
\\
1,&\text{if}\quad rm=1\ {\rm mod}\ 4,
\\
0,&\text{if}\quad rm=2\ {\rm mod}\ 4,
\\
-{\rm i},&\text{if}\quad rm=3\ {\rm mod}\ 4,
\end{cases}
\end{gather*}
and for $r$ even and $a=1$ we get
\begin{gather*}
\tr S_1=\begin{cases}
1-{\rm i},&\text{if\quad $\frac{rm}{2}+m$ is even},
\\[0ex]
0,&\text{if\quad $\frac{rm}{2}+m$ is odd}.
\end{cases}
\end{gather*}

Moreover, one easily checks
\begin{gather*}
S_a^2(\theta_{k,n})={\rm e}^{-a\pi {\rm i}\,\frac{m}{r}}{\rm e}^{2a\pi {\rm i}\,\frac{k}{r}}\theta_{am-k,n}.
\end{gather*}

Thus for $a=0$ we get
\begin{gather*}
\tr S_0^2=\begin{cases}
1,&\text{if\quad $rm$ is odd},
\\
2,&\text{if\quad $rm$ is even},
\end{cases}
\end{gather*}
and for $r$ even and $a=1$ we have
\begin{gather*}
\tr S_1^2=\begin{cases}
0,&\text{if\quad $m$ is odd},
\\
2,&\text{if\quad $m$ is even}.
\end{cases}
\end{gather*}

\begin{Lemma}\label{LD4}
Let $D$ be a diagonalisable linear automorphism of order $4$ of a vector space of dimension~$d$. Let~$t, t_1, t_2\in \RR$ be defined by
\begin{gather*}
\tr D= t_1+{\rm i} t_2,\qquad \tr D^2= t.
\end{gather*}
Then the multiplicities $m_1$, $m_{\rm i}$, $m_{-1}$ and $m_{-{\rm i}}$ of the eigenvalues $1$, ${\rm i}$, $-1$ and $-{\rm i}$ of~$D$ are equal~to
\begin{gather*}
m_{\pm1}= \frac1{4}(d\pm 2t_1+t),\quad
m_{\pm {\rm i}}= \frac1{4}(d\pm 2t_2-t).
\end{gather*}
\end{Lemma}
Using Lemma~\ref{LD4} for $D=S_0$, we~obtain
\begin{gather*}
m_1=\bigg[\frac{rm}{4}\bigg]+1, \qquad
m_{\pm {\rm i}}=\bigg[\frac{rm\mp 1}{4}\bigg], \qquad
m_{-1}=\bigg[\frac{rm-2}{4}\bigg]+1.
\end{gather*}
Similary, we~obtain for even $r$ and $D=S_1$
\begin{gather*}
m_1=\begin{cases}
\frac{rm}{4}+1,&\text{if\quad $m$ is even},
\\
\frac{rm}{4}+\frac{1}{2},&\text{if}\quad rm\equiv 2\ {\rm mod}\ 4,
\\
\frac{rm}{4},&\text{if\quad $m$ is odd\quad and\quad $r\equiv 0\ {\rm mod}\ 4$},
\end{cases}
\\[1ex]
m_{-1}=\begin{cases}
\frac{rm}{4},&\text{if}\quad rm\equiv 0\ {\rm mod}\ 4,
\\
\frac{rm}{4}-\frac{1}{2},&\text{if}\quad rm\equiv 2\ {\rm mod}\ 4,
\end{cases}
\\[1ex]
m_{\pm {\rm i}}=\frac{rm}2-m_{\pm 1}.
\end{gather*}

Recall that $a=0$ if $\Gamma=\Gamma_r^4(\lambda)$ and $a=1$ if $\Gamma=\Gamma_r^{4,+}(\lambda)$. In~both cases, we~have $\gamma_4=\gamma\cdot(2\pi\kappa)$. Thus
\begin{gather*}
L^*_{\gamma_4} \theta_{k,n}=L^*_{(2\pi\kappa)} L^*_{\gamma}\theta_{k,n}={\rm e}^{{\rm i}n\frac{2\pi\kappa}{\kappa'}}{\rm e}^{{\rm i}n\frac{\pi}{2\kappa'}}S_a(\theta_{k,n})={\rm e}^{{\rm i}\sgn(\lambda)n\frac{\pi}{2}}S_a(\theta_{k,n}).
\end{gather*}

In particular, the dimension of the space of~$\gamma_4$-invariant functions in~${\mathcal W}_{m,n}$ equals the multiplicity of the eigenvalue ${\rm e}^{-{\rm i}n\frac{\pi}{2}}$ of~$S_a$ for $\lambda>0$, and the multiplicity of the eigenvalue ${\rm e}^{{\rm i}n\frac{\pi}{2}}$ of~$S_a$ for $\lambda<0$. In~particular, we~have that $\bm(m,n)$ for $\lambda$ equals $\bm(m,-n)$ for $-\lambda$. Now, the previous computations for $S_a$ give the assertion for $m>0$. Furthermore, using $\textbf{m}(m,n)=\bm(-m,-n)$, we~obtain the assertion also for $m<0$.
\end{proof}

\subsubsection[$\ord(\lambda)\in\{3,6\}$]{$\boldsymbol{\ord(\lambda)\in\{3,6\}}$}

\begin{Proposition} \label{P13} If $\Gamma=\Gamma^6_r(\lambda)$ with $\lambda\in\frac{\pi}3+2\pi\ZZ$, then the representation~$\cH_1$ is equivalent to
\begin{gather*}
\cH_1\cong\bigoplus_{m\in \Z\backslash\{0\}} \bigoplus_{n\in\Z} \bm(m,n)\cdot \cF_{rm,\,\frac n{6\lambda}},
\end{gather*}
where
\begin{gather*}
\bm(m,n) =\begin{cases}
\big[\frac16\big({r|m|-\rem_6(\sgn(\lambda m)\cdot n)}\big)\big]+1,& \text{if\quad $rm+n$ is even},
\\[.5ex]
\big[\frac16\big(r|m|-\rem_6(\sgn(\lambda m)\cdot n)+3\big)\big],& \text{if\quad $rm+n$ is odd}.
\end{cases}
\end{gather*}
\end{Proposition}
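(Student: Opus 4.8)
The plan is to follow the proofs of Propositions~\ref{P1} and~\ref{P12} step by step. By Remark~\ref{z0} we have $l_4^6=(0,0,6\lambda)$, hence $\gamma_4^6=(6\lambda)$; writing $\lambda=\pi/3+2\pi\kappa$ with $\kappa\in\ZZ$ and putting $\kappa':=|1+6\kappa|$, this equals $(\pm 2\pi\kappa')$, so $\Gamma':=\langle\gamma_1,\gamma_2,\gamma_3,\gamma_4^6\rangle$ is a straight sublattice of~$\Gamma$ with $\lambda':=\lambda(\Gamma')=2\pi\kappa'$. Applying Proposition~\ref{PH0H1} to~$\Gamma'$ gives $\cH_1'\cong\bigoplus_{m\not=0}|m|r\cdot\bigoplus_{n\in\ZZ}\cF_{rm,\,n/\lambda'}$, and, as in Section~\ref{S81}, $L^2(\Gamma\backslash G)$ is identified with the space of~$\gamma_4$-invariant functions in~$L^2(\Gamma'\backslash G)$. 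Since the action of~$\gamma_4$ commutes with~$\rho$ and preserves each isotypic component, the subspace of~$\gamma_4$-invariants in~$|m|r\cdot\cF_{rm,\,n/\lambda'}$ equals $\cW_0\oplus A_+(\cW_0)\oplus A_+^2(\cW_0)\oplus\cdots$, where $\cW_0\subset\cW_{m,n}=\cSpan\{\theta_{k,n}\mid k\in\ZZ_{r|m|}\}$ is the fixed space of~$\gamma_4$ acting on ground states; hence $\bm(m,n)=\dim\cW_0$, and it remains to analyse this action. Just as in Proposition~\ref{P12}, the $\RR$-linear map $S(\xi)=\bar\xi$, whose conjugate $T_{\mu,\nu}^{-1}ST_{\mu,\nu}$ is the reflection $\left(\begin{smallmatrix}1&-1\\0&-1\end{smallmatrix}\right)\in\GL(2,\ZZ)$ stabilising the hexagonal lattice of~$\Gamma^6_r(\lambda)$, yields an automorphism $F=\phi F_S\phi^{-1}$ of~$G$ preserving~$\Gamma$ with, by~\eqref{Fiso}, $F^*\cF_{c,d}=\cF_{-c,-d}$; since the dimension count below will not depend on~$\sgn(\lambda)$, this reduces everything to the case $m>0$.

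The heart of the proof is the computation of the operator by which $\gamma_4$ acts on~$\cW_{m,n}$. Write $\gamma_4=\gamma\cdot(2\pi\kappa)$ with $\gamma=M(\dots)\cdot(\pi/3)$, where the $M$-part is trivial for~$r$ even and equals $M\big(0,\tfrac1{2r\sqrt\nu},-\tfrac1{8r^2}\big)$ for~$r$ odd (the data from Theorem~\ref{th:classUptoInner} being chosen precisely so that $\gamma^6=(2\pi)$); put $a=0$ for~$r$ even and $a=1$ for~$r$ odd. As in the lemma computing the action of~$(\pi/2)$ inside the proof of Proposition~\ref{P12}, I would insert the explicit formula for $(\pi/3)\cdot M(x,y,z)(t)$, expand each $\theta_{k,n}$ as a lattice sum of Gaussians, carry out the Gaussian integral in the $x$-variable, and Poisson-resum in~$y$; the outcome is a finite Gauss-sum operator
\[
L^*_{\gamma}\theta_{k,n}={\rm e}^{{\rm i}n\pi/(3\kappa')}S_a(\theta_{k,n}),\qquad
S_a(\theta_{k,n})=\frac{c_a}{\sqrt{rm}}\sum_{\bar k\in\ZZ_{rm}}K_a(k,\bar k)\,\theta_{\bar k,n},
\]
with a constant~$c_a$ and an explicit quadratic-exponential kernel~$K_a$ which is rational once the factors $\sqrt3$ produced by $\cot(\pi/3)$ and $1/\sin(\pi/3)$ are absorbed into the normalisation $\sqrt\nu$ of the~$\theta_{k,n}$. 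Then $L^*_{\gamma_4}\theta_{k,n}={\rm e}^{{\rm i}\,\sgn(\lambda)\,n\,\pi/3}S_a(\theta_{k,n})$, and $\gamma^6=(2\pi)$ forces $S_a^6=\Id$, so~$S_a$ is diagonalisable with sixth-root-of-unity eigenvalues, and $\dim\cW_0$ is the multiplicity of the eigenvalue ${\rm e}^{-{\rm i}\,\sgn(\lambda)\,n\,\pi/3}$ of~$S_a$.

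To extract this multiplicity I would prove and apply the order-$6$ analogue of Lemma~\ref{LD4}: for a diagonalisable~$D$ of order dividing~$6$ on a $d$-dimensional space, with $\tr D^k=t_k$ for $k=0,\dots,5$ (so $t_0=d$ and $t_{6-k}=\overline{t_k}$), the multiplicity of the eigenvalue ${\rm e}^{2\pi{\rm i}j/6}$ equals $\tfrac16\sum_{k=0}^{5}t_k\,{\rm e}^{-2\pi{\rm i}jk/6}$. This reduces matters to evaluating $\tr S_a$, $\tr S_a^2$ and $\tr S_a^3$. Here $S_a^3$ corresponds to the rotation by~$\pi$ (as $S_6^3=-I_2$), so $S_a^3\theta_{k,n}$ is $\theta_{k',n}$ up to a phase with $k'$ congruent to~$-k$ modulo a shift depending on~$a$, and $\tr S_a^3$ is a fixed-point count exactly as in Proposition~\ref{P1}; $S_a^2$ corresponds to the rotation by~$2\pi/3$ ($S_6^2=S_3$) and gives a cubic Gauss sum; and $\tr S_a$ is the genuinely sextic Gauss sum, evaluated by quadratic Gauss-sum reciprocity as used in~\eqref{qgs}. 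Substituting the resulting values of the~$t_k$ into the order-$6$ lemma and simplifying with $\rem_6(\sgn(\lambda m)\cdot n)$ and the parity of~$rm+n$ yields the stated formula for~$\bm(m,n)$; together with $\bm(m,n)=\bm(-m,-n)$ this finishes the proof. The main obstacle is precisely the exact evaluation of these Gauss sums, above all of~$\tr S_a$: one must verify that the hexagonal geometry rationalises~$K_a$, then pin down all constants and phases, tracking the dependence on the parity of~$r$, on~$m$, and on $n\bmod 6$.
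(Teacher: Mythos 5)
Your proposal follows essentially the same route as the paper's proof: pass to the straight sublattice $\Gamma'=\langle\gamma_1,\gamma_2,\gamma_3,\gamma_4^6\rangle$ with $\kappa'=|1+6\kappa|$, reduce to the $\gamma_4$-action on the ground states $\cW_{m,n}$, compute that action as an order-six Gauss-sum operator by Gaussian integration and Poisson resummation, and extract the multiplicity of the eigenvalue ${\rm e}^{-{\rm i}\sgn(\lambda)n\pi/3}$ from the traces of the first three powers via an order-6 analogue of Lemma~\ref{LD4} (this is exactly the paper's Lemma~\ref{LD6}) together with the reciprocity formula~\eqref{qgs}. The only deviations are cosmetic: the paper uses the reflection $\xi\mapsto{\rm e}^{2\pi{\rm i}/3}\bar\xi$ rather than plain conjugation to obtain $\bm(m,n)=\bm(-m,-n)$, and its operator $D_b$ is your $S_a$.
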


\begin{Proposition} \label{Pord3} Let $\Gamma$ be one of the lattices $\Gamma_r^3(\lambda)$ or $\Gamma_r^{3,+}(\lambda)$ with $\lambda\in\frac{2\pi}3+2\pi\ZZ$, where in the latter case we assume that $r\equiv 0\ {\rm mod}\ 3$. Then $\cH_1$ is equivalent to
\begin{gather*}
\cH_1\cong\bigoplus_{m\in \Z\backslash\{0\}} \bigoplus_{n\in\Z} \bm(m,n)\cdot \cF_{rm,\,\frac n{3\lambda}}.
\end{gather*}
For $\Gamma=\Gamma_r^3(\lambda)$, the multiplicities are given by
\begin{gather*}
\bm(m,n) =\begin{cases}
\big[\frac13 \big({r|m|-\rem_3(\sgn(\lambda m)\cdot n)\big)}\big]+1,
& \text{if\quad $rm \equiv \sgn(\lambda)\cdot n$ {\rm mod}}\ 3,
\\[1ex]
\big[\frac13\big(r|m|-\rem_3(\sgn(\lambda m)\cdot n)+1\big)\big],
& \text{if\quad $rm\not\equiv \sgn(\lambda)\cdot n$ {\rm mod} $3$},
\end{cases}
\end{gather*}
and for $\Gamma=\Gamma_r^{3,+}(\lambda)$ by
\begin{gather*}
\bm(m,n) =
\begin{cases}
\frac{r|m|}{3},&\text{if\quad $m\not\equiv 0$ {\rm mod} $3$},
\\[1ex]
\big[\frac13\big(r|m|-\rem_3(\sgn(\lambda m)\cdot n)\big)\big]+1,
& \text{if\quad $m \equiv 0$ {\rm mod 3} and $n\equiv 0$ \rm{mod 3}},
\\[1ex]
\big[\frac13\big(r|m|-\rem_3(\sgn(\lambda m)\cdot n)+1\big)\big],
&\text{if\quad $m \equiv 0$ {\rm mod 3} and $n\not \equiv 0$ \rm{mod 3}}.
\end{cases}
\end{gather*}
\end{Proposition}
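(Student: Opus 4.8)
The plan is to run the reduction of Section~\ref{S81} exactly as in the proofs of Propositions~\ref{P1} and~\ref{P12}. Write $\lambda=\frac{2\pi}{3}+2\pi\kappa$ and put $\kappa':=|1+3\kappa|$. By Remark~\ref{z0} one has $l_4^3=(0,0,3\lambda)$, so in $G$ the element $\gamma_4^3$ is the pure translation $(\pm2\pi\kappa')$; hence $\Gamma':=\langle\gamma_1,\gamma_2,\gamma_3,\gamma_4^3\rangle$ is a straight lattice with $\lambda(\Gamma')=3\lambda$ and $\kappa'$ in the role of $\kappa$ in Proposition~\ref{PH0H1}. By that proposition, $\cH_1'\cong\bigoplus_{m\neq0}|m|r\cdot\bigoplus_{n\in\Z}\cF_{rm,\,n/(3\lambda)}$, each isotypic block $|m|r\cdot\cF_{rm,\,n/(3\lambda)}$ is invariant under $\gamma_4$, and $\cH_1$ is the space of $\gamma_4$-invariants. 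So it suffices to determine, for $m\neq0$ and $n\in\Z$, the dimension $\bm(m,n)$ of the space of $\gamma_4$-invariant ground states inside $\cW_{m,n}=\cSpan\{\theta_{k,n}\mid k\in\Z_{r|m|}\}$.

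First I would reduce to $m>0$. As in Proposition~\ref{P12}, let $S$ be an $\RR$-linear reflection of $\CC$ that is a symmetry of the hexagonal lattice generated by $\bar\alpha,\bar\beta$; then $\det S=-1$ and $S({\rm i}\xi)=-{\rm i}S(\xi)$, so $F:=\phi F_S\phi^{-1}\in\Aut(G)$ maps $\Gamma$ onto itself and, by~\eqref{Fiso}, $F^*\cF_{c,d}=\cF_{-c,-d}$; hence $\bm(m,n)=\bm(-m,-n)$ and it is enough to treat $m>0$. I would treat $\Gamma^3_r(\lambda)$ and $\Gamma^{3,+}_r(\lambda)$ simultaneously, encoding the two lattice types (and, within each, the sub-cases $r$ odd / $r$ even dictated by the $\xi_0,z_0$ from Theorem~\ref{th:classUptoInner}) into the $M$-part of $\gamma_4$ via a parameter $a\in\{0,1\}$. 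Writing $\gamma_4=\gamma\cdot(2\pi\kappa)$ with $\gamma=M(\dots)\,(\frac{2\pi}{3})$ and $\gamma^3=(2\pi)$, a direct computation on functions on $G$ shows that $L^*_{\gamma_4}$ acts on $\cW_{m,n}$ as $\omega^{\pm n}\cdot\tilde S$, where $\omega={\rm e}^{2\pi{\rm i}/3}$, the sign is $\sgn\lambda$, and $\tilde S$ is a unitary operator on $\cW_{m,n}$, independent of $n$, with $\tilde S^3=\Id$ (the latter because $\gamma_4^3=(\pm2\pi\kappa')$ acts on $\cW_{m,n}$ by the scalar ${\rm e}^{\pm2\pi{\rm i}n}=1$).

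The core computation is the matrix of $\tilde S$ on the basis $\{\theta_{k,n}\}$, obtained as in the lemma inside the proof of Proposition~\ref{P12}: expand $L^*_{(2\pi/3)}\theta_{k,n}$ in the Fourier variable $y$, evaluate the resulting Gaussian integral (now with $\cos\frac{2\pi}{3}=-\frac12$, $\sin\frac{2\pi}{3}=\frac{\sqrt3}{2}$ and the hexagonal data $\mu=\frac12$, $\nu=\frac{\sqrt3}{2}$), and re-sum over the lattice. This yields $\tilde S(\theta_{k,n})=c\sum_{\bar k\in\Z_{r|m|}}\zeta^{\,Q(k,\bar k)}\theta_{\bar k,n}$ for a suitable quadratic phase $Q$, with $c=(r|m|)^{-1/2}$ times an $a$-dependent root of unity. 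Since $\tilde S$ is unitary of order dividing $3$ it is diagonalisable with eigenvalues in $\{1,\omega,\omega^2\}$ and $\tr\tilde S^2=\overline{\tr\tilde S}$, so it only remains to evaluate the single Gauss sum $\tr\tilde S=c\sum_{k\in\Z_{r|m|}}\zeta^{\,Q(k,k)}$, whose value is a short case list in terms of $rm\bmod3$, $m\bmod3$ and the parity of $r$, for $a=0$ and for $a=1$ separately.

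Finally I would prove and apply the order-$3$ analogue of Lemma~\ref{LD4}: a diagonalisable automorphism $D$ of order dividing $3$ on a $d$-dimensional space has eigenvalue multiplicities $m_{\omega^j}=\frac13\big(d+\omega^{-j}\tr D+\omega^{j}\tr D^2\big)$ for $j=0,1,2$ (a $3\times3$ Vandermonde inversion). Since a ground state $\theta\in\cW_{m,n}$ is $\gamma_4$-invariant iff $\tilde S\theta=\omega^{\mp n}\theta$, the number $\bm(m,n)$ equals the multiplicity of the eigenvalue $\omega^{\mp n}$ of $\tilde S$; substituting the computed $\tr\tilde S$ gives the stated formulas, with the $\rem_3(\sgn(\lambda m)\cdot n)$ and $\lfloor\,\cdot\,\rfloor$ shapes arising exactly as in Propositions~\ref{P1} and~\ref{P12} once one also passes from $m>0$ to $m<0$ via $\bm(m,n)=\bm(-m,-n)$. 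I expect the main obstacle to be the explicit evaluation of these cubic Gauss sums on the hexagonal lattice, together with the careful bookkeeping of the $a$- and $r$-parity-dependent root-of-unity prefactors coming from the $z_0$-shifts built into the standard lattices $\Gamma^3_r$ and $\Gamma^{3,+}_r$; by contrast, the order-$6$ companion (Proposition~\ref{P13}) follows the same scheme but needs, in addition, the value of $\tr\tilde S^3$ and a $6\times6$ Vandermonde inversion.
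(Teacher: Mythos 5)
Your proposal matches the paper's proof in all essentials: reduction to the straight sublattice $\Gamma'=\langle\gamma_1,\gamma_2,\gamma_3,\gamma_4^3\rangle=\langle\gamma_1,\gamma_2,\gamma_3,(\pm2\pi\kappa')\rangle$, the reflection automorphism $\phi F_S\phi^{-1}$ giving $\bm(m,n)=\bm(-m,-n)$, the identification of $L^*_{\gamma_4}$ on $\cW_{m,n}$ as ${\rm e}^{{\rm i}\sgn(\lambda)n\frac{2\pi}{3}}$ times an $n$-independent unitary of order dividing $3$ given by a quadratic Gauss-sum matrix, evaluation of its trace via the reciprocity formula~\eqref{qgs}, and the order-$3$ eigenvalue-multiplicity inversion (the paper's Lemma~\ref{LD3}). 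The only organisational difference is that the paper factors $\gamma_4$ through the order-six element $\gamma=M(0,\frac{b}{2r\sqrt\nu},-\frac{b}{8r^2})(\frac{\pi}{3})$ and works with $D_b^2$, respectively the twisted operator $\tilde A_{1,b}$ for $\Gamma^{3,+}_r$, so that one Gauss-sum computation also serves Proposition~\ref{P13}, whereas you compute directly with the $\frac{2\pi}{3}$-rotation; this is cosmetic and does not affect correctness.
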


\begin{proof}[Proof of Propositions~\ref{P13} and~\ref{Pord3}.] Recall that $\mu=1/2$, $\nu=\sqrt 3/2$.
Since we want to study all cases simultaneously, we~put $\gamma= M\big(0,\frac{b}{2r\sqrt\nu},-\frac{b}{8r^2}\big)\big(\frac{\pi}{3}\big)$,
where $b=0$ if $r$ is even and $b=1$ if~$r$ is odd.
Then $\gamma^6=(2\pi)$. We~put $q:=\ord(\lambda)\in\{3,6\}$.

We define an $\RR$-linear map $S\colon \CC\to\CC$ by $S(\xi)={\rm e}^{-\pi {\rm i}/3}\bar \xi$ if $q=3$ and $S(\xi)={\rm e}^{2\pi {\rm i}/3}\bar\xi$ if $q=6$. The automorphism $F:=\phi F_S\phi^{-1}$ of~$G$ maps $\Gamma$ to $\Gamma$. Indeed,
\begin{gather*}
F(\gamma_1)=\gamma_2^{-1},\qquad F(\gamma_2)=\gamma_1^{-1},\qquad F(\gamma_3)=\gamma_3^{-1},\qquad F(\gamma_4)=\gamma_4^{-1},
\end{gather*}
if $q=3$ and
\begin{gather*}
F(\gamma_1)=\gamma_2,\qquad F(\gamma_2)=\gamma_1,\qquad F(\gamma_3)=\gamma_3^{-1},\qquad F(\gamma_4)=\gamma_4^{-1},
\end{gather*}
if $q=6$.
Using~\eqref{Fiso}, we~see that $F^*(\cF_{c,d})=\cF_{-c,-d}$. Hence $\bm(m,n)=\bm(-m,-n)$. Therefore, we~will assume that $m>0$ in the following computations.

We define $\kappa\in\ZZ$ by $\lambda=\pi/3+2\pi\kappa$ and $\lambda=2\pi/3+2\pi\kappa$, respectively. We~set $\kappa':=|1+q\kappa|$. Then $\Gamma'=\la \gamma_1,\gamma_2,\gamma_3,\gamma_4^q\ra=\la \gamma_1,\gamma_2,\gamma_3,(2\pi\kappa')\ra$. As~usual, we~consider the ground states $\theta_{k,n} \in \cW_{m,n}\subset L^2(\Gamma'\backslash G)$.
\begin{Lemma}
We have
\begin{gather*}
L^*_{\gamma} \theta_{k,n} = {\rm e}^{{\rm i}n\frac{\pi}{3\kappa'}} D_b(\theta_{k,n}),
\end{gather*}
where
\begin{gather*}
D_b\colon\quad {\mathcal W}_{m,n}\longrightarrow {\mathcal W}_{m,n}, \qquad D_b(\theta_{k,n})=\frac1{\sqrt{rm}} {\rm e}^{-\frac {\pi {\rm i}}{12}}{\rm e}^{\frac{b\pi {\rm i}}r\left(k-\frac m4\right)}\sum_{\bar k\in \Z_{rm}} {\rm e}^{\frac{\pi {\rm i}}{rm} (k^2-2k\bar k) }\theta_{\bar k,n}.
\end{gather*}
\end{Lemma}

\begin{proof}
Recall that $\mu=\frac{1}{2}$ and $\nu=\frac{\sqrt{3}}{2}$. It~is easy to check that the action of~$\gamma$ is given by
\begin{gather}\label{Ew3}
L^*_{\gamma} \ph(x,y,z,t)=\ph\bigg(\!x \mu -y \nu, x \nu + y\mu+\frac b{2r{\sqrt\nu}}, z -\nu^2 {xy} +\frac\nu4 \big(x^2\!-y^2\big)\!-\frac b{8r^2},t+\frac{\pi}{3}\bigg).
\end{gather}

By~\eqref{Ew3},
\begin{align*}
L^*_{\gamma} \theta_{k,n}&={\rm e}^{{\rm i}nt/\kappa'}{\rm e}^{{\rm i}n\frac{\pi}{3\kappa'}}{\rm e}^{2\pi {\rm i}rm\left(-\nu^2xy+\frac\nu4(x^2-y^2)-\frac b{8r^2}\right)}{\rm e}^{2\pi {\rm i}rmz}
\\
&\phantom{=}\times \sum_{j\in \frac k{rm}+\Z}{\rm e}^{\pi {\rm i} \mu rmj^2}{\rm e}^{-\pi rm(\mu x-\nu y+j\sqrt\nu)^2}{\rm e}^{2\pi {\rm i} rmj\sqrt\nu \left(\nu x+\mu y+\frac b{2r{\sqrt\nu}}\right)}.
\end{align*}
This yields
\begin{align*}
\big\la L^*_{\gamma} \theta_{k,n}, {\rm e}^{2\pi {\rm i}p\sqrt\nu y}\big\ra &= \int_0^{1/\sqrt\nu}L^*_{\gamma} \theta_{k,n} \, {\rm e}^{-2\pi {\rm i}p\sqrt\nu y}{\rm d}y
\\
&={\rm e}^{{\rm i}n\frac{\pi}{3\kappa'}}{\rm e}^{{\rm i}nt/\kappa'}{\rm e}^{2\pi {\rm i}rmz} {\rm e}^{\frac{b\pi {\rm i}}r\left(k-\frac m4\right)} {\rm e}^{-\frac{\pi rm}2\left(\mu-{\rm i}\nu\right)x^2} {\rm e}^{\frac{\pi {\rm i}}{rm}(k^2-2pk)}
\\
&\phantom{=}\times\int_{\R}{\rm e}^{-\pi rm\nu\left((\nu+{\rm i}\mu)y^2+2\big((-\mu+{\rm i}\nu)x+\frac{{\rm i}p}{rm\sqrt\nu}\big)y\right)}{\rm d}y.
\end{align*}
We put
\begin{gather*}
\alpha:={\rm e}^{\frac{\pi {\rm i}}{12}},\qquad
\beta:=\alpha^{-1}\bigg((-\mu+{\rm i}\nu)x+\frac{{\rm i}p}{rm\sqrt\nu}\bigg).
\end{gather*}
Then $\alpha^2=\nu +{\rm i}\mu$ and
\begin{gather*}
\la L^*_{\gamma} \theta_{k,n}, {\rm e}^{2\pi {\rm i}p\sqrt\nu y}\ra
\\ \qquad
{}= {\rm e}^{{\rm i}n\frac{\pi}{3\kappa'}}{\rm e}^{{\rm i}nt/\kappa'}{\rm e}^{2\pi {\rm i}rmz} {\rm e}^{\frac{b\pi {\rm i}}r\left(k-\frac m4\right)} {\rm e}^{-\frac{\pi rm}2\left(\mu-{\rm i}\nu\right)x^2} {\rm e}^{\frac{\pi {\rm i}}{rm}(k^2-2pk)}
\int_{\R}{\rm e}^{-\pi rm\nu (\alpha y+\beta)^2}{\rm d}y \, {\rm e}^{\pi rm\nu\beta^2}
\\ \qquad
= \frac1{\sqrt{rm\nu}} {\rm e}^{-\frac{\pi {\rm i}}{12}} {\rm e}^{{\rm i}n\frac{\pi}{3\kappa'}}{\rm e}^{{\rm i}n\frac t{\kappa'}}{\rm e}^{2\pi {\rm i}rmz} {\rm e}^{\frac{b\pi {\rm i}}r\left(k-\frac m4\right)} {\rm e}^{-\pi rm\big(x+\frac{p\sqrt\nu}{rm}\big)^2}{\rm e}^{\frac{\pi {\rm i}}{rm}\big(k^2-2kp+\frac{p^2}2\big)}.
\end{gather*}
Hence,
\begin{align*}
L^*_{\gamma} \theta_{k,n}&=\sqrt\nu \sum_{p\in\Z} \big\la L^*_{\gamma} \theta_{k,n}, {\rm e}^{2\pi {\rm i}p\sqrt\nu y}\big\ra \,{\rm e}^{2\pi {\rm i}p\sqrt\nu y}
\\
&=\frac1{\sqrt{rm}}{\rm e}^{{\rm i}nt/\kappa'}{\rm e}^{{\rm i}n\frac{\pi}{3\kappa'}}{\rm e}^{-\frac {\pi {\rm i}}{12}}\, {\rm e}^{2\pi {\rm i}rmz}{\rm e}^{\frac{b\pi {\rm i}}r\left(k-\frac m4\right)}{\rm e}^{\pi {\rm i}\,\frac{k^2}{rm}}
\\
&\phantom{=}\times \sum_{\bar k\in\Z_{rm}}\sum_{J\in\Z} {\rm e}^{-2\frac{\pi {\rm i}}{rm} k\bar k}{\rm e}^{\pi {\rm i}\mu rm\big(\frac{\bar k}{rm}+J\big)^2}{\rm e}^{-\pi rm\left(x+\left(\frac{\bar k}{rm}+J\right)\sqrt\nu\right)^2}{\rm e}^{2\pi {\rm i} rm (\frac{\bar k}{rm}+J) \sqrt\nu y},
\end{align*}
where we put $p=\bar k+J\cdot rm$. We~conclude
\begin{gather*}
L^*_{\gamma} \theta_{k,n} = \frac1{\sqrt{rm}}{\rm e}^{{\rm i}n\frac{\pi}{3\kappa'}}{\rm e}^{-\frac {\pi {\rm i}}{12}} {\rm e}^{\frac{b\pi {\rm i}}r\left(k-\frac m4\right)}
\sum_{\bar k\in \Z_{rm}} {\rm e}^{\frac{\pi {\rm i}}{rm} (k^2-2k\bar k)}\theta_{\bar k,n} = {\rm e}^{{\rm i}n\frac{\pi}{3\kappa'}} D_b(\theta_{k,n}). \tag*{\qed}
\end{gather*}
\renewcommand{\qed}{}
\end{proof}

Consider $\Gamma=\Gamma^6_r(\lambda)$ for $\lambda \in \frac\pi 3+2\pi\kappa$, $\kappa\in\ZZ$. Here $\kappa'=|6\kappa+1|$.
Then $\gamma_4$ is constructed from $\xi_0=\frac{b}{2r}$ and $z_0=-\frac{b}{8r^2}$, where $b=0$ if $r$ is even and $b=1$ if $r$ is odd. Hence $\gamma_4=\gamma\cdot(2\pi\kappa)$.

Then
\begin{gather*}
L^*_{\gamma_4} \theta_{k,n}=L^*_{(2\pi\kappa)} L^*_{\gamma}\theta_{k,n}={\rm e}^{{\rm i}n\frac{2\pi\kappa}{\kappa'}}{\rm e}^{{\rm i}n\frac{\pi}{3\kappa'}}D_b(\theta_{k,n})={\rm e}^{{\rm i}\sgn(\lambda)n\frac{\pi}{3}}D_b(\theta_{k,n}).
\end{gather*}

In particular, the dimension of the space of~$\gamma_4$-invariant functions in~${\mathcal W}_{m,n}$ is equal to the multiplicity of the eigenvalue ${\rm e}^{-{\rm i}n\pi/3}$ of~$D_b$ if $\lambda>0$ and to the multiplicity of the eigenvalue ${\rm e}^{{\rm i}n\pi/3}$ of~$D_b$ if $\lambda<0$.

%%%%%% \frac{2}{3}\pi %%%%%%%
Now, we~consider $\Gamma=\Gamma^3_r(\lambda)$ with $\lambda \in \frac{2\pi} 3+2\pi\kappa$, $\kappa\in\ZZ$. Then $\gamma_4$ is determined by $\xi_0=\frac{b}{r}+{\rm i}\frac{b}{2r}$, $z_0=-\frac{b}{8r^2}$, where again~$b=0$ if $r$ is even and $b=1$ if $r$ is odd. We~have $\gamma_4=\gamma^2\cdot(2\pi\kappa)$.
Here $\kappa':=|1+3\kappa|$, thus
\begin{gather*}
L^*_{\gamma_4} \theta_{k,n}=L^*_{(2\pi\kappa)} L^*_{\gamma^2}\theta_{k,n}={\rm e}^{{\rm i}n\frac{2\pi\kappa}{\kappa'}}{\rm e}^{{\rm i}n\frac{2\pi}{3\kappa'}}D_b^2(\theta_{k,n})={\rm e}^{{\rm i}\sgn(\lambda)n\frac{2}{3}\pi}D_b^2(\theta_{k,n}).
\end{gather*}

Consequently, the dimension of the space of~$\gamma_4$-invariant functions in~${\mathcal W}_{m,n}$ is equal to the multiplicity of the eigenvalue ${\rm e}^{-{\rm i}n\frac{2}{3}\pi}$ of~$D_b^2$ if $\lambda>0$ and to the multiplicity of the eigenvalue ${\rm e}^{{\rm i}n\frac{2}{3}\pi}$ of~$D^2_b$ if $\lambda<0$.

At last, we~consider $\Gamma=\Gamma^{3,+}_r(\lambda)$ with $\lambda \in \frac{2\pi} 3+2\pi\kappa$, $\kappa\in\ZZ$, and $r\in3\ZZ$. Again~$\kappa':=|1+3\kappa|$.
Then $\gamma_4$ is determined by
\begin{gather*}
\xi_0=\begin{cases}
-\frac{1}{r},&\text{if\quad $r$ is even},
\\[.5ex]
{\rm i}\frac{1}{2r},&\text{if\quad $r$ is odd},
\end{cases}\qquad
z_0=\begin{cases}
-\frac{1}{6r^2},&\text{if\quad $r$ is even},
\\[.5ex]
-\frac{1}{24r^2},&\text{if\quad $r$ is odd}.
\end{cases}
\end{gather*}
We put $m_b:=M\big(0,-\frac{1}{r\sqrt\nu},\frac{3b-1}{6r^2}\big)$, where again~$b=0$ if $r$ is even and $b=1$ if $r$ is odd. Then $\gamma_4=m_b\gamma^2(2\pi\kappa)$.
The action of~$m_b$ on~${\mathcal W}_{m,n}$ is given by
\begin{gather*}
\theta_{k,n}\longmapsto {\rm e}^{\frac{\pi}{3}{\rm i}\frac{m}{r}(3b-1)}{\rm e}^{-2\pi {\rm i}\,\frac{k}{r}}\theta_{k,n}.
\end{gather*}

Thus
\begin{align*}
L_{\gamma_{4}}^*(\theta_{k,n})=&L^*_{(2\pi\kappa)}L^*_{\gamma^2}L^*_{m_b}(\theta_{k,n})={\rm e}^{{\rm i}\sgn(\lambda)n\frac{2}{3}\pi}{\rm e}^{\frac{\pi}{3}{\rm i}\frac{m}{r}(3b-1)}{\rm e}^{-2\pi {\rm i}\,\frac{k}{r}}D^2_b(\theta_{k,n}).
\end{align*}
Hence the dimension of the space of~$\gamma_4$-invariant functions in~${\mathcal W}_{m,n}$ is equal to the multiplicity of the eigenvalue ${\rm e}^{-{\rm i}n\frac{2}{3}\pi}$ of
\begin{gather*}
\tilde A_{1,b}:={\rm e}^{\frac{\pi}{3}{\rm i}\frac{m}{r}(3b-1)}{\rm e}^{-2\pi {\rm i}\,\frac{k}{r}}D^2_b
\end{gather*}
if $\lambda>0$ and to the multiplicity of the eigenvalue ${\rm e}^{{\rm i}n\frac{2}{3}\pi}$ of~$\tilde A_{1,b}$ for $\lambda<0$.

Thus to prove Propositions~\ref{P13} and~\ref{Pord3} it remains to compute the multiplicity of the eigenvalues of~$D_b$, $D^2_b$ and $\tilde A_{1,b}$. Since $\bm(m,n)$ for $\lambda$ equals $\bm(m,-n)$ for $-\lambda$, it~again suffices to consider $m>0$. We~will use the following lemma.

\begin{Lemma}\label{LD6}
Let $D$ be a diagonalisable linear automorphism of order $6$ of a vector space of dimension~$d$. Let~$m_j$ denote the multiplicity of the eigenvalue ${\rm e}^{\frac{j \pi {\rm i}}3}$ for $j=-2,-1,\dots,3$. Furthermore, let $t, t_1,\dots,t_4\in \RR$ be defined by
\begin{gather*}
\tr D= t_1+{\rm i}\nu t_2,\qquad \tr D^2= t_3+{\rm i}\nu t_4,\qquad \tr D^3=t.
\end{gather*}
Then
\begin{gather*}
m_0 = \frac13 \bigg(t_1+t_3+\frac t2+\frac d2 \bigg),\\[0.4ex]
m_{\pm1} = \pm\frac 14(t_2+t_4) +\frac16 (t_1-t_3-t+d),\\[0.4ex]
m_{\pm2} = \pm\frac 14(t_2-t_4) +\frac16 (-t_1-t_3+t+d),\\[0.4ex]
m_3 = \frac13 \bigg({-}t_1+t_3-\frac t2+\frac d2 \bigg).
\end{gather*}
%\end{proof}
\end{Lemma}
Using equation~\eqref{qgs} we compute
\begin{gather*}
\tr D_b= \frac1{\sqrt{rm}} {\rm e}^{-\frac {\pi {\rm i}}{12}}\sum_{k\in\Z_{rm}} {\rm e}^{\frac{b\pi {\rm i}}r\left(k-\frac m4\right)} {\rm e}^{-\frac{\pi {\rm i}}{rm}k^2}
= \frac1{\sqrt{rm}} {\rm e}^{-\frac {\pi {\rm i}}{12}} {\rm e}^{-\frac{b\pi {\rm i}}{4r}m}S(-1,bm,rm)
=\mu-\nu {\rm i}.
\end{gather*}

Moreover, using~\eqref{qgs}, one easily checks
\begin{gather*}
D_b^2(\theta_{k,n})=\frac1{\sqrt{rm}} {\rm e}^{\frac {\pi {\rm i}}{12}}{\rm e}^{\frac{a\pi {\rm i}}r\left(2k-\frac {3m}4\right)}\sum_{\bar k\in \Z_{rm}} {\rm e}^{\frac{a\pi {\rm i}}r \bar k } {\rm e}^{-\frac{\pi {\rm i}}{rm} (\bar k^2+2k\bar k) }\theta_{\bar k,n}.
\end{gather*}

Instead of~$\tr D_b^2$, we~compute the trace of the more general map
\begin{gather*}
A_{a,b}\colon\ \theta_{k,n}\longmapsto {\rm e}^{-2\pi {\rm i} a\frac kr} D_b^2(\theta_{k,n})
\end{gather*}
and obtain
\begin{align}
\tr A_{a,b}&=\frac1{\sqrt{rm}} {\rm e}^{\frac {\pi {\rm i}}{12}}\sum_{k\in\Z_{rm}} {\rm e}^{2\pi {\rm i}(b-a)\frac{k}r} {\rm e}^{-\frac {3}4b\pi {\rm i}\,\frac mr} {\rm e}^{\frac{b\pi {\rm i}}rk} {\rm e}^{-\frac{\pi {\rm i}}{rm}\, 3k^2}\nonumber
\\
&=\frac1{\sqrt{rm}} {\rm e}^{\frac {\pi {\rm i}}{12}} {\rm e}^{-\frac{3b\pi {\rm i}}{4r}m}S(-3,(3b-2a)m,rm).
\label{trA}
\end{align}
For $a=0$, this yields
\begin{gather*}
\tr D_b^2 =\tr A_{0,b}=
\begin{cases}
$\phantom{$-$}$3/2-{\rm i}\nu, &\mbox{if}\quad rm\equiv 0\ {\rm mod}\ 3,
\\
-1/2-{\rm i}\nu, &\mbox{if}\quad rm\equiv 1\ {\rm mod}\ 3,
\\
$\phantom{$-$}$1/2+{\rm i}\nu, &\mbox{if}\quad rm\equiv 2\ {\rm mod}\ 3.
\end{cases}
\end{gather*}
We also have
\begin{gather*}
D_b^3(\theta_{k,n})={\rm e}^{\frac{b\pi {\rm i}}r(2k-m)} \theta_{bm-k,n},
\end{gather*}
hence
\begin{gather*}
t=\tr D_b^3=\begin{cases}
$\phantom{$-$}2$, &\text{if\quad $rm$ is even},
\\
-1,&\text{if\quad $rm$ is odd}.
\end{cases}
\end{gather*}
Furthermore, in the notation of Lemma~\ref{LD6},
\begin{gather*}
d=rm,\qquad t_1=1/2,\qquad t_2=-1,
\\
t_3= \begin{cases}
$\phantom{$-$}$3/2, &\mbox{if}\quad rm\equiv 0\ {\rm mod}\ 3,
\\
-1/2, &\mbox{if}\quad rm\equiv 1\ {\rm mod}\ 3,
\\
$\phantom{$-$}$1/2, &\mbox{if}\quad rm\equiv 2\ {\rm mod}\ 3,
\end{cases}\qquad
t_4=\begin{cases}
-1, &\mbox{if}\quad rm\equiv 0,1\ {\rm mod}\ 3,
\\
1, &\mbox{if}\quad rm\equiv 2\ {\rm mod}\ 3,
\end{cases}
\end{gather*}
and Lemma~\ref{LD6} gives the assertion for $\Gamma=\Gamma^6_r(\lambda)$.

Similarly, we~can compute the multiplicity $m_0$ and $m_\pm$ of the eigenvalue 1 and ${\rm e}^{\pm\frac{2}{3}\pi {\rm i}}$ of~$D_b^2$ by using the following lemma and obtain the assertion for $\Gamma=\Gamma^3_r(\lambda)$.
\begin{Lemma}\label{LD3}
Let $A$ be a diagonalisable linear automorphism of order $3$ of a vector space of~dimension~$d$. Furthermore, let $t_1,t_2\in \RR$ be defined by $\tr A= t_1+{\rm i}\nu t_2$.
Then the multiplicities $m_0$ and $m_\pm$ of the eigenvalues~$1$ and ${\rm e}^{\pm\frac{2}{3}\pi {\rm i}}$ of~$A$ are equal to
\begin{gather*}
m_0 = \frac{1}{3}(2t_1+d),\qquad
m_\pm = \frac{1}{3}(d-t_1)\pm\frac{1}{2}t_2.
\end{gather*}
\end{Lemma}

At last, by using equations~\eqref{trA} and~\eqref{qgs} we compute
\begin{gather*}
\tr \tilde A_{1,b}= \tr \big({\rm e}^{\frac{\pi {\rm i}m}{3r}(3b-1)} A_{1,b}\big)
= \frac1{\sqrt 3}{\rm e}^{-\frac{\pi {\rm i}}6}\big(1+2\Re {\rm e}^{\frac{2\pi {\rm i}}3 m}\big)
\\ \hphantom{\tr \tilde A_{1,b}}
{}=\begin{cases}
0, &\text{if}\quad m\equiv 1,2\ {\rm mod}\ 3,
\\[1ex]
{3}/{2}-\nu {\rm i}, &\text{if}\quad m\equiv 0\ {\rm mod}\ 3.
\end{cases}
\end{gather*}
Applying Lemma~\ref{LD3} to $\tilde A_{1,b}$ we obtain the assertion for $\Gamma=\Gamma^{3,+}_r(\lambda)$.
\end{proof}

\begin{Remark}\label{RBr}
In the introduction, we~mentioned Brezin's book~\cite{Br} on harmonic analysis on solvmanifolds, which contains an informal discussion of approaches to the decomposition of the right regular representation. In~Chapter 1, Brezin studies a series of four-dimensional examples. These examples are general semi-direct products $S_\sigma=H\rtimes _\sigma\RR$ of the Heisenberg group $H$ by the real line defined by a one-parameter subgroup $\sigma$ of~$\SL(2,\RR)\subsetneq \Aut(H)$, where $\sigma(1)\in\SL(2,\ZZ)$. He fixes a lattice $\Gamma$ in~$H$ which is generated by the set $\ZZ^2$ in the complement $\RR^2$ of the centre~$Z(H)$. Then he considers lattices $\Lambda_\sigma$ that are generated by $\Gamma$ and the subset $\ZZ$ of the real line. This requires an additional condition for $\sigma(1)$, which is called \emph{suitability} by Brezin. Let~us now restrict these considerations to the case where $S_\sigma$ is isomorphic to the oscillator group and let $\Lambda_\sigma\subset S_\sigma$ be a lattice in such a group. Then it is easy to see that $\Lambda_\sigma$ must be of type $L_1^1$, $L_1^{2}$ or~$L_1^{4}$. Moreover, it~satisfies a further condition corresponding to $\iota_1=\iota_2=0$ in our classification of standard lattices. Compared to the general case, this special choice of lattices simplifies the decomposition of the right regular representation.

Brezin uses a decomposition of the space of~$L^2$-functions on the quotient into two subspaces~$Y(0)$ and $\sum_{n\not=0}Y(n)$, which correspond to our spaces $\cH_0$ and $\cH_1$. However, his strategy to treat these subspaces is somewhat different from ours. He prefers an inductive approach. He identifies functions in~$Y(0)$ with functions on the three-dimensional quotient of~$\RR^2\rtimes_\sigma\RR$ by a~lattice generated by $\ZZ^2\subset\RR^2$ and $\ZZ\subset\RR$. For functions in~$\sum_{n\not=0}Y(n)$, he considers their restriction to the Heisenberg group $H$. This yields a representation of~$H\rtimes_\sigma \ZZ$. This representation is decomposed into irreducible ones. Then all extensions of these irreducible representations of~$H\rtimes_\sigma \ZZ$ to representations of~$S_\sigma$ are determined and those that actually occur in the right regular representation are identified.

In Brezin's book, the emphasis is more on a general discussion of approaches than on explicit results.
However, the case of lattices of type $L_1^{4}$ is studied in more detail and some computations of multiplicities are done. Unfortunately, no final result is formulated, but we think that, basically, these computations coincide with ours.
\end{Remark}

\subsection{The spectrum of the wave operator} \label{rmwave}
The (Lorentzian) Laplace--Beltrami operator, i.e., the wave operator, equals minus the Casimir operator of the right regular representation. On each irreducible subrepresentation~$\sigma$ it acts as multiplication by a scalar $x_\sigma$. This scalar is known for all irreducible unitary representations of~$\Osc_1$, see Section~\ref{irrrep}. Consequently, once the decomposition of~$\big(\rho,L^2(L\backslash\Osc_1)\big)$ into irreducible summands is known, the spectrum of the Laplace--Beltrami operator can be computed explicitly. More exactly, the spectrum is equal to the closure of the set of all numbers $x_\sigma\in\RR$, where $\sigma$ is an irreducible subrepresentation of~$\rho$. Let~us do this, for example, for straight lattices.

\begin{Proposition} Let $L\subset\Osc_1$ be a normalised, unshifted and straight lattice, that is, $L=L_r(2\pi\kappa,\mu,\nu,0,0)$ for $\kappa\in\NN_{>0}$ and $\mu,\nu\in\RR$, $\nu>0$. We~define
\begin{gather*}
\cA(\mu,\nu):=\bigg\{4\pi \bigg(\nu
k^2+\frac1\nu(-\mu k+l)^2\bigg)\, \bigg|\, (l,k)\in\ZZ^2\setminus\{(0,0)\}\bigg\}.
\end{gather*}
Then the spectrum of the wave operator of~$L\backslash \Osc_1$ equals
$4\pi\frac r\kappa\,\ZZ\cup \cA(\mu,\nu)$ if $\kappa$ is even and $2\pi\frac r\kappa\,\ZZ\cup \cA(\mu,\nu)$ if $\kappa$ is odd. In~particular, the spectrum is discrete.
\end{Proposition}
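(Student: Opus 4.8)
The plan is to feed the explicit decomposition of $L^2(L\backslash\Osc_1)$ supplied by Proposition~\ref{PH0H1} into the formula for the Casimir operator recalled in Section~\ref{irrrep}. Since the wave operator is $-\Delta_\sigma$ on each irreducible subrepresentation $\sigma$, Section~\ref{irrrep} tells us it acts by the scalar $x_\sigma$ with $x_{\cC_d}=0$, $x_{{\mathcal S}_a^\tau}=4\pi a^2$, $x_{\cF_{c,d}}=2\pi c(4\pi d+1)$ for $c>0$ and $x_{\cF_{c,d}}=2\pi c(4\pi d-1)$ for $c<0$; note that this scalar is independent of the parameter $\tau$. By the description of the spectrum in the Introduction, the spectrum of the wave operator on $L\backslash\Osc_1$ is the closure of the set of all $x_\sigma$ taken over the irreducible subrepresentations $\sigma$ of $\rho$, so it suffices to compute this set of scalars and to check that it is already closed and discrete. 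Here $L_r(2\pi\kappa,\mu,\nu,0,0)$ corresponds under $\phi$ to $\Gamma_r(\lambda,\mu,\nu,0,0)$ with $\lambda=2\pi\kappa$, so Proposition~\ref{PH0H1} applies verbatim.

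Next I would read the scalars off Proposition~\ref{PH0H1}, substituting $\lambda=2\pi\kappa$. The summands $\cC_{n/\lambda}$, $n\in\ZZ$, contribute only the value $0$. The summands ${\mathcal S}_{a(l,k)}^{\tau(K)}$ with $(l,k)\neq(0,0)$ contribute the values $4\pi a(l,k)^2=4\pi\big(\nu k^2+\frac1\nu(-\mu k+l)^2\big)$, and as $(l,k)$ ranges over $\ZZ^2\setminus\{(0,0)\}$ these are precisely the elements of $\cA(\mu,\nu)$. The summands $\cF_{rm,\,n/\lambda}$ with $m\neq0$, $n\in\ZZ$ contribute $\frac{2\pi r}{\kappa}\,m(2n+\kappa)$ when $m>0$ and $\frac{2\pi r}{\kappa}\,m(2n-\kappa)$ when $m<0$; substituting $(m,n)\mapsto(-m,-n)$ shows that these two families coincide, so taken together they produce exactly the set $\big\{\frac{2\pi r}{\kappa}\,m(2n+\kappa)\ \big|\ m\in\NN_{>0},\ n\in\ZZ\big\}$.

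It remains to identify $E:=\{m(2n+\kappa)\mid m\in\NN_{>0},\ n\in\ZZ\}$. If $\kappa$ is even, then $2n+\kappa$ ranges over all even integers, so $m=1$ already gives $E\supseteq 2\ZZ$, while every product $m(2n+\kappa)$ is even; thus $E=2\ZZ$. If $\kappa$ is odd, then $2n+\kappa$ ranges over all odd integers; given $N\in\ZZ\setminus\{0\}$, write $N=2^ab$ with $b$ odd and take $m=2^a$, $2n+\kappa=b$, whereas $0$ cannot be a product of a positive integer with an odd one; thus $E=\ZZ\setminus\{0\}$. Adjoining the value $0$ coming from the $\cC$-summands, the set of scalars becomes $\cA(\mu,\nu)\cup\frac{4\pi r}{\kappa}\ZZ$ if $\kappa$ is even and $\cA(\mu,\nu)\cup\frac{2\pi r}{\kappa}\ZZ$ if $\kappa$ is odd, as claimed.

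Finally, for discreteness and to see that the closure in the Introduction's definition does nothing here: $\cA(\mu,\nu)$ equals $4\pi$ times the value set of the positive definite binary quadratic form $(l,k)\mapsto\big\|T_{\mu,\nu}^{-1}(l,k)^\top\big\|^2$, so below any given bound only finitely many $(l,k)$ occur, whence $\cA(\mu,\nu)$ is a closed subset of $\RR$ with no finite accumulation point; the arithmetic progression $\frac{cr}{\kappa}\ZZ$ has the same two properties trivially. Their union is therefore closed and discrete, so it coincides with its closure and the spectrum is discrete. The only step that is not a mechanical substitution is the bookkeeping for $\cH_1$ in the third paragraph: verifying that the negative-$m$ summands add nothing new, and that the parity of $\kappa$ is exactly what distinguishes $\frac{2\pi r}{\kappa}\ZZ$ from its index-two subgroup $\frac{4\pi r}{\kappa}\ZZ$ before the zero eigenvalue is put back in.
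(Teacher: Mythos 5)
Your proposal is correct and follows exactly the route the paper intends: plug the decomposition of Proposition~\ref{PH0H1} into the Casimir scalars $x_{\cC_d}=0$, $x_{{\mathcal S}_a^\tau}=4\pi a^2$, $x_{\cF_{c,d}}=2\pi c(4\pi d\pm1)$ from Section~\ref{irrrep}; the paper's proof is just this one-line citation, and your parity bookkeeping for $E=\{m(2n+\kappa)\}$ (giving $2\ZZ$ for $\kappa$ even and $\ZZ\setminus\{0\}$ for $\kappa$ odd, with $0$ restored by the $\cC_{n/\lambda}$ summands) together with the closedness/discreteness check correctly fills in the details the paper leaves implicit.
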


\begin{proof}The assertion follows from Proposition~\ref{PH0H1} and the formulas for $\Delta_{\cC_d}$, $\Delta_{{\mathcal S}_a^\tau}$, and $\Delta_{\cF_{c,d}}$ in~Section~\ref{irrrep}.
\end{proof}

\begin{Corollary}On each quotient of~$\Osc_1$ by a standard lattice, the spectrum of the wave operator is discrete.
\end{Corollary}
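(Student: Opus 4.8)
The plan is to read the spectrum of the wave operator straight off the explicit decomposition of $\big(\rho,L^2(L\backslash\Osc_1)\big)$ obtained in this section. The wave operator acts on an irreducible subrepresentation $\sigma$ by the scalar $x_\sigma=-\Delta_\sigma$, and the values $\Delta_{\cC_d}=0$, $\Delta_{{\mathcal S}_a^\tau}=-4\pi a^2$, $\Delta_{\cF_{c,d}}=-2\pi c(4\pi d+1)$ for $c>0$ and $-2\pi c(4\pi d-1)$ for $c<0$ were recorded in Section~\ref{irrrep}. Since the spectrum of the wave operator on $L\backslash\Osc_1$ is the closure of $\{x_\sigma\mid \sigma \text{ an irreducible subrepresentation of }\rho\}$, it suffices to show that for a standard lattice this set of scalars is already closed and discrete in $\RR$.

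Fix a standard lattice $L$ with associated data $r=r(L)$, $\lambda=\lambda(L)$, $q=\ord(\lambda)$ and $\mu,\nu$ as in Theorem~\ref{th:classUptoInner}. First I would assemble, from Propositions~\ref{PH0H1}, \ref{pr:H0}, \ref{2piH0}, \ref{pr:2piH1}, \ref{P1}, \ref{P12}, \ref{P13} and~\ref{Pord3}, the list of irreducible summands that can occur in $\cH_0$ and $\cH_1$. In every case they are of three kinds: the one-dimensional $\cC_{n/\lambda}$ with $n\in\ZZ$; the representations ${\mathcal S}_{a(l,k)}^\tau$ with $(l,k)\in\ZZ^2\setminus\{(0,0)\}$, where $a(l,k)=\|T_{\mu,\nu}^{-1}(l,k)^\top\|$; and the representations $\cF_{rm,d}$ with $m\in\ZZ_{\not=0}$ and second parameter $d=c_0\,n/\lambda$, $n\in\ZZ$, where $c_0$ is a fixed positive rational ($c_0=r_0/r$ for the type-$L^1$ families, $c_0=1/q$ for the others). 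Hence the sets of scalars $x_\sigma$ are contained, respectively, in $\{0\}$, in $\{-4\pi a(l,k)^2\mid(l,k)\in\ZZ^2\setminus\{(0,0)\}\}$, and in $\{-2\pi rm(4\pi c_0 n/\lambda\pm1)\mid m\in\ZZ_{\not=0},\,n\in\ZZ\}$.

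The core of the argument is that each of these three sets is closed and discrete in $\RR$. The first is the single point $\{0\}$. For the second, $\Lambda:=T_{\mu,\nu}^{-1}\ZZ^2$ is a lattice in $\RR^2$, so every bounded region of $\RR^2$ contains only finitely many points of $\Lambda$; hence $\{\|v\|^2\mid v\in\Lambda\setminus\{0\}\}$ is a closed discrete subset of $(0,\infty)$ with no finite accumulation point, and so is its affine image. For the third, one checks case by case over the eight families of Theorem~\ref{th:classUptoInner} that $q\lambda\in2\pi\ZZ$; in particular $\lambda$ is a rational multiple of $\pi$, so $4\pi c_0 n/\lambda$ is a rational number whose denominator divides a fixed integer, and consequently every scalar $-2\pi rm(4\pi c_0 n/\lambda\pm1)$ lies in one fixed cyclic subgroup $\delta\ZZ$ of $(\RR,+)$, where $\delta=\delta(L)>0$; a subset of a discrete subgroup of $\RR$ is itself closed and discrete. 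Finally, a finite union of closed discrete subsets of $\RR$ is again closed and discrete — the union of finitely many closed sets is closed, and an accumulation point of a union is an accumulation point of one of the parts. Thus $\{x_\sigma\}$ is closed and discrete, so the spectrum of the wave operator on $L\backslash\Osc_1$, being its closure, coincides with it and is discrete.

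The only step requiring genuine care, and the one I would write out most carefully, is the bookkeeping in the last case: verifying uniformly over all standard lattices that the parameter $d$ of every occurring $\cF_{rm,d}$ has the claimed form $c_0 n/\lambda$ with $q\lambda\in2\pi\ZZ$, so that the $\cF$-scalars really land in a single cyclic subgroup $\delta\ZZ\subset\RR$. Everything else reduces to the discreteness of a planar lattice together with elementary point-set topology.
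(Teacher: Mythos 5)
Your proof is correct, but it follows a genuinely different (and much heavier) route than the paper. The paper's proof is a one-liner: as observed at the start of Section~\ref{S8}, every standard lattice $\Gamma$ contains a straight, normalised, unshifted sublattice $\Gamma'$ of finite index (generated by $\gamma_1,\gamma_2,\gamma_3$ and a power of $\gamma_4$), so $\Gamma'\backslash G$ finitely covers $\Gamma\backslash G$ and $L^2(\Gamma\backslash G)$ is identified with the $\gamma_4$-invariant part of $L^2(\Gamma'\backslash G)$; hence every Casimir scalar occurring for the standard lattice already occurs for the straight sublattice, and discreteness is inherited from the Proposition immediately preceding the corollary, which computed the spectrum for straight lattices as $4\pi\frac r\kappa\,\ZZ\cup\cA(\mu,\nu)$ resp.\ $2\pi\frac r\kappa\,\ZZ\cup\cA(\mu,\nu)$. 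You instead read off the occurring irreducibles from Propositions~\ref{PH0H1}, \ref{pr:H0}, \ref{2piH0}, \ref{pr:2piH1}, \ref{P1}, \ref{P12}, \ref{P13} and~\ref{Pord3} for each of the eight families and check directly that the three families of scalars (the point $0$, the values $-4\pi\big\|T_{\mu,\nu}^{-1}(l,k)^\top\big\|^2$ coming from a planar lattice, and the $\cF$-scalars, which land in a fixed cyclic subgroup of $\RR$ because $q\lambda\in2\pi\ZZ$ for every standard lattice) form a closed discrete set whose finite union is again closed and discrete. Your bookkeeping of the $\cF$-parameters ($d=r_0n/(r\lambda)$ for type $L^1_{r_0}$, $d=n/(q\lambda)$ otherwise) is accurate, so the argument goes through; what it buys is an essentially explicit description of the spectrum for each standard lattice, but at the cost of invoking the full decomposition (in fact only the list of occurring representations, not the multiplicities, is needed), whereas the paper's covering argument needs none of the case-by-case analysis beyond the existence of the straight finite-index sublattice and the already-proved straight-lattice proposition.
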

\begin{proof}
As we have seen, every quotient by a standard lattice is finitely covered by a quotient by a straight lattice.
\end{proof}

There exist lattices in~$\Osc_1$ such that the spectrum of the wave operator on the quotient space is not discrete.

\begin{Example}
 Let $L=L_r(2\pi\kappa,\mu,\nu,0,0)$ be a straight lattice, where $\kappa\in\NN_{>0}$ and $\mu,\nu\in\RR$, $\nu>0$. We~choose a real number $u$ such that $\tilde u:=2\pi \kappa r u$ is irrational and consider $L':=F_u(L)$, where $F_u$ is the automorphism defined in (\ref{EFu}). Then the right regular representation with respect to $L'$ equals $\big(F_u^{-1}\big)^*\big(\rho, L^2(L\backslash\Osc_1)\big)$. By~Proposition~\ref{PH0H1}, equation~(\ref{Fiso}), and the formula for $\Delta_{\cF_{c,d}}$ in Section~\ref{irrrep}, we~obtain that
\begin{gather*}
\cB_+(\tilde u):=\bigg\{ x_\sigma\,\bigg|\, \sigma=\cF_{c,d} \mbox{ for } c=rm,\, d=\frac n{2\pi\kappa}-urm,\, m\in\NN_{>0},\, n\in\ZZ\bigg\}
\\[.5ex] \hphantom{\cB_+(\tilde u)\,}
{}=\bigg\{ 4\pi\frac r\kappa\bigg (n+\frac \kappa 2-\tilde u m\bigg)m\,\bigg|\, m\in\NN_{>0},\, n\in\ZZ\bigg\}
\end{gather*}
is contained in the spectrum of the wave operator. Let~us assume that $\kappa$ is even. Then $n':=n+\frac \kappa 2$ is an integer. By~Dirichlet's approximation theorem, the set
$\{(n'-\tilde u m)m\,|\,m\in\NN_{>0}$, $n'\in\ZZ\}$ contains infinitely many numbers in~$[-1,1]$ if $\tilde u$ is irrational. Hence, $\cB_+(\tilde u)$ contains an accumulation point in~$\RR$.
\end{Example}

\subsection{Generalised quadratic Gauss sums}
We used several times the following reciprocity formula for generalised quadratic Gauss sums~\cite{Si60}, see also~\cite{BE}. Let~$a$, $b$, $c$ be integers with $ac\not=0$ such that $ac+b$ is even. Then
\begin{gather}\label{qgs}
S(a,b,c):=\sum_{r=0}^{|c|-1} {\rm e}^{\pi {\rm i} (a r^2+br)/c}=|c/a|^{\frac12}{\rm e}^{\pi {\rm i}(|ac|-b^2)/(4ac)}\sum_{r=0}^{|a|-1} {\rm e}^{-\pi {\rm i}(cr^2+br)/a}.
\end{gather}

\section{Summary}\label{S9}
\looseness=1
Suppose we are given a lattice $L$ in~$\Osc_1=H\rtimes \RR$ and we want to determine the spectrum of~$L\backslash \Osc_1$. First, we~reduce the problem to the situation where $L$ is normalised and unshifted.

Let $L$ be arbitrary. Then $L\cap H$ is isomorphic to a discrete Heisenberg group $H^r_1(\ZZ)$ for a uniquely determined natural number $r$. We~put $r(L):=r$. The projection of~$L\subset\Osc_1=H\rtimes\RR$ to the second factor is generated by a uniquely determined element $\lambda$ of~$\pi\NN_{>0}\cup \{\lambda_0+2\pi\ZZ\mid \lambda_0=\pi/3,\pi/2,2\pi/3\}$. We~set $\lambda(L):=\lambda$. We~choose adapted generators $\alpha$, $\beta$, $\gamma$, $\delta$ (see Definition~\ref{def:adapted}) and determine the type of~$L$, i.e., the isomorphism class of~$L$ considered as a discrete oscillator group, using Proposition~\ref{TF}.
The commutator group $[L\cap H, L\cap H]$ is generated by an element $(0,h^2,0)$ for some $h\in\RR_{>0}$. We~put $S:=h^{-1}\cdot I_2$. Then $L':=F_S(L)$ is a normalised lattice. Now we compute the number $\bs_{L'}$ for $L'$ as in Definition~\ref{de:inv}, where we can use the adapted generators $F_S(\alpha)$, $F_S(\beta)$, $F_S(\gamma)$, $F_S(\delta)$. We~put $u=-\lambda^{-1}\cdot\bs_{L'}$ and $F=F_u\circ F_S$.
Then $L''=F_u(L')=F(L)$ is normalised and unshifted.
The right regular representation~$\big(\rho,L^2(L\backslash\Osc_1)\big)$ of~$\Osc_1$ is isomorphic to the pullback $F^*\big(\rho'',L^2(L''\backslash\Osc_1)\big)$ of the right regular representation~$\rho''$ on~$L^2(L''\backslash\Osc_1)$. In~particular, the irreducible subrepresentations of~$L^2(L\backslash\Osc_1)$ are the pullbacks of the irreducible subrepresentations of~$L^2(L''\backslash\Osc_1)$. Hence, the spectrum of~$L\backslash \Osc_1$ can be computed from that of~$L''$ using equation~(\ref{Fiso}).

Consequently, we~may assume that the lattice $L\subset\Osc_1$ is normalised and unshifted. If~not already done, we~now determine the type of~$L$. This can be done as explained in the general case choosing adapted generators and using Proposition~\ref{TF}. Alternatively, we~can use Corollary~\ref{co:iso}, which does not require adapted generators.
Since also for the following main result adapted generators are not necessarily needed, we~relax this condition: The subgroup $L\cap Z(H)$ is generated by $\gamma:=(0,1/r,0)$ for $r=r(L)\in\NN_{>0}$. Choose $\alpha,\beta\in L\cap H$, such that $[\alpha,\beta]=\gamma^r$. Moreover, choose the remaining generator $\delta$ of~$L$ such that its projection~$\lambda$ to the $\RR$-factor equals~$\lambda(L)$.
For instance, adapted generators $\alpha$, $\beta$, $\gamma$, $\delta$ satisfy these conditions.

Let $\bar \alpha$, $\bar\beta$ and $\bar\delta$ be the projections of~$\alpha$, $\beta$ and $\delta$ to $H/Z(H)\cong \CC\cong\RR^2$. We~define the matrix $T:=(\bar\alpha,\bar\beta)$ with columns $\bar \alpha$ and $\bar \beta$.
Put $q=\ord(\lambda)$.
For $q\in\{2,3,4,6\}$, the group $\ZZ_q=\big\la T^{-1} {\rm e}^{{\rm i}\lambda} T\big\ra$ acts on the left of~$\ZZ^2\setminus \big\{(0,0)^\top\big\}$. The orbit space of this action is denoted by~${\mathcal O}_q$.
Obviously, for $l,k\in\ZZ$, the number $\big\|T\cdot (l,k)^\top\big\|$ depends only on the orbit of~$(l,k)^\top$.

If $\lambda\in 2\pi\,\NN_{>0}$, we~define in addition a pair of integers $(\iota_1,\iota_2)$ by $\bar\delta=\frac{\iota_1}r\bar\alpha+\frac{\iota_2}r\bar\beta$.

\begin{Theorem}\label{theo}
Let $L$ be a normalised and unshifted lattice in~$\Osc_1$. The right regular representation~$\rho$ of~$\Osc_1$ on~$L^2(L\backslash\Osc_1)$ decomposes as a sum $\cH_0\oplus\cH_1$ into two summands, which we will describe in the following.
\begin{enumerate}\itemsep=0pt
\item[$1.$] If $q:=\ord(\lambda)\in\{2,3,4,6\}$, then the representation~$\cH_0$ is equivalent~to
\begin{gather}\label{EH0q>1}
\cH_0\cong\bigoplus_{n\in \Z} \cC_{n/\lambda} \oplus \bigoplus
%_{\substack{(k,l)\in\Z^2/\Z_q\\(k,l)\not=(0,0)}}
_{(l,k)\in\cO_q}\bigoplus_{K=0}^{|1+q\kappa|-1} {\mathcal S}_{a(l,k)}^{\tau(K)},
\end{gather}
where $\lambda=2\pi/q+2\pi\kappa$, $\tau(K)=K/|1+q\kappa|\in\RR/\ZZ$
and $a(l,k)= \big\|T\cdot (l,k)^\top\big\|$.
%For $\ord(\lambda)\in\{3,4,6\}$, this sum depends only on~$\lambda$.
\\
If $q:=\ord(\lambda)=1$, then $\cH_0$ is equivalent to
%{$\kappa-1$ ? nein!
\begin{gather}\label{EH0q=1}
\cH_0\cong\bigoplus_{n\in \Z} \cC_{n/\lambda} \oplus \bigoplus _{\substack{(l,k)\in\Z^2,\\(l,k)\not=(0,0)}} \bigoplus_{j=0}^{\kappa} {\mathcal S}_{a(l,k)}^{\tau(K_j(l,k))},
\end{gather}
where $\kappa$, $\tau(K)$ and $a(l,k)$ are defined as before and $K_j(l,k):=j-\frac1{r}(\iota_1 k-\iota_2l)$.
\item[$2.$]
For $q:=\ord(\lambda)\in\{2,3,4,6\}$ denote by $q_0\in\{2,3\}$ the smallest prime factor of~$q$. Then the representation~$\cH_1$ is equivalent~to
\begin{gather*}
\cH_1\cong \bigoplus_{m\in \Z_{\not=0}} \bigoplus_{n\in\Z} \bm(m,n)\cdot \cF_{rm,\,\frac n{q\lambda}},
\end{gather*}
where $\bm(m,n)$ depends on the type of~$L$ in the following way.
\begin{enumerate}\itemsep=0pt
\item[$(a)$] If $L$ is of type $L_r^q$, then
\begin{gather*}
\bm(m,n) =\begin{cases}
\big[\frac1q \big({r|m|-\rem_q(\sgn(\lambda m)\cdot n)}\big)\big]+1,
& \text{if\quad $n \equiv \overline{rm}$ {\rm mod}\ $q_0$},
\\[.5ex]
\big[\frac1q\big( r|m|-\rem_q(\sgn(\lambda m)\cdot n)-2\big)\big]+1,
& \text{if\quad $n\not\equiv \overline{rm}$ {\rm mod} $q_0$},
\end{cases}
\end{gather*}
where $\overline{rm}=\sgn(\lambda)rm$ if $3\mid q$ and $\overline{rm}=0$ otherwise.
\item[$(b)$]
 If $q_0\mid r$ and $L$ is of type $L_r^{q,+}$, then
\begin{gather*}
\bm(m,n) =
\begin{cases}
\frac{r|m|}{q},&\text{if\quad $q_0\nmid m$ and $q\mid r$},
\\[1ex]
\big[\frac1q\big(r|m|-\rem_q(\sgn(\lambda m)\cdot n)\big)\big]+1,
& \text{if\quad $(q_0\mid m$ or $q\nmid r)$}
\\
&\mbox{and}\quad q_0\mid m+n,
\\[1ex]
\big[\frac1q\big(r|m|-\rem_q(\sgn(\lambda m)\cdot n)-2\big)\big]+1,
&\text{if\quad $(q_0\mid m$ or $q\nmid r)$}\\ &\mbox{and}\quad q_0\nmid m+n.
\end{cases}
\end{gather*}
\end{enumerate}
If $\ord(\lambda)=1$ and $L$ is of type $L^1_{r_0}$, then $\cH_1$ is equivalent to
\begin{gather*}
\cH_1\cong\bigoplus_{m\in\Z _{\not=0}}\bigoplus_{n\in \Z} r_0|m|\cdot {\mathcal F}_{rm,\frac {r_0n}{r\lambda}}.
\end{gather*}
\end{enumerate}
\end{Theorem}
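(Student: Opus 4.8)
The plan is to assemble the statement from the case-by-case results already established in Sections~\ref{S7}--\ref{S8}, namely Propositions~\ref{PH0H1}, \ref{pr:H0}, \ref{2piH0}, \ref{pr:2piH1}, \ref{P1}, \ref{P12}, \ref{P13}, and~\ref{Pord3}, together with the classification of standard lattices in Theorem~\ref{th:classUptoInner}. The key reduction is that the statement, being about the isomorphism class of the representation $\big(\rho, L^2(L\backslash\Osc_1)\big)$, is invariant under inner automorphisms of~$\Osc_1$, and by Theorem~\ref{th:classUptoInner} every normalised unshifted lattice is, up to an inner automorphism, one of the standard lattices in the list. Hence it suffices to verify the two displayed formulas for $\cH_0$ and $\cH_1$ for each standard lattice, and then to translate the result from the model $G$ (where all the computations were carried out) back to~$\Osc_1$ via the isomorphism $\phi$ of~\eqref{Eiso}.

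First I would handle $\cH_0$. For $q=\ord(\lambda)=1$ the formula~\eqref{EH0q=1} is precisely Proposition~\ref{2piH0} applied to the standard lattice $\Gamma^1_{r_0}(r,\lambda,\mu,\nu,\iota_1,\iota_2)=\Gamma_r\big(\lambda,\mu,\nu,\tfrac{\iota_1}r+{\rm i}\tfrac{\iota_2}r,z_0\big)$, once one notes that the definition of~$(\iota_1,\iota_2)$ given just before the theorem via $\bar\delta=\tfrac{\iota_1}r\bar\alpha+\tfrac{\iota_2}r\bar\beta$ agrees with the parameters of the standard lattice, and that the running index in Proposition~\ref{2piH0} goes from $j=0$ to $\kappa-1$ while here it is written $j=0,\dots,\kappa$ --- so I would first check that these ranges match (they should, after adjusting for whether $\kappa$ is being counted as $\kappa$ or $\kappa+1$ terms). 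For $q\in\{2,3,4,6\}$, formula~\eqref{EH0q>1} is exactly Proposition~\ref{pr:H0}, using~\eqref{sq} to identify $S_q=T_{\mu,\nu}{\rm e}^{{\rm i}\lambda}T_{\mu,\nu}^{-1}$ and hence the $\ZZ_q$-orbit space $\cO_q$ of the action of $T^{-1}{\rm e}^{{\rm i}\lambda}T$ with the orbit space of~$S_q$; the quantity $a(l,k)=\|T\cdot(l,k)^\top\|$ in the theorem coincides with $\|T_{\mu,\nu}^{-1}\cdot(l,k)^\top\|$ in Proposition~\ref{pr:H0} because for standard lattices $T=(T_{\mu,\nu})^{-1}$ by Lemma~\ref{lm:generatorChoice} and the normalisation in~\eqref{choice2}. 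I would spell out this identification of $T$ with $T_{\mu,\nu}^{-1}$ carefully since it is the bridge between the intrinsic description in the theorem and the coordinate description in the propositions.

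Next, $\cH_1$. For $q=1$ and $L$ of type $L^1_{r_0}$, the formula is Proposition~\ref{pr:2piH1} verbatim. For $q\in\{2,3,4,6\}$, I would split according to the type $L^q_r$ versus $L^{q,+}_r$ and match the multiplicity formulas: $q=2$ is Proposition~\ref{P1}, $q=4$ is Proposition~\ref{P12}, $q=6$ is Proposition~\ref{P13}, and $q=3$ is Proposition~\ref{Pord3}. The slightly delicate point here is that the theorem states the multiplicities $\bm(m,n)$ in a unified form using $q_0$ (the smallest prime factor of~$q$), $\rem_q$, $\sgn(\lambda m)$, and the parity/residue conditions $n\equiv\overline{rm}\bmod q_0$, whereas each proposition states them in a form tailored to the specific~$q$. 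I expect the main obstacle of the proof to be exactly this bookkeeping: showing that the case-specific formulas in Propositions~\ref{P1}--\ref{Pord3} --- which involve quantities like $[\tfrac14(r|m|-\rem_4(\cdots))]$, $[\tfrac14(r|m|-\rem_4(\cdots))+\tfrac12]$, traces of Gauss-sum operators, etc. --- collapse into the two-line uniform expression in item~2 of the theorem. This is a matter of rewriting floor functions and checking that "$n$ even/odd" for $q=2,4$ and "$rm+n$ even/odd" for $q=6$ and the congruences for $q=3$ are all instances of "$n\equiv\overline{rm}\bmod q_0$", with $\overline{rm}=\sgn(\lambda)rm$ for $3\mid q$ and $\overline{rm}=0$ otherwise; I would do this case by case but would not write out every floor-function manipulation, merely indicating that elementary arithmetic identities (e.g. $[\tfrac{x-a}{q}]+1 = [\tfrac{x-a}{q}-\tfrac{2}{q}\cdot 0]+1$ trivially, and $[\tfrac{x-a}{q}+\tfrac12] = [\tfrac{x-a-2}{q}]+1$ when $q=4$ and the relevant residue is odd, etc.) reconcile the two forms. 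Finally, for all cases, I would invoke the remark in Section~\ref{S6} that $\big(\rho, L^2(L\backslash\Osc_1)\big)\cong\big(\rho_G, L^2(\Gamma\backslash G)\big)$ via $\phi^*$, and that $\cC_d$, $\SSS^\tau_a$, $\cF_{c,d}$ as representations of~$G$ match those of~$\Osc_1$ under this identification, so that the decompositions proved for $\Gamma\backslash G$ transfer directly to $L\backslash\Osc_1$. That completes the proof, as the list in Theorem~\ref{th:classUptoInner} exhausts all normalised unshifted lattices up to inner automorphisms.
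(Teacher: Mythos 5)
Your overall route --- reduce to standard lattices via Theorem~\ref{th:classUptoInner} and invariance under inner automorphisms, then assemble the statement from Propositions~\ref{PH0H1}--\ref{Pord3} and transfer through $\phi$ --- is the same as the paper's, and your treatment of $\cH_1$ is adequate: the multiplicity formulas there depend only on the type of~$L$, on $r$ and on $\lambda$, all of which are invariant under inner automorphisms, so citing Propositions~\ref{pr:2piH1}, \ref{P1}, \ref{P12}, \ref{P13} and~\ref{Pord3} together with the elementary floor-function bookkeeping you describe (which the paper leaves implicit) does settle item~2. Your worry about the index range $j=0,\dots,\kappa$ versus $j=0,\dots,\kappa-1$ is only a convention: in the theorem $\kappa$ is defined by $\lambda=2\pi/q+2\pi\kappa$, so for $q=1$ the range $0,\dots,\kappa$ has exactly $\lambda/2\pi$ terms, as in Proposition~\ref{2piH0}.

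The gap is in $\cH_0$. The right-hand sides of \eqref{EH0q>1} and \eqref{EH0q=1} are expressed through data attached to the \emph{given} lattice $L$ via an \emph{arbitrary} admissible choice of generators: the matrix $T=(\bar\alpha,\bar\beta)$, the orbit space $\cO_q$ of $\big\la T^{-1}{\rm e}^{{\rm i}\lambda}T\big\ra$ and, for $q=1$, the pair $(\iota_1,\iota_2)$. Your reduction ``it suffices to verify the formulas for each standard lattice'' is therefore incomplete unless you also prove that these right-hand sides do not change when $L$ is replaced by its standard representative with canonical generators; and your assertions that the theorem's $(\iota_1,\iota_2)$ ``agrees with the parameters of the standard lattice'' and that $T=T_{\mu,\nu}^{-1}$ are false in general --- they hold for a standard lattice with its canonical generators, but for the lattice of the theorem the two data sets differ by a rotation $S\in\SO(2)$ and a change of basis $M\in\SL(2,\ZZ)$ (and, for $q=1$, by the $\SSS$-action on $\iota$), i.e.\ $STM=T_{\mu,\nu}^{-1}$. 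Supplying this bridge is exactly the content of the paper's proof: for $q\in\{2,3,4,6\}$ one checks that $S$ preserves the norms $a(l,k)=\big\|T(l,k)^\top\big\|$ and that $M$ descends to a bijection $\big(\ZZ^2\setminus\{(0,0)\}\big)/\ZZ_q\to\cO_q$ matching the ${\mathcal S}$-summands of Proposition~\ref{pr:H0} with those of \eqref{EH0q>1}; for $q=1$ the paper does \emph{not} pass to a standard lattice at all, but conjugates $L$ only by a rotation and a translation onto a lattice $\Gamma_r\big(\lambda,\mu,\nu,\frac{\iota_1}r+{\rm i}\frac{\iota_2}r,z_0\big)$ carrying the same $(\iota_1,\iota_2)$ and with $\mu+{\rm i}\nu=T^{-1}\cdot{\rm i}$ an arbitrary point of the half plane --- which is precisely why Proposition~\ref{2piH0} was formulated for arbitrary $(\mu,\nu)$ rather than only for standard lattices. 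If you insist on reducing to the fundamental domain in the case $q=1$, you must additionally verify that $a(l,k)$ and $K_j(l,k)=j-\frac1r(\iota_1k-\iota_2l)$ are invariant under the simultaneous $\SL(2,\ZZ)$-transformation of $(l,k)$, $T$ and $\iota$ and under the $\SSS$-action; none of this is in your proposal, so as written the proof of item~1 is not complete.
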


\begin{Remark}\label{fR}\quad
\begin{enumerate}\itemsep=0pt
\item If $\alpha$, $\beta$, $\gamma$, $\delta$ are adapted generators, then the group $\ZZ_q=\big\la T^{-1} {\rm e}^{{\rm i}\lambda} T\big\ra$ is generated by the matrix $S_q$ defined in (\ref{Sq}).
\item We know from Theorem~\ref{th:classUptoInner} and Proposition~\ref{pr:H0} that
for $q\in\{3,4,6\}$ the subrepresentation~$\cH_0$ depends only on~$\lambda$. At first glance, this is not reflected by Formula~(\ref{EH0q>1}), which contains besides $\lambda$ also $T$. However, if $\alpha$, $\beta$, $\gamma$, $\delta$ are adapted generators (which always can be chosen), then
$\big\| T\cdot(l,k)^\top \big\|=\big\|T_{\mu,\nu}^{-1}(l,k)^\top\big\|$ holds for all $l,k\in\ZZ$, where $T_{\mu,\nu}$ is defined as in (\ref{Tmunu2}) and $\mu+{\rm i}\nu={\rm i}$ for $q=4$ and $\mu+{\rm i}\nu={\rm e}^{{\rm i}\pi/3}$ for $q\in\{3,6\}$.
\item As already noticed in Remark~\ref{R73}, the representations ${\mathcal S}_{a(l,k)}^{\tau(K)}$ appearing in~\eqref{EH0q>1} can be isomorphic for different $(l,k)$ since possibly $a(l,k)=a(l',k')$ even if $(l,k)$ and $(l',k')$ are not on the same $\ZZ_q$-orbit. Analogously, summands in~\eqref{EH0q=1} can be isomorphic. This problem already appeared in the examples in~\cite{Br}, where also the following discussion for order $q=4$ can be found. If~$q=4$, then $a(l,k)=l^2+k^2$. The pairs $(l,k)$ and $(l',k')$ are on the same $\ZZ_q$-orbit if and only if $l+{\rm i}k=z(l'+{\rm i}k')$ for some $z\in\{\pm1,\pm {\rm i}\}$, i.e., if and only $l+{\rm i}k$ and $l'+{\rm i}k'$ generate the same ideal in~$\ZZ[{\rm i}]$. Hence, for a given $a\in\NN_{>0}$, the representation~${\mathcal S}_a^{\tau(K)}$ appears with multiplicity $m(a)$, where $m(a)$ is the number of ideals of norm $a$ in~$\ZZ[{\rm i}]$. Let~$a=2^{k_0}p_1^{k_1}\cdots p_\mu^{k_\mu} p_{\mu+1}^{k_{\mu+1}} \cdots p_\nu^{k_\nu}$ be the prime factorisation of~$a$, where $p_1,\dots, p_\mu$ are the prime factors that are congruent to 1 modulo 4 and $p_{\mu+1},\dots,p_\nu$ are the remaining odd prime factors. Then $a$ is the norm of an ideal in~$\ZZ[{\rm i}]$ if and only if $k_{\mu+1},\dots,k_\nu$ are even. In~this case, $m(a)=\prod_{j=1}^\mu(k_j+1)$. Classical density results, see, e.g., \cite[Section~40]{H}, imply that the number $M(a)=\sum_{j=1}^a m(j)$ of ideals of norm at most~$a$ in~$\ZZ[{\rm i}]$ satisfies the asymptotics $\lim\limits_{a\to \infty}M(a)/a=\pi/4$.
\end{enumerate}
\end{Remark}

\begin{proof}[Proof of Theorem~\ref{theo}.]
Let $L$ be a normalised unshifted lattice in~$\Osc_1$. Denote by $\Gamma=\phi(L)$ the corresponding lattice in~$G$.
There is an inner automorphism $\varphi$ of~$G$ such that $\varphi(\Gamma)$ is a~standard lattice. Since $\ph$ does not change the type of the lattice and
since $\lambda(L)=\lambda(\ph(\Gamma))$,
%Since $\varphi^*\cF_{c,d}=\cF_{c,d}$
we~obtain the assertion for $\cH_1$ by applying Propositions~\ref{pr:2piH1},~\ref{P1},~\ref{P12},~\ref{P13} and~\ref{Pord3}.

Now, we~want to prove the assertion for $\cH_0$. Assume that $q=\ord(\Gamma)=1$. There is an~inner automorphism $\varphi$ of~$G$ such that $\varphi(\Gamma)=\Gamma_r(\lambda,\mu,\nu,\xi_0,z_0)$ with $\lambda=\lambda(\Gamma)\in 2\pi\,\NN_{>0}$, $\mu+{\rm i}\nu=T^{-1}\cdot {\rm i}\in\textbf{H}$, $\xi_0=\frac{\iota_1}{r}+{\rm i}\frac{\iota_2}{r}$ and $z_0=\frac{1}{2}\rem_2(\iota_1\iota_2r)$ with $\iota_1,\iota_2\in\ZZ$ defined as above.
Indeed, choose $S\in\SO(2,\RR)$ such that $ST=T_{\mu,\nu}^{-1}$. Then $F:=\phi F_{S}\phi^{-1}$ maps $\Gamma$ to the normalised unshifted lattice generated by
\begin{gather*}
\bigg(\frac 1{\sqrt\nu},z_\alpha,0\bigg),\quad
\bigg({-}\frac{\mu}{\sqrt\nu}+{\rm i}\sqrt\nu,z_\beta,0\bigg),\quad
\bigg(0,\frac{1}{r},0\bigg),\quad
\bigg(\iota_1\frac 1{\sqrt\nu}+\iota_2\bigg({-}\frac{\mu}{\sqrt\nu}+{\rm i}\sqrt\nu\bigg),z_\delta,\lambda\bigg)
\end{gather*}
for suitable $z_\alpha,z_\beta,z_\delta\in\RR$, which is isomorphic to $\Gamma_r(\lambda,\mu,\nu,\xi_0,z_0)$ under inner automorphisms $\big($take $\phi F_\eta\phi^{-1}$ with $\eta=-z_\beta\frac 1{\sqrt\nu}+z_\alpha\big({-}\frac{\mu}{\sqrt\nu}+{\rm i}\sqrt\nu\big)$ and Lemma~\ref{pr:shift}$\big)$. By~Proposition~\ref{2piH0}, we~ob\-tain that $\cH_0$ for $\Gamma$ is equivalent to a representation given by formula~\eqref{EH0q=1} but with $a(l,k)=\big\|T_{\mu,\nu}^{-1}(l,k)^\top\big\|$.
Since $\big\|T_{\mu,\nu}^{-1}(l,k)^\top\big\|=\big\|T(l,k)^\top\big\|$, the assertion for $q=1$ follows.

Now, assume that $q$ is in~$\{2,3,4,6\}$. As~before there is an inner automorphism $\ph$ such that~$\varphi(\Gamma)$ is a standard lattice. Let~$\mu$ and $\nu$ be the corresponding parameters for $\ph(\Gamma)$.
Hence, by Proposition~\ref{pr:H0}, $\cH_0$ is equivalent to
\begin{gather*}
\cH_0=\bigoplus_{n\in \Z} \cC_{n/\lambda} \oplus \bigoplus _{\substack{(l',k')\in\Z^2/\Z_q\\(l',k')\not=(0,0)}} \bigoplus_{K=0}^{|1+q\kappa|-1} {\mathcal S}_{a'(l',k')}^{\tau(K)},
\end{gather*}
 where $a'(l',k')=\big\|T_{\mu,\nu}^{-1}\cdot(l',k')^\top\big\|$. It~remains to show that this representation is indeed equal to the one in~\eqref{EH0q>1}. We~know that there exist matrices $M\in\SL(2,\ZZ)$ and $S\in\SO(2,\RR)$ such that $STM=T_{\mu,\nu}^{-1}$.
Since $S_q=T_{\mu,\nu}{\rm e}^{-{\rm i}\lambda}T_{\mu,\nu}^{-1}$ holds, the map
\begin{gather*}
M\colon\quad \Z^2\longrightarrow \ZZ^2,\qquad (l',k')^\top\longmapsto (l,k)^\top:=M\cdot (l',k')^\top
\end{gather*} descends to a map of orbit spaces $M\colon \big(\ZZ^2\setminus\{(0,0)\}\big)/\ZZ_q\rightarrow \cO_q$.
Moreover, we~have $a'(l',k')=\big\|T_{\mu,\nu}^{-1}(l',k')^\top\big\|=\big\|STM(l',k')^\top\big\|=\big\|T(l,k)^\top\big\|=a(l,k)$ and the assertion for $q\in\{2,3,4,6\}$ follows.
\end{proof}

\addcontentsline{toc}{section}{References}
\LastPageEnding

\end{document}